\definecolor{aleacolor}{rgb}{0.16,0.59,0.78}
\theoremstyle{plain}
\newtheorem{theorem}{Theorem}[section]                                          
\newtheorem{proposition}[theorem]{Proposition}                          
\newtheorem{lemma}[theorem]{Lemma}
\newtheorem{corollary}[theorem]{Corollary}
\newtheorem{conjecture}[theorem]{Conjecture}
\theoremstyle{definition}
\newtheorem{definition}[theorem]{Definition}
\theoremstyle{remark}
\newtheorem{remark}[theorem]{Remark}
\makeatletter \@addtoreset{equation}{section} \makeatother
\newcommand{\myparagraph}[1]{\medskip\noindent\textbf{#1}}
\newcommand{\be}[1]{\begin{equation}\label{#1}}
\newcommand{\ee}{\end{equation}}
\newcommand{\bt}[1]{\begin{theorem}\label{#1}}
\newcommand{\et}{\end{theorem}}
\newcommand{\bl}[1]{\begin{lemma}\label{#1}}
\newcommand{\el}{\end{lemma}}
\newcommand{\bp}[1]{\begin{proposition}\label{#1}}
\newcommand{\ep}{\end{proposition}}
\newcommand{\bd}[1]{\begin{definition}\label{#1}}
\newcommand{\ed}{\end{definition}}
\newcommand{\br}[1]{\begin{remark}\label{#1}}
\newcommand{\er}{\end{remark}}
\newcommand{\bcj}[1]{\begin{conjecture}\label{#1}}
\newcommand{\ecj}{\end{conjecture}}
\newcommand{\bcor}[1]{\begin{corollary}\label{#1}}
\newcommand{\ecor}{\end{corollary}}
\newcommand{\bpr}{\begin{proof}}
\newcommand{\epr}{\end{proof}}
\newcommand{\wt}{\widetilde}
\newcommand{\dd}{\mathrm{d}}
\newcommand{\eee}{\mathrm{e}}
\renewcommand{\P}{\mathbb{P}}
\DeclareMathOperator{\var}{Var}
\def\CC{\mathcal{C}}
\def\CD{\mathcal{D}}
\def\CM{\mathcal{M}}
\def\CE{\mathcal{E}}
\def\CF{\mathcal{F}}
\def\CL{\mathcal{L}}
\def\CP{\mathcal{P}}
\def\B{\mathbb{B}}
\def\C{\mathbb{C}}
\def\E{\mathbb{E}}
\def\N{\mathbb{N}}
\def\R{\mathbb{R}}
\def\Z{\mathbb{Z}}
\newcommand{\uc}{\underline{c}}
\newcommand{\ux}{\underline{x}}
\newcommand{\ud}{\underline{d}}
\newcommand{\udster}{\underline{d}^*}
\newcommand{\umuu}{\underline{\mu}}
\newcommand{\umu}{\underline{m}}
\newcommand{\uL}{\underline{\Lambda}}
\newcommand{\ul}{\underline{\lambda}}
\newcommand{\uth}{\underline{\theta}}
\newcommand{\unu}{\underline{0}}
\newcommand{\jto}{{_{\D \Longrightarrow \atop j \to \infty}}}
\newcommand{\tto}{{_{\D \Longrightarrow \atop t \to \infty}}}
\newcommand{\Ntoo}{{_{\D \Longrightarrow \atop N \to \infty}}}
\newcommand{\ttO}{{_{\D \longrightarrow \atop t \to \infty}}}
\newcommand{\D}{\displaystyle}
\newcommand{\eea}{\end{eqnarray}}
\newcommand{\bean}{\begin{eqnarray*}}
\newcommand{\eean}{\end{eqnarray*}}
\newcommand{\intl}{\int\limits}
\newcommand{\prodl}{\prod\limits}
\newcommand{\liml}{\lim\limits}
\newcommand{\suml}{\sum\limits}
\newcommand{\ve}{\varepsilon}
\newcommand{\Ntwo}{\N \backslash \{1\}}
\begin{document}

\title[Renormalisation of hierarchically interacting Cannings processes]{Renormalisation of hierarchically interacting\\
Cannings processes}

\author{A.\ Greven}

\author{F.\ den Hollander}

\author{S.\ Kliem}

\author{A.\ Klimovsky}

\address{Department Mathematik\newline
Universit\"at Erlangen-N\"urnberg\newline
Cauerstra{\ss}e $11$\newline
D-91058 Erlangen, Germany
}
\email{greven@mi.uni-erlangen.de}
\urladdr{\url{http://www.mathematik.uni-erlangen.de/~greven}}

\address{Mathematical Institute\newline
Leiden University\newline
P.O.\ Box 9512\newline
NL-2300 RA Leiden, The Netherlands
}
\email{denholla@math.leidenuniv.nl}
\urladdr{\url{http://www.math.leidenuniv.nl/~denholla}}

\address{AG Wahrscheinlichkeitstheorie\newline
Fakultät für Mathematik\newline
Universität Duisburg-Essen\newline
D-45117 Essen, Germany
}
\email{sandra.kliem@uni-due.de}
\urladdr{\url{https://www.uni-due.de/~hm0135}}

\address{AG Wahrscheinlichkeitstheorie\newline
Fakultät für Mathematik\newline
Universität Duisburg-Essen\newline
D-45117 Essen, Germany
}
\email{ak@aklimovsky.net}
\urladdr{\url{http://www.aklimovsky.net/}}

\thanks{The idea for this paper arose from
discussions with P.\ Pfaffelhuber and A.\  Wakolbinger during an Oberwolfach
meeting on ``Random Trees'' in January 2009. FdH thanks J.\ Goodman, R.J.\
Kooman and E.\ Verbitskiy for discussions on M\"obius-transformations. AG was
supported by the Deutsche Forschungsgemeinschaft  (grant DFG-GR 876/15-1), FdH
by the European Research Council (Advanced Grant VARIS-267356), and AK by the
European Commission (project PIEF-GA-2009-251200) and by NWO (grant
613.000.913).  SK held a postdoctoral position at EURANDOM from the Summer of
2009 until the  Summer of 2011. AK was Postdoctoral Marie Curie Fellow at
EURANDOM from  December 2009 until July 2012, and held a postdoctoral position
in Leiden from  August 2012 until May 2013. AK was a guest of the Hausdorff
Research Institute for  Mathematics  in Bonn in the Fall of 2010 (Junior
Trimester Program on Stochastics). SK and AK were guests of the Institute for
Mathematical Sciences, National University of Singapore, during its 2011
programme on Probability and Discrete Mathematics in Mathematical Biology. The
authors thank the anonymous referees for a careful reading of the
manuscript and for helpful suggestions on the presentation.}

\subjclass[2000]{Primary
60J25, 
60K35; 
Secondary
60G57, 
60J60, 
60J75, 
82C28, 
92D25. 
} 
\keywords{$C^\Lambda$-process, $\Lambda$-coalescent, hierarchical group, migration, 
multi-scale reshuffling-resampling, spatial coalescent, hierarchical mean-field limit,
renormalisation, McKean-Vlasov process, M\"obius-transformation.
}

\begin{abstract}
{\small
In order to analyse universal patterns in the large space-time behaviour of
interacting multi-type stochastic populations on countable geographic spaces, 
a key approach has been to carry out a renormalisation analysis in the hierarchical 
mean-field limit. This has provided considerable insight into the structure of 
interacting systems of finite-dimensional \emph{diffusions}, such as Fisher-Wright 
or Feller diffusions, and their infinite-dimensional analogues, such as 
Fleming-Viot or Dawson-Watanabe superdiffusions.

The present paper brings a new class of interacting \emph{jump processes}  into
focus. We start from a single-colony $C^\Lambda$-process, which arises as  the
continuum-mass limit of a $\Lambda$-Cannings individual-based population  model,
where $\Lambda$ is a finite non-negative measure that describes the  offspring
mechanism, i.e., how individuals in a single colony are replaced via 
resampling. The key feature of the $\Lambda$-Cannings individual-based
population  model is that the offspring of a single individual can be a positive
fraction of  the total population. After that we introduce a system of
\emph{hierarchically interacting} $C^\Lambda$-processes, where the interaction
comes from migration  and reshuffling-resampling on \emph{all} hierarchical
space-time scales \emph{simultaneously}. More precisely, individuals live in
colonies labelled by  the hierarchical group $\Omega_N$ of order $N$, and are
subject to \emph{migration}  based on a sequence of migration coefficients
$\uc=(c_k)_{k\in\N_0}$ and to  \emph{reshuffling-resampling} based on a sequence
of resampling measures  $\uL= (\Lambda_k)_{k\in\N_0}$, both acting in
$k$-\emph{macro-colonies}, for all $k\in\N_0$.  The reshuffling is linked to the
resampling: before resampling in a macro-colony takes  place all individuals in
that macro-colony are relocated uniformly, i.e., resampling is  done in a locally
``panmictic'' manner.  We refer to this system as the  $C_N^{\uc,\uL}$-process.
The dual process of the $C^\Lambda$-process is the  $\Lambda$-coalescent,
whereas the dual process of the $C_N^{\uc,\uL}$-process is  a spatial coalescent
with multi-scale non-local coalescence.

For the above system, we carry out a \emph{full renormalisation analysis} in the 
\emph{hierarchical mean-field limit} $N \to \infty$. Our main result is that, in
the limit as $N\to\infty$, on each hierarchical scale $k\in\N_0$, the $k$-macro-colony  
averages of the $C_N^{\uc,\uL}$-process at the macroscopic time scale $N^k$
(= the volume of the $k$-macrocolony) converge to a random process that is a 
superposition of a $C^{\Lambda_k}$-process and a Fleming-Viot process, the latter 
with a volatility $d_k$ and with a drift of strength $c_k$ towards the limiting  
$(k+1)$-macro-colony average. It turns out that $d_k$ is a function of $c_l$ and 
$\Lambda_l$  for all $0\leq l <k$. Thus, it is through the volatility that the renormalisation
manifests itself. We investigate how $d_k$ scales as $k\to\infty$, which requires 
an analysis of compositions of certain M\"obius-transformations, and leads to four 
different \emph{regimes}.

We discuss the implications of the scaling of $d_k$ for the behaviour on large 
space-time scales of the $C_N^{\uc,\uL}$-process. We compare the outcome 
with what is known from the renormalisation analysis of  hierarchically interacting
Fleming-Viot diffusions, pointing out several new features. In particular, we
obtain a \emph{new classification} for when the process exhibits
\emph{clustering}  (= develops spatially expanding mono-type regions),
respectively, exhibits \emph{local coexistence} (= allows for different types to
live next to each other with positive probability). Here, the simple dichotomy
of recurrent  versus transient migration for hierarchically interacting
Fleming-Viot diffusions, namely, $\sum_{k\in\N_0} (1/c_k) = \infty$ versus
$<\infty$, is  replaced by a dichotomy that expresses a trade-off between
migration and  reshuffling-resampling, namely, $\sum_{k\in\N_0} (1/c_k)
\sum_{l=0}^k \Lambda_l ([0,1])=\infty$ versus $<\infty$. Thus, while recurrent
migrations still only  give rise to clustering, there now are transient migrations 
that do the same when the non-local resampling is strong enough, namely,
$\sum_{l\in\N_0} \Lambda_l ([0,1])=\infty$. Moreover, in the clustering regime
we find a richer scenario  for the \emph{cluster formation} than for
Fleming-Viot diffusions. In the local-coexistence regime, on the other hand, we
find that the types initially present only survive with a positive probability,
not with probability one as for Fleming-Viot diffusions. Finally, we show that
for finite $N$ the same dichotomy between clustering and local coexistence
holds as for $N\to\infty$, even though we lack proper control on the cluster
formation, respectively, on the distribution of the types that survive. 
}
\end{abstract}

\maketitle

{\small
\tableofcontents
}

\section{Introduction and main results}
\label{s.intro}

\subsection{Outline}
\label{s.introoutline}

Section~\ref{ss.motive} provides the background for the paper. Section~\ref{ss.cannings} 
defines the single-colony and the multi-colony $C^\Lambda$-process, as well as the 
so-called McKean-Vlasov $C^\Lambda$-process, a single-colony $C^\Lambda$-process with 
immigration and emigration from and to a cemetery state arising in the context of 
the scaling limit of the multi-colony $C^\Lambda$-process with mean-field interaction.
Section~\ref{ss.exthier} defines a {\em new} process, the $C_N^{\uc,\uL}$-process, where
the countably many colonies are labelled by the hierarchical group $\Omega_N$ of order 
$N$, and the migration and the reshuffling-resampling on successive hierarchical space-time
scales are governed by a sequence $\uc=(c_k)_{k\in\N_0}$ of migration coefficients and 
a sequence $\uL=(\Lambda_k)_{k\in\N_0}$ of resampling measures. Section~\ref{ss.mainth} 
introduces {\em multiple space-time scales} and a collection of {\em renormalised 
systems}. It is shown that, in the hierarchical mean-field limit $N\to\infty$, the 
block averages of the $C_N^{\uc,\uL}$-process on hierarchical space-time scale $k$ 
converge to a McKean-Vlasov process that is a superposition of a single-colony 
$C^{\Lambda_k}$-process and a single-colony Fleming-Viot process with a volatility
$d_k$ that is a function of $c_l$ and $\Lambda_l$ for all $0\leq l<k$, and a drift 
of strength $c_k$ towards the limiting $(k+1)$-st block average. The scaling 
of $d_k$ as $k\to\infty$ turns out to have several \emph{universality classes}. 
The implications of this scaling for the behaviour of the $C_N^{\uc,\uL}$-process 
on large space-time scales is discussed in detail, and the outcome is compared 
with what is known for hierarchically interacting Fleming-Viot diffusions.

A key feature of the $C_N^{\uc,\uL}$-process is that it has a spatial $\uL$-coalescent
with block migration and multi-scale non-local coalescence as a dual process. This duality, 
which is of intrinsic  interest, and the properties of the dual process are worked out in 
Section~\ref{s.spatcoal}. The proofs of the main theorems are given in 
Sections~\ref{s.wpmp}--\ref{s.prcor}. To help the reader, a list of the main symbols 
used in the paper is added in Section~\ref{s.notation}.

\subsection{Background}
\label{ss.motive}

\subsubsection{Population dynamics}
\label{sss.popdyn}

For the description of spatial populations subject to migration and to neutral 
stochastic evolution (i.e., resampling without selection, mutation or recombination), 
it is common to use variants of interacting Fleming-Viot diffusions (Dawson~\cite{D93},  
Donnelly and Kurtz~\cite{DK99}, Etheridge~\cite{E00,E11}). These are processes taking 
values in $\CP(E)^I$, where $I$ is a countable Abelian group playing the role of a 
\emph{geographic space} labelling the colonies of the population (e.g.\ $\Z^d$, the
$d$-dimensional integer lattice, or $\Omega_N$, the hierarchical group  of order
$N$), $E$ is a compact Polish space playing the role of a \emph{type space} encoding 
the possible types of the individuals living in these colonies (e.g., $[0,1]$), and 
$\CP(E)$ is the set of probability measures on $E$. An element in $\CP(E)^I$ specifies 
the frequencies of the types in each of the  colonies in $I$.

Let us first consider the (locally finite) populations of individuals from which
the above processes arise as continuum-mass limits. Assume that the individuals 
\emph{migrate} between the colonies according to independent continuous-time random 
walks on $I$. Inside each colony, the evolution is driven by a change of generation 
called \emph{resampling}. Resampling, in its simplest form (Moran model), means that
after exponential waiting times a pair of individuals  (``the parents'') is replaced 
by a new pair of individuals (``the children''), who randomly and independently adopt 
the type of one of the parents. The process of type \emph{frequencies} in each of 
the colonies as a result of the migration and the resampling is a jump process taking 
values in $\CP(E)^I$.

If we pass to the {\em continuum-mass limit} of the frequencies by letting the number 
of individuals per colony tend to infinity, then we obtain a system of \emph{interacting 
Fleming-Viot diffusions} (Dawson, Greven and Vaillancourt~\cite{DGV95}). By picking 
different resampling mechanisms, occurring at a rate that depends on the state of the 
colony, we obtain variants of interacting Fleming-Viot diffusions with a state-dependent 
resampling rate~\cite{DM95}. In this context, key questions are: To what extent does 
the behaviour on large space-time scales depend on the precise form of the resampling 
mechanism? In particular, to what extent is this behaviour \emph{universal}? For 
Fleming-Viot models and a small class of state- and type-dependent Fleming-Viot 
models, this question has been answered in \cite{DGV95}.

If we consider resampling mechanisms where, instead of a pair of individuals,  a
positive fraction of the local population is replaced (an idea due to
Cannings~\cite{C74,C75}), then we enter the world of {\em jump processes}. In
this paper, we will focus on jump processes that are parametrised by a measure
$\Lambda$ on $[0,1]$ that models the random proportion of offspring in the
population generated by a single individual in a resampling event. It has been
argued by many authors that such jump processes are suitable for describing
situations with \emph{little biodiversity}. For instance, the jumps may account
for selective sweeps, or for extreme reproduction events (occurring on smaller
time scales and in a random manner, so that an effectively neutral evolution
results), such as those observed in certain marine organisms, e.g., {\em Atlantic 
cod} or {\em Pacific oyster} (Eldon and Wakeley~\cite{EW06}). It is argued in 
Der, Epstein and Plotkin~\cite{DEP11} that mixtures of diffusive dynamics and 
Cannings dynamics provide a better fit to generation-by-generation empirical data 
from {\em Drosophila} populations. Birkner and Blath~\cite{BB08,BB09}  
treat the issue of statistical inference on the genealogies corresponding to a 
one-parameter family of Cannings dynamics. None of these models includes 
the effect of geography.

Our goal is to describe the effect of jumps in a \emph{spatial} setting with a 
volatile reproduction. To that end, we add two ingredients: (1) a geographic 
space with a migration mechanism; (2) a spatially structured reproduction 
mechanism. As a result, we obtain a system of {\em interacting Cannings 
processes}. 

As geographic space, we choose a hierarchically structured lattice: the
hierarchical group, i.e., we study a system of hierarchically interacting
Cannings processes. The interaction is chosen in such a way that the
geographic space mimics the {\em two}-dimensional Euclidean space, 
with the migration of individuals given by independent random walks.

On top of migration and single-colony resampling, we add \emph{multi-colony 
resampling} by carrying out a Cannings-type resampling in all blocks
simultaneously, combined with a \emph{reshuffling} of the individuals inside
the block before the resampling is done. This is a {\em first attempt} to account
for the fact that the volatility the Cannings model tries to capture results 
from {\em catastrophic events} on a smaller time scale (with a geographic 
structure). In this view, the reshuffling mimics the fact that in reproduction 
the local geographic interaction typically takes place on a smaller time scale, 
in a random manner, and effectively results in a Cannings jump and in 
a complete geographic redistribution of individuals during a single observation 
time. To carry out this idea fully, the mechanism should actually be modelled 
by specifying a random environment. In this work, however, we concentrate 
on the case of spatially homogeneous parameters. The case of spatially 
inhomogeneous parameters (modelled via a random environment) is left 
for future work. On a technical level, we will see that in our model the 
reshuffling substantially simplifies the analysis. 

The idea to give reproduction a non-local geographic structure, in particular,
in two dimensions, was exploited by Barton, Etheridge and V\'eber~\cite{BEV10}
and by Berestycki, Etheridge and V\'eber~\cite{BEV13} also\footnote{In the
literature, there is an alternative terminology -- ``generalised
$\Lambda$-Fleming-Viot process" or ``jump-type Fleming-Viot process" -- which
refers to the continuum-mass limit of the original discrete individual-based 
Cannings model. In this paper, we stick to the name ``Cannings process" 
also for the continuum-mass limit.}. There, the process lives on the
torus of sidelength $L$ and is constructed via its dual, and it is shown that a
limiting process on $\R^2$ exists as $L \to \infty$. In \cite{BEV10,BEV13}, it is
assumed that  the individual lineages are compound Poisson processes.
Freeman~\cite{Fpr} considers a particular case of the spatially structured
Cannings model with a continuum self-similar geographic space, where
\textit{all} individuals in a block are updated upon resampling. The latter
set-up does not require compensation for small jumps and allows for their
accumulation.

\subsubsection{Renormalisation}
\label{sss.renorm}

A key approach to understand universality in the behaviour of interacting systems 
has been a \emph{renormalisation analysis of block averages on successive space-time 
scales} combined with a \emph{hierarchical mean-field limit}. In this setting, one 
replaces $I$ by the hierarchical group $\Omega_N$ of order $N$ and passes to the 
limit $N\to\infty$ (``the hierarchical mean-field limit'')\footnote{Actually, this set-up 
provides an {\em approximation} for the geographic space $I=\Z^2$, on which simple 
random walk migration is critically recurrent (Dawson, Gorostiza and Wakolbinger~\cite{DGW}). 
We will comment on this issue in Section~\ref{sss.mighg}.}.
With the limiting dynamics obtained through the hierarchical mean-field limit 
one associates a (nonlinear) \emph{renormalisation transformation} $\CF_c$ (which
depends on the migration rate $c$), acting on the resampling rate function $g$ 
driving the diffusion in single colonies. One studies the {\em orbit} $(\CF^{[k]}
(g))_{k\in\N}$, with $\CF^{[k]} = \CF_{c_{k-1}} \circ \cdots \circ \CF_{c_0}$, 
characterising the behaviour of the system on an increasing sequence of space-time 
scales, where $(c_k)_{k \in \N}$ represents the sequence of migration coefficients, 
with the index $k$ labelling the hierarchical distance. The \emph{universality classes} 
of the system are associated with the fixed points (or the fixed  shapes) of $\CF_c$, 
i.e., $g$ with $\CF_c(g) = ag$ with $a=1$ (or $a=a(c) \in (0,\infty)$).

The above \emph{renormalisation program} was developed for various choices of
the single-colony  state space. Each such choice gives rise to a different
universality class with specific features  for the large space-time behaviour.
For the \emph{stochastic part} of the renormalisation program (i.e., the derivation
of the limiting renormalised dynamics), see Dawson and Greven~\cite{DG93a},
\cite{DG93b}, \cite{DG93c}, \cite{DG96}, \cite{DG99}, \cite{DG03}, Dawson,
Greven and Vaillancourt~\cite{DGV95}, and Cox, Dawson and Greven~\cite{CDG04}.
For the \emph{analytic part} (i.e., the study of the renormalisation map
$\CF$), see Baillon, Cl\'ement, Greven and den Hollander~\cite{BCGH95},
\cite{BCGH97}, den Hollander and Swart~\cite{HS98}, and Dawson, Greven, den 
Hollander, Sun and Swart~\cite{DGHSS08}.

So far, two important classes of single-colony processes could not be treated:
{\em Anderson  diffusions} \cite{GH07} and {\em jump processes}. In the present 
paper, we focus on the second class, in particular, on so-called $C^\Lambda$-processes. 
In all previously treated models, the renormalisation transformation was a map 
$\CF_c$ acting on the set $M(E)$ of measurable functions on $E$, 
the single-component state space, while the function $g$ was a branching rate, 
a resampling rate or other, defining a diffusion function $x \mapsto xg(x)$ on 
$[0,\infty)$ or $x \mapsto x(1-x)g(x)$ on $[0,1]$, etc. In the present paper, 
however, we deal with jump processes that are characterised by a sequence of 
finite measures $\uL=(\Lambda_k)_{k\in\N_0}$ on $[0,1]$, and we obtain a 
renormalisation map $\CF_c$ acting on a pair $(g,\uL)$, where $g \in M(E)$ 
characterises diffusive behaviour and $\uL$ characterises resampling behaviour. 
It turns out that the orbit of this map is of the form  
\be{ag0}
(d_kg^*,(\Lambda_l)_{l \geq k})_{k\in\N_0},  
\ee
where $g^* \equiv 1$ and $d_k$ depends on $d_{k-1}$, $c_{k-1}$ and the total mass 
of $\Lambda_{k-1}$. Here, as before, $\uc=(c_k)_{k\in\N_0}$ is the sequence of 
migration coefficients. The reason behind this reduction is that our single-colony 
process is a superposition of a $C^\Lambda$-process and a Fleming-Viot process with 
\emph{state-independent} resampling rates and that both these processes renormalise 
to a multiple of the latter. It turns out that $d_k$ can be expressed in terms of
compositions of certain {\em M\"obius-transformations} with parameters changing 
from composition to composition. It is through these compositions that the 
\emph{renormalisation} manifests itself.

If the single-colony process would be a superposition of a $C^\Lambda$-process
and a  Fleming-Viot process with \emph{state-dependent} resampling rate, i.e.,
$g$ would {\em not} be a constant but a function of the state, then the renormalisation
transformation would be much more complicated. It remains a challenge to deal with 
this generalisation.

\subsection{The Cannings model}
\label{ss.cannings}

The $\Lambda$-Cannings model involves a finite non-negative measure 
$\Lambda\in\CM_f([0,1])$. Below, we often assume that
\be{ag1}
\Lambda(\{0\})=0
\ee
and $\Lambda$ satisfying the so-called \emph{dust-free condition}
\be{ag2}
\int_{(0,1]} \frac{\Lambda(\dd r)}{r} = \infty.
\ee
Condition \eqref{ag1} excludes the well-studied case of \textit{Fleming-Viot
diffusions}. In this paper, we are primarily interested in the new effects
brought by the \textit{pure jump} case in the $\Lambda$-Cannings model. These
effects were not studied using renormalisation techniques previously. Besides
the pure jump case, later on, we allow for superpositions of Fleming-Viot
diffusion and pure-jump $\Lambda$-Cannings models (cf.\ Sections~\ref{sss.MV} and
\ref{sss.hierarCan}). Condition~\eqref{ag2} excludes cases where the jump sizes
do not accumulate. Moreover, this condition is needed to have well-defined
proportions of the different types in the population in the infinite-population
limit (Pitman~\cite[Theorem~8]{P99}), and also to be able to define a
genealogical tree for the population (Greven, Pfaffelhuber and
Winter~\cite{GPW09})\footnote{Condition~\eqref{ag2} is relevant for some of the
questions addressed in this paper, though not for all. We comment on this issue
we go along. Another line of research would be to work with the most general
Cannings models that allow for simultaneous multiple resampling events. We do
not pursue such a generalisation here.}.

In Sections~\ref{sss.scolonycannings}--\ref{sss.MV}, we build up the Cannings model
in three steps: single-colony $C^\Lambda$-process, multi-colony $C^\Lambda$-process,
and $C^\Lambda$-process with immigration-emigration (McKean-Vlasov limit).

\subsubsection{Single-colony $C^\Lambda$-process}
\label{sss.scolonycannings}

We recall the definition of the $\Lambda$-Cannings model in its simplest form.
This model describes the evolution of allelic types of finitely many individuals
living in a single colony. Let $M \in \N$ be the number of individuals, and let
$E$ be a compact Polish space encoding the types (a typical choice is $E=[0,1]$). The
evolution of the population, whose state space is $E^M$, is as follows.  
\begin{itemize}
\item
The number of individuals stays fixed at $M$ during the evolution.
\item
Initially, i.i.d.\ types are assigned to the individuals according to a
given distribution
\be{ag5}
\theta \in \CP(E).
\ee
\item
Let $\Lambda^*\in\CM([0,1])$ be the $\sigma$-finite non-negative measure defined as
\be{ag3}
\Lambda^*(\{0\}) = 0, \qquad \Lambda^*(\dd r) = \frac{ \Lambda(\dd r)}{r^2}, \quad r \in (0,1].
\ee
Consider an inhomogeneous Poisson point process on $[0,\infty) \times [0,1]$ with
intensity measure
\be{ag4}
\dd t \otimes \Lambda^*(\dd r).
\ee
For each point $(t,r)$ in this process, we carry out the following transition at
time $t$. {\em Mark} each of the $M$ individuals independently with a $1$ or $0$ 
with probability $r$, respectively, $1-r$. \emph{All} individuals marked by a $1$ 
are killed and are replaced by copies of a \emph{single} individual (= ``parent") 
that is uniformly chosen at random among all the individuals marked by a $1$ (see 
Fig.~\ref{fig-canevol}).
\end{itemize}
In this way, we obtain a pure-jump Markov process, which is called the
\emph{$\Lambda$-Cannings model} with measure $\Lambda$ and population size $M$.

\begin{figure}[htbp]
\centering
\includegraphics[width=0.9\textwidth]{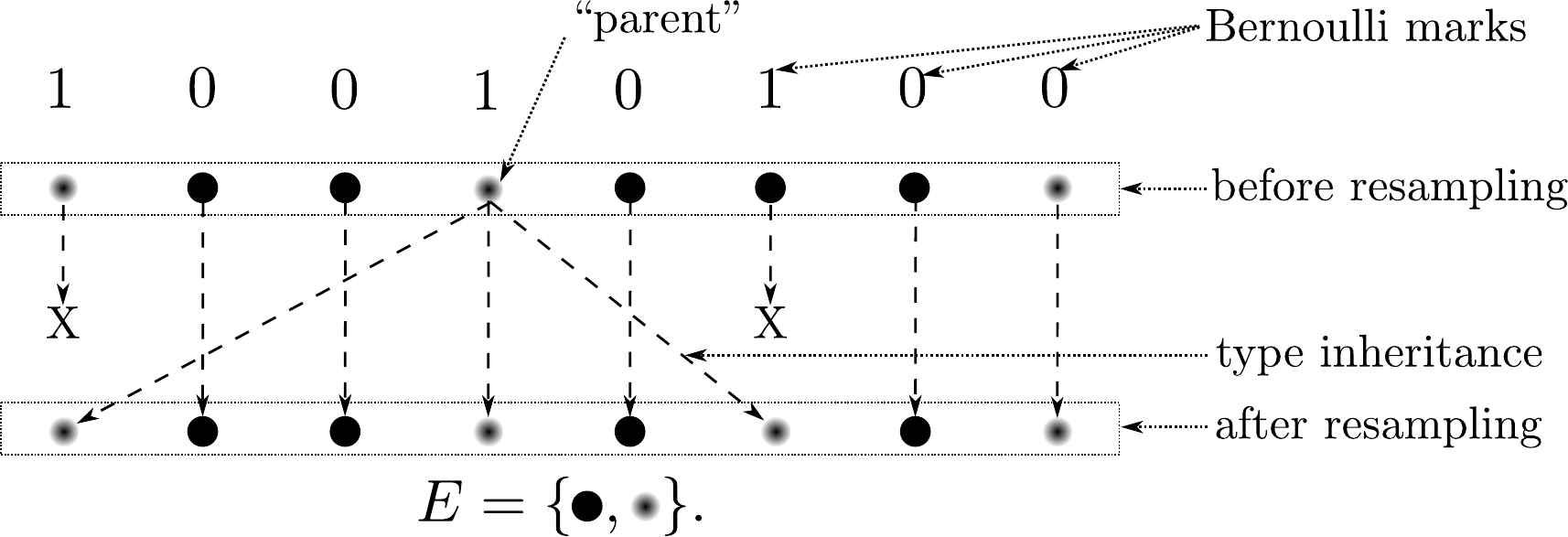}
\caption{Cannings resampling event in a colony of $M=8$ individuals of 
two types. Arrows indicate type inheritance, X indicates death.}
\label{fig-canevol}
\end{figure}

Note that, for a jump to occur, at least two individuals marked by a $1$ are needed. 
Hence, for finite $M$, the rate at which some pair of individuals is marked is
\be{ag11}
\int_{(0,1]} \frac{\Lambda(\dd r)}{r^2} \tfrac12 M(M-1)\,r^2
= \tfrac12 M(M-1)\,\Lambda ((0,1]) < \infty,
\ee
and so only finitely many jumps occur in any finite time interval.

By observing the frequencies of the types, i.e., the number of individuals with
a  given type divided by $M$, we obtain a measure-valued pure-jump Markov
process on  $\CP(E)$. Equip $\CP(E)$ with the topology of weak convergence of
probability measures. Letting $M\to\infty$, we obtain a limiting process
$X=(X(t))_{t \geq 0}$,  called the \emph{$C^\Lambda$-process}, which is a strong
Markov jump process with  paths in $D([0,\infty),\CP(E))$ (the set of c\`adl\`ag
paths in $\CP(E)$ endowed  with the Skorokhod $J_1$-topology) and can be
characterised as the solution of a well-posed martingale problem (Donnelly and
Kurtz~\cite{DK99}). This process has countably many jumps in any finite time
interval when $\Lambda((0,1])>0$.

Note that the limiting case $\Lambda = \delta_0$ is the Fleming-Viot diffusion 
(cf.\ Section~\ref{sss.MV}). It is well known that this limiting case is obtained 
as a scaling limit of the Moran model.

\subsubsection{Multi-colony $C^\Lambda$-process: mean-field version}
\label{sss.mcolonycannings}

Next, we consider the {\em spatial} $\Lambda$-Cannings model in its standard
mean-field version. Consider as geographic space a block of sites $\{0,\ldots,N-1\}$ 
and assign $M$ individuals to each site (= colony). The evolution of the population,
whose state space is $(E^M)^N$, is defined as the following pure-jump Markov process.
\begin{itemize}

\item
The total number of individuals stays fixed at $NM$ during the evolution.

\item
At the start, each individual is assigned a type that is drawn from $E$ according
to some prescribed exchangeable law. 

\item
Individuals \emph{migrate} between colonies at rate $c>0$, jumping according to the
uniform distribution on $\{0,\ldots,N-1\}$ (see Fig.~\ref{fig-mfcan}).

\item
Individuals \emph{resample} within each colony according to the $\Lambda$-Cannings 
model with population size corresponding to the current size of the colony.

\end{itemize}
By considering the frequencies of the types in each of the colonies, we obtain a
pure-jump Markov process taking values in $\CP(E)^N$.

\begin{figure}[htbp]
\centering
\includegraphics[width=0.9\textwidth]{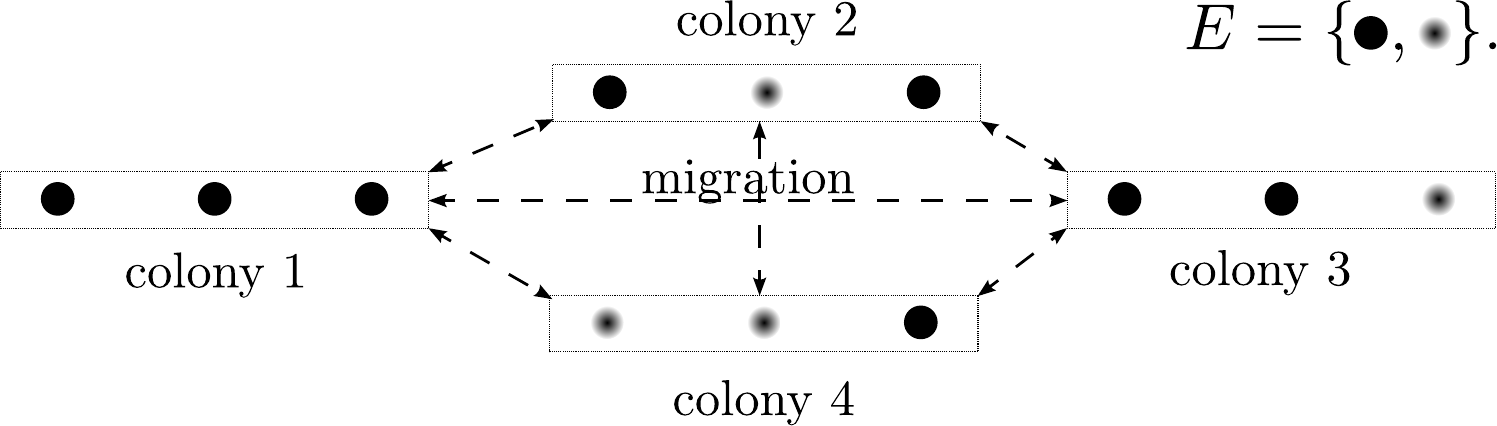}
\caption{\small Possible one-step migration paths between $N=4$ colonies with $M=3$ 
individuals of two types in the mean-field version.}
\label{fig-mfcan}
\end{figure}

Letting $M\to\infty$, we pass to the continuum-mass limit and we obtain a system of 
$N$ interacting $C^\Lambda$-processes, denoted by
\be{ag7}
X^{(N)} = \big(X^{(N)}(t)\big)_{t \geq 0} \quad \text{ with } \quad
X^{(N)}(t) = \big\{X^{(N)}_i(t)\big\}_{i=0}^{N-1}
\in \CP(E)^N.
\ee
The process $X^{(N)}$ can be characterised as the solution of a well-posed martingale
problem on $D([0,\infty),\CP(E)^N)$ with the product topology on $\CP(E)^N$. To this 
end, we have to consider an algebra $\CF \subset C_{\mathrm{b}}(\CP(E)^N,\R)$ of test 
functions, and a linear operator $L^{(N)}$ on $C_{\mathrm{b}}(\CP(E)^N,\R)$ with domain 
$\CF$, playing the role of the generator in the martingale problem. Here, we let $\CF$ 
be the algebra of functions $F$ of the form
\be{ak:test-functions}
\begin{aligned}
&F(x) = \int_{E^n} \left(\bigotimes_{m=1}^n x_{i_m}(\dd u^m)\right)
\varphi\big(u^1,\ldots,u^n\big),
\quad x=(x_0,\ldots,x_{N-1})\in\CP(E)^N,\\
&n\in\N,\,\varphi\in C_{\mathrm{b}}(E^n,\R),\,i_1,\ldots,i_n \in
\{0,\ldots,N-1\}.
\end{aligned}
\ee
The generator
\be{ag18}
L^{(N)}\colon\,\CF \to C_\mathrm{b}\big(\CP(E)^N,\R\big)
\ee
has two parts,
\be{ag20a}
L^{(N)} =  L^{(N)}_{\mathrm{mig}} +  L^{(N)}_{\mathrm{res}}.
\ee

The \emph{migration operator} is given by
\be{ak:migration-operator}
(L^{(N)}_{\mathrm{mig}}F)(x)
= \frac{c}{N} \sum_{i,j=0}^{N-1} \int_E (x_j - x_i)(\dd a)\,
\frac{\partial F(x)}{\partial x_i}[\delta_a],
\ee
where
\be{Gatder}
\frac{\partial F(x)}{\partial x_i}[\delta_a] = \lim_{h \downarrow 0} \frac{1}{h}
\,\Big[F(x_0,\ldots,x_{i-1},x_i+h\delta_a,x_{i+1},\ldots,x_{N-1})-F(x)\Big]
\ee
is the G\^ateaux-derivative of $F$ with respect to $x_i$ in the direction $\delta_a$
(this definition requires that in \eqref{ak:test-functions} we extend $\CP(E)$ to
the set of finite signed measure on $E$). Note that the total derivative in the direction
$\nu \in \CP(E)$ is the integral over $\nu$ of the expression in (\ref{Gatder}),
since $\CP(E)$ is a Choquet simplex and $F$ is continuously differentiable.

The \emph{resampling operator} is given by (cf.~the verbal description of the
single-colony $C^\Lambda$-process in Section~\ref{sss.scolonycannings})
\be{ak:one-level-generator}
\begin{aligned}
(L^{(N)}_{\mathrm{res}}F)(x) = & \sum_{i=0}^{N-1} \int_{(0,1]} 
\Lambda^*(\dd r) \int_E x_i(\dd a)\\
&
\quad \quad \times \Big[F\big(x_0,\ldots,x_{i-1},
(1-r)x_i+r\delta_a,x_{i+1},\ldots,x_{N-1}\big)- F(x)\Big].
\end{aligned}
\ee
Note that, by the law of large numbers, in the limit $M \to \infty$ the evolution 
in (\ref{ag5}--\ref{ag4}) results in the transition $x \to (1-r)x + r\delta_a$ with 
type $a$ drawn from distribution $x$. This gives rise to \eqref{ak:one-level-generator}.

\bp{P.wp}{\bf [Multi-colony martingale problem]}\\ 
Without assumption \eqref{ag2}, for every $x\in\CP(E)^N$, the martingale problem for 
$(L^{(N)},\CF,\delta_x)$ is well-posed. The unique solution is a strong Markov 
process with the Feller property. 
\ep

\noindent
The proof of Proposition~\ref{P.wp} is given in Section~\ref{ss.mppropproofs}.

\subsubsection{$C^\Lambda$-process with immigration-emigration: McKean-Vlasov limit}
\label{sss.MV}

The $N\to\infty$ limit of the $N$-colony model defined in Section~\ref{sss.mcolonycannings} 
can be described in terms of an independent and identically distributed family of 
$\CP(E)$-valued processes indexed by $\N$. Let us describe the distribution of a single 
member of this family, which can be viewed as a spatial variant of the model in 
Section~\ref{sss.scolonycannings} when we add immigration-emigration to/from a 
cemetery state, with the immigration given by a source that is constant in time. 
Such processes are of interest in their own right. They are referred to as 
\emph{McKean-Vlasov processes} for $(c,d,\Lambda,\theta)$, $c,d \in (0,\infty)$, 
$\Lambda \in \CM_f([0,1])$, $\theta \in \CP(E)$, or $C^\Lambda$-processes with 
immigration-emigration at rate $c$ with source $\theta$ and volatility constant $d$.

Let $\mathcal{F} \subseteq C_\mathrm{b}(\CP(E),\R)$ be the algebra of functions $F$ 
of the form
\be{ak:multi-level-test-functions*}
F(x) = \int_{E^n} x^{\otimes n}(\dd u)\,\varphi(u),
\qquad x \in \CP(E),\, n \in \N,\,\varphi \in C_{\mathrm{b}}(E^n,\R).
\ee
Define the second G\^ateaux-derivative of $F$ with respect to $x$ as
\be{ak-second-variation}
\frac{\partial^2 F(x)}{\partial x^2 }[\delta_u,\delta_v]
=
\frac{\partial}{\partial x} \left( \frac{\partial F(x)}{\partial x} [\delta_u] \right)[\delta_v]
,
\quad u,v \in E
.
\ee
For $c,d \in [0,\infty)$, $\Lambda\in\CM_f([0,1])$ subject to (\ref{ag1}--\ref{ag2})
and $\theta\in\CP(E)$, let $L_\theta^{c,d,\Lambda}\colon\,\mathcal{F} \to
C_\mathrm{b}(\CP(E),\R)$ be the linear operator
\be{generic-interaction-operator}
L_\theta^{c,d,\Lambda} = L^c_\theta + L^d + L^\Lambda
\ee
acting on $F\in \mathcal{F}$ as
\be{Ldefs}
\begin{aligned}
(L^c_\theta F)(x) &= c \int_E \left(\theta-x\right)(\dd a)\,
\frac{\partial F(x)}{\partial x}[\delta_a],\\
(L^dF)(x) &= d \int_E \int_E Q_x(\dd u,\dd v)\,
\frac{\partial^2 F(x)}{\partial x^2}[\delta_u,\delta_v],\\
(L^\Lambda F)(x) &= \int_{(0,1]} \Lambda^*(\dd r) \int_E x(\dd a)\,
\big[F\big((1-r)x+ r\delta_a\big)-F(x)\big],
\end{aligned}
\ee
where
\be{flemming-viot-kernel}
Q_x(\dd u,\dd v) = x(\dd u)\,\delta_u(\dd v) - x(\dd u)\,x(\dd v)
\ee
is the Fleming-Viot diffusion coefficient. The three parts of $L_\theta^{c,d,\Lambda}$ 
correspond to: a \emph{drift} towards $\theta$ of strength $c$ (immigration-emigration), 
a \emph{Fleming-Viot diffusion} with \emph{volatility} $d$ (Moran resampling), 
and a $C^\Lambda$-process with resampling measure $\Lambda$ (Cannings resampling).
This model arises as the $M \to\infty$ limit of an individual-based model with $M$ 
individuals at a single site with {\em immigration} from a constant source with type 
distribution $\theta \in \CP(E)$ and emigration to a cemetery state, both at rate $c$, 
in addition to the $\Lambda$-resampling.
 
\bp{prop:McKean-Vlasov-well-posedness}
{\bf [McKean-Vlasov martingale problem]}\\
Without assumption \eqref{ag2}, for every $x \in \CP(E)$, the martingale problem for 
$(L_\theta^{c,d,\Lambda},\mathcal{F},\delta_x)$ is well-posed. The unique solution 
is a strong Markov process with the Feller property.
\ep

\noindent
The proof of Proposition~\ref{prop:McKean-Vlasov-well-posedness} is given in 
Section~\ref{ss.mppropproofs}.

Denote by
\be{Zdef}
Z_\theta^{c,d,\Lambda} = \big(Z_\theta^{c,d,\Lambda}(t)\big)_{t\geq 0},
\quad Z^{c,d,\Lambda}_\theta (0) = \theta,
\ee
the solution of the martingale problem in
Proposition~\ref{prop:McKean-Vlasov-well-posedness}  for the special choice
$x=\theta$. This is called the \emph{McKean-Vlasov process}\footnote{The
terminology stems from the fact that this process describes the limiting
behaviour of an interacting particle system for which propagation of chaos
holds. The physics terminology is related to the fact that the system of
independent components is more random (= more chaotic) than the one with dependent
components. In our context, in the mean-field limit ($N \to \infty$), the components of the system become independent of each
other. Therefore, ``chaos propagates".} with  parameters $c,d,\Lambda$ and
initial state $\theta$.

\subsection{The hierarchical Cannings process}
\label{ss.exthier}

The model described in Section~\ref{sss.mcolonycannings} has a {\em finite} geographical 
space, an interaction that is mean-field, and a resampling of individuals at the same 
site. In this section, we introduce two new features into the model:
\begin{itemize} 
\item[(1)] 
We consider a {\em countably infinite} geographic space, namely, the hierarchical group 
$\Omega_N$ of order $N$, with a migration mechanism that is block-wise exchangeable.
\item[(2)] 
We allow resampling between individuals not only at the same site but also in blocks 
around a site, which we view as {\em macro-colonies}.
\end{itemize}
Both the migration rates and the resampling rates for macro-colonies decay as the
distance between the macro-colonies grows. Feature (1) is introduced in 
Sections~\ref{sss.hg}--\ref{sss.mighg}, feature~(2) in Section~\ref{sss.reshg}.
The hierarchical model is defined in Section~\ref{sss.hierarCan}.

\subsubsection{Hierarchical group of order $N$}
\label{sss.hg}

The {\em hierarchical group $\Omega_N$} of order $N$ is the set
\be{ag30_a}
\Omega_N = \Big\{\eta=(\eta^l)_{l\in\N_0} \in\{0,1,\ldots, N-1\}^{\N_0}
\colon\, \sum_{l\in\N_0} \eta^l < \infty\Big\}, \qquad N\in\N\backslash\{1\},
\ee
endowed with the addition operation $+$ defined by $(\eta+\zeta)^l=\eta^l+\zeta^l
\textrm{ (mod $N$)}$, $l\in\N_0$ (see Fig.~\ref{fig-hierargr} for the case $N=3$). 
In other words, $\Omega_N$ is the direct sum of the cyclical group of order $N$, 
a fact that is important for the application of Fourier analysis. The group $\Omega_N$ 
is equipped with the ultrametric distance $d(\cdot,\cdot)$ defined by
\be{ag31}
d(\eta,\zeta)=d(0,\eta-\zeta)
= \min\{k\in\N_0 \colon\, \eta^l=\zeta^l, \text{ for all } l \geq k\},
\qquad \eta,\zeta\in\Omega_N.
\ee

Let
\be{block-definition}
B_k(\eta) = \{\zeta\in\Omega_N\colon\, d(\eta,\zeta) \leq k\},
\qquad \eta\in\Omega_N,\,k\in\N_0,
\ee
denote the $k$-block around $\eta$, which we think of as a {\em macro-colony}.
The geometry of $\Omega_N$ is explained in 
Fig.~\ref{fig-hierargr}).

\begin{figure}[htbp]
\centering
\includegraphics[width=\textwidth]{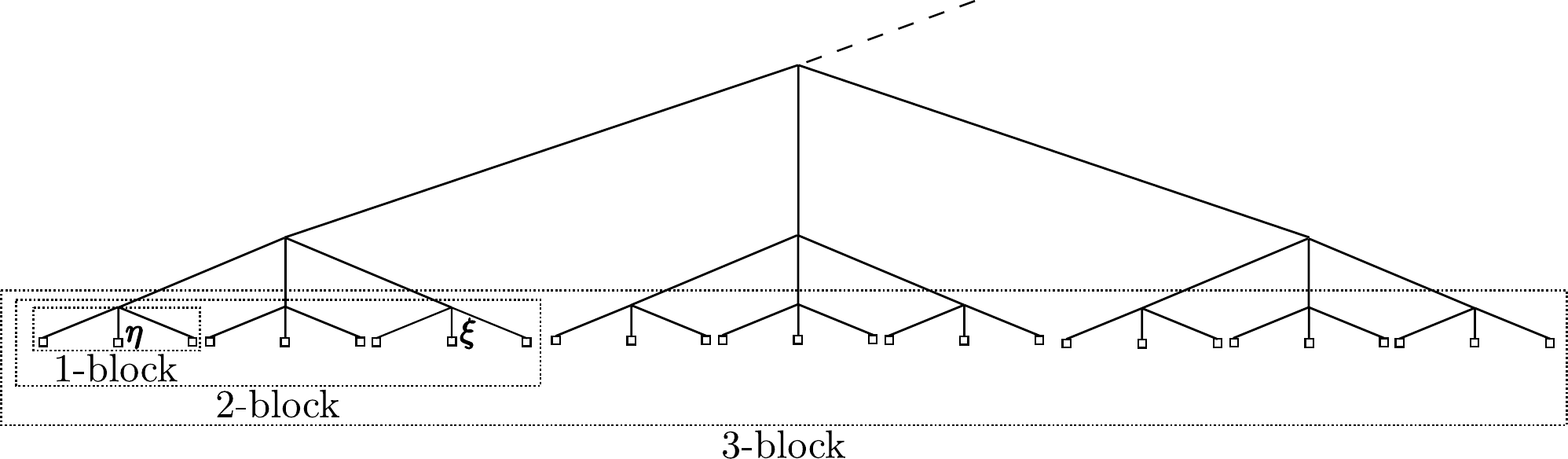}
\caption{\small Close-ups of a 1-block, a 2-block and a 3-block in the hierarchical 
group of order $N=3$. The elements of the group are the leaves of the tree ($\Box$). 
The hierarchical distance between two elements is the graph distance to the most 
recent common ancestor: $d(\xi,\eta) = 2$ for $\xi$ and $\eta$ in the picture.}
\label{fig-hierargr}
\end{figure}

We construct a process 
\be{xdefhg}
X^{(\Omega_N)} = \big(X^{(\Omega_N)}(t)\big)_{t\geq 0} \quad \text{ with } \quad
X^{(\Omega_N)}(t)=\big\{X^{(\Omega_N)}_\eta(t)\big\}_{\eta\in\Omega_N}\in\CP(E)^{\Omega_N},
\ee
by using the same evolution mechanism as for the multi-colony system in
Section~\ref{sss.mcolonycannings}, except that we replace the migration on
$\{0,\dots,N-1\}$ by a migration on $\Omega_N$, and the resampling acting in
each colony by a resampling in each of the macro-colonies. On $\CP(E)^{\Omega_N}$, 
we again choose the product of the weak topology on $\CP(E)$ as the basic topology.

\subsubsection{Block migration}
\label{sss.mighg}

We introduce migration on $\Omega_N$ through a random walk kernel. For that purpose, we 
introduce a sequence of migration rates
\be{ckdef}
\uc = (c_k)_{k\in\N_0} \in (0,\infty)^{\N_0},
\ee
and we let the individuals \emph{migrate} as follows:
\begin{itemize}
\item
Each individual, for every $k\in\N$, chooses at rate $c_{k-1}/N^{k-1}$ the block of 
radius $k$ around its present location and jumps to a location uniformly chosen at 
random in that block.
\end{itemize}
The transition kernel of the random walk that is thus performed by each individual are
\be{32b}
a^{(N)}(\eta,\zeta) = \sum_{k \geq d(\eta,\zeta)} \frac{c_{k-1}}{N^{2k-1}},
\qquad \eta,\zeta\in\Omega_N,\,\eta\neq\zeta, \qquad a^{(N)}(\eta,\eta)=0.
\ee
As shown in Dawson, Gorostiza and Wakolbinger~\cite{DGW05}, this random walk is recurrent
if and only if $\sum_{k\in\N_0} (1/c_k) = \infty$. For the special case where $c_k=c^k$, 
it is strongly recurrent for $c<1$, critically recurrent for $c=1$, and transient for
$c>1$\footnote{Loosely speaking, the behaviour is like that of simple random walk on $\Z^d$ 
with $d<2$, $d=2$ and $d>2$, respectively. More precisely, with the help of potential 
theory it is possible to associate with the random walk a dimension as a function of 
$c$ and $N$ that for $N \to \infty$ converges to 2. This shows that, in the limit as $N\to\infty$, 
the potential theory of the hierarchical random walk given by \eqref{32b} with $c=1$ is similar 
to that of simple random walk on $\Z^2$.}.

Throughout the paper, we assume that\footnote{In Section~\ref{ss.dichoXN}, we will analyse 
the case $N<\infty$, where \eqref{ak:recurrence-cond} must be replaced by $\limsup_{k\to\infty} 
\tfrac{1}{k} \log c_k < \log N$.}
\be{ak:recurrence-cond}
\limsup_{k\to\infty} \tfrac{1}{k} \log c_k < \infty.
\ee
This guarantees that the total migration rate per individual is bounded (at least for 
sufficiently large $N$).

\subsubsection{Block reshuffling-resampling}
\label{sss.reshg}

As we saw in Section~\ref{ss.cannings}, the idea of the Cannings model is to allow
reproduction with an offspring that is of a size comparable to the whole population.
Since we have introduced a spatial structure, we now allow, on all hierarchical 
levels $k$ simultaneously, a reproduction event where each individual treats the 
$k$-block around its present location as a \emph{macro-colony} and uses it for its 
resampling. More precisely, we choose a sequence of finite non-negative resampling 
measures
\be{ag41}
\uL = \big(\Lambda_k)_{k\in\N_0} \in \CM_f([0,1])^{\N_0},
\ee
each subject to \eqref{ag1}. Assume in addition that
\begin{equation}
\label{ak1000}
\int_{(0,1]} \Lambda^*_k(\dd r)  < \infty, \qquad k \in \N, 
\end{equation}
and that $\Lambda_0$ satisfies \eqref{ag2}. The condition in (\ref{ak1000}) is needed to 
guarantee that in finite time a colony is affected by finitely many reshuffling-resampling 
events only, since otherwise this transition cannot be defined (see Remark~\ref{rem:r-expansion} 
at the end of Section \ref{ss.exthier}). The condition in (\ref{ag2}) guarantees that the population
has a well-defined genealogy and most of the population at a site goes back to a finite number
of ancestors after a positive finite time. 

Set
\be{lambda-total-masses}
\lambda_k = \Lambda_k([0,1]), \qquad \lambda^*_k = \Lambda^*_k([0,1]), \qquad k\in\N_0.
\ee
We let individuals \emph{reshuffle-resample} by carrying out the following two steps 
at once (the formal definition requires the use of a suitable Poisson point process: 
cf.~(\ref{ag3}--\ref{ag4}) and \eqref{ag22b}):
\begin{itemize}
\item
For every $\eta\in\Omega_N$ and $k\in\N_0$, choose the block $B_k(\eta)$ at rate
$1/N^{2k}$.
\item
Each individual in $B_k(\eta)$ is first moved to a uniformly chosen random location 
in $B_k(\eta)$, i.e., a reshuffling takes place (see Fig.~\ref{fig-reshuffle}). 
After that, $r$ is drawn according to the intensity measure $\Lambda^*_k$ (recall 
\eqref{ag3}), and with probability $r$ each of the individuals in $B_k(\eta)$ is 
replaced by an individual of type $a$, with $a$ drawn according to the type distribution 
in $B_k(\eta)$, i.e.,
\be{def-blocks}
y_{\eta,k} \equiv N^{-k} \sum_{\zeta\in B_k(\eta)} x_\zeta.
\ee
\end{itemize}
Note that the reshuffling-resampling affects all the individuals in a macro-colony 
simultaneously and in the same manner. The reshuffling-resampling occurs at 
all levels $k\in\N_0$, at a rate that is fastest in single colonies and gets slower 
as the level $k$ of the macro-colony increases.\footnote{Because the reshuffling 
is done first, the resampling always acts on a uniformly distributed state (``panmictic 
resampling''). }

\begin{figure}[htbp]
\label{fig:reshuffling} 
\centering
\includegraphics[width=\textwidth]{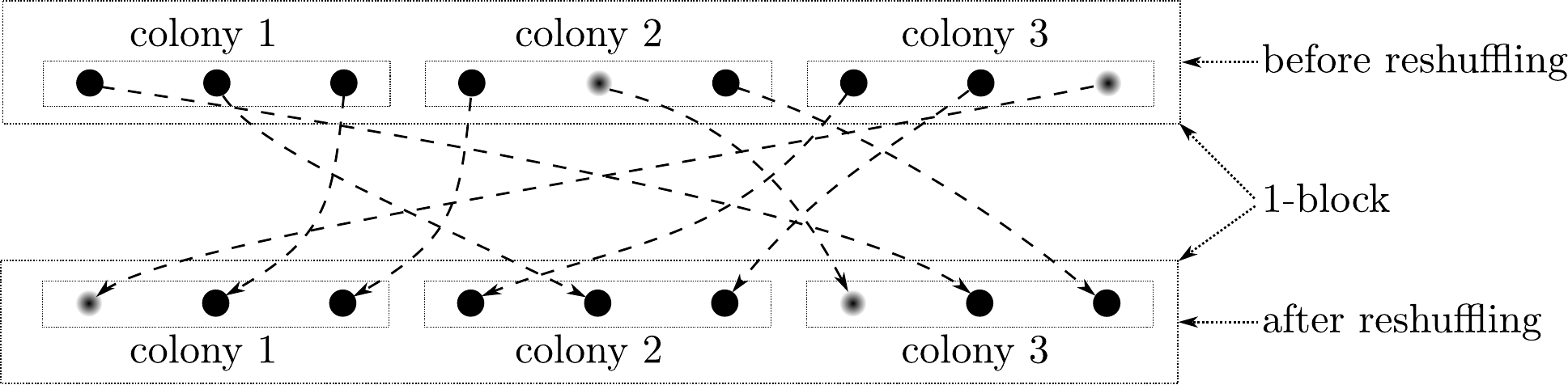} 
\caption{\small
Random reshuffling in a 1-block on the hierarchical lattice of order $N=3$
with $M=3$ individuals of two types per colony.}
\label{fig-reshuffle}
\end{figure}

Throughout the paper, we assume that $\ul^*=(\lambda^*_k)_{k\in\N_0}$ (recall the 
definition of $\lambda^*_k$ from \eqref{lambda-total-masses}) satisfies\footnote{In 
Section~\ref{ss.dichoXN}, we will analyse the case $N<\infty$, where
\eqref{ak:lambda-growth-condition} must be replaced by $\limsup_{k\to\infty} 
\tfrac{1}{k}\,\log \lambda^*_k < \log N$.}
\be{ak:lambda-growth-condition}
\limsup_{k\to\infty} \tfrac{1}{k}\,\log \lambda^*_k < \infty.
\ee
Note that each of the $N^k$ colonies in a $k$-block can trigger reshuffling-resampling 
in that block, and for each colony the block is chosen at rate $N^{-2k}$. Therefore
\eqref{ak:lambda-growth-condition} guarantees that the total resampling rate per 
individual is bounded.

In the continuum-mass limit, the reshuffling-resampling operation, when it acts on 
the states in the colonies, takes the form
\be{ag12}
{ x_\zeta } \text{ is replaced by }
(1-r)y_{\eta,k}+r\delta_a \text{ for all } \zeta\in B_k(\eta)
\ee
with $a \in E$ drawn from $y_{\eta,k}$ (the type distribution in $B_k(\eta)$ (cf.\ \eqref{def-blocks}). 
Note that in the mean-field case and in the single-colony case of Section~\ref{sss.scolonycannings}, 
$a \in E$ is drawn from $x_\zeta$ (cf.\ \eqref{ak:one-level-generator} and the comment following 
it)\footnote{Reshuffling is a parallel update affecting all individuals in a macro-colony simultaneously. 
Therefore it cannot be seen as a migration of individuals equipped with independent clocks.}.

\subsubsection{Hierarchical Cannings process}
\label{sss.hierarCan}

We are now ready to formally define our system of \emph{hierarchically interacting 
$C^\Lambda$-processes} in terms of a martingale problem. This is the continuum-mass 
limit ($M \rightarrow \infty$) of the individual-based model that we described in
Sections~\ref{sss.hg}--\ref{sss.reshg}. Recall that so far we have considered block 
migration and non-local reshuffling-resampling on the hierarchical group of fixed order 
$N$, starting with $M$ individuals at each site. 

We equip the set $\CP(E)^{\Omega_N}$ with the product topology to get a state space 
that is Polish. Let $\mathcal{F} \subset C_\mathrm{b}\big(\CP(E)^{\Omega_N},\R\big)$ 
be the algebra of functions of the form
\be{ak:multi-level-test-functions}
\begin{aligned}
&F(x) = \int_{E^n} \left(\bigotimes_{m=1}^n x_{\eta_m}\big(\dd u^m\big)\right)
\varphi\big(u^1,\ldots,u^n\big),
\quad x = (x_{\eta})_{\eta\in\Omega_N}\in\CP(E)^{\Omega_N},\\
&n \in \N, \quad \varphi \in C_{\mathrm{b}}(E^n,\R), 
\quad \eta_1,\ldots,\eta_n \in \Omega_N.
\end{aligned}
\ee
The linear operator for the martingale problem
\be{ak:multi-level-generator}
L^{(\Omega_N)}\colon\,\mathcal{F} \to C_\mathrm{b}\big(\CP(E)^{\Omega_N},\R\big)
\ee
again has two parts,
\be{ak:multi-level-generator-decomposition}
L^{(\Omega_N)}=L^{(\Omega_N)}_{\mathrm{mig}}+L^{(\Omega_N)}_{\mathrm{res}}.
\ee
The \emph{migration operator} is given by
\be{ak:multi-level-migration}
(L^{(\Omega_N)}_{\mathrm{mig}} F)(x)
= \sum_{\eta,\zeta \in \Omega_N} a^{(N)}(\eta,\zeta)
\int_E (x_{\zeta} - x_{\eta})(\dd a)\,\frac{\partial F(x)}{\partial x_\eta}[\delta_a]
\ee
and the \emph{reshuffling-resampling operator} by
\be{ak:multi-level-resampling-global}
\begin{aligned}
(L^{(\Omega_N)}_{\mathrm{res}} F)(x)
= \sum_{\eta \in \Omega_N} 
&
\left(
(L^{d_0}_{\eta} F)(x)
+
\int_{(0,1]} \Lambda^*_0 (\dd r) \int_E x_{\eta}(\dd a)
\left[F\left(\Phi_{r,a,\{\eta\}}(x)\right)-F(x)\right]
\right.
\\
&
\left.
+
\sum_{k\in\N} N^{-2k}
\int_{(0,1]} \Lambda^*_k (\dd r) \int_E y_{\eta,k}(\dd a)
\left[F\left(\Phi_{r,a,B_k(\eta)}(x)\right)-F(x)\right]
\right)
,
\end{aligned}
\ee
where $\Phi_{r,a,B_k(\eta)}\colon\,\CP(E)^{\Omega_N}\to\CP(E)^{\Omega_N}$ is the
\emph{reshuffling-resampling map} acting as
\be{ak:resampling-mapping-global}
\Big[\big(\Phi_{r,a,B_k(\eta)}\big)(x)\Big]_\zeta =
\begin{cases}
(1-r) y_{\eta,k} + r \delta_a, &\zeta \in B_k(\eta),\\
x_{\zeta}, &\zeta \notin B_k(\eta),
\end{cases}
\ee
where $r \in [0,1]$, $a \in E$, $k\in\N_0$, $\eta\in\Omega_N$, and
$L^{d_0}_{\eta}$ is the Fleming-Viot diffusion operator with volatility $d_0$
(see~\eqref{Ldefs}) acting on the colony $x_\eta$ with
\be{ak:d-0}
d_0  \geq 0.
\ee

\begin{remark}
\label{rem:r-expansion}
{\rm (1)}
If $d_0 = 0$, then the operator in \eqref{ak:multi-level-resampling-global} is pure-jump.\\
{\rm (2)}
The right-hand side of \eqref{ak:multi-level-resampling-global} is well-defined because 
of \eqref{ak1000}. Indeed, by Taylor-expanding the inner integral in 
\eqref{ak:multi-level-resampling-global} in powers of $r$, we get
\begin{align}
\label{ak:9999}
\int_E y_{\eta,k}(\dd a)
\left[F\left(\Phi_{r,a,B_k(\eta)}(x)\right)-F(x)\right]
=  F(y_{\eta,k})-F(x)+O(r^2),
\quad \text{as } r \downarrow 0.
\end{align}
To have a well-defined resampling operator \eqref{ak:multi-level-resampling-global}, 
the expression in \eqref{ak:9999} must be integrable with respect to $\Lambda^*_k (\dd r)$, 
which is equivalent to assumption~\eqref{ak1000}.
\end{remark}

\begin{proposition}
\label{P.vecLambda}
{\bf [Hierarchical martingale problem]}\\
Without assumption \eqref{ag2}, for every $\Theta \in \CP(E)^{\Omega_N}$, the 
martingale problem for $(L^{(\Omega_N)},\CF,\delta_{\Theta})$ is well-posed\footnote{As 
a part of the definition of the martingale problem, we always require that the solution has 
c\`adl\`ag paths and is adapted to a natural filtration.}. 
The unique solution is a strong Markov process with the Feller property. 
\end{proposition}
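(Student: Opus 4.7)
The plan is to prove existence by a double-truncation approximation and uniqueness by duality with the multi-scale spatial coalescent, and then to read off the strong Markov and Feller properties. The main obstacle is to show that the combined infinite geographic space and the accumulation of small reshuffling-resampling jumps do not spoil either step; in particular, the order-$r^2$ compensation at level $0$ must survive the passage to the infinite-volume limit.

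For existence, I fix $\varepsilon>0$ and $K\in\N$ and consider the modified generator $L^{(\Omega_N),\varepsilon,K}$ obtained from $L^{(\Omega_N)}$ by (i) restricting each resampling intensity $\Lambda_k^\ast$ to $[\varepsilon,1]$, and (ii) switching off the migration and the reshuffling-resampling on blocks $B_k(\eta)$ with $k>K$. The truncated dynamics is a pure-jump interacting process, augmented at each site by the Fleming-Viot noise when $d_0>0$, with jump rates per coordinate uniformly bounded in $\eta$. Its martingale problem is well-posed by the standard arguments for countable interacting systems used in the proof of Proposition~\ref{P.wp} and Proposition~\ref{prop:McKean-Vlasov-well-posedness}; call the unique solution $X^{\varepsilon,K}$.

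I would then let $K\to\infty$ with $\varepsilon>0$ fixed. Tightness of $(X^{\varepsilon,K})_K$ in $D([0,\infty),\CP(E)^{\Omega_N})$ follows coordinate-wise by the Aldous-Rebolledo criterion: the per-site total migration rate $\sum_{k\in\N} c_{k-1}/N^{k-1}$ and the per-site total reshuffling-resampling rate $\sum_{k\in\N_0} \lambda_k^\ast/N^{2k}$ are both finite by \eqref{ak:recurrence-cond} and \eqref{ak:lambda-growth-condition}, and the contribution of levels $k>K$ to any fixed $F\in\mathcal{F}$ tends to zero as $K\to\infty$, so every limit point solves the martingale problem for the $\varepsilon$-truncated generator. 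Finally I would send $\varepsilon\downarrow 0$ using the Taylor expansion \eqref{ak:9999}: at levels $k\in\N$ the integrand is $\Lambda_k^\ast$-integrable by \eqref{ak1000}, and at level $0$ the quadratic remainder is controlled by $\int_{(0,1]} r^2\,\Lambda_0^\ast(\dd r)=\lambda_0<\infty$, which is automatic since $\Lambda_0\in\CM_f([0,1])$ and does not require the dust-free condition \eqref{ag2}. This yields convergence of the compensated generators on $\mathcal{F}$ and produces a solution to the full martingale problem for $L^{(\Omega_N)}$.

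For uniqueness I would invoke duality with the spatial coalescent with block migration and multi-scale non-local coalescence developed in Section~\ref{s.spatcoal}. The test functions $F\in\mathcal{F}$ of \eqref{ak:multi-level-test-functions} are monomials in finitely many coordinates, and the action of $L^{(\Omega_N)}$ on such $F$ is conjugate to the generator of a continuous-time Markov chain on spatially labelled partitions whose total rate on any finite number of labels is finite: block migration of the labels, a Kingman coalescence driven by $d_0$, a $\Lambda_0$-coalescence at each site, and multi-scale coalescences driven by $(\Lambda_k)_{k\ge 1}$ with the geometric damping $N^{-2k}$. The resulting duality identity determines $\E_x[F(X(t))]$ for every $F\in\mathcal{F}$ and every $t\ge 0$, and since $\mathcal{F}$ is measure-determining on $\CP(E)^{\Omega_N}$, all finite-dimensional distributions of any solution are fixed. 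Continuity in $x$ of these dual expectations gives the Feller property, and well-posedness of the shifted martingale problem gives the strong Markov property.
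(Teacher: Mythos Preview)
Your uniqueness and Feller arguments via duality are correct and match the paper's. For existence, however, the paper takes a genuinely different and more direct route than your double-truncation scheme.

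Rather than build solutions by approximation, the paper follows Evans~\cite[Theorem~4.1]{E97}: it first constructs the dual spatial $\uL$-coalescent $\mathfrak{C}^{(\Omega_N)}$ explicitly via the Poisson point process of Section~\ref{sss.evpois}, and then \emph{defines} the forward transition kernel $Q_t$ on $\CP(E)^{\Omega_N}$ by solving the Hausdorff moment problem
\[
\int Q_t(X,\dd X')\, H^{(n)}_\varphi(X',\pi)
= \E\bigl[H^{(n)}_\varphi(X,\mathfrak{C}^{(\Omega_N)}_n(t))\mid \mathfrak{C}^{(\Omega_N)}_n(0)=\pi\bigr].
\]
Since $\CP(E)^{\Omega_N}$ is compact, the moments on the right (supplied by the coalescent, which is perfectly well-defined without the dust-free condition because an $n$-coalescent has finite total rate) determine $Q_t$ uniquely. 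The Markov property of $\mathfrak{C}^{(\Omega_N)}$ gives the semigroup property of $Q_t$, and Blumenthal--Getoor then yields a c\`adl\`ag Hunt process with this kernel. Existence, uniqueness, strong Markov and Feller all fall out of the duality in one stroke.

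Compared with your scheme, the paper's approach buys economy: there is no need for the $\varepsilon$-truncation, no tightness arguments in Skorokhod space, and no two nested limits to control. In particular, the delicate point you identify---that the level-$0$ small jumps must be compensated via the $O(r^2)$ expansion when $\varepsilon\downarrow 0$---simply never arises, because the coalescent side of the duality involves only the finite rates $\lambda^{(\Lambda_0)}_{b,i}$ for $i\ge 2$. What your approach buys is a more constructive picture of the process as a limit of finite-rate systems; this is closer in spirit to the remark in Section~\ref{ss.mp} about exhausting $\Omega_N$ by $(B_j(0))_{j\in\N_0}$, but that sketch is not what the paper actually executes. If you want to make your route fully rigorous, you should spell out the tightness of $(X^\varepsilon)_{\varepsilon>0}$ explicitly (uniform control of the predictable quadratic variations via $\int_{(0,1]} r^2\,\Lambda_0^\ast(\dd r)=\lambda_0$), which you currently leave implicit.
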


\noindent
The proof of Proposition~\ref{P.vecLambda} is given in Section~\ref{ss.mppropproofs}.

The Markov process arising as the solution of the above martingale problem is denoted 
by $X^{(\Omega_N)}=(X^{(\Omega_N)}(t))_{t\geq 0}$, and is referred to as the 
$C_N^{\uc,\uL}$-process on $\Omega_N$.

\medskip\noindent
{\bf Remark:} 
For the analysis of the $C_N^{\uc,\uL}$-process, the following auxiliary models will 
be important later on. Given $K \in \N_0$, consider the finite geographical space
\be{ag17}
G_{N,K} = \{0,\ldots, N-1\}^K,
\ee
which is a truncation of the hierarchical group $\Omega_N$ after $K$ levels. Equip 
$G_{N,K}$ with coordinate-wise addition modulo $N$, which turns it into a finite 
Abelian group. By restricting the migration and the resampling to $G_{N,K}$ (i.e., 
by setting $c_k=0$ and $\Lambda_k=0$ for $k \geq K$), we obtain a Markov process 
with geographic space $G_{N,K}$ that can be characterised by a martingale problem 
as well. In the limit as $K\to\infty$, this Markov process can be used to approximate 
the $C_N^{\uc,\uL}$-process. This approximation of $X^{(\Omega_N)}$ by $X^{(G_{N,K})}$ 
is made rigorous in Proposition~\ref{P.Kapprox}

\medskip\noindent
{\bf Remark:} Similarly to the mean-field Cannings process $X^{(N)}$ from Section~\ref{sss.mcolonycannings}, the hierarchical Cannings process $X^{(\Omega_N)}$ can be obtained as a $M \to \infty$ limit of the finite $M$ individual-based models.

\subsection{Main results}
\label{ss.mainth}

Our main results concern a multiscale analysis of the $C_N^{\uc,\uL}$-process on 
$\Omega_N$, $X^{(\Omega_N)}$ (cf. below Proposition~\ref{P.vecLambda}) in the
limit as $N\to\infty$. To that end, we introduce {\em renormalised} systems with the 
proper {\em space-time} scaling.

For each $k \in \N_0$, we look at the \emph{$k$-block averages} defined by
\be{k-block-average}
Y_{\eta,k}^{(\Omega_N)}(t) = \frac{1}{N^k}
\sum_{\zeta\in B_k(\eta)} X_{\zeta}^{(\Omega_N)}(t), 
\qquad \eta \in \Omega_N,
\ee
which constitute a {\em renormalisation of space} where the component $\eta$ is 
replaced by the average in $B_k(\eta)$. The corresponding {\em renormalisation 
of time} is to replace $t$ by $tN^k$, i.e., $t$ is the associated macroscopic time 
variable. For each $k \in \N_0$ and $\eta \in \Omega_N$, we can thus introduce 
a {\em renormalised} interacting system
\be{a3}
\left(\left(Y^{(\Omega_N)}_{\eta,k}
(t N^k)\right)_{\eta \in \Omega_N}\right)_{t \geq 0},
\ee
which is constant in $B_k(\eta)$ and can be viewed as an interacting system indexed 
by the set $\Omega^{(k)}_N$ that is obtained from $\Omega_N$ by dropping the first 
$k$-entries of $\eta \in \Omega_N$ (recall \eqref{ag30_a}). This provides us with a 
\emph{sequence of renormalised interacting systems}, which for fixed $N$ are however 
{\em not} Markov.

Our main results are stated in Sections~\ref{sss.scaling}--\ref{sss.extmsp}. In 
Section~\ref{sss.scaling}, we state the scaling behaviour of the renormalised
interacting system in \eqref{a3} as $N\to\infty$ for fixed $k\in\N_0$. In 
Section~\ref{sss.extmsp}, we look at the interaction chain that captures the 
scaling behaviour on all scales simultaneously. In Section~\ref{ss.dichoXN}, we 
take a look at our system $X^{(\Omega_N)}$ for finite $N$. In 
Section~\ref{sss.univclass}, we compare the result with the hierarchical 
Fleming-Viot process. In Sections~\ref{sss.polregvar}--\ref{sss.expregvar},
we identify the different regimes for $k\to\infty$ and in Section~\ref{sec:cluster-foramtion} 
we investigate cluster formation.

\subsubsection{The hierarchical mean-field limit}
\label{sss.scaling}

Our first main theorem identifies the scaling behaviour of $X^{(\Omega_N)}$ 
as $N\to\infty$ (the so-called hierarchical mean-field limit) for every 
fixed block scale $k\in\N_0$. We assume that, for each $N$, the law of 
$X^{(\Omega_N)}(0)$ is the restriction to $\Omega_N$ of a random field $X$ 
indexed by $\Omega_\infty=\bigoplus_\N \N$ that is taken to be i.i.d.\ 
with a single-site mean $\theta$ for some $\theta\in\CP(E)$.

Recall \eqref{lambda-total-masses} and \eqref{ak:d-0}. Let $\ud=(d_k)_{k\in\N_0}$ 
be the sequence of \emph{volatility constants} defined recursively as
\be{diffusion-constants}
d_{k+1} = \frac{c_k(\tfrac12\lambda_k +d_k)}
{c_k + (\tfrac12\lambda_k + d_k)},
\quad k \in \N_0.
\ee
Let $\mathcal{L}$ 
denote law, let $\Longrightarrow$ 
denote weak convergence on path 
space, and recall (\ref{Zdef}).

\begin{theorem}
\label{mainth}
{\bf [Hierarchical mean-field limit and renormalisation]}\\
For every $k\in\N$, uniformly in $\eta\in\Omega_\infty$,
\be{macroscopic-behaviour}
\mathcal{L}
\left[\left(Y_{\eta,k}^{(\Omega_N)}(t N^k)\right)_{t\geq 0}\right]
\Ntoo \mathcal{L}
\left[\left(Z_\theta^{c_k,d_k,\Lambda_k}(t)\right)_{t\geq 0}\right].
\ee 
For $k=0$, \eqref{macroscopic-behaviour} is still true, but the McKean-Vlasov 
process must be started from $Z(0) = X^{(\Omega_N)}_\eta(0)$ instead of 
$Z(0)=\theta$ (cf.~\eqref{Zdef}).
\end{theorem}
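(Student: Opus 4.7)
The plan is to prove \eqref{macroscopic-behaviour} by induction on $k$, combining tightness of the laws on path space $D([0,\infty),\CP(E))$ with identification of the limit via convergence of the associated martingale problems to the well-posed problem for $L_\theta^{c_k,d_k,\Lambda_k}$ supplied by Proposition~\ref{prop:McKean-Vlasov-well-posedness}. The recursion \eqref{diffusion-constants} for $d_k$ will emerge from a time-scale separation argument at the inductive step.

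For the base case $k=0$, I would apply $L^{(\Omega_N)}$ to test functions in $\mathcal{F}$ depending only on the single component $x_\eta$. The level-$0$ reshuffling-resampling yields exactly $L^{\Lambda_0}$, the Fleming-Viot part contributes $L^{d_0}$, and among the migration contributions only the level-$1$ piece $c_0 \int_E (y_{\eta,1} - x_\eta)(\dd a)\,\partial F(x)/\partial x_\eta[\delta_a]$ survives as $N\to\infty$; by exchangeability of the initial condition, $y_{\eta,1}$ concentrates on $\theta$ on the observation time scale $1$, producing the drift $L^{c_0}_\theta$. All higher-level migration and reshuffling contributions are of order $1/N$ or smaller and vanish in the limit.

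For the inductive step $k \mapsto k+1$, the key is a separation of time scales: on the accelerated scale $N^{k+1}$, the $j$-block averages for $j \leq k$ relax on the faster scales $N^j$ to the equilibrium of their McKean-Vlasov dynamics (by the induction hypothesis). Applying $N^{k+1}\,L^{(\Omega_N)}$ to test functions depending only on $Y_{\eta,k+1}^{(\Omega_N)}$ and invoking a Kurtz-type averaging principle, the fast lower-level variables get replaced by their conditional equilibrium expectations given $Y_{\eta,k+1}^{(\Omega_N)}$. Three contributions remain: level-$(k+2)$ migration produces a drift of strength $c_{k+1}$ towards $Y_{\eta,k+2}^{(\Omega_N)} \to \theta$; level-$(k+1)$ reshuffling-resampling survives intact and yields $L^{\Lambda_{k+1}}$; and the combined contribution of all levels $\leq k$ collapses to an effective Fleming-Viot term with volatility $d_{k+1}$. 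The recursion \eqref{diffusion-constants}, equivalent to $1/d_{k+1} = 1/c_k + 1/(\tfrac12 \lambda_k + d_k)$, is the algebraic identity reflecting the harmonic combination of the drift rate $c_k$ and the effective resampling strength $\tfrac12 \lambda_k + d_k$ at level $k$.

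Tightness of $(Y^{(\Omega_N)}_{\eta,k}(\cdot\, N^k))_N$ in $D([0,\infty),\CP(E))$ follows from the Aldous-Rebolledo criterion: compactness of $\CP(E)$ supplies compact containment, and uniform bounds on the predictable quadratic variation and the jump part of the martingales $F(Y^{(\Omega_N)}_{\eta,k}(t\,N^k)) - \int_0^t N^k\,(L^{(\Omega_N)}F)(X^{(\Omega_N)}(s\,N^k))\,\dd s$ for $F \in \mathcal{F}$ follow from \eqref{ak:recurrence-cond} and \eqref{ak:lambda-growth-condition}, which bound the per-individual total rates; a finite-truncation argument using the auxiliary processes $X^{(G_{N,K})}$ from the remark following Proposition~\ref{P.vecLambda} reduces the analysis to finite-range interactions. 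The main obstacle will be rigorously justifying the averaging step that produces $d_{k+1}$: one must show that, under $X^{(\Omega_N)}(\cdot\,N^{k+1})$, the conditional law of the sub-block averages given $Y^{(\Omega_N)}_{\eta,k+1}$ has relaxed to the McKean-Vlasov equilibrium to leading order and that the equilibrium covariance structure injects precisely the Fleming-Viot coefficient $d_{k+1}\,Q_{Y^{(\Omega_N)}_{\eta,k+1}}$ into the limit. This step is most cleanly handled via duality with the spatial $\Lambda$-coalescent of Section~\ref{s.spatcoal}, reducing the needed second-moment computations to tractable coalescent identities.
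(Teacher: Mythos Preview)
Your proposal is correct and follows essentially the same approach as the paper: tightness via generator bounds plus compact containment, generator convergence on the algebra $\mathcal{F}$, and an averaging principle with local McKean-Vlasov equilibria to produce the Fleming-Viot volatility $d_{k+1}$ at each inductive step, with duality used to justify the equilibrium second-moment input. The paper organises the argument slightly differently---it first reduces from $\Omega_N$ to the truncated space $G_{N,K}$ via an explicit generator-difference estimate (Proposition~\ref{P.Kapprox}) and then carries out the level-by-level analysis entirely inside the finite system, building up from the mean-field case ($K=1$) through $K=2$ to general $K$---whereas you frame the truncation as an auxiliary tool and run the induction on $k$ directly; but the content of the generator calculations and the averaging step is the same.
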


\noindent
The proof of Theorem~\ref{mainth} is given in Section~\ref{s.proof1stth}. The limiting 
process in \eqref{macroscopic-behaviour} is a \textit{McKean-Vlasov process} with 
drift constant $c=c_k$ and resampling measure $d_k \delta_0 + \Lambda_k$ (cf.\ 
\eqref{Ldefs}). This shows that the class of Cannings models with block resampling 
is preserved under the renormalisation. 

\myparagraph{Heuristics.}  In order to understand the origin of the recursion
relation in \eqref{diffusion-constants},  let us start by explaining where
$d_1=c_0 \lambda_0/(2 c_0 + \lambda_0)$ comes from.  Consider two
\textit{lineages}\footnote{The fact that we consider coalescing lineages as 
opposed to type distributions is actually the essence of the \emph{duality} 
approach to the study of the dynamics of interacting particle systems. In 
the present context, duality is developed in Section~\ref{s.spatcoal}.} 
drawn at random from a macro-colony of order $1$, say $B_1(\eta)$ 
for some $\eta \in \Omega_N$. Due to migration, both lineages are
uniformly distributed  over the macro-colony after the first migration step. For
each lineage, marking the  migration steps that result in being in the same
colony, we get a Poisson process with  rate $2c_0$ on timescale $N t$. For every
such mark, the rate to coalesce is $\lambda_0 N$  (on time scale $N t$), while
the rate to migration away is $2 c_0 N$. Hence, the probability  that the two
lineages coalesce before they migrate away is $\lambda_0/(2c_0+\lambda_0 )$. 
Therefore, thinning the Poisson process with rate $2 c_0$, we see that the two
lineages  coalesce at rate $2c_0\lambda_0/(2c_0+\lambda_0)$. Since the
coalescence rate is  twice the diffusion coefficient (cf., Section~\ref{ss.1st2nd}), this gives a heuristic
explanation for $d_1$. Note that three  lineages are within the same colony only
after a time of order $N^2$, so three lineages  do not coalesce on time scale
$Nt$.

To understand the generic step of the recursion relation, i.e., $d_{k+1}$, consider a
macro-colony of order $k+1$, say $B_{k+1}(\eta)$ for some $\eta \in \Omega_N$, 
and two lineages drawn at random from this macro-colony. Consider only migration 
on level $k$, i.e., migration events between the macro-colonies of order $k$, which 
occur at rate $2c_k N^{-k}$. For every such event, the rate of coalescence is $2d_k
+\lambda_k$, while migration of one of them occurs at rate $2c_k$. Hence, the
probability that the two lineages coalesce before one of them migrates is $(2d_k+
\lambda_k)/(2c_k+2d_k +\lambda_k)$. After speeding up time by a factor $N$, 
we see that the coalescence rate is $2c_k(2d_k+\lambda_k )/(2c_k+2d_k+\lambda_k)$. 
Since the coalescence rate is twice the diffusion coefficient, this gives a heuristic 
explanation for $d_k$. Again, three or more lineages do not coalesce on the same 
time scale.

\subsubsection{Multi-scale analysis: the interaction chain}
\label{sss.extmsp}

\myparagraph{Multi-scale behaviour.}
Our second main theorem looks at the implications of the scaling behaviour of $d_k$ as 
$k\to\infty$, to be described in Theorems~\ref{T.beh}--\ref{dcases} in 
Section~\ref{sss.univclass}--\ref{sss.polregvar}, for which we must extend Theorem~\ref{mainth} 
to include \emph{multi-scale renormalisation}. This is done by considering {\em two} indices $(j,k)$ 
and introducing an appropriate multi-scale limiting process, called the {\em interaction chain}
\be{inch}
M^{(j)}=(M^{(j)}_k)_{k=-(j+1),\ldots,0}, \qquad j \in \N_0,
\ee 
which describes all the block averages of size $N^{|k|}$ indexed by $k=-(j+1),\ldots,0$ 
\emph{simultaneously} at time $N^j t$ with $j\in\N_0$ fixed. Formally, the interaction 
chain is defined as the time-inhomogeneous Markov chain with a prescribed initial 
state at time $-(j+1)$, 
\be{ag61}
M^{(j)}_{-(j+1)} = \theta \in \CP(E),
\ee
and with transition kernel
\be{ag14}
K_k(x,\cdot) = \nu^{c_k,d_k,\Lambda_k}_x(\cdot),
\quad x \in \CP(E),\,k \in \N_0,
\ee
for the transition from time $-(k+1)$ to time $-k$ (for $k=j,\ldots,0$). Here, 
$\nu^{c,d,\Lambda}_x$ is the unique equilibrium of the McKean-Vlasov process 
$Z^{c,d,\Lambda}_x$ defined in \eqref{Ldefs} of Section~\ref{sss.MV} (see 
Section~\ref{s.mcv_imem} for details). 

\begin{theorem}   
\label{Tscb}
{\bf [Multi-scale behaviour]}\\   
Let $(t_N)_{N\in\N}$ be such that 
\begin{equation}
\begin{aligned}
\lim_{N\to\infty} t_N=\infty \text{ and } \lim_{N\to\infty}t_N/N=0.
\end{aligned}
\end{equation}
Then, for every $j\in\N_0$, uniformly in $\eta\in \Omega_\infty$ and $u_k \in (0,\infty)$,
\be{ag16}
\begin{aligned}
&\CL \left[\left(Y^{(\Omega_N)}_{\eta,k} (N^j t_N + N^k u_k)\right)_{k=j,\ldots,0}\right]
\Ntoo
\CL \left[\left(M^{(j)}_{-k}\right)_{k=j,\ldots,0}\right],\\
&\CL \left[Y^{(\Omega_N)}_{\eta,j+1} (N^j t_N)\right] \Ntoo \delta_\theta,
\end{aligned}
\ee
where $\theta \in \CP(E)$ is the single-site mean of the initial distribution
$X^{(\Omega_N)}(0)$, cf.~Section~\ref{sss.scaling}.

\end{theorem}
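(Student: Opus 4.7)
The plan is to prove Theorem~\ref{Tscb} by backward induction on the scale index $k = j+1, j, \ldots, 0$, combining Theorem~\ref{mainth} applied at each scale with the ergodicity of the McKean-Vlasov process (whose unique equilibrium $\nu^{c_k,d_k,\Lambda_k}_x$ is established separately in Section~\ref{s.mcv_imem}). The guiding picture is the separation of time scales: the $k$-block average lives on macroscopic time $N^k$, so from this perspective the $(k+1)$-block average is essentially frozen and plays the role of the immigration source $\theta$ in the McKean-Vlasov formulation, while scales below $k$ are instantaneously at their conditional equilibrium.

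The base case $k=j+1$ is exactly the second line of \eqref{ag16}. Theorem~\ref{mainth} applied at scale $j+1$ gives pathwise weak convergence of $s \mapsto Y^{(\Omega_N)}_{\eta,j+1}(s N^{j+1})$ to $Z^{c_{j+1},d_{j+1},\Lambda_{j+1}}_\theta(s)$; evaluating at $s = t_N/N \to 0$ and invoking right-continuity of the McKean-Vlasov process at $s=0$ (together with the law of large numbers for the initial field, which forces $Y^{(\Omega_N)}_{\eta,j+1}(0)$ to concentrate at $\theta$) yields the $\delta_\theta$ limit.

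For the inductive step from scale $k+1$ to scale $k$, suppose the induction hypothesis gives joint convergence of the block averages at scales $k+1,\ldots,j+1$ to $(M^{(j)}_{-(k+1)},\ldots,M^{(j)}_{-(j+1)})$. Using the strong Markov property of $X^{(\Omega_N)}$ (Proposition~\ref{P.vecLambda}) at time $N^j t_N - \epsilon N^{k+1}$ for an auxiliary small $\epsilon > 0$, I would condition on the scale-$(k+1)$ block average being approximately some $x$ (distributed, by the induction hypothesis, as $M^{(j)}_{-(k+1)}$) and apply Theorem~\ref{mainth} at scale $k$ to the post-conditioning evolution. On the macro-$k$ time window of length $\epsilon N + u_k \to \infty$, the macro-$(k+1)$ clock advances only by $\epsilon$, so the scale-$(k+1)$ source is essentially constant at $x$ and the scale-$k$ average evolves approximately as a McKean-Vlasov process with source $x$. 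Ergodicity over the diverging macro-$k$ time then delivers $\nu^{c_k,d_k,\Lambda_k}_x = K_k(x,\cdot)$, matching the interaction-chain transition from $M^{(j)}_{-(k+1)}$ to $M^{(j)}_{-k}$. Sending $\epsilon \downarrow 0$ closes the step, and iterating down to $k=0$ gives the first line of \eqref{ag16}.

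The main obstacle is making the conditional McKean-Vlasov approximation uniform and simultaneous across all scales: one must quantify that the scale-$(k+1)$ block average genuinely does not move on the macro-$k$ time window (despite that window being enormous in absolute time compared with lower-scale windows) while still allowing the scale-$k$ average sufficient macro-$k$ time to equilibrate. The cleanest route is likely through the spatial coalescent dual of Section~\ref{s.spatcoal}: in the hierarchical mean-field limit, coalescence events at each hierarchical level $k$ form asymptotically independent Poisson processes, so the dual tree decomposes level by level, which translates via duality into the nested transition kernels $K_k$ of the interaction chain $M^{(j)}$.
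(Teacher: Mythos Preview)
Your proposal is correct and follows essentially the same route as the paper's proof in Section~\ref{ss.intchain}: both argue downward from scale $j+1$ to $0$, use that the $(k+1)$-block average is frozen on time scale $N^k$ (so it plays the role of the McKean-Vlasov source), and invoke ergodicity of the McKean-Vlasov process to identify each transition with $K_k(x,\cdot)=\nu^{c_k,d_k,\Lambda_k}_x$. Your restart at time $N^j t_N - \epsilon N^{k+1}$ followed by $\epsilon\downarrow 0$ is a spelled-out version of the paper's observation that $N^{k+1}s + N^k t_N$ differs from $N^{k+1}s$ by an $o(1)$ perturbation in the macro-$(k+1)$ variable; the paper simply cites \cite[Section~5(f)]{DGV95} for the details you outline.
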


\noindent
The proof of Theorem~\ref{Tscb} is given in Section~\ref{s.msana}.

Theorem~\ref{Tscb} says that, as $N \to \infty$, the system is in a {\em quasi-equilibrium}
$\nu_x^{c_k, d_k, \Lambda_k}$ on time scale $N^j t_N +N^k u$, with $u \in (0,\infty)$ 
the macroscopic time parameter on level $k$, when $x$ is the average on level 
$k+1$. 

\myparagraph{Heuristics.} The effect described in Theorem~\ref{Tscb} results from
the fact that on the smaller time scale $uN^k$ a $k$-block average evolves
effectively like a single component of the $N-1$ other $k$-block averages with a
mean-field migration mechanism. This leads to \textit{propagation of chaos},
i.e., convergence to a system of independently evolving components that 
interact only because they feel the overall type density in the $(k+1)$-block. 
Since we look at the system at a late time $N^j t_N$, we see that the dynamics 
at scale $N^ku$, which is $o(N^j t_N)$, has already reached equilibrium, as 
is clear from a restart argument that absorbs an order-$N^k$ term into
$N^j t_N$.

\myparagraph{The basic dichotomy.}
We next let the index in the multi-scale renormalisation scheme tend to 
infinity and identify how the limit depends on the parameters $(\uc,\uL)$. Indeed, 
Theorem~\ref{Tscb}, in combination with Theorems~\ref{T.beh}--\ref{dcases} in
Sections~\ref{sss.univclass}--\ref{sss.polregvar}, allows us to study the universality 
properties on large space-time scales when we first let $N \to \infty$ and then 
$j \to \infty$\footnote{For several previously investigated systems, the limit as 
$j\to\infty$ was shown to be interchangeable (Dawson, Greven and Vaillancourt~\cite{DGV95}, 
Fleischmann and Greven~\cite{FG94}).)}.

The interaction chain exhibits a {\em dichotomy}, as will be seen in Theorem~\ref{T.dicho} 
below, in the sense that
\be{jinflim}
\CL\left[M^{(j)}_0\right] \jto \nu_\theta \in \CP(\CP(E)),
\ee
with $\nu_\theta$ either (I) of the form of a random single-atom measure, i.e., 
\be{form1}
\nu_\theta = \CL[\delta_U], \text{ for some random } U \in E
\mbox{ with } \CL[U]=\theta,
\ee 
or (II) $\nu_\theta$ spread out.
To be more specific, define
\be{vardef2}
\var_x(\psi)
= \int_{E \times E} [x(\dd u)\delta_u(\dd v) - x(\dd u) x(\dd v)]\,\psi(u)\psi(v).
\ee
Then, $\nu_\theta$ is spread out iff
\be{spread}
\sup_{\psi \in B_1} \E_{\nu_\theta} [\var_{\cdot}(\psi)] > 0,
\ee
where $B_1 \equiv C_\mathrm{b} (E,\R) \cap \{\psi\colon\,|\psi| \leq 1\}$ and the
expectation is taken with respect to the parameter $x$ in \eqref{vardef2}, i.e.,
\be{vardef1}
\E_{\nu_\theta} [\var_{\cdot}(\psi)] = \int_{\CP(E)} \nu_\theta(\dd x)
\var_x(\psi)
.
\ee
Case (I) is called the {\em clustering regime}, since it indicates the formation 
of large mono-type regions, while case (II) is called the {\em local coexistence 
regime}, since it indicates the formation of multi-type local equilibria under which
different types can live next to each other with a positive probability. In the local 
coexistence regime, a remarkable difference occurs comparing with the hierarchical 
Fleming-Viot process: mono-type regions for $M^{(j)}_0$ as $j\to\infty$ have a 
probability in the open interval $(0,1)$ rather than probability $0$ (see 
Proposition~\ref{P.mkv}(b) below). The latter is referred to in \cite{DGV95} 
by saying that the system is in the {\em stable regime} (which is stronger than 
local coexistence). In the present paper, we do not identify the conditions on 
$\uc$ and $\ul$ that correspond to the stable regime. The dichotomy can be 
conveniently rephrased as follows: There is either a trivial or a non-trivial 
{\em entrance law} for the interaction chain with initial state $\theta \in \CP(E)$ 
at time $-\infty$\footnote{Recall that an entrance law for a sequence of transition kernels 
$(K_k)_{k=-\infty}^0$ and an entrance state $\theta$ is any law of a Markov 
chain $(Y_k)_{k=-\infty}^0$ with these transition kernels such that $\lim_{k\to -\infty} 
Y_k = \theta$.}.

\myparagraph{Explicit dichotomy criterion.} The large-scale behaviour of $X^{(\Omega_N)}$ 
is determined by the sequence $\umu=(m_k)_{k\in\N_0}$ with
\be{ag46}
m_k = \frac{\mu_k + d_k}{c_k}, \mbox{ where } \mu_k=\tfrac12\lambda_k
\ee
(recall $c_k$ from \eqref{ckdef}, $\lambda_k$ from \eqref{lambda-total-masses} 
and $d_k$ from \eqref{diffusion-constants}). We will argue that the dichotomy
\be{ag46b} 
\sum_{k\in\N_0} m_k=\infty  \quad \text{vs.} \quad \sum_{k\in\N_0} m_k<\infty
\ee
represents qualitatively different situations for the interacting system $X^{(\Omega_N)}$ 
corresponding to, respectively,
\begin{itemize}
\item
\emph{clustering} (= formation of large mono-type regions), 
\item
\emph{local coexistence} (= convergence to multi-type equilibria).
\end{itemize}
In the clustering regime, the scaling behaviour of $d_k$ is independent of $d_0$, 
while in the local coexistence regime it depends on $d_0$. In \eqref{ag58} of 
Section~\ref{ss.1st2nd}, we will show that 
\be{ag50}
\E_{\mathcal{L}[M^{(j)}_0]} \left[\var_{\cdot}(\psi)\right] 
= \left[\prod_{k=0}^j \frac{1}{1+m_k}\right]\,\var_\theta(\psi), 
\quad j\in\N_0,\,\psi \in C_\mathrm{b}(E,\R),\,\theta \in \CP(E).
\ee
This implies that the entrance law is trivial when $\sum_{k\in\N_0} m_k=\infty$ and 
non-trivial when $\sum_{k\in\N_0} m_k<\infty$. Our third main theorem identifies the
dichotomy.

\begin{theorem} 
\label{T.dicho}
{\bf [Dichotomy of the entrance law]}
\begin{itemize}
\item[\textup{(a)}] 
The interaction chain converges to an entrance law:
\be{ak9}
\begin{cases}
\CL\left[
\left(M^{(j)}_k\right)_{k= -(j+1),\ldots,0}\right] \jto
\CL\left[\left(M^{(\infty)}_k\right)_{k=-\infty,\dots,0}\right],&\\
M^{(\infty)}_{-\infty}=\theta.&
\end{cases}
\ee
\item[\textup{(b)}] 
{\bf[Clustering]}
If $\sum_{k\in\N_0} m_k=\infty$, then $\CL[M^{(j)}_0] \jto
\CL[\delta_U]$ with $\CL[U]=\theta$.
\item[\textup{(c)}] 
{\bf[Local coexistence]}
If $\sum_{k\in\N_0} m_k<\infty$, then 
\be{ak:201402081402}
\sup_{\psi \in C_\mathrm{b}(E,\R)} \E_{\mathcal{L}[M^{(\infty)}_0]}[\var_\cdot(\psi)]>0.
\ee
\end{itemize}
\end{theorem}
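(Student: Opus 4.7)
The whole theorem hinges on the variance identity \eqref{ag50}, already announced in Section~\ref{ss.1st2nd} as \eqref{ag58}, together with two structural properties of the transition kernels $K_k(x,\cdot)=\nu^{c_k,d_k,\Lambda_k}_x(\cdot)$: (i) the first-moment preservation $\int_{\CP(E)} y\,K_k(x,\dd y)=x$ (because the McKean-Vlasov equilibrium has mean equal to its immigration source $x$), and (ii) the explicit second-moment contraction factor $1/(1+m_k)$ with $m_k = (\mu_k+d_k)/c_k$. I will take (i) and (ii) as given from the McKean-Vlasov analysis in Section~\ref{s.mcv_imem}.

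For part (a), I would establish convergence of the chain via the coalescent duality developed in Section~\ref{s.spatcoal}. Duality expresses each mixed moment $\E[\int_{E^n} \varphi(u_1,\dots,u_n)\,(M^{(j)}_{-k})^{\otimes n}(\dd u)]$ as the expectation of $\varphi$ applied to the coloured partition produced by a finite $\uL$-coalescent run for a time determined by the cut-off level~$j$. As $j$ grows, the coalescent is simply allowed to run for more merging epochs before its atoms are coloured i.i.d.\ according to $\theta$; this yields a monotone (in the sense of the coalescent stochastic order) convergence of all mixed moments. Since $E$ is compact Polish, the family $\{(M^{(j)}_k)_{k=-(j+1)}^0\}_{j\in\N_0}$ is tight in $\CP(E)^{\{-(j+1),\dots,0\}}$, and joint moment convergence identifies a unique limit. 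The Markov property of the limit, with the prescribed transition kernels $K_k$ and entrance value $\theta$ at $-\infty$, follows from consistency of the kernels (they do not depend on $j$) combined with the first-moment identity $\E[M^{(j)}_{-k}] = \theta$.

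Given the entrance law, parts (b) and (c) follow directly from \eqref{ag50} by taking $j\to\infty$. For (b), the hypothesis $\sum_{k\in\N_0} m_k = \infty$ gives $\prod_{k\in\N_0}(1+m_k)^{-1}=0$, so by \eqref{ag50} and part (a), $\E_{\CL[M^{(\infty)}_0]}[\var_\cdot(\psi)] = 0$ for every $\psi\in C_\mathrm{b}(E,\R)$. Choosing $\psi$ in a countable convergence-determining family $\{\psi_n\}\subset C_\mathrm{b}(E,\R)$ for $\CP(E)$, we conclude $\var_{M^{(\infty)}_0}(\psi_n)=0$ for all $n$ almost surely, which forces $M^{(\infty)}_0 = \delta_U$ a.s.\ for some $E$-valued $U$. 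The first-moment preservation then gives $\E[\delta_U]=\theta$, i.e.\ $\CL[U]=\theta$. For (c), $\sum_{k\in\N_0} m_k<\infty$ gives $\prod_{k\in\N_0}(1+m_k)^{-1}>0$; choosing any $\psi\in B_1$ with $\var_\theta(\psi)>0$ (possible whenever $\theta$ is not a Dirac mass, otherwise the statement is vacuous) and applying \eqref{ag50} in the limit $j\to\infty$ yields $\E_{\CL[M^{(\infty)}_0]}[\var_\cdot(\psi)]>0$, which is \eqref{ak:201402081402}.

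The main obstacle is part (a): the variance formula alone controls only second moments, so to pin down the full law of the entrance chain I must invoke moment duality with the spatial $\uL$-coalescent and verify monotonicity of the coalescent functionals in the cut-off level~$j$. Once this is done, parts (b) and (c) are essentially immediate consequences of the contraction product $\prod_k (1+m_k)^{-1}$ degenerating to $0$ or staying positive, respectively.
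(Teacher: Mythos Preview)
Your treatment of parts (b) and (c) coincides with the paper's: both rely on the variance identity \eqref{ag50}/\eqref{ag58} and read off the dichotomy from whether $\prod_{k}(1+m_k)^{-1}$ vanishes or stays positive; the identification $\CL[U]=\theta$ via the first-moment (martingale) property is also the paper's argument.

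For part (a) you take a genuinely different route. The paper does \emph{not} invoke duality directly here; instead it uses that $(\langle M^{(j)}_k,\varphi\rangle)_k$ is a bounded martingale together with the continuity of $\theta\mapsto\nu^{c,d,\Lambda}_\theta$ (established in Section~\ref{sss.conttheta}, itself via the immigration-emigration coalescent), and then appeals to the standard interaction-chain convergence argument from \cite[Section~6a]{DGV95}. Your proposal is to compute mixed moments of $M^{(j)}_0$ through a coalescent dual and obtain convergence of moments by monotonicity in~$j$. This can be made to work, but two points need sharpening. First, the relevant dual is \emph{not} the spatial $\uL$-coalescent on $\Omega_N$ of Section~\ref{ss.blockcoal}; it is a tower of single-site immigration-emigration $\Lambda$-coalescents (Section~\ref{sss.special-spatial-coalescents}), one for each kernel $K_k=\nu^{c_k,d_k,\Lambda_k}_\cdot$, run to equilibrium and concatenated as $k$ ranges over $0,\dots,j$. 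Second, your ``monotonicity'' claim should be stated as: increasing $j$ adds one more coalescence stage, so the number of partition elements surviving to the $\theta$-colouring step is stochastically non-increasing in $j$; for $\varphi\in[0,1]$ this makes $\E[\langle M^{(j)}_0,\varphi\rangle^n]$ monotone and bounded, hence convergent, and general $\varphi$ follows by affine rescaling. The paper's route is more modular (it recycles the kernel-continuity result and the \cite{DGV95} machinery), while yours is more self-contained but requires you to set up the concatenated coalescent explicitly rather than borrowing Section~\ref{s.spatcoal} wholesale.
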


\noindent
The proof of Theorem~\ref{T.dicho} is given in Section~\ref{ss.dichointeract}.

Theorem~\ref{T.dicho}, in combination with Theorem~\ref{T.beh}(c) in Section~\ref{sss.univclass},
says that, like for Fleming-Viot diffusions, we have a clear-cut criterion for the two 
regimes in terms of the migration coefficients and the resampling coefficients. 

\myparagraph{Heuristics.}
If the resampling happens only locally, i.e., $\lambda_k =0$, for $k \in \N$, we simply obtain the two regimes
depending on whether two ancestral lines coalesce with probability 1 or $<1$, giving after a long time
monotype or coexistence, if and only if they meet with probability 1 or $<1$. Now, the ancestral lines can coalesce
due to the reshuffling-resampling in a $k$-ball and hence the occupation time of two ancestral lines in
the distances $k$ weighted by the $\lambda_k$ is the relevant quantity.

\subsubsection{Main results for finite $N$}
\label{ss.dichoXN}

In this section, we take a look at our system $X^{(\Omega_N)}$ ($C_N^{\uc,\uL}$-process 
on $\Omega_N$, cf. below Proposition~\ref{P.vecLambda}) for finite $N$, i.e., 
without taking the hierarchical mean-field limit. We ask whether this system also
exhibits a dichotomy of clustering versus local coexistence, i.e., for fixed
$N$ and $t\to\infty$, does $\CL[X^{(\Omega_N)}(t)]$ converge to a mono-type state, 
where the type is distributed according to $\theta$, or to an equilibrium state, 
where different types live next to each other? 

As it will turn out below, in the finite-$N$ case there is the dichotomy and, moreover, the quantitative
criterion is the same as in the $N \to \infty$ limit.

Concretely, let $P_t(\cdot,\cdot)$ denote the transition kernel of the random walk on $\Omega_N$ 
with migration coefficients
\be{a2}
\bar{c}_k(N) = c_k + N^{-1} \lambda_{k+1}, \quad k \in \N_0
\ee 
starting at 0 (cf.~Section~\ref{sss.mighg}). Let
\be{stable-ergodicity-condition}
\bar H_N = \sum_{k\in\N_0} \lambda_k N^{-k} 
\int_0^\infty P_{2s}(0,B_k(0))\,\dd s,
\ee
where $B_k(0)$ is the $k$-block in $\Omega_N$ around 0 (recall
\eqref{block-definition}) and $P_t(0,B_k(0))\equiv\sum_{\zeta\in B_k(0)}
P_t(0,\eta)$. We will see in  Section~\ref{sss.dichosv} that $\bar H_N$ in
\eqref{stable-ergodicity-condition} is the \emph{expected hazard} for two 
partition elements in the spatial $\uL$-coalescent with non-local coalescence 
to coalesce. Note in particular that the second summand in \eqref{a2} is 
induced by the reshuffling in the spatial $\uL$-coalescent with
non-local coalescence.

Our next three main theorems identify the ergodic behaviour for finite $N$.

\begin{theorem}
\label{dich_coclu}
{\bf [Dichotomy for finite $N$]}\\
The following dichotomy holds for every $N\in\N\backslash\{1\}$ fixed:
\begin{itemize}
\item[\textup{(a)}] 
{\bf [Local coexistence]}
If $\bar H_N<\infty$, then
\be{ag52a}
\liminf_{t \to \infty} \sup_{\psi \in B_1} 
\E_{X^{(\Omega_N)}_\eta(t)}[ \var_\cdot(\psi)] > 0,
\quad \text{for all }\eta\in\Omega_N.
\ee
\item[\textup{(b)}] 
{\bf [Clustering]}
If $\bar H_N=\infty$, then
\be{ag52b}
\lim_{t \to \infty} \sup_{\psi \in B_1} 
\E_{X^{(\Omega_N)}_\eta(t)} [\var_\cdot(\psi)] = 0,
\quad \text{for all }\eta\in\Omega_N.
\ee
\end{itemize}
\end{theorem}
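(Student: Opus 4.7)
The plan is to exploit the duality between the $C_N^{\uc,\uL}$-process and the spatial $\uL$-coalescent with non-local coalescence (as developed in Section~\ref{s.spatcoal}), which reduces Theorem~\ref{dich_coclu} to the question of whether two tagged lineages coalesce almost surely.

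First I would derive the moment-duality identity
\begin{equation}
\label{dual-id}
\E_{X^{(\Omega_N)}_\eta(t)}[\var_\cdot(\psi)]
= \P_{(\eta,\eta)}(T_{\mathrm{coal}}>t)\,\var_\theta(\psi),\qquad \psi\in B_1,
\end{equation}
where $T_{\mathrm{coal}}$ is the coalescence time of two dual lineages, both started at $\eta$. Writing $\var_x(\psi)=\int\psi^2\,\dd x-(\int\psi\,\dd x)^2$ and applying the duality with the two-particle coalescent, the coalesced and uncoalesced contributions to $\E[\int\psi(u)\psi(v)\,X_\eta(t)(\dd u)X_\eta(t)(\dd v)]$ are $\int\psi^2\,\dd\theta$ and $(\int\psi\,\dd\theta)^2$, respectively (the i.i.d.\ initial condition providing the factorisation), and \eqref{dual-id} follows by subtraction. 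Hence \eqref{ag52a}--\eqref{ag52b} become equivalent to a dichotomy for $\P_{(\eta,\eta)}(T_{\mathrm{coal}}=\infty)$.

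Next I would describe the dynamics of the two-particle dual. Each lineage performs a continuous-time random walk on $\Omega_N$ whose kernel, after absorbing the migration induced by the reshuffling step, has the migration coefficients $\bar c_k(N)=c_k+N^{-1}\lambda_{k+1}$ from \eqref{a2}. When the two lineages occupy a common $k$-block, a level-$k$ reshuffling-resampling event in that block (rate $N^{-k}\Lambda^*_k(\dd r)$, merging a given pair with probability $r^2$) contributes a coalescence rate $\int r^2\,\Lambda^*_k(\dd r)\,N^{-k}=\lambda_k N^{-k}$. Conditioning on the joint uncoalesced trajectory $(\xi_1,\xi_2)$, $T_{\mathrm{coal}}$ is then exponential with cumulative hazard
\begin{equation}
\label{H-tot}
H_{\mathrm{tot}}=\int_0^\infty H(s)\,\dd s,\qquad
H(s)=\sum_{k\in\N_0}\lambda_k N^{-k}\,\I\big[d(\xi_1(s),\xi_2(s))\leq k\big],
\end{equation}
so that $\P_{(\eta,\eta)}(T_{\mathrm{coal}}=\infty)=\E[\exp(-H_{\mathrm{tot}})]$. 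Using translation invariance (which identifies the law of $\xi_1-\xi_2$ with that of a single walk run at twice the rate) together with Fubini yields $\E[H_{\mathrm{tot}}]=\sum_{k\in\N_0}\lambda_k N^{-k}\int_0^\infty P_{2s}(0,B_k(0))\,\dd s=\bar H_N$.

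The dichotomy then falls out. In case (a), $\bar H_N<\infty$ forces $H_{\mathrm{tot}}<\infty$ a.s., whence $\P_{(\eta,\eta)}(T_{\mathrm{coal}}=\infty)>0$; picking $\psi\in B_1$ with $\var_\theta(\psi)>0$ (possible unless $\theta$ is a point mass, a trivial case) proves \eqref{ag52a} via \eqref{dual-id}. In case (b), I would show $\P(H_{\mathrm{tot}}=\infty)=1$ in two steps: (i) a second-moment (Paley--Zygmund) argument on the truncated hazard $\int_0^T H(s)\,\dd s$, combined with $\bar H_N=\infty$, gives $\P(H_{\mathrm{tot}}=\infty)>0$; (ii) the event $\{H_{\mathrm{tot}}=\infty\}$ is shift-invariant (shifting time by any finite amount alters $H_{\mathrm{tot}}$ only by a finite quantity) and hence lies in the tail $\sigma$-algebra of the random walk $\xi_1-\xi_2$, which has i.i.d.\ increments and therefore a trivial tail. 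Together (i)--(ii) force $\P_{(\eta,\eta)}(T_{\mathrm{coal}}<\infty)=1$, and \eqref{ag52b} follows from \eqref{dual-id}. The main obstacle will be step~(i): controlling the variance of the truncated hazard requires potential-theoretic estimates on the Green function of the random walk with rates $\bar c_k(N)$ on $\Omega_N$ (in the spirit of \cite{DGW05}), with particular care needed for the near-diagonal contributions from large $k$ where $P_{2s}(0,B_k(0))$ is close to $1$; this is where most of the technical work is concentrated, while the duality itself and the formula $\E[H_{\mathrm{tot}}]=\bar H_N$ are comparatively soft.
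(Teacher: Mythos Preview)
Your proposal is correct and follows essentially the same route as the paper: reduce via duality to the two-particle coalescent, express non-coalescence through the cumulative hazard, and establish the zero-one law ($H_{\mathrm{tot}}=\infty$ a.s.\ iff $\bar H_N=\infty$) via a second-moment (Paley--Zygmund) bound combined with tail triviality of the difference walk. Two minor remarks: the paper truncates the hazard in the spatial level ($\sum_{k\le M}$) rather than in time, which makes the Green-function estimate via \cite{DGW05} cleaner; and your duality identity for the variance is exact only for the constant initial field $X(0)\equiv\theta$ --- for genuinely random i.i.d.\ initial data there is an extra contribution on the event $\{T_{\mathrm{coal}}>t,\ \xi_1(t)=\xi_2(t)\}$, though this vanishes as $t\to\infty$ and does not affect the dichotomy.
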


\noindent
The proof of Theorem~\ref{dich_coclu} is given in Section~\ref{ss.cluscoex}. 

The dichotomy can be sharpened by using duality theory and the complete longtime 
behaviour of $X^{(\Omega_N)}$ can be identified.

\begin{theorem}
\label{P.ergodic}
{\bf [Ergodic behaviour for finite $N$]}\\
The following dichotomy holds:
\begin{itemize}
\item[\textup{(a)}]
{\bf [Local coexistence]}
If $\bar H_N<\infty$, then for every $\theta \in \CP(E)$ and every $X^{(\Omega_N)}(0)$ 
whose law is stationary and ergodic w.r.t.\ translations in $\Omega_N$ and has a 
single-site mean $\theta$,
\be{stable-ergodicity}
\CL\left[ X^{(\Omega_N)}(t)\right] \tto \nu^{(\Omega_N),\uc,\ul}_\theta
\in \CP(\CP(E)^{\Omega_N})
\ee
for some unique law $\nu^{(\Omega_N),\uc,\ul}_\theta$ that is stationary and ergodic 
w.r.t.\ translations in $\Omega_N$ and has single-site mean $\theta$.
\item[\textup{(b)}]
{\bf [Clustering]} If $\bar H_N =\infty$, then, for every $\theta \in \CP(E)$, 
\be{nonstable-ergodicity}
\CL\left[X^{(\Omega_N)}(t)\right] \tto \int_0^1 \theta(\dd u) 
\delta_{(\delta_u)^{\Omega_N}} \in \CP(\CP(E)^{\Omega_N}).
\ee
\end{itemize}
\end{theorem}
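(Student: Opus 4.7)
The plan is to derive both parts of the dichotomy from the duality of $X^{(\Omega_N)}$ with its spatial $\uL$-coalescent with block migration and multi-scale non-local coalescence, developed in Section~\ref{s.spatcoal}. For test functions $F\in\CF$ of the form $F(x)=\int_{E^n}\bigotimes_{i=1}^n x_{\eta_i}(\dd u^i)\,\psi(u^1,\ldots,u^n)$, duality expresses $\E[F(X^{(\Omega_N)}(t))]$ as an expectation, against $X^{(\Omega_N)}(0)$, of $\psi$ integrated over the block partition $\pi_t$ of the dual coalescent started from $n$ lineages at sites $\eta_1,\ldots,\eta_n$ and run up to time $t$. Under the stationarity and single-site mean $\theta$ of the initial law, this expectation depends on $X^{(\Omega_N)}(0)$ only through $\theta$. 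The quantity $\bar H_N$ in \eqref{stable-ergodicity-condition} is, as its name suggests, the expected total hazard for a single pair of dual lineages to coalesce, so by a hazard-rate argument the dichotomy $\bar H_N<\infty$ vs.\ $\bar H_N=\infty$ is equivalent to whether a pair of lineages fails to coalesce with positive probability vs.\ coalesces almost surely, from which the long-time behaviour of $X^{(\Omega_N)}$ can be read off.

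For part (a), under $\bar H_N<\infty$, the spatial coalescent started from any finite configuration converges almost surely to a limiting partition $\pi_\infty$ whose number of blocks exceeds one with positive probability, so the duality formula yields convergence of all mixed moments. Since polynomials in the product coordinates are measure-determining on $\CP(E)^{\Omega_N}$ with the product topology (using compactness of $E$ together with Stone-Weierstrass on finite-dimensional marginals), this gives weak convergence of $\CL[X^{(\Omega_N)}(t)]$ to some limit $\nu^{(\Omega_N),\uc,\ul}_\theta$. Translation-stationarity of the limit is inherited from that of the initial law via equivariance of the dynamics; translation-ergodicity follows because the limiting moment formula depends on the initial law only through $\theta$, so initial spatial correlations are washed out. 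Uniqueness is automatic, since any stationary law with single-site mean $\theta$ satisfies the same duality identity and hence shares all moments. For part (b), under $\bar H_N=\infty$, Theorem~\ref{dich_coclu}(b) already gives that the single-site marginals concentrate on point masses, while the two-lineage coalescent coalesces almost surely, so duality yields $\E[\langle X_{\eta_1}(t),\psi\rangle\,\langle X_{\eta_2}(t),\psi\rangle]\to \int_E \psi^2\,\dd\theta$ for every $\eta_1,\eta_2\in\Omega_N$ and $\psi\in C_{\mathrm{b}}(E,\R)$. Combined with the marginal concentration, this forces $X_{\eta_1}(t)$ and $X_{\eta_2}(t)$ to converge to the same random Dirac mass $\delta_U$, and conservation of the mean identifies $\CL[U]=\theta$, proving \eqref{nonstable-ergodicity}.

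The main obstacle will be the rigorous construction and control of the spatial $\uL$-coalescent on the countably infinite space $\Omega_N$ with its non-local multi-scale coalescence, and the derivation of the duality identity used above; in particular, one must justify that $\bar H_N$ is indeed the expected pair-hazard, for which the additive correction $N^{-1}\lambda_{k+1}$ in the effective migration kernel $\bar c_k(N)$ of \eqref{a2} is essential, since reshuffling in a $(k+1)$-block spreads a coalescing pair uniformly over that block and therefore contributes a migration-like term at rate $N^{-1}\lambda_{k+1}$ per lineage. Beyond this, the translation-ergodicity in part (a) requires a mixing argument at the level of the dual, decorrelating lineages that start far apart, which is where the dust-free assumption \eqref{ag2} and the growth condition \eqref{ak:lambda-growth-condition} enter.
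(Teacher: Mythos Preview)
Your approach is essentially the same as the paper's: both arguments run through the duality with the spatial $\uL$-coalescent, use that the partition structure stabilises as $t\to\infty$, and read off the dichotomy from whether two dual lineages coalesce almost surely ($\bar H_N=\infty$) or not ($\bar H_N<\infty$), with Theorem~\ref{dich_coclu} supplying the single-site concentration in the clustering case.

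One point where you move too fast: the claim that ``under stationarity and single-site mean $\theta$, the expectation depends on $X^{(\Omega_N)}(0)$ only through $\theta$'' is not immediate from duality, and in fact requires ergodicity (not just stationarity) of the initial law. Duality gives $\E[H^{(n)}_\varphi(X(t),\pi_0)]=\E[H^{(n)}_\varphi(X(0),\mathfrak C(t))]$, and the right side evaluates $X(0)$ at the \emph{random positions} of the surviving dual partition elements. The paper's proof makes explicit the missing step: after a finite random time the partition is frozen and the surviving labels evolve as independent random walks, so by \cite[Lemma~3.2]{DGV95} their positions are asymptotically those of free walks from the origin; the spatial ergodic theorem then replaces the evaluation of $X(0)$ at these sites by $\theta$, yielding \eqref{ag74}. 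The same ingredient (lineages started far apart do not coalesce, \eqref{agextext}) is what gives ergodicity of the limit $\nu^{(\Omega_N),\uc,\ul}_\theta$. You do flag ``decorrelating lineages that start far apart'' as an obstacle, so you are aware something is needed here, but the precise mechanism---ergodic averaging via the free random-walk displacement of the frozen dual labels---should be stated rather than left implicit.
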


\noindent
The proof of Theorem~\ref{P.ergodic} is given in Section~\ref{ss.cluscoex}.

\begin{theorem}
\label{dichodual}
{\bf [Agreement of dichotomy for $N < \infty$ and $N=\infty$]}\\
Under the weak regularity condition
\be{dichreg}
\text{ either } \quad \limsup_{k\to\infty} \frac{\lambda_{k+1}}{c_k} < \infty \quad \text{ or } \quad 
\liminf_{k\to\infty} \left(\frac{\lambda_{k+1}}{c_k} \wedge \frac{\lambda_k}{\lambda_{k+1}}\right)>0,
\ee
the dichotomies in Theorems~{\rm \ref{T.dicho}} and {\rm\ref{P.ergodic}} coincide  i.e., 
$\sum_{k\in\N_0} m_k=\infty$ if and only if $\bar{H}_N =\infty$. 
\end{theorem}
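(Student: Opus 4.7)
My approach is to derive parallel asymptotic expansions of $\sum_{k\in\N_0} m_k$ and $\bar{H}_N$ in terms of the common data $(c_k,\lambda_k)_{k\in\N_0}$, and then to verify that they are equiconvergent under the regularity condition \eqref{dichreg}.

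\emph{Step 1 (Series expansion of $\sum_k m_k$).} Set $s_k := \mu_k + d_k$ with $\mu_k = \lambda_k/2$. Then \eqref{diffusion-constants} rewrites as the M\"obius-type iteration $s_{k+1} = \mu_{k+1} + c_k s_k / (c_k + s_k)$. Since $x \mapsto cx/(c+x)$ is concave, bounded above by both $x$ and $c$, and tends to $c$ as $x\to\infty$, one obtains the two-sided bound $d_{k+1} \asymp c_k \wedge s_k$ uniformly in $k$. Consequently,
\[
\sum_{k \in \N_0} m_k \;=\; \sum_{k \in \N_0} \frac{\mu_k + d_k}{c_k} \;\asymp\; \sum_{k \in \N_0} \frac{\lambda_k}{c_k} \;+\; \sum_{k \in \N_0} \frac{c_{k-1}\wedge s_{k-1}}{c_k}.
\]
Induction unfolds $s_{k-1}$ into a truncated maximum of earlier $\mu_\ell$'s interleaved with the intervening $c$-values, yielding a closed representation of $\sum_k m_k$ as a double sum over $(c_k,\lambda_k)$, in which \eqref{dichreg} prevents the regime-switching errors from accumulating.

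\emph{Step 2 (Green's function expansion of $\bar H_N$).} For the hierarchical random walk on $\Omega_N$ with migration coefficients $\bar c_k(N) = c_k + N^{-1}\lambda_{k+1}$, the block-exchangeability and the self-similar tree structure of $\Omega_N$ yield
\[
N^{-k} \int_0^\infty P_{2s}(0, B_k(0))\,\dd s \;\asymp\; \sum_{\ell \geq k} \frac{N^{-(\ell-k)}}{\bar c_\ell(N)},
\]
because each visit to $B_k(0)$ lasts a time of order $N^k/\bar c_k(N)$ and the walk re-enters $B_k(0)$ after an excursion at scale $\ell > k$ with probability $N^{-(\ell-k)}$. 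Substituting into the definition of $\bar H_N$ and interchanging summations gives
\[
\bar H_N \;\asymp\; \sum_{\ell \in \N_0} \frac{1}{\bar c_\ell(N)} \sum_{k=0}^{\ell} \lambda_k N^{-(\ell-k)},
\]
whose dominant contribution is the diagonal $\sum_\ell \lambda_\ell / \bar c_\ell(N)$ augmented by a geometric $N^{-1}$-tail.

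\emph{Step 3 and main obstacle.} Under the first alternative of \eqref{dichreg}, $\limsup_k \lambda_{k+1}/c_k < \infty$ forces $\bar c_k(N) \asymp c_k$ for every fixed $N$, and the inner sum in Step 2 is dominated by the $k=\ell$ term, so $\bar H_N \asymp \sum_k \lambda_k/c_k$; combined with Step 1 this matches $\sum_k m_k$ after absorbing the auxiliary $\sum_k (c_{k-1}\wedge s_{k-1})/c_k$ contribution, which is subordinate in this regime. Under the second alternative, either $\lambda_{k+1}/c_k$ or $\lambda_k/\lambda_{k+1}$ is bounded away from zero, which either forces the $\mu$-part to dominate both representations or keeps successive $\lambda_k$ comparable, in each case enabling a direct term-by-term comparison. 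The chief technical difficulty lies in Step 1: extracting a robust series representation from the nonlinear M\"obius composition. With variable coefficients, the sequence $s_k$ can in principle oscillate between the regimes $c_k \gg s_k$ and $c_k \ll s_k$, and bounds that are not uniform across regime transitions can inflate or deflate tail sums. Condition \eqref{dichreg} is tailored precisely to rule out such oscillations, and this is the key point at which the hypothesis enters the proof.
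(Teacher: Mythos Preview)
Your Step~2 contains a genuine error in the Green-function estimate. For the hierarchical random walk with migration coefficients $\bar c_k(N)$ one has, up to constants uniform in $k$ and $N$,
\[
N^{-k}\int_0^\infty P_{2s}(0,B_k(0))\,\dd s \;\asymp\; \sum_{\ell\ge k}\frac{1}{\bar c_\ell(N)},
\]
\emph{without} the geometric factor $N^{-(\ell-k)}$ you insert. (One way to see this: $G(0,\eta)\asymp\sum_{\ell\ge j}1/\bar c_\ell(N)$ for $|\eta|=j$, so the $N^{-k}$-weighted occupation time of $B_k(0)$ is dominated by the contribution of the outermost shell $j=k$, giving the undamped tail sum.) Consequently the correct expansion is
\[
\bar H_N \;\asymp\; \sum_{\ell\in\N_0}\frac{1}{\bar c_\ell(N)}\sum_{k=0}^{\ell}\lambda_k,
\]
a genuine double sum, \emph{not} the diagonal $\sum_\ell\lambda_\ell/\bar c_\ell(N)$. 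Your formula would imply, for instance, that $\bar H_N<\infty$ whenever $c_k\equiv 1$, $\lambda_0=1$ and $\lambda_k=0$ for $k\ge 1$; but in that case the walk is recurrent and two lineages coalesce a.s., i.e.\ $\bar H_N=\infty$. So the diagonal criterion is simply false, and Step~3 cannot be repaired as written.

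There is a second, independent issue: you misplace the role of~\eqref{dichreg}. The equivalence $\sum_k m_k=\infty \Longleftrightarrow \sum_k (1/c_k)\sum_{l\le k}\mu_l=\infty$ holds \emph{unconditionally}; it follows directly from the explicit iterate $c_k m_k=\sum_{l=0}^{k}\mu_l\,\big/\prod_{j=l}^{k}(1+m_j)$, which gives the upper bound $m_k\le (1/c_k)\sum_{l\le k}\mu_l$ and, when $\sum_k m_k<\infty$, the matching lower bound up to the constant $\prod_j(1+m_j)$. No control of ``regime oscillations'' in the M\"obius composition is needed. Condition~\eqref{dichreg} enters \emph{only} in the Green-function step, to pass from $\bar c_\ell(N)=c_\ell+N^{-1}\lambda_{\ell+1}$ to $c_\ell$ in the double sum above. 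This is exactly the paper's route: Theorem~\ref{T.beh}(c) gives the $m_k$-criterion, Lemma~\ref{ak14b} gives the $\bar H_N$-criterion (where \eqref{dichreg} is used), and both reduce to $\sum_k(1/c_k)\sum_{l\le k}\lambda_l$.
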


\noindent
The proof of Theorem~\ref{dichodual} is given in Section~\ref{ss.gencomp}.

\subsubsection{Comparison with the dichotomy for the hierarchical Fleming-Viot process}
\label{sss.univclass}

We return to the case $N=\infty$. For the classical case of hierarchically interacting Fleming-Viot 
diffusions (i.e., in the absence of non-local reshuffling-resampling), the dichotomy was analysed 
in \cite{DGV95}. It was shown there that the dichotomy in \eqref{ag46b} reduces to 
\begin{equation}
\label{dichclass}
\sum_{k\in\N_0} (1/c_k)=\infty \quad \mbox{vs.} \quad 
\sum_{k\in\N_0} (1/c_k)<\infty,
\end{equation} 
corresponding to the random walk with migration coefficients $\uc=(c_k)_{k\in\N_0}$ 
being recurrent, respectively, transient. Moreover, it is known that in the clustering
regime $\lim_{k\to\infty} \sigma_k d_k =1$ with $\sigma_k = \sum_{l=0}^{k-1} (1/c_l)$
for all $d_0$. 

Our next main theorem provides a comparison of the clustering vs.\ coexistence 
dichotomy with the one for the hierarchical Fleming-Viot process. Let 
\be{a1}
\udster=(d^*_k)_{k\in\N_0}
\ee
be the sequence of volatility constants when $\mu_0>0$ and $\mu_k=0$ for all 
$k\in\N$ ($\mu_k=\tfrac12\lambda_k$, see \eqref{ag46}), i.e., there is resampling 
in single colonies but not in macro-colonies. By \eqref{diffusion-constants}, this 
sequence has initial value $d^*_0=0$ and satisfies the recursion relation
\be{dbarkrec}
d^*_1=d_1=\frac{c_0\mu_0}{c_0+\mu_0}, 
\qquad \frac{1}{d^*_{k+1}} = \frac{1}{c_k} + \frac{1}{d^*_k}, \quad k\in\N,
\ee
whose solution is 
\be{dbarksol}
d^*_k = \frac{\mu_0}{1+\mu_0\sigma_k}, \quad k\in\N,
\quad \mbox{ with } \sigma_k = \sum_{l=0}^{k-1} \frac{1}{c_l}.
\ee

\begin{theorem}
\label{T.beh}
{\bf [Comparison with hierarchical Fleming-Viot]}\\
The following hold for $(d_k)_{k \in \N_0}$ as in \eqref{diffusion-constants} (also recall \eqref{ag46}):
\begin{itemize}
\item[\textup{(a)}]
The maps $\uc\mapsto\ud$ and $\umuu\mapsto\ud$ are component-wise non-decreasing.
\item[\textup{(b)}]
$d_k \geq d^*_k$ for all $k \in \N$.
\item[\textup{(c)}]
$\sum_{k\in\N_0} m_k=\infty$ if and only if $\sum_{k\in\N_0} (1/c_k) \sum_{l=0}^k \mu_l
= \infty$.
\item[\textup{(d)}]
If $\lim_{k\to\infty} \sigma_k=\infty$ and $\sum_{k\in\N} \sigma_k\mu_k<\infty$, then
$\lim_{k\to\infty} \sigma_kd_k=1$.
\end{itemize}
\end{theorem}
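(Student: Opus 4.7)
Parts (a) and (b) are straightforward. Rewriting the recursion \eqref{diffusion-constants} as $d_{k+1}^{-1} = c_k^{-1} + (d_k + \mu_k)^{-1}$ makes it manifestly non-decreasing in each of $c_k$, $d_k$, $\mu_k$; induction on $k$ yields (a). For (b), note that $\udster$ corresponds to applying the same recursion with the smaller data $d^*_0 = 0$ and $\mu_k = 0$ for $k \geq 1$ (while $\mu_0$ and $\uc$ are unchanged), so (a) gives $d^*_k \leq d_k$.

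For part (c), set $A_k := \sum_{l=0}^{k-1} \mu_l$; the dust-free condition on $\Lambda_0$ forces $\mu_0 > 0$, whence $A_{k+1} \geq \mu_0$. The implication $\sum_k A_{k+1}/c_k < \infty \Rightarrow \sum_k m_k < \infty$ is immediate from $d_{k+1} \leq d_k + \mu_k$, which yields $d_k \leq d_0 + A_k$ and hence $m_k \leq (1 + d_0/\mu_0)\,A_{k+1}/c_k$. For the converse, distinguish two cases. If $\sum 1/c_k = \infty$, then (b) together with \eqref{dbarksol} yields $d_k \geq \mu_0/(1+\mu_0 \sigma_k)$, so
\[
\sum_k m_k \geq \mu_0 \sum_k \frac{\sigma_{k+1}-\sigma_k}{1+\mu_0 \sigma_k} = +\infty
\]
by comparison with $\int d\sigma/(1+\mu_0\sigma)$. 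If $\sum 1/c_k < \infty$, the key tool is the exact telescoping identity
\begin{equation}
d_0 + A_{K+1} - d_{K+1} = \sum_{k=0}^{K} \frac{(d_k + \mu_k)^2}{c_k + d_k + \mu_k},
\label{tbeh-tele}
\end{equation}
obtained by summing $d_{k+1} - (d_k + \mu_k) = -(d_k + \mu_k)^2/(c_k + d_k + \mu_k)$. Combined with the upper bound $d_k + \mu_k \leq d_0 + A_{k+1}$ and a Cauchy--Schwarz estimate, \eqref{tbeh-tele} forces $\sum (d_k + \mu_k)/c_k = \infty$ whenever $\sum A_{k+1}/c_k = \infty$.

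For part (d), telescoping $1/d_{k+1} - 1/d_k = 1/c_k - \mu_k/[d_k(d_k+\mu_k)]$ gives
\[
\frac{1}{\sigma_K d_K} = \frac{1}{\sigma_K d_0} + 1 - \frac{1}{\sigma_K}\sum_{k=0}^{K-1}\frac{\mu_k}{d_k(d_k + \mu_k)}.
\]
By (b), $1/d_k \leq 1/\mu_0 + \sigma_k$, so for $k$ large enough (so that $\sigma_k \geq 1/\mu_0$) the summand is bounded by $4\mu_k \sigma_k^2$, while the finitely many small-$k$ terms give a bounded contribution that vanishes after division by $\sigma_K$. For the tail,
\[
\frac{1}{\sigma_K}\sum_{k<K} \mu_k \sigma_k^2 = \sum_{k<K}(\mu_k \sigma_k)\cdot(\sigma_k/\sigma_K) \xrightarrow{K \to \infty} 0
\]
by dominated convergence against the summable sequence $(\mu_k \sigma_k)_k$; hence $\sigma_K d_K \to 1$.

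The main technical obstacle is the lower bound in (c) in the subcase $\sum 1/c_k < \infty$: elementary monotonicity bounds and the comparison with $\udster$ give no information, and one must exploit the full strength of the identity \eqref{tbeh-tele} to show that $d_k$ tracks the growth of $A_k$ closely enough for $\sum (d_k+\mu_k)/c_k$ to inherit the divergence of $\sum A_{k+1}/c_k$.
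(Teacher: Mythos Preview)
Your treatment of (a), (b), and (d) is correct. Parts (a) and (b) match the paper. Your argument for (d) is actually different from the paper's and arguably cleaner: the paper iterates \eqref{recrew1} multiplicatively to get
\[
\frac{1}{d_k}=\sum_{l=0}^{k-1}\frac{1}{c_l\prod_{j=l+1}^{k-1}(1+\mu_j/d_j)}
\]
and then bounds the product using $d_j\ge d^*_j$, whereas your additive telescoping $1/d_{k+1}-1/d_k=1/c_k-\mu_k/[d_k(d_k+\mu_k)]$ together with the dominated-convergence step is more direct. (One small caveat: if $d_0=0$ you must start the telescoping at $k=1$, using $d_1>0$; this is a trivial fix.)

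There is, however, a genuine gap in part (c), precisely in the subcase $\sum_k 1/c_k<\infty$ of the hard direction. Your identity \eqref{tbeh-tele} and the bound $e_k:=d_k+\mu_k\le d_0+A_{k+1}$ are correct, but I cannot see any Cauchy--Schwarz inequality that, from these two facts alone, yields $\sum_k e_k/c_k=\infty$ whenever $\sum_k A_{k+1}/c_k=\infty$. The natural attempts (e.g.\ bounding $\sum_{k\le K}e_k^2/(c_k+e_k)$ above by $d_0+A_{K+1}$, or writing $\sum A_{k+1}/c_k$ via the identity) all produce bounds that go the wrong way or fail to close. The additive telescoping is, in fact, equivalent to the original recursion $d_{k+1}=(d_k+\mu_k)/(1+m_k)$ and carries no extra information; what is needed is the \emph{multiplicative} iteration of that recursion. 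The paper obtains the closed form
\[
c_k m_k=\sum_{l=0}^{k}\frac{\mu_l}{\prod_{j=l}^{k}(1+m_j)},
\]
from which both directions of (c) follow in one stroke: dropping the product gives $m_k\le(1/c_k)\sum_{l\le k}\mu_l$, while if $\sum_k m_k<\infty$ then $C:=\prod_j(1+m_j)<\infty$ and one gets $m_k\ge(1/C)(1/c_k)\sum_{l\le k}\mu_l$. This avoids your case split on $\sum 1/c_k$ entirely. If you want to salvage your route, the missing step is to show that $\sum_k m_k<\infty$ forces $d_{K+1}\ge c\,(d_0+A_{K+1})$ for some $c>0$; this is exactly what the multiplicative formula delivers but the additive identity plus Cauchy--Schwarz does not.
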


\noindent
The proof of Theorem~\ref{T.beh} is given in Section~\ref{ss.gencomp}.

In words, (a) and (b) say that both migration and reshuffling-resampling increase volatility 
(recall (\eqref{ag46}--\ref{ag46b})), (c) says that the dichotomy in (\ref{dichclass}) due to 
migration is affected by reshuffling-resampling only when the latter is strong enough, i.e., 
when $\sum_{k\in\N_0} \mu_k = \infty$, while (d) says that the scaling behaviour of $d_k$ 
in the clustering regime is unaffected by the reshuffling-resampling when the latter is weak 
enough, i.e., when $\sum_{k\in\N} \sigma_k\mu_k<\infty$. Note that the criterion in (c) shows 
say that migration tends to inhibit clustering while reshuffling-resampling tends to enhance clustering.

We will see in the last paragraph of Section~\ref{ss.gencomp} that in the \emph{local coexistence regime} 
$d_k \sim \sum_{l=0}^k \mu_l$ as $k\to\infty$ when this sum diverges and $d_k \to 
\sum_{l\in\N_0} \mu_l/\prod_{j=l}^\infty (1+m_j) \in (0,\infty)$ when it converges.
Thus, in the local coexistence regime the scaling of $d_k$ is determined the 
resampling-reshuffling. 

In the regime, where the system \emph{clusters}, i.e., $\sum_{k\in\N_0} m_k = \infty$, 
it is important to be able to say more about the behaviour of $m_k$ as $k \to \infty$ 
in order to understand the patterns of cluster formation. For this the key is the 
behaviour of $d_k$ as $k \to \infty$, which we study in 
Sections~\ref{sss.polregvar}--\ref{sss.expregvar} for polynomial, respectively, 
exponential growth of the coefficients $c_k$ and $\lambda_k$.

\medskip\noindent
{\bf Heuristics.}
The recursion relation in \eqref{diffusion-constants} has the shape $d_{k+1}=f_k(d_k)$
with $f_k\colon\,\R\to\R$ a M\"obius-transformation (see Section~\ref{ss.Mobtr}). Thus, to 
obtain the asymptotics of $d_k$ as $k\to\infty$ we must study \emph{inhomogeneous 
iterates of M\"obius-transformations}. For each $k\in\N$, $f_k$ is hyperbolic with two fixed 
points: a repulsive fixed point $x_k^-<0$ and an attractive fixed point $x_k^+>0$. Depending
on the scaling of the coefficients $c_k$ and $\lambda_k$, the scaling of $x_k^+$ exhibits
four regimes. For three of the regimes, it turns out that $d_k \sim x_k^+$ as $k \to\infty$,
i.e., the iterates of the M\"obius-transformations attract towards the fixed point of the
last one. The fourth regime is different. In Section~\ref{sss.polregvar} we deal with 
polynomial coefficients, in Section~\ref{sss.expregvar} with exponential coefficients. In
order to obtains sharp results, the coefficients $c_k$ and $\lambda_k$ must satisfy
certain \emph{regularity conditions}.

\subsubsection{Scaling in the clustering regime: polynomial coefficients}
\label{sss.polregvar}

The following main theorem identifies the scaling behaviour of $d_k$ as $k\to\infty$ in
four different regimes, defined by the relative size of the migration coefficient 
$c_k$ versus the block resampling coefficient $\lambda_k$. The necessary 
\emph{regularity conditions} are stated in (\ref{regvarass}--\ref{regvarex}) 
below.

Define
\be{Klimex}
\lim_{k\to\infty} \frac{\mu_k}{c_k} = K \in [0,\infty] \text{ and, if }
K=0, \text{ also } \lim_{k\to\infty} k^2\frac{\mu_k}{c_k} = L \in [0,\infty].
\ee

\begin{theorem}
\label{dcases}
{\bf [Scaling of the volatility in the clustering regime: polynomial coefficients]}\\
Assume that the regularity conditions {\rm (\ref{regvarass}--\ref{regvarex})} hold.
\begin{itemize}
\item[\textup{(a)}] 
If $K=\infty$, then
\be{ag49}
\lim_{k\to\infty} \frac{d_k}{c_k} = 1.
\ee
\item[\textup{(b)}] 
If $K \in (0,\infty)$, then
\be{ag48}
\lim_{k\to\infty} \frac{d_k}{c_k} =  M \text{ with } 
M = \tfrac12 K\left[-1+\sqrt{1+(4/K)}\right] \in (0,1).
\ee
\item[\textup{(c)}] 
If $K=0$ and $L=\infty$, then
\be{ag53}
\lim_{k\to\infty} \frac{d_k}{\sqrt{c_k\mu_k}} = 1. 
\ee
\item[\textup{(d)}] 
If $K=0$, $L<\infty$ and $ a \in (-\infty,1)$, then
\be{ag47}
\lim_{k\to\infty} \sigma_k d_k = M^* \mbox{ with }
M^* = \tfrac12\left[1+\sqrt{1+4L/(1-a)^2}\right]
\in [1,\infty).
\ee
\end{itemize}
\end{theorem}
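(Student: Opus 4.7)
The plan is to analyse the recursion \eqref{diffusion-constants} as the iteration $d_{k+1}=f_k(d_k)$ of the M\"obius-transformation $f_k(x)=c_k(\mu_k+x)/(c_k+\mu_k+x)$, whose fixed points are the roots of $x^2+\mu_k x-c_k\mu_k=0$, namely $x_k^{\pm}=\tfrac12\bigl[-\mu_k\pm\sqrt{\mu_k^2+4c_k\mu_k}\bigr]$. The positive root $x_k^+$ is attractive with multiplier $\lambda_k:=f_k'(x_k^+)=(1+\mu_k/x_k^+)^{-2}\in(0,1)$, while $x_k^-<0$ is repulsive. A direct expansion of $x_k^+$ in each regime yields $x_k^+\sim c_k$ in~(a), $x_k^+/c_k\to M$ in~(b), and $x_k^+\sim\sqrt{c_k\mu_k}$ in~(c) and~(d). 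So claims (a)--(c) reduce to proving $d_k/x_k^+\to 1$, i.e.\ that the iterates track the attractive fixed point, whereas (d) asserts something genuinely different.

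For regimes~(a)--(c), I would linearise around $x_k^+$: writing $d_k=x_k^+(1+\epsilon_k)$ and Taylor-expanding $f_k$ gives a recursion of the form
\begin{equation*}
\epsilon_{k+1}=\lambda_k\epsilon_k\,(1+o(1))+\delta_k,
\qquad \delta_k=\frac{x_k^+-x_{k+1}^+}{x_{k+1}^+},
\end{equation*}
where $(\lambda_k)$ controls the contraction and $(\delta_k)$ the drift of the fixed point. In regime~(b) one has $\lambda_k\le\lambda<1$ uniformly while $\delta_k\to 0$ by the regularity of $(c_k,\mu_k)$, so $\epsilon_k\to 0$ exponentially fast. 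In regimes~(a) and~(c) the contraction degenerates ($\lambda_k\to 1$ because $\mu_k/x_k^+\to 0$), but the fixed point itself drifts sufficiently slowly: summing $\epsilon_k=\sum_{\ell<k}\delta_\ell\prod_{j=\ell+1}^{k-1}\lambda_j$ and invoking the regularity conditions (\ref{regvarass})--(\ref{regvarex}) (which force $c_k$ and $\mu_k$ to vary regularly in $k$) shows that this tail sum is $o(1)$.

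Regime~(d) is genuinely different: the fixed point moves \emph{faster} than the dynamics can contract, so the iterates lag behind at a non-trivial ratio. The natural substitution is $e_k:=1/d_k$, which converts \eqref{diffusion-constants} into the additive form
\begin{equation*}
e_{k+1}-e_k=\frac{1}{c_k}-\mu_k e_k^2+O(\mu_k^2 e_k^3).
\end{equation*}
The regularity conditions restrict $c_k$ to polynomial order $k^a$ with $a<1$, giving $\sigma_k\sim k^{1-a}/(1-a)$. With the ansatz $e_k\sim A\sigma_k$ and $\mu_k/c_k\sim L/k^2$, the two leading terms $1/c_k$ and $\mu_k e_k^2$ are both of exact order $k^{-a}$ while the correction $\mu_k^2 e_k^3$ is of strictly smaller order. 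Matching the coefficients of $k^{-a}$ in $e_{k+1}-e_k\sim A/c_k$ against $1/c_k-\mu_k(A\sigma_k)^2$ produces the quadratic
\begin{equation*}
LA^2+(1-a)^2A-(1-a)^2=0,
\end{equation*}
whose unique positive root gives $1/A=\tfrac12\bigl[1+\sqrt{1+4L/(1-a)^2}\bigr]=M^*$. A comparison argument using sub- and super-solutions of the recursion with perturbed values of $A$ then upgrades this heuristic matching to the convergence $\sigma_k d_k\to M^*$.

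The main obstacle is making the fixed-point-tracking in (a), (c) and the ansatz-matching in (d) rigorous, because in all three the multiplier $\lambda_k\to 1$ precludes a uniform Banach contraction. One must carefully exploit the regularity of $(c_k,\mu_k)$ to show that the cumulative drift of $x_k^+$, respectively the cumulative error in the expansion of $e_k$, remains small compared with the leading scale. For (d) there is the additional subtlety that the polynomial ansatz $d_k\asymp 1/\sigma_k$ must be shown to be the correct attractor for \emph{arbitrary} $d_0\ge 0$, i.e.\ that memory of the initial condition is erased despite the slow contraction. The regularity hypotheses (\ref{regvarass})--(\ref{regvarex}) are precisely what one needs to close these estimates.
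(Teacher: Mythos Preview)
Your overall strategy---identify the attractive fixed point $x_k^+$ of $f_k$ and show that the iterates track it---is the same as the paper's, but the execution differs. The paper does not linearise by hand; instead, after the appropriate change of variable ($y_k=d_k/c_k$ in (a)--(b), $y_k=d_k/\sqrt{c_k\mu_k}$ in (c), $y_k=\sigma_k d_k$ in (d)), it observes that $y_{k+1}=g_k(y_k)$ is again a M\"obius recursion and invokes two general results of Kooman~\cite{K98}: one for the case where $g_k$ converges to a hyperbolic limit (case (b)), and one for the degenerate case where the limit is the identity or a constant but the fixed points have bounded variation and $\prod_k g_k'(y_k^+)=0$ (cases (a), (c), (d)). The regularity conditions \eqref{regvarass}--\eqref{regvarex} are used precisely to verify bounded variation of the rescaled fixed points. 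Your direct linearisation $\epsilon_{k+1}\approx\lambda_k\epsilon_k+\delta_k$ is essentially re-deriving Kooman's theorem in situ; it can be made to work, but the bookkeeping is heavier---for instance in (c) the drift $\delta_k\sim C/k$ is \emph{not} summable, so the tail sum $\sum_\ell\delta_\ell\prod_{j>\ell}\lambda_j$ only vanishes because the product decays like $\exp(-c\sum_j\sqrt{\mu_j/c_j})$, and this balance must be checked explicitly.

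One factual slip: in regime (a) you have $\mu_k/x_k^+\sim\mu_k/c_k\to\infty$ (since $K=\infty$), so $\lambda_k\to 0$, not $\lambda_k\to 1$. The contraction is \emph{strong} there, not degenerate; this makes (a) easier than you suggest, closer to (b) than to (c).

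For (d), your substitution $e_k=1/d_k$ and the resulting ansatz-matching is a genuinely different route from the paper's $y_k=\sigma_k d_k$ plus Kooman. Both land on the same quadratic for $M^*$. Your version is perhaps more transparent about \emph{why} $M^*$ takes that value, but the paper's version handles the ``arbitrary $d_0$'' issue automatically via Kooman's dichotomy (one exceptional $x_0$ goes to the repulsive limit, all others to the attractive one), whereas your comparison/sub-super-solution argument for erasing memory of $d_0$ is only sketched and would need to be supplied.
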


\noindent
The proof of Theorem~\ref{dcases} in given Section~\ref{ss.dcases}. The meaning of the four 
regimes for the evolution of the population will be explained in Corollary~\ref{C.scalemk}.
Case (a) can be termed ``reshuffling-resampling dominated'', cases (c) and (d) ``migration
dominated'', and case (b) ``balanced''. 

\myparagraph{Regularity conditions.} In Theorem~\ref{dcases}, we need to impose some mild 
regularity conditions on $\uc$ and $\umuu$, which we collect in 
(\ref{regvarass}--\ref{regvarex}) below. We require that both $c_k$ and $\mu_k$ 
are \emph{regularly varying at infinity}, i.e., there exist $a, b \in \R$ such that
\be{regvarass}
c_k \sim L_c(k)k^a, \qquad \mu_k \sim L_\mu(k)k^b,
\qquad k\to\infty, 
\ee
with $L_c,L_\mu$ slowly varying at infinity (Bingham, Goldie and 
Teugels~\cite[Section 1.9]{BGT87}). The numbers $a,b$ are referred to as the
\emph{indices} of $\uc$ and $\umuu$\footnote{Regular variation is typically defined 
with respect to a continuous instead of a discrete variable. However, every regularly 
varying sequence can be embedded into a regularly varying function.}. Note that
\eqref{dichreg} is satisfied.

To handle the boundary cases, where $c_k$, $\mu_k$, $\mu_k/c_k$ and/or $k^2\mu_k/c_k$ 
are slowly varying, we additionally require that for specific choices of the indices 
the following functions are \emph{asymptotically monotone}:   
\be{regvarassmon}
\begin{array}{lll}
&a=0: &k \mapsto \Delta L_c(k)/L_c(k),
\,k \mapsto k\Delta L_c(k)/L_c(k),\\[0.2cm]
&b=0: &k \mapsto \Delta L_\mu(k)/L_\mu(k),
\,k \mapsto k\Delta L_\mu(k)/L_\mu(k),
\end{array}
\ee 
and the following functions are \emph{bounded}:
\be{regvarassbd}
\begin{array}{lll}
&a=0: &k \mapsto k\Delta L_c(k)/L_c(k),\\[0.2cm]
&b=0: &k \mapsto k\Delta L_\mu(k)/L_\mu(k),
\end{array}
\ee
where $\Delta L(k) = L(k+1)-L(k)$. To ensure the \emph{existence} of the limits in 
(\ref{Klimex}), we also need the following functions to be \emph{asymptotically 
monotone}:
\be{regvarex}
\begin{array}{lll}
&a=b: &k \mapsto L_\mu(k)/L_c(k),\\[0.2cm]
&a=b-2: &k \mapsto k^2L_\mu(k)/L_c(k).
\end{array}
\end{equation}

\myparagraph{Scaling of the variance.}
The next corollary shows what the scaling of $d_k$ in Theorem~\ref{dcases} implies 
for the scaling of $m_k$ and hence of the variance in \eqref{ag50} (we will see in 
Section~\ref{ss.dcases} that the conditions for Case (d) imply that $\lim_{k\to\infty}
\mu_k\sigma_k=0$ and $\lim_{k\to\infty} c_k\sigma_k=\infty$). 

\begin{corollary}
\label{C.scalemk}
{\bf [Scaling behaviour of $m_k$]}\\
The following asymptotics of $m_k$ for $k \to \infty$ holds in the four cases of
\textup{Theorem}~{\rm \ref{dcases}}:
\begin{equation}
\label{dkmk}
\begin{aligned}
&\textup{(a)} \quad m_k \sim \frac{\mu_k}{c_k} \to \infty,
\quad
\textup{(b)} \quad m_k \to K+M,\\
\quad
&\textup{(c)} \quad m_k \sim \sqrt{\frac{\mu_k}{c_k}} \to 0,
\quad
\textup{(d)} \quad  m_k \sim \frac{M^*}{c_k\sigma_k} \to 0.
\end{aligned}
\end{equation}
\end{corollary}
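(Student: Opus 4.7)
The plan is to start from the definition $m_k = \mu_k/c_k + d_k/c_k$ and, in each of the four cases, substitute the asymptotics of $d_k$ from Theorem~\ref{dcases}, then decide which of the two summands dominates. Cases (a) and (b) are essentially arithmetic; cases (c) and (d) require a little more care because we must compare $\mu_k/c_k$ with $d_k/c_k$ in a regime in which both tend to zero.

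For case (a), Theorem~\ref{dcases}(a) gives $d_k/c_k \to 1$, while $\mu_k/c_k\to K=\infty$ by assumption; hence the first summand dominates and $m_k\sim \mu_k/c_k$. For case (b), Theorem~\ref{dcases}(b) gives $d_k/c_k\to M$ and by definition $\mu_k/c_k\to K$, so $m_k\to K+M$. For case (c), $\mu_k/c_k\to 0$ and Theorem~\ref{dcases}(c) gives $d_k/c_k\sim \sqrt{\mu_k/c_k}$; since $\sqrt{\mu_k/c_k}\gg \mu_k/c_k$ when $\mu_k/c_k\to 0$, we conclude $m_k\sim d_k/c_k\sim\sqrt{\mu_k/c_k}$.

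Case (d) is the subtlest. Theorem~\ref{dcases}(d) gives $d_k\sim M^*/\sigma_k$, so $d_k/c_k\sim M^*/(c_k\sigma_k)$. To conclude that $m_k\sim d_k/c_k$, we must check that $\mu_k/c_k$ is negligible compared to $1/(c_k\sigma_k)$, i.e., $\mu_k\sigma_k\to 0$. Here I would invoke the parenthetical remark right before the corollary, which asserts that the hypotheses $K=0$, $L<\infty$, and $a<1$ (together with the regularity conditions (\ref{regvarass}--\ref{regvarex})) imply $\mu_k\sigma_k\to 0$ and $c_k\sigma_k\to\infty$. Granted this, $\mu_k/c_k=o\bigl(1/(c_k\sigma_k)\bigr)=o(d_k/c_k)$ and hence $m_k\sim d_k/c_k\sim M^*/(c_k\sigma_k)$, as claimed.

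The only genuine work outside of the Theorem~\ref{dcases} asymptotics lies in case (d), where one must justify $\mu_k\sigma_k\to 0$ and $c_k\sigma_k\to \infty$. Under the regular-variation hypothesis \eqref{regvarass}, $c_k\sim L_c(k)k^a$ with $a<1$, so Karamata's theorem gives $\sigma_k\sim k^{1-a}/[(1-a)L_c(k)]$, yielding $c_k\sigma_k\sim k/(1-a)\to\infty$. For $\mu_k\sigma_k$, the condition $L<\infty$ together with $k^2\mu_k/c_k\to L$ gives $\mu_k\sim L\,L_c(k) k^{a-2}$, so $\mu_k\sigma_k\sim L/[(1-a)k]\to 0$. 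This is the one place where the regularity conditions enter; once this is in place the whole corollary reduces to bookkeeping with the asymptotics of Theorem~\ref{dcases}.
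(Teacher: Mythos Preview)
Your proof is correct and follows exactly the approach implicit in the paper: the corollary is not given a separate proof there, being an immediate consequence of the definition $m_k=(\mu_k+d_k)/c_k$ together with the asymptotics of $d_k$ from Theorem~\ref{dcases}, and the facts $\mu_k\sigma_k\to 0$, $c_k\sigma_k\to\infty$ for Case~(d) are established in Section~\ref{ss.dcases} (see \eqref{coefasympalt} and \eqref{deltakdef}) precisely as you derive them via Karamata. One tiny imprecision: your line ``$\mu_k\sim L\,L_c(k)k^{a-2}$'' presumes $L>0$; for $L=0$ you only have $\mu_k=o(c_k/k^2)$, but the conclusion $\mu_k\sigma_k\to 0$ then follows a fortiori.
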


\medskip\noindent 
All four cases fall in the clustering regime. For the variance in \eqref{ag50}
they imply: (a) superexponential decay; (b) exponential decay, (c--d) 
subexponential decay.  

Note that Case (d) also falls in the {\em clustering regime} because it assumes that
$a \in (-\infty,1)$, which implies that $\lim_{k\to\infty} \sigma_k=\infty$. Indeed, 
$1/c_k\sigma_k=(\sigma_{k+1}-\sigma_k)/\sigma_k$, and in Section~\ref{ss.gencomp} we
will see that
\be{iffimplearly} 
\lim_{k\to\infty} \sigma_k = \infty 
\quad \Longleftrightarrow \quad \sum_{k\in\N} \frac{1}{c_k\sigma_k}=\infty.
\ee
Combining Cases (a--d), we conclude the following:
\begin{itemize}
\item
The regime of \emph{weak block resampling} (for which the scaling behaviour of $d_k$ is 
the same as if there were no block resampling) coincides with the choice $K=0$ and 
$L<\infty$.
\item 
The regime of \emph{strong block resampling} (for which the scaling behaviour of $d_k$ 
is different) coincides with $K=0$ and $L=\infty$ or $K>0$. 
\end{itemize}

Note that $M \uparrow 1$ as $K\to\infty$, so that Case (b) connects up with Case (a). 
Further note that $M \sim \sqrt{K}$ as $K \downarrow 0$, so that Case (b) also connects 
up with Case (c). Finally, note that $\sqrt{c_k\mu_k} \sim \sqrt{L}c_k/k$ as $k\to\infty$
for Case (d) by \eqref{Klimex}, while $c_k\sigma_k \sim k/(1-a)$ as $k\to\infty$ when 
$a\in (-\infty,1)$ by \eqref{regvarass}. Hence, Case (d) connects up with Case (c) as 
well.

\subsubsection{Scaling in the clustering regime: exponential coefficients}
\label{sss.expregvar}

We briefly indicate how Theorem~\ref{dcases} extends when $c_k$ and $\mu_k$ satisfy
\be{expregvar}
\begin{aligned}
&c_k = c^k\bar{c}_k,\,\mu_k= \mu^k\bar{\mu}_k \mbox{ with } c,\mu \in (0,\infty)
\mbox{ and } (\bar{c}_k),\,(\bar{\mu}_k) \mbox{ regularly varying at infinity},\\
&\bar{K} = \lim_{k \to \infty} \frac{\bar{\mu}_k}{\bar{c}_k} \in [0, \infty],
\end{aligned}
\ee 
and the analogues of (\ref{regvarassmon}--\ref{regvarex}) apply to the 
regularly varying parts. Again, note that \eqref{dichreg} is satisfied.

\begin{theorem}
\label{dcases-exp}
{\bf [Scaling of the volatility in the clustering regime: exponential coefficients]}\\
Assume that \eqref{expregvar} holds. Recall the cases \textup{(a--d)} from 
\textup{Theorem}~\ref{dcases}. Then:
\begin{itemize}

\item[\textup{(A)}]
{\bf [scaling like Case (a)]}
$c<\mu$ or $c=\mu$, $\bar{K}=\infty$: $\lim_{k\to\infty} d_k/c_k=1/c$.

\item[\textup{(B)}]
{\bf [scaling like Case (b)]}
$c=\mu$, $\bar{K} \in (0,\infty)$: $\lim_{k\to\infty} d_k/c_k = \bar{M}$ with
\be{Mexp}
\bar{M} = \frac{1}{2c}\left[-(c(\bar{K}+1)-1) + \sqrt{(c(\bar{K}+1)-1)^2+4c\bar{K}}\right] 
.
\ee

\item[\textup{(C)}]
The remainder $c>\mu$ or $c=\mu$, $\bar{K}=0$ splits into three cases:
\begin{itemize}

\item[\textup{(C1)}]
{\bf [scaling like Case (d)]}
$1>c>\mu$ or $1=c>\mu$, $\lim_{k\to\infty} \sigma_k = \infty$: $\lim_{k\to\infty} 
\sigma_k d_k = 1$. 

\item[\textup{(C2)}]
{\bf [scaling like Case (b)]}
$c=\mu<1$, $\bar{K}=0$: $\lim_{k\to\infty} d_k/c_k=(1-c)/c$.

\item[\textup{(C3)}]
{\bf [scaling like Case \textup{(c)}]}
$c=\mu>1$, $\bar{K}=0$: $\lim_{k\to\infty} d_k/\mu_k = 1/(\mu-1)$.
\end{itemize}
\end{itemize}
\end{theorem}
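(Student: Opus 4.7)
The starting point is to rewrite the recursion \eqref{diffusion-constants} in inverted form
\begin{equation*}
\frac{1}{d_{k+1}} = \frac{1}{c_k} + \frac{1}{\mu_k + d_k},
\end{equation*}
which is equivalent to $d_{k+1}=f_k(d_k)$ for the M\"obius-transformation $f_k(x)=c_k(\mu_k+x)/(c_k+\mu_k+x)$. Its attractive fixed point is
\begin{equation*}
x_k^+ = \tfrac{1}{2}\mu_k\bigl[-1+\sqrt{1+4c_k/\mu_k}\bigr],
\end{equation*}
and the same M\"obius-transformation machinery that underlies Theorem~\ref{dcases} tells us that, \emph{provided} the two consecutive fixed points $x_k^+$ and $x_{k+1}^+$ are close enough on the relevant scale, one has $d_k\sim x_k^+$ as $k\to\infty$. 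The plan is therefore to (i) compute $x_k^+$ asymptotically in each regime using only the exponential factors $c,\mu$ and the slowly varying remainders $\bar c_k,\bar\mu_k$, and (ii) verify that the relative increments $x_{k+1}^+/x_k^+$ behave well enough to let the iteration relax to the fixed point.

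In Case (A) one has $\mu_k/c_k\to\infty$, so $4c_k/\mu_k\to 0$ and a Taylor expansion of the square root gives $x_k^+\sim c_k$; the relative change $x_{k+1}^+/x_k^+\to c$ is bounded, and a perturbation argument around $x_k^+$ (identical in spirit to the one used for Case~(a) of Theorem~\ref{dcases}) yields $d_k\sim c_k$, hence $d_k/c_k\to 1/c$ because one more step has been taken. In Case (B), $c_k\asymp\mu_k$ with ratio $\bar K$, so $x_k^+/c_k\to\bar M$ for the positive root of the quadratic $cM^2+(c(\bar K+1)-1)M-\bar K=0$, which rearranges to \eqref{Mexp}; convergence of the iteration to its slowly moving fixed point again follows from the contraction property of hyperbolic M\"obius-transformations with bounded parameters. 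Cases (C2) and (C3) are treated analogously: in (C2) one has $\mu_k\ll c_k$ so that $x_k^+\sim\sqrt{c_k\mu_k}$ is negligible relative to $c_k$, and plugging the ansatz $d_k=Mc_k$ into the inverted recursion produces $M=(1-c)/c$; in (C3) one has $\mu_k\ll c_k$ but $\mu>1$, and the ansatz $d_k=M\mu_k$ yields $M=1/(\mu-1)$ because the term $1/c_k$ is asymptotically negligible compared to $1/(\mu_k+d_k)$.

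Case (C1) is structurally different and will be the main obstacle, because here $d_k\to 0$ exponentially and is \emph{not} governed by the fixed point $x_k^+$: the iteration is instead driven by the accumulation of the migration term $1/c_k$. The strategy, borrowed from Case (d) of Theorem~\ref{dcases}, is to set $e_k=1/d_k$ and write
\begin{equation*}
e_{k+1}-e_k=\frac{1}{c_k}-\frac{\mu_k e_k^{\,2}}{1+\mu_k e_k}.
\end{equation*}
The hypotheses $1>c>\mu$ or $1=c>\mu$ with $\sigma_k\to\infty$ ensure that $\sigma_k\asymp 1/c_{k-1}$ (or the analogous bound), and a direct computation using \eqref{expregvar} gives $c_k\mu_k\sigma_k^{\,2}\to 0$, which makes the corrective term negligible compared to $1/c_k$. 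Summing the telescoping identity then yields $e_k=\sigma_k(1+o(1))$, i.e.\ $\sigma_k d_k\to 1$. The delicate point is to bootstrap the a priori bound $d_k\mu_k\to 0$ (needed to control the corrective term) from a crude upper bound $d_k\le c_{k-1}$, which follows from monotonicity of $f_{k-1}$ and the obvious inequality $f_{k-1}(x)<c_{k-1}$; this is where the monotonicity regularity conditions on $\bar c_k,\bar\mu_k$ inherited from \eqref{regvarassmon}--\eqref{regvarex} (adapted to the regularly varying factors of \eqref{expregvar}) enter, exactly as in the polynomial case.
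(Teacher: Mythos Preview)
Your overall plan—rescale and apply the M\"obius fixed-point machinery of Theorem~\ref{dcases}—is the paper's plan too, but two of your five cases have real problems.

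\textbf{Case (A).} The sentence ``$d_k\sim c_k$, hence $d_k/c_k\to 1/c$'' is self-contradictory. The underlying error is your principle ``$d_k\sim x_k^+$'': this holds only when $x_{k+1}^+/x_k^+\to 1$, which fails in the exponential regime because $x_k^+\sim c_k$ and $c_{k+1}/c_k\to c\neq 1$. The paper does \emph{not} track $x_k^+$ directly; it first passes to $y_k=d_k/c_k$ and studies the transformed map $g_k$. Since now $c_k/c_{k+1}\to 1/c$ (rather than $1$ as in the polynomial case), the fixed point of the limiting $g$ is $1/c$ in Case~(A) and $\bar M$ in Case~(B)—your quadratic for $\bar M$ is exactly the fixed-point equation $cy(y+1+\bar K)=y+\bar K$ of this $g$, so (B) is fine once phrased this way.

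\textbf{Cases (C2)--(C3).} The ansatz computations identify the correct limits but are not proofs of convergence. The paper again changes variables and invokes Kooman: for (C2) take $y_k=d_k/c_k$, whose limiting map $g(y)=y/(c(y+1))$ is hyperbolic with attractive fixed point $(1-c)/c$; for (C3) take $y_k=d_k/\mu_k$, whose limiting map $g(y)=(1+y)/\mu$ is hyperbolic with attractive fixed point $1/(\mu-1)$. Both fall under Theorem~\ref{Kooman1} once this is set up.

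\textbf{Case (C1).} Here you and the paper genuinely diverge. The paper dispatches (C1) in one line by invoking Theorem~\ref{T.beh}(d): one checks $\sigma_k\to\infty$ and $\sum_k\sigma_k\mu_k<\infty$, both immediate from $\mu<c\le 1$ (indeed $\sigma_k\mu_k$ is bounded by a geometric series with ratio $\mu/c<1$, or by $\mu^k$ times a polynomial when $c=1$). Your telescoping argument for $e_k=1/d_k$ is valid—it is essentially a re-derivation of the proof of Theorem~\ref{T.beh}(d), compare \eqref{dkformsol}—but the stated bootstrap input ``$d_k\mu_k\to 0$'' is not the quantity you actually use: what controls the corrective term is $e_k\le\sigma_k+O(1)$ (from $e_{k+1}\le e_k+1/c_k$) together with $c_k\mu_k\sigma_k^2\to 0$, which gives $\mu_ke_k^2=o(1/c_k)$ and hence $\sum_{l<k}\mu_le_l^2=o(\sigma_k)$.
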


\begin{remark}
The analogue of $L$ (cf., \eqref{Klimex} and \textup{Theorem}~\ref{dcases}) no longer plays
a role for exponential coefficients (cf., \textup{Theorem}~\ref{dcases-exp}).
\end{remark}

\noindent
The proof of Theorem~\ref{dcases-exp} is given in Section~\ref{ss.exp}. The choices 
$1=c>\mu$, $\lim_{k\to\infty} \sigma_k < \infty$ and $c>1$, $c>\mu$ correspond to 
local coexistence (and so does $c=\mu>1$, $\bar{K}=0$, $\sum_{k\in\N_0} 
\bar{\mu}_k/\bar{c}_k<\infty$).

\subsubsection{Cluster formation} 
\label{sec:cluster-foramtion}

In the clustering regime, it is of interest to study the size of the mono-type
regions  as a function of time, i.e., how fast do the clusters grow? To that end, we
look at the  interaction chain $M^{(j)}_{-k(j)}$ for $j\to\infty$, where the
\textit{level scaling} function  $k\colon\,\N\to\N$ with $\lim_{j\to\infty} k(j)=\infty$ 
is suitably chosen such that we obtain a \textit{nontrivial clustering limiting law}, i.e.,
\be{ak:interaction-chain-scaling-limit}
\lim_{j \to \infty} 
\mathcal{L} \left[ M^{(j)}_{-k(j)} \right]
= 
\mathcal{L} \left[ \hat{\theta} \right]
,
\ee
where the limiting random measure $\hat{\theta}$ satisfies
\be{ak:clustering-interaction-chain}
\P \{ \hat{\theta} = \delta_U, \text{ for some } U \in E \} < 1
.
\ee
For example, in Dawson and Greven~\cite{DG93b} such a question was 
answered in the case of the interacting Fleming-Viot processes with critically
recurrent migration $\uc$. There, different types of limit laws and different types 
of scaling can occur, corresponding to different clustering regimes. Following Dawson,
Greven and  Vaillancourt~\cite{DGV95} and Dawson and Greven~\cite{DG96}, it is
natural to consider  a whole family of scalings $k_\alpha\colon\,\N \to \N$,
$\alpha \in [0,1]$ satisfying \eqref{ak:interaction-chain-scaling-limit}. We
single  out \emph{fast}, \emph{diffusive} and \emph{slow} clustering regimes:
\begin{itemize}
\item[\textup{(i)}] {\bf Fast clustering:} 
$\lim_{j\to\infty} k_\alpha(j)/j=1$ for all $\alpha$.
\item[\textup{(ii)}] {\bf Diffusive clustering:} 
In this regime,
$\lim_{j\to\infty} k_\alpha(j)/j=\kappa(\alpha)$ for all $\alpha$, where $\alpha\mapsto
\kappa(\alpha)$ is continuous and non-increasing with $\kappa(0)=1$ and 
$\kappa(1)=0$. 
\item[\textup{(iii)}] {\bf Slow clustering:} 
$\lim_{j\to\infty} k_\alpha(j)/j=0$ for all $\alpha$. This regime borders with the 
regime of local coexistence.
\end{itemize}

\medskip\noindent
{\bf Remark:}
Diffusive clustering similar to (ii) was previously found for the voter model on $\Z^2$ 
by Cox and Griffeath~\cite{CG86}, where the radii of the clusters of opinion ``all 1'' 
or ``all 0'' scale as $t^{\alpha/2}$ with $\alpha \in [0,1)$, i.e., clusters occur on 
all scales $\alpha \in [0,1)$. This is different from what happens on $\Z^d$, $d \geq 3$, 
where clusters occur only on scale $\alpha =1$. For the model of hierarchically interacting 
Fleming-Viot diffusions with $c_k \equiv 1$ (= critically recurrent migration), Fleischmann 
and Greven~\cite{FG94} showed that, for all $N \in \N \setminus \{1\}$ and all $\eta \in 
\Omega_N$, 
\be{allim}
\CL\left[
\left(Y^{(\Omega_N)}_{\eta,\lfloor(1-\alpha)t\rfloor}(N^t)\right)_{\alpha \in [0,1)}
\right] 
\tto
\CL\left[
\left(Y^{(\Omega_N)}\left(\log\left(\frac{1}{1-\alpha}\right)\right)
\right)_{\alpha \in [0,1)}
\right],
\ee 
where $(Y^{(\Omega_N)}(t))_{t \in (0,1]}$ is a Fleming-Viot diffusion on $\CP(E)$. 
A similar behaviour occurs for other models, e.g., for branching models 
(Dawson and Greven~\cite{DG96}). 

\medskip
Our last two main theorems show which type of clustering occurs for the various 
scaling regimes of the coefficients $\uc$ and $\umuu$ identified in 
Theorems~\ref{dcases}--\ref{dcases-exp}. Polynomial coefficients allow for fast 
and diffusive clustering only. Exponential coefficients allow for fast, diffusive 
and slow clustering, with the latter only in a narrow regime.

\begin{theorem}
\label{T.dicho2}
{\bf[Clustering regimes for polynomial coefficients]}\\
Recall the scaling regimes of Theorem~{\rm \ref{dcases}}.
\begin{itemize}
\item[\textup{(i)}] 
{\bf [Fast clustering]} In cases \textup{(a-c)}, the system exhibits fast clustering.
\item[\textup{(ii)}] {\bf [Diffusive clustering]} 
In case \textup{(d)}, the system exhibits diffusive clustering, i.e.,
\be{ak11}
\CL\left[\left(M^{(j)}_{-\lfloor (1-\alpha) j \rfloor}\right)_{\alpha \in [0,1)} \right]
\jto \CL\left[\left(Z^{0,1,0}_{\theta} 
\left(\log\left(\frac{1}{1-\alpha^R}\right)\right)\right)_{\alpha \in [0,1)}\right],
\ee
where $R=M^*(1-a)$ with $M^*$ defined in \eqref{ag47} and $a$ the exponent in
\eqref{regvarass}.
\end{itemize}
\end{theorem}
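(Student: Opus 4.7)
The plan is to reduce everything to moments of the interaction chain, starting from the variance formula~\eqref{ag50}, which by the Markov property extends verbatim to every intermediate level:
\[
\E_{\CL[M^{(j)}_{-k}]}\bigl[\var_\cdot(\psi)\bigr]
= \Bigl(\prod_{l=k}^{j}\tfrac{1}{1+m_l}\Bigr)\var_\theta(\psi),
\qquad 0\le k\le j+1.
\]
Since the first moment $\E[\langle\psi,M^{(j)}_{-k}\rangle]=\langle\psi,\theta\rangle$ is preserved along the chain, the asymptotic behaviour of this product decides whether $M^{(j)}_{-k_\alpha(j)}$ collapses to a random Dirac mass $\delta_U$ with $\CL[U]=\theta$ or admits a non-degenerate limit: the nontrivial scalings are exactly those for which the product is bounded away from $0$. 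All arguments below plug in the asymptotics of $m_k$ from Corollary~\ref{C.scalemk}.

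For part \textup{(i)}, I would estimate $\sum_{l=k(j)}^{j}\log(1+m_l)$ case by case. In case \textup{(a)}, $m_l\to\infty$ makes every term diverge. In case \textup{(b)}, $m_l\to K+M>0$ yields a sum of order $(j-k(j))$. In case \textup{(c)}, $m_l\sim\sqrt{\mu_l/c_l}$ is regularly varying of index $(b-a)/2\in(-1,0)$ and $l^2 m_l^2=l^2\mu_l/c_l\to\infty$ by the case-\textup{(c)} assumption $L=\infty$, so the sum over $[\lfloor\beta j\rfloor,j]$ diverges by direct integration against the regularly varying density. In each case the variance at level $k(j)$ vanishes whenever $\limsup_j k(j)/j<1$, so every scaling $k_\alpha(j)$ giving a non-degenerate limit in \eqref{ak:interaction-chain-scaling-limit} must satisfy $k_\alpha(j)/j\to 1$, which is fast clustering by definition.

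For part \textup{(ii)}, case \textup{(d)} gives $m_l\sim R/l$ with $R=M^*(1-a)$, so for $k_\alpha(j)=\lfloor(1-\alpha)j\rfloor$ the Ces\`aro-type estimate
\[
\sum_{l=\lfloor(1-\alpha)j\rfloor}^{j}\log(1+m_l)
= R\log\tfrac{1}{1-\alpha}+o(1)
\]
produces the nontrivial variance ratio $(1-\alpha)^R$ for every $\alpha\in(0,1)$, already establishing the diffusive scaling $\kappa(\alpha)=1-\alpha$. To identify the limit process as the time-changed Fleming-Viot diffusion in \eqref{ak11}, I would turn to moment duality with the $\uL$-coalescent developed in Section~\ref{s.spatcoal}: the centered $p$-point moments of $M^{(j)}_{-k}$ admit analogous product representations in terms of a discrete coalescent on $\{1,\ldots,p\}$ whose pair-merger probabilities at level $l$ are governed by a Kingman part of order $d_l/c_l\sim R/l$ plus $\Lambda_l$-contributions of order $\mu_l/c_l=o(1/l)$ under the case-\textup{(d)} assumption $L<\infty$. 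After substitution $l=\lfloor(1-\alpha)j\rfloor$, this discrete coalescent converges to Kingman's coalescent --- the moment dual of $Z^{0,1,0}_\theta$ --- and the matching of pair-merger rates dictates the time change $t(\alpha)=\log(1/(1-\alpha^R))$ on the continuous side.

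The main obstacle is upgrading this finite-dimensional moment convergence to joint path convergence in $\alpha$. This calls for tightness of $(M^{(j)}_{-\lfloor(1-\alpha)j\rfloor})_{\alpha\in[0,1)}$ in the Skorokhod space of $\CP(E)$-valued paths, which I would obtain from a H\"older-type bound $\E[\var_\cdot(M^{(j)}_{-k(j,\alpha_1)}-M^{(j)}_{-k(j,\alpha_2)})]=O(|\alpha_1-\alpha_2|^R)$ read off the variance formula, combined with an identification of the limiting Markov structure via convergence of martingale problems. The reduction to a pure Fleming-Viot limit in case \textup{(d)} is justified because $K=0$ and $L<\infty$ force $\mu_l=o(d_l)$ along the tail, so the block-resampling component of the kernel $K_l$ disappears in the scaling limit. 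This mirrors the strategy of Fleischmann--Greven for hierarchically interacting Fleming-Viot diffusions cited in Section~\ref{sec:cluster-foramtion}, which can be imported here since, to leading order, the two interaction chains coincide.
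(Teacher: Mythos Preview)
Your treatment of part~(i) is correct and essentially parallel to the paper: both arguments reduce to the asymptotics of $\sum_{l=\lfloor\beta j\rfloor}^j m_l$ via Corollary~\ref{C.scalemk}. You use the quantity $\E_{\CL[M^{(j)}_{-k}]}[\var_\cdot(\psi)]=\prod_{l=k}^j(1+m_l)^{-1}\var_\theta(\psi)$, while the paper works with the complementary quantity $\var_{\CL[M^{(j)}_{-k}]}(\langle\cdot,\psi\rangle)$ from Proposition~\ref{prop:2nd_int}; since these sum to $\var_\theta(\psi)$, the two are interchangeable, and your telescoping product is in fact cleaner to evaluate.

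For part~(ii) your route diverges from the paper's and is considerably heavier. The paper does \emph{not} pass through higher moments or the coalescent dual. Instead it exploits two structural facts: (a) $(\langle M^{(j)}_k,\varphi\rangle)_k$ is a bounded martingale (assertion~\eqref{add33}), and (b) the \emph{conditional} second moment formula of Proposition~\ref{prop:2nd_int}, which replaces the corresponding formula~(6.12) in Dawson--Greven--Vaillancourt~\cite{DGV95}. With these two inputs one verifies the analogue of~\eqref{sk109}, namely that the conditional variance between levels $\bar\beta_1 j$ and $\bar\beta_2 j$ converges to $1-(\bar\beta_2/\bar\beta_1)^R$, and then the \emph{entire} machinery of \cite[Section~6(b), Steps~1--3]{DGV95} (martingale characterisation of the diffusion limit via quadratic variation, plus the time change $\bar\beta=e^{-s}$) applies verbatim. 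No higher moments, no coalescent convergence, and no separate tightness argument are needed: the martingale structure and second moments carry the whole identification because the limit is a diffusion. Your duality route would require you to prove that the ``interaction-chain coalescent'' (iterating the equilibrium coalescents $C^{c_l,\Lambda_l}_\infty$ level by level) converges to Kingman's coalescent after rescaling, which is plausible given $\mu_l/d_l\to 0$ but is a genuine additional theorem that the paper avoids.

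Two small corrections: the reference you want for the interaction-chain scaling is \cite{DGV95} (Dawson--Greven--Vaillancourt), not Fleischmann--Greven~\cite{FG94}, which concerns finite $N$. And your tightness bound ``$\E[\var_\cdot(M^{(j)}_{-k_1}-M^{(j)}_{-k_2})]$'' is not well-defined, since $\var_\cdot$ acts on probability measures; what you presumably mean is $\E[\langle\psi,M^{(j)}_{-k_1}-M^{(j)}_{-k_2}\rangle^2]$, which is exactly the martingale-increment second moment controlled by Proposition~\ref{prop:2nd_int}.
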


\begin{theorem}
\label{T.clustering-exp}
{\bf[Clustering regimes for exponential coefficients]}\\
Recall the scaling regimes of Theorem~{\rm \ref{dcases-exp}}.
\begin{itemize}
\item[\textup{(i)}] {\bf [Fast clustering]} 
In cases \textup{(A, B, C1, C2)}, and case \textup{(C3)} with 
\be{ak:201402081404}
\lim_{k\to\infty} 
k\bar{\mu}_k/\bar{c}_k=\infty, 
\ee
the system exhibits fast clustering.
\item[\textup{(ii)}] {\bf [Diffusive clustering]} 
In case \textup{(C3)} with $\lim_{k\to\infty} k\bar{\mu}_k/\bar{c}_k=C$, the system 
exhibits diffusive clustering, i.e., \eqref{ak11} holds with $R = C/(\mu-1)$.
\item[\textup{(iii)}] {\bf [Slow clustering]} 
In case \textup{(C3)} with $k\bar{\mu}_k/\bar{c}_k \asymp 1/(\log k)^\gamma$,
$\gamma \in (0,1)$, the system exhibits slow clustering. 
\end{itemize}
\end{theorem}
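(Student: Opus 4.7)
My plan is to determine, in each exponential sub-regime (A)--(C3) of Theorem~\ref{dcases-exp}, the leading-order asymptotics of
\begin{equation*}
m_k=(\mu_k+d_k)/c_k,\qquad \mu_k=\tfrac12\lambda_k,
\end{equation*}
and then read off the clustering behaviour from the variance identity
\begin{equation*}
\E_{\CL[M^{(j)}_{-k}]}\bigl[\var_\cdot(\psi)\bigr]
=\biggl[\prod_{l=k}^{j}\frac{1}{1+m_l}\biggr]\var_\theta(\psi),\qquad 0\le k\le j+1,
\end{equation*}
which extends \eqref{ag50} to intermediate levels by iterating the one-step variance contraction $1/(1+m_l)$ of the McKean-Vlasov kernel $K_l$. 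The regime (fast, diffusive or slow) is then dictated by how the tail sum $\sum_{l=k(j)}^{j} m_l$ behaves as $j\to\infty$, which is the same blueprint followed in the proof of Theorem~\ref{T.dicho2}.

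First, I plug the $d_k$-asymptotics from Theorem~\ref{dcases-exp} into $m_k=\mu_k/c_k+d_k/c_k$. In case~(A), $m_k\to+\infty$ (dominated by $\mu_k/c_k=(\mu/c)^k\bar\mu_k/\bar c_k$ for $c<\mu$, or by $\bar K=\infty$ for $c=\mu$); in~(B), $m_k\to \bar K+\bar M\in(0,\infty)$; in~(C1) with $c<1$, using $\sigma_k\sim 1/((1-c)c_k)$ one gets $m_k\to 1-c>0$; in~(C2), $m_k\to (1-c)/c>0$; and in~(C3), since $c=\mu$ and $d_k/\mu_k\to 1/(\mu-1)$,
\begin{equation*}
m_k\sim \frac{\mu}{\mu-1}\,\frac{\bar\mu_k}{\bar c_k},\qquad k\to\infty.
\end{equation*}
In cases (A), (B), (C1), (C2) and in (C3) with $k\bar\mu_k/\bar c_k\to\infty$, either $\liminf_k m_k>0$ or $km_k\to\infty$; in either situation $\sum_{l=\lfloor(1-\alpha)j\rfloor}^{j} m_l\to\infty$ for every $\alpha\in[0,1)$, so the variance at level $-\lfloor(1-\alpha)j\rfloor$ vanishes. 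Hence nontrivial limits can occur only at scales with $k_\alpha(j)/j\to 1$, establishing fast clustering and proving~(i).

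For the diffusive sub-regime of (C3), where $k\bar\mu_k/\bar c_k\to C$, one has $km_k\to R$ for the constant $R$ of the statement, and a Cesàro argument gives $\sum_{l=\lfloor(1-\alpha)j\rfloor}^{j}\log(1+m_l)\to R\log(1/(1-\alpha))$, so the variance factor converges to $(1-\alpha)^R\var_\theta(\psi)$. To upgrade this marginal statement to the process-level convergence \eqref{ak11}, I would follow the route used for Theorem~\ref{T.dicho2}(ii): establish tightness of $\alpha\mapsto M^{(j)}_{-\lfloor(1-\alpha)j\rfloor}$ in $D([0,1),\CP(E))$ from the uniform bound $m_k=O(1/k)$, and then identify the limiting martingale problem by Taylor-expanding $\nu^{c_k,d_k,\Lambda_k}_x$ against test functions from \eqref{ak:multi-level-test-functions*}; the leading contribution matches the Fleming-Viot generator $L^d$ from \eqref{Ldefs} with the appropriate time change, whereas the $\Lambda_k$-part contributes only lower-order corrections since $\Lambda_k$ is exponentially dominated by $c_k$ on (C3) with $\bar K=0$.

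For the slow sub-regime of (C3), where $k\bar\mu_k/\bar c_k\asymp 1/(\log k)^\gamma$ with $\gamma\in(0,1)$, one has $m_k\asymp 1/(k(\log k)^\gamma)$, hence
\begin{equation*}
\sum_{l=k(j)}^{j} m_l\asymp \frac{(\log j)^{1-\gamma}-(\log k(j))^{1-\gamma}}{1-\gamma}.
\end{equation*}
This tail sum stays bounded only if $\log k(j)=\log j-O((\log j)^\gamma)$, that is $k(j)=j\exp\!\bigl(-\kappa(\alpha)(\log j)^\gamma\bigr)$ for a continuous function $\kappa$; in particular $k(j)/j\to 0$, which is slow clustering, proving~(iii). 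The main obstacle is the process-level identification in the diffusive sub-regime: the precise form of the time change in \eqref{ak11} does not follow from marginal variances alone but requires the generator convergence to be carried out carefully, with the $O(1/k^2)$ remainder terms of the Taylor-expansion controlled uniformly along the level schedule $k_\alpha(j)$.
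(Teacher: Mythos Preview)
Your approach is essentially the paper's: compute the asymptotics of $m_k$ in each sub-regime of Theorem~\ref{dcases-exp} and read off the clustering type from the behaviour of $\sum_{l=k}^{j}m_l$. The only formal difference is which variance you track: you use $\E_{\CL[M^{(j)}_{-k}]}[\var_\cdot(\psi)]=\prod_{l=k}^{j}(1+m_l)^{-1}\var_\theta(\psi)$, while the paper works with the complementary quantity $\var_{\CL[M^{(j)}_{-k}]}(\langle\cdot,\psi\rangle)$ from Proposition~\ref{prop:2nd_int}; since $d_{i+1}/c_i=m_i/(1+m_i)$, one checks $\sum_{i=k}^{j}\frac{d_{i+1}}{c_i}\prod_{l=i+1}^{j}\frac{1}{1+m_l}=1-\prod_{l=k}^{j}\frac{1}{1+m_l}$, so the two criteria are the same. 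For the process-level statement in the diffusive sub-regime the paper does not redo a tightness-plus-generator argument but simply feeds the $m_k\sim R/k$ asymptotics into \eqref{sk109} and invokes the martingale argument of \cite[Section~6(b)]{DGV95}; your sketch would also work but is more laborious. One small slip: in (C1) with $c<1$ the limit is $m_k\to(1-c)/c$, not $1-c$, since $c_k\sigma_k\to c/(1-c)$; this does not affect the conclusion.
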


\noindent
The proofs of Theorems~\ref{T.dicho2}--\ref{T.clustering-exp} are given in 
Section~\ref{ss.ich}. Note that \eqref{allim} is a statement valid for {\em all} 
$N \in \N \setminus \{1\}$. In contrast, Theorems~\ref{T.dicho2}--\ref{T.clustering-exp} 
are valid in the hierarchical mean-field limit $N \to \infty$ only.

\subsection{Discussion}
\label{ss.outline}

\myparagraph{Summary.} We have constructed the $C^{\uc,\uL}_N$-process in Section \ref{sss.hierarCan}, describing
hierarchically interacting  Cannings processes, and have identified its
space-time scaling behaviour in the hierarchical  mean field limit $N\to\infty$
(interaction chain, cf.~Theorem~\ref{Tscb}). We have fully classified the clustering  vs.\ local
coexistence dichotomy in terms of the parameters $\uc,\uL$ of the model (cf.~Theorem~\ref{T.dicho}), and 
found different regimes of cluster formation (cf.~Theorems~\ref{T.dicho2}, \ref{T.clustering-exp}). Moreover, we have verified the
dichotomy also  for finite $N$ (cf.~Theorems~\ref{dich_coclu}--\ref{dichodual}). Our results provide a full generalisation of
what was known for hierarchically interacting diffusions, and show that Cannings
resampling leads to new phenomena (cf.~Theorem~\ref{T.beh} and comment following it). 

\myparagraph{Diverging volatility of the Fleming-Viot part and local coexistence.}
The growth of the block resampling rates $(\lambda_k)_{k \in \N}$ can lead to a
situation, where, as we pass to larger block averages, the volatility of the
Fleming-Viot part of the asymptotic limit dynamics diverges, even though on the
level of a single component the system exhibits local coexistence (recall Theorem~\ref{T.dicho}(c)). This requires
that the migration rates are (barely) transient  and the block resampling rate
decays very slowly. An example of such a situation is the choice
$c_k=k(\log k)^3$ and $\mu_k=1/k$ which leads to $d_k \sim \log k$ and  $m_k\sim
1/k(\log k)^2$ as $k\to\infty$. Thus, the system may be in the local coexistence
 regime and yet have a diverging volatility on large space-time scales.

\myparagraph{Open problems.} The results of Section~\ref{ss.mainth} and
suggest that a dichotomy  between clustering and local
coexistence also holds for a suitably defined Cannings  model with non-local
resampling on $\Z^d$, $d \geq 3$. In addition, a continuum limit to  the
geographic space $\R^2$ ought to arise as well, cf.~\cite{BEV10}. The latter
may be easier to investigate  in the limit $N\to\infty$, following the approach
outlined in Greven~\cite{G05}. Another  open problem concerns the different
ways in which cluster formation can occur. Here, the  limit $N\to\infty$ could
already give a good picture of what is to be expected for finite  $N$. A further
task is to investigate the genealogical structure of the model, based on  the
work in Greven, Klimovsky and Winter~\cite{GKWpr} for the model without
multi-colony  Cannings resampling (i.e., $\Lambda_k = \delta_0$ for $k \in \N$).

\myparagraph{Outline of the remainder of the paper.}
Section~\ref{s.spatcoal} introduces the spatial $\uL$-coalescent with block
coalescence and derives  some of its key properties.
Sections~\ref{s.wpmp}--\ref{s.prcor} use the results in Section~\ref{s.spatcoal}
to prove the propositions and the theorems stated in
Sections~\ref{ss.cannings}--\ref{ss.mainth}. Here is a \emph{roadmap}:
\begin{itemize}

\item 
Section~\ref{s.wpmp} handles all  issues related to the well-posedness of 
martingale problems. The proofs of Propositions~\ref{P.wp}--\ref{P.vecLambda} 
are in Section~\ref{ss.mppropproofs}.

\item
Section~\ref{s.mcv_imem}  deals with the properties of the McKean-Vlasov
process, including its equilibrium distribution. 

\item Section~\ref{s.stratth} outlines  the strategy behind the proofs of the
scaling results for the hierarchical Cannings  process, which are worked out in
Sections~\ref{s.mflambda}--\ref{s.msana} as follows: Theorem~\ref{mainth} is
proved in Section~\ref{s.proof1stth} with preparatory work being done in
Sections~\ref{s.mflambda}--\ref{s.hiermod}, Theorem~\ref{Tscb} is proved in
Section~\ref{ss.intchain}, Theorem~\ref{T.dicho} in
Section~\ref{ss.dichointeract}, and
Theorems~\ref{T.dicho2}--\ref{T.clustering-exp} in Section~\ref{ss.ich}.

\item 
Section~\ref{ss.cluscoex} proves the scaling results for the interaction
chain stated in Theorems~\ref{dich_coclu} and \ref{P.ergodic}.

\item 
Section~\ref{s.prcor} derives the scaling results for the volatility constant: 
Theorems~\ref{dichodual} and \ref{T.beh} are proved in Section~\ref{ss.gencomp}, M\"obius-transformations are introduced in Section~\ref{ss.Mobtr}, Theorem~\ref{dcases} 
is proved in Section~\ref{ss.dcases}, and Theorem~\ref{dcases-exp} in Section~\ref{ss.exp}. 

\item 
Section~\ref{s.notation} collects the notation. 
\end{itemize}

\section{Spatial $\Lambda$-coalescent with non-local coalescence}
\label{s.spatcoal}

In this section, we introduce a new class of spatial $\uL$-coalescent processes,
namely, processes where coalescence of partition elements at distances larger 
than or equal to zero can occur. This is a generalisation of the spatial coalescent 
introduced by Limic and Sturm~\cite{LS06}, which allows for the coalescence 
of the partition elements (= families = lineages) residing at the same location only. Informally, the spatial
$\uL$-coalescent with non-local coalescence is the process that encodes the
\emph{family structure} of a sample from the currently alive population in the
$C^{\uc,\uL}_N$-process, i.e., it is the process of \emph{coalescing lineages} 
that occur when the evolution of the spatial $C^{\uc,\uL}_N$-Cannings process is 
traced backwards in time up to a common ancestor. In what follows, we denote this 
backwards-in-time process by $\mathfrak{C}^{\uc, \uL}_{N}$.

Recall that two Markov processes $X$ and $Y$ with Polish state spaces $\CE$ and $\CE^\prime$ 
are called {\em dual w.r.t.\ the duality function} $H\colon\,\CE \times \CE^\prime 
\to \R$ if
\be{ag13}
\E_{X_0} [H(X_t, Y_0)] = \E_{Y_0} [H(X_0, Y_t)],
\qquad \text{for all}\,(X_0, Y_0) \in \CE \times \CE^\prime,
\ee
and if the family $\{H(\cdot,Y_0) \colon\, Y_0 \in \CE^\prime\}$ uniquely determines 
a law on $\CE$. Typically, the key point of a duality relation is to translate 
questions about a complicated process into questions about a simpler process. 
This translation often allows for an analysis of the long-time behaviour of the 
process, as well as a proof of existence and uniqueness for associated martingale 
problems. If $H(\cdot, \cdot) \in C_\mathrm{b}(\CE \times \CE^\prime)$, and if $H(\cdot,Y_0)$ 
and $H(X_0,\cdot)$ are in the domain of the generator of $X$, respectively, $Y$ for 
all $(X_0, Y_0) \in \CE \times \CE^\prime$, then it is possible to establish 
duality by just checking a {\em generator relation} (see Remark~\ref{R2.9} below
and also Liggett~\cite[Section II.3]{L85}).

The analysis of the processes on their relevant time scales will lead us to
study  a number of auxiliary processes on geographic spaces different from
$\Omega_N$.  The duality will be crucial for the proof of
Propositions~\ref{P.wp}--\ref{P.vecLambda}  (martingale well-posedness) in
Section~\ref{s.wpmp}, and also for statements about  the long-time behaviour of
the processes and the qualitative properties of their  equilibria. In
Section~\ref{ss.spatcoal}, we define the spatial $\Lambda$-coalescent  with
local coalescence. In Section~\ref{ss.blockcoal}, we add non-local coalescence. 
In Section~\ref{ss.dual}, we formulate and prove the duality relation between
the  $C^{\uc,\uL}_N$-process and the spatial $\Lambda$-coalescent with non-local
coalescence. In Section~\ref{ss.ltbvecL},  we look at the long-time behaviour of
the spatial $\Lambda$-coalescent with non-local coalescence.

\subsection{Spatial $\Lambda$-coalescent with local coalescence}
\label{ss.spatcoal}

In this section, we briefly recall the definition of the spatial
$\Lambda$-coalescent on a countable geographic space $G$ as introduced by Limic
and Sturm~\cite{LS06}. (For a general discussion of exchangeable coalescents,
see Berestycki~\cite{B09}.) Here, we do not need assumption \eqref{ag1} on
measure $\Lambda$. In Section~\ref{ss.blockcoal}, we will add {\em non-local
coalescence}, i.e., coalescence of individuals not necessarily located at the
same site.

The following choices of the geographic space $G$ will be needed later on:
\be{Gchoices}
G_{N,K}=\{0,\ldots,N-1\}^K,\,K,N\in\N, \qquad G=\Omega_N,\,N\in\N,
\qquad G = \{0,*\}.
\ee
The choices in \eqref{Gchoices} correspond to geographic spaces that are needed,
respectively, for finite approximations of the hierarchical group, for the 
hierarchical group, for a single-colony with immigration-emigration, and for
the McKean-Vlasov limit. We define the basic transition mechanisms and characterise 
the process by a martingale problem in order to be able to verify duality and to 
prove convergence properties. In Section~\ref{sss.statevol} we define the state 
space and the evolution rules, in Section~\ref{sss.mart} we formulate the martingale 
problem, while in Section~\ref{sss.special-spatial-coalescents} we introduce 
coalescents with immigration-emigration.

\subsubsection{State space, evolution rules, graphical construction and entrance law}
\label{sss.statevol}

\myparagraph{State space.} 
As with non-spatial exchangeable coalescents, it is convenient to start with finite 
state spaces and subsequently extend to infinite state spaces via exchangeability. 
Given $n \in \N$, consider the set
\be{ag19a}
[n] = \{1,\ldots,n\}
\ee
and the set $\Pi_n$ of its partitions into {\it families}:
\be{ag19}
\Pi_n = \text{ set of all partitions  }  \pi = \{ \pi_i \subset [n] \}_{i=1}^b
\text{ of set $[n]$ into disjoint families $\pi_i$, $i\in[b]$.}
\ee
That is, for any $\pi = \{ \pi_i \}_{i=1}^b \in \Pi_n$, we have $[n] = \bigcup_{i=1}^b 
\pi_i $ and $\pi_i \cap \pi_j = \emptyset$ for $i,j \in [b]$ with $i \neq j$. In 
what follows, we denote by 
\be{add30}  
b = b(\pi) \in [n]
\ee
the number of families in $\pi \in \Pi_n$.

\begin{remark}[Notation]
\label{rem:coalescent-block-mapping}
By a slight abuse of notation, we can associate with $\pi \in \Pi_n$ the mapping 
$\pi\colon\,[n] \to [b]$ defined as $\pi(i)=k$, where $k \in [b]$ is such that 
$i \in \pi_k$. In words, $k$ is the label of the unique family containing $i$.
\end{remark}

\noindent 
Abbreviate
\be{ak-999}
\pi^{-1}(k) = \min \{ i \in [n] \colon  \pi(i) = k \}, \quad k \in [b].
\ee

The state space of the spatial coalescent is the set of \emph{$G$-labelled partitions} 
defined as
\be{labelled-partitions}
\Pi_{G,n} =
\Big\{
\pi_{G} = \{ (\pi_1, g_1), (\pi_2, g_2), \ldots, (\pi_b, g_b) \}
\colon\,\{ \pi_1, \ldots, \pi_b \} \in \Pi_n,\, g_1, \ldots, g_b \in G, b \in [n] \Big\}.
\ee
For definiteness, we assume that the families of $\pi_{G} \in \Pi_{G,n}$ are indexed 
in the increasing order of each family's smallest element, i.e., the enumeration is 
such that $\min \pi_i < \min \pi_j$ for all $i,j\in [b]$ with $i \neq j$.

Let $S_{G,n} \in \Pi_{G,n}$ denote the labelled partition of $[n]$ into
\emph{singletons}, i.e.,
\be{labelled-singletons}
S_{G,n} =
\Big\{(\{1\}, g_1), (\{2\}, g_2), \ldots, (\{n\}, g_n)\colon\,
g_i \in G, i \in [n]\Big\}.
\ee
With each $\pi_G \in \Pi_{G,n}$ we can naturally associate the partition $\pi
\in \Pi_n$ by \emph{removing the labels}, i.e., with
\be{20a}
\pi_G = \{ (\pi_1, g_1), (\pi_2, g_2), \ldots, (\pi_b, g_b) \}
\ee
we associate $\pi = \{ \pi_1, \ldots, \pi_b \} \in \Pi_n$. With each $\pi_G \in 
\Pi_{G,n}$ we also associate the set of its
\emph{labels}
\be{20b}
L(\pi_G)= \{g_1,\ldots,g_b\} \subset G.
\ee

In addition to the finite-$n$ sets $\Pi_n$ and $\Pi_{G,n}$ considered above,
consider their infinite versions 
\be{ag20}
\Pi = \{\text{partitions of } \N\},
\quad
\Pi_G = \{G\text{-labelled partitions of } \N\},
\ee
and introduce the set of standard initial states
\begin{equation}
\label{eq:initial-state}
S_G = \big\{ \{(\{i\}, g_i)\}_{i \in \N} \colon\, g_i \in G, i \in \N \big\}.
\end{equation}
Equip $\Pi_G$ with the following topology. First, equip the set $\Pi_{G,n}$ with 
the {\em discrete topology}. In particular, this implies that $\Pi_{G,n}$ is a
{\em Polish space}. We say that the sequence of labelled partitions 
$\{ \pi_G^{(k)} \in\Pi_G \}_{k \in \N}$ converges to the labelled partition 
$\pi_G \in \Pi_G$ if the sequence $\{ \pi_G^{(k)}\vert_n \in \Pi_{G,n} \}_{k\in\N}$ 
converges to $\pi_G\vert_n \in \Pi_{G,n}$ for all $n \in \N$. This topology makes 
the space $\Pi_G$ Polish, too.

\myparagraph{Evolution rules.} 
Assume that we are given transition rates (= ``migration rates'') on $G$
\be{ag21}
a^*\colon\, G^2 \to \R,
\quad
a^\ast (g,f) = a(f,g),
\ee
where $a(\cdot,\cdot)$ is the migration kernel of the corresponding $C^\Lambda$-process with 
geographic space $G$ as in \eqref{Gchoices}. The spatial \emph{$n$-$\Lambda$-coalescent} is the continuous-time 
Markov process $\mathfrak{C}^{(G),\mathrm{loc}}_n = (\mathfrak{C}^{(G),\mathrm{loc}}_n
(t) = \pi_G(t) \in \Pi_{G,n})_{t \geq 0}$ with the following dynamics. Given the 
current state $\pi_G = \mathfrak{C}^{(G),\mathrm{loc}}_n(t-) \in \Pi_{G,n}$, the process 
$\mathfrak{C}^{(G),\mathrm{loc}}_n$ evolves via:
\begin{itemize}
\item
\emph{Coalescence}.
Independently, at each site $g \in G$, the families of $\pi_G$ with label $g$
\emph{coalesce} according to the mechanism of the non-spatial $n$-$\Lambda$-coalescent. 
In other words, given that in the current state of the spatial $\Lambda$-coalescent
there are $b = b(\pi_G,g) \in [n]$ families with label $g$, among these $i \in[2,b] 
\cap \N$ \emph{fixed} families coalesce into one family with label $g$ at rate 
$\lambda^{(\Lambda)}_{b,i}$, where
\be{coalescence-rates}
\lambda^{(\Lambda)}_{b,i} = \int_{[0,1]} \Lambda^*(\dd r) r^{i}(1-r)^{b-i},
\quad i \in [2,b] \cap \N,
\ee
with $\Lambda^*$ given by \eqref{ag3}.
\item
\emph{Migration}.
Families migrate independently at rate $a^\ast$, i.e., for any ordered pair of 
labels $(g,g^\prime) \in G^2$, a family of $\pi_G$ with label $g \in G$ 
changes its label (= ``migrates'') to $g^\prime \in G$ at rate $a^*(g,g^\prime)$.
\end{itemize}

\myparagraph{Graphical construction.}
Next, we recall the explicit construction of the above described spatial 
$n$-$\Lambda$-coalescent via Poisson point processes (see also Limic and 
Sturm~\cite{LS06}).

Consider the family $\mathfrak{P} = \{\mathfrak{P}_g \}_{g \in G} $ of i.i.d.\
Poisson point processes on $[0,\infty) \times [0, 1] \times \{0, 1\}^\N$ defined 
on the filtered probability space $(\Omega,\CF,(\CF_t)_{t\geq 0},\P)$ with intensity 
measure
\be{ppp-coalescence}
\dd t \otimes \left[\Lambda^\ast(\dd r)
(r \delta_1 + (1-r) \delta_0)^{\otimes \N}\right](\dd \omega),
\ee
where $\omega = (\omega_i)_{i \in \N} \subset \{0,1\}^\N$. We assume that 
point processes $\mathfrak{P}$ are adapted to filtration $(\CF_t)_{t\geq 0}$. 

Note that the second
factor of the intensity measure in \eqref{ppp-coalescence} is not a product measure 
on $[0,1] \times \{0,1\}^\N$, in particular, it is not the same as 
\be{a4}
\big[\Lambda^\ast (\dd r)(r \delta_1 +(1-r)\delta_0)\big]^{\otimes \N}(\dd\omega).
\ee

Given $J \subset [n]$ and $g \in G$, define the \emph{labelled coalescence
map} $\textrm{coal}_{J,g}\colon\,\Pi_{G,n} \to \Pi_{G,n}$, which coalesces 
the blocks with indices specified by $J$ and locates the new-formed block at $g$,
as follows:
\be{labelled-coalescence}
\textrm{coal}_{J,g}(\pi_{G,n}) =
\left(\bigcup_{i \in J \cap [b(\pi)]}\pi_i, g\right)
\cup
\left(\pi_{G,n}\,\setminus\,\bigcup_{i \in J \cap [b(\pi)]}(\pi_i, g_i)\right),
\quad \pi_{G,n} \in \Pi_{G,n}.
\ee
Using $\mathfrak{P}$, we construct the standard spatial $n$-$\Lambda$-coalescent 
$\mathfrak{C}^{(G),\mathrm{loc}}_n = (\mathfrak{C}^{(G),\mathrm{loc}}_n(t))_{t \geq 0}$ 
as a Markov $\Pi_{G,n}$-valued process with the following properties:
\begin{itemize}
\item
\emph{Initial state.} Assume $\mathfrak{C}^{(G),\mathrm{loc}}_n(0) \in S_{G,n}$.
\item
\emph{Coalescence.} For each $g \in G$ and each point $(t, r, \omega)$ of the 
Poisson point process $\mathfrak{P}_g$ satisfying $\sum_{i \in \N} \omega_i \geq 2$, 
all families $(\pi_i(t-),g_i(t-)) \in \mathfrak{C}^{(G),\mathrm{loc}}_n(t-)$ such 
that $g_i(t-)= g$ and $\omega_i = 1$ coalesce into a new family labelled by $g$, i.e.,
\be{no1}
\mathfrak{C}^{(G),\mathrm{loc}}_n(t)=\textrm{coal}_{\{i \in [n]\colon\,
\omega_i = 1, g_i(t-) = g\},g}(\mathfrak{C}^{(G),\mathrm{loc}}_n(t-)).
\ee
\item
\emph{Migration.} Between the coalescence events, the labels of all partition elements 
of $\mathfrak{C}^{(G),\mathrm{loc}}_n(t)$ perform independent random walks with 
transition rates $a^*$\footnote{The adjective ``between'' is well defined because the 
set of points $(t,r,\omega)$ of $\mathfrak{P}_g$ satisfying the condition $\sum_{i \in \N} 
\omega_i \geq 2$ is topologically discrete, and hence can be ordered w.r.t.\ the first 
coordinate (= time).}.
\end{itemize}

In what follows, we denote by $\cdot\vert_n\colon\, \Pi_{G,m} \to \Pi_{G,n}$, for $m \geq n$,
(respectively, $\cdot\vert_n\colon\,\Pi_{G} \to \Pi_{G,n}$) the operation of 
projection of all families in $[m]$ (respectively, $\N$) onto $[n]$.

\myparagraph{Entrance law.}
Note that, by construction, the spatial $n$-$\Lambda$-coalescent satisfies the 
following consistency property:
\be{no2}
\CL\left[\mathfrak{C}^{(G),\mathrm{loc}}_m\vert_n\right]
= \CL\left[\mathfrak{C}^{(G),\mathrm{loc}}_n\right],
\qquad n,m \in \N,\,n \leq m.
\ee
Therefore, by the Kolmogorov extension theorem, there exists a process
\be{add1}
\mathfrak{C}^{(G),\mathrm{loc}} = (\mathfrak{C}^{(G),\mathrm{loc}}(t) \in \Pi_G)_{t \geq 0} 
\ee
such that $\mathfrak{C}^{(G),\mathrm{loc}}\vert_n = \mathfrak{C}^{(G),\mathrm{loc}}_n$.

\begin{definition}[\cite{LS06}]
Call the process $\mathfrak{C}^{(G),\mathrm{loc}}$ the spatial $\Lambda$-coalescent 
corresponding to the migration rates $a^*$ and the coalescence measure $\Lambda$.
\end{definition}

\subsubsection{Martingale problem}
\label{sss.mart}

In this section, we characterise the spatial $\Lambda$-coalescent as the unique
solution of the corresponding well-posed martingale problem.

Let $\CC_G$ be the algebra of bounded continuous functions $F\colon\,\Pi_G \to \R$
such that for all $F \in \CC_G$ there exists an $n \in \N$ and a bounded function
\be{ghkk1}
F_n\colon\, \Pi_{G,n} \to \R
\ee 
with the property that $F(\cdot)=F_n(\cdot\vert_n)$. In words, $F$ only depends 
on the family structure of a finite number of individuals. It is easy to check 
that $\CC_G$ separates points on $\Pi_G$. Given $f,g \in G$ and $i \in [n]$, define 
the \emph{migration map} $\textrm{mig}_{f \to g, i}\colon\,\Pi_{G,n} \to \Pi_{G,n}$ as
\be{migration-operation}
\textrm{mig}_{f \to g, i}(\pi_{G,n}) =
\begin{cases}
\left(\pi_i, g\right) \cup \left(\pi_{G,n}\setminus(\pi_i, f)\right),
&(\pi_i, f) \in \pi_{G,n},\\
\pi_{G,n},
&(\pi_i, f) \notin \pi_{G,n},
\end{cases}
\quad
\pi_{G,n} \in \Pi_{G,n},
\ee
describing the jump in which the family labelled $i$ migrates from colony $f$ to 
colony $g$.

Consider the linear operator $L^{(G)*}$ defined as
\be{spatial-coalescent-generator}
L^{(G)*} = L^{(G)*}_{\mathrm{mig}} + L^{(G)*}_{\mathrm{coal}},
\ee
where the operators $L^{(G)*}_{\mathrm{mig}},L^{(G)*}_{\mathrm{coal}}\colon\,
\mathcal{C}_G \to \mathcal{C}_G$ are defined for $\pi_G \in \Pi_G$ and 
$F \in \mathcal{C}_G$ as
\be{no3}
(L^{(G)*}_{\mathrm{mig}} F) (\pi_G) =
\sum_{i=1}^{b(\pi_{G}|_n)} \sum_{g,f \in G} a^*(g,f)
\big[F_n(\textrm{mig}_{g \to f,i}(\pi_{G}\vert_n))-F(\pi_G)\big],
\ee
\be{add2_a}
(L^{(G)*}_{\mathrm{coal}} F) (\pi_G) =
\sum_{g \in G} \sum_{\substack{J\subset\{i \in [n]:\,g_i = g\},\\|J| \geq 2}}
\lambda^{(\Lambda)}_{b(\pi_{G}|_n,g),|J|}
\big[F_n(\textrm{coal}_{J,g}(\pi_{G}\vert_n))-F(\pi_G)\big]
\ee
(recall definitions \eqref{add30}, \eqref{ag21}, \eqref{coalescence-rates} and \eqref{labelled-coalescence}).

\begin{proposition}
\label{prop:spatial-coalescent-martingale-problem-1}
{\bf [Martingale problem for the spatial $\Lambda$-coalescent with local coalescence]}\\
The spatial $\Lambda$-coalescent with local coalescence defined in Section~{\rm \ref{sss.statevol}} 
solves the well-posed martingale problem for $(L^{(G)*},C_{\mathrm{b}}(\Pi_G),
\delta_{S_G})$ with $S_G$ as in \eqref{eq:initial-state}. 
\end{proposition}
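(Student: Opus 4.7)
The plan is to reduce the martingale problem on the full partition space $\Pi_G$ to a family of martingale problems on the finite projections $\Pi_{G,n}$, and to exploit the consistency property \eqref{no2} built into the graphical Poisson construction.

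First, I would argue existence. For fixed $n \in \N$, the projected process $\mathfrak{C}^{(G),\mathrm{loc}}_n$ takes values in the countable state space $\Pi_{G,n}$. Its jumps are of two kinds -- migration of one of the $b(\pi_G|_n) \leq n$ families, and coalescence of subsets of families located at a common site. I would check that the total outgoing rate from any state $\pi_G|_n$ is finite: migration contributes at most $n \sup_{g \in G} \sum_{f \in G} a^\ast(g,f)$, which is finite by the boundedness of the migration rate (on $\Omega_N$ this uses \eqref{ak:recurrence-cond}), while coalescence at a site carrying $b \leq n$ families contributes $\sum_{i=2}^b \binom{b}{i} \lambda^{(\Lambda)}_{b,i} = \int_{(0,1]} \Lambda^\ast(\dd r)\,[1-(1-r)^b - br(1-r)^{b-1}] \leq \binom{b}{2} \Lambda((0,1])$ by the $r^2$ factor that absorbs the $\Lambda^\ast$-singularity at $0$. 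Hence $\mathfrak{C}^{(G),\mathrm{loc}}_n$ is a non-explosive CTMC on a countable space, and its compensated jump functional shows directly that it solves the martingale problem for the restriction of $L^{(G)*}$ to the sub-algebra $\CC_{G,n} \subset \CC_G$ of functions depending only on $\pi_G|_n$. For $F \in \CC_G$ with $F = F_n \circ \cdot|_n$, this is exactly the statement that $(F(\mathfrak{C}^{(G),\mathrm{loc}}(t)) - \int_0^t (L^{(G)*}F)(\mathfrak{C}^{(G),\mathrm{loc}}(s))\,\dd s)_{t \geq 0}$ is a martingale, so $\mathfrak{C}^{(G),\mathrm{loc}}$ from \eqref{add1} solves the $(L^{(G)*},\CC_G,\delta_{S_G})$-martingale problem.

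For uniqueness, let $\pi_G = (\pi_G(t))_{t\geq 0}$ be any $\Pi_G$-valued c\`adl\`ag solution of the martingale problem starting at $S_G$. Since $\CC_{G,n} \subset \CC_G$, the projection $\pi_G|_n$ solves the martingale problem on $\Pi_{G,n}$ governed by the restricted generator; by the standard CTMC uniqueness theory (boundedness of the jump rates shown above rules out pathologies), $\mathcal{L}[\pi_G|_n]$ is uniquely determined for every $n$. Because $\CC_G$ separates points of $\Pi_G$ and the finite-dimensional laws $\mathcal{L}[(\pi_G(t_1)|_n, \ldots, \pi_G(t_k)|_n)]$ determine $\mathcal{L}[\pi_G]$ via the Kolmogorov extension theorem (combined with the coherence of the projection maps $\cdot|_n$), uniqueness in law on $\Pi_G$ follows. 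The strong Markov and Feller properties are then automatic from the corresponding properties of the finite-$n$ CTMCs together with the fact that convergence in $\Pi_G$ is equivalent to convergence of all $n$-projections.

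The main obstacle I anticipate is verifying that the generator identity $(L^{(G)*}F)(\pi_G) = (L^{(G)*}_n F_n)(\pi_G|_n)$ holds literally for $F \in \CC_{G,n}$, i.e., that coalescence events at level $m > n$ that are ``invisible'' at the $n$-level truly contribute zero to the generator. This is where the specific indexing convention for labelled coalescence in \eqref{labelled-coalescence} must be reconciled with the projection operation $\cdot|_n$, so that when a subset $J$ of indices with $J \cap [n] = \emptyset$ coalesces, the state of $\pi_G|_n$ is unchanged. A second delicate point, only relevant when $G$ is infinite (as in $G = \Omega_N$), is uniform control on the total migration rate per family, for which assumption \eqref{ak:recurrence-cond} suffices; without such control the finite-$n$ CTMC could in principle explode. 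Once these two points are checked, the rest is a routine assembly of the graphical construction, the martingale identity, and the uniqueness argument via projections.
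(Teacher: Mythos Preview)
Your argument is correct, but it takes a different route from the paper. The paper's proof is two sentences: existence is read off the graphical Poisson construction, and uniqueness is deferred to the duality relation developed in Section~\ref{ss.dual}. You instead establish both existence and uniqueness by reducing to the finite-$n$ projections $\mathfrak{C}^{(G),\mathrm{loc}}_n$, observing that each is a non-explosive continuous-time Markov chain on the countable space $\Pi_{G,n}$ (with explicit rate bounds), and then invoking standard CTMC uniqueness together with the consistency property \eqref{no2} to pin down the law on $\Pi_G$. Your approach is more self-contained---it does not forward-reference the duality machinery---and it makes explicit the role of the projective structure that the paper uses only to \emph{construct} the process in \eqref{add1}. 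The paper's route is shorter precisely because the heavy lifting (duality) is done elsewhere; your route trades that for the two technical checks you flag (compatibility of $L^{(G)*}$ with the projection $\cdot|_n$, and the migration-rate bound for infinite $G$), both of which are straightforward here. Note, incidentally, that the paper's proof of the closely related Proposition~\ref{prop:spatial-coalescent-martingale-problem-2} is actually closer in spirit to your argument (reduction to finite geographic spaces) than to the duality shortcut invoked for Proposition~\ref{prop:spatial-coalescent-martingale-problem-1}.
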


\begin{proof}
A straightforward inspection of the graphical construction yields the existence. 
The uniqueness is immediate because we have a duality relation, as we will see 
in Section~\ref{ss.dual}.
\end{proof}

\begin{remark}
Note that, instead of the singleton initial condition in
Proposition~{\rm \ref{prop:spatial-coalescent-martingale-problem-1}} (and in the 
graphical construction of Section~{\rm \ref{sss.statevol}}), we can use any other 
initial condition in $\Pi_G$.
\end{remark}

\subsubsection{Mean-field and immigration-emigration $\Lambda$-coalescents}
\label{sss.special-spatial-coalescents}

Some special spatial $\Lambda$-coalescents will be needed in the course of our 
analysis of the hierarchically interacting Cannings process. We define 
the \emph{mean-field $\Lambda$-coalescent} as the spatial $\Lambda$-coalescent 
with geographic space $G = \{0, \ldots, N-1 \} $ and migration kernel $a(i,j) = c/N$  
for all $i,j \in G$ with $i \neq j$. Furthermore, we define the \emph{$\Lambda$-coalescent 
with immigration-emigration} as the spatial $\Lambda$-coalescent with geographic space
$G = \{0, \ast\}$ and migration kernel $a(0,\ast) = c$, $a(\ast,0) = 0$.
In other words, $\ast$ is a cemetery migration state.

\subsection{Spatial $\Lambda$-coalescent with non-local coalescence}
\label{ss.blockcoal}

In this section, we construct a new type of spatial coalescent process based on 
a sequence $\uL =(\Lambda_k)_{k \in  \N_0}$ of finite measures on $[0,1]$ as in \eqref{ag41}, namely, the 
\emph{spatial $\uL$-coalescent} on $G=\Omega_N$ {\em with non-local
coalescence}. For each $k\in\N$, we introduce two additional transition
mechanisms: (1) a  \emph{block reshuffling} of all partition elements in a ball
of radius $k$;  (2) a \emph{non-local $\Lambda$-coalescence} of partition
elements in a ball of radius $k$. 

In this section, we assume that, for all $k \in \N$, measure $\Lambda_k$ satisfy \eqref{ag1}. But we do not assume that measure $\Lambda_0$ satisfies \eqref{ag1}. Denote  
\be{ak987}
d_0 = \Lambda_0\{0\}
.
\ee

In Section~\ref{sss.evpois}, we  give
definitions, in Section~\ref{sss.martingale-problem-block-coalescence} we 
formulate the martingale problem.

\subsubsection{The evolution rules and the Poissonian construction}
\label{sss.evpois}

In what follows, we consider $G=\Omega_N$. We start by extending the graphical construction from Section~\ref{sss.statevol} 
to incorporate the additional transition mechanisms of non-local reshuffling and 
coalescence.

Given the filtered probability space $(\Omega, \CF, (\CF_t)_{t\geq 0}, \P)$, consider Poisson point processes $\mathfrak{P}^{(\Omega_N)}$ on
\be{ag22}
[0,\infty) \times \Omega_N \times \N_0 \times [0,1] \times \{0, 1\}^{\N} 
\ee
having intensity measure
\be{ag22b}
\dd t \otimes
\dd \eta \otimes
\left( N^{-2k}\dd k \left[\Lambda^\ast_k(\dd r) 
\big(r \delta_1 + (1-r) \delta_0\big)^{\otimes{\N}}\right](\dd \omega) \right),
\ee
where $\omega = (\omega_i)_{i \in \N} \subset \{0,1\}^{\N}$, $(t,\eta,k,r,\omega) 
\in [0,\infty) \times \Omega_N \times \N_0 \times [0,1] \times \{0,1\}^{\N}$,
$\dd k$ is counting measure on $\N$ and $\dd \eta$ is counting measure on $\Omega_N$.  
Again, note that the third factor in \eqref{ag22b} is not a product measure 
(compare \eqref{a4}).

Given $\Sigma \Subset \Omega_N$ (i.e., $\Sigma$ is a finite subset of $\Omega_N$) 
and $\xi = \{\xi_i\}_{i=1}^{|\Sigma|}, \xi_i \in \Sigma$, let $\textrm{resh}_{\Sigma,
\xi}\colon\,\Pi_{\Omega_N} \to \Pi_{\Omega_N}$ be the \emph{reshuffling map} 
that for all $i$ moves families from $\eta_i \in \Sigma$ to $\xi_i \in \Sigma$:
\be{reshuffling-mapping}
\textrm{resh}_{\Sigma,\xi}(\pi_{\Omega_N})_i
= \begin{cases}
(\pi_i,\eta_i), &\eta_i \notin \Sigma,\\
(\pi_i,\xi_i), &\eta_i \in \Sigma,
\end{cases}
\quad
\pi_{\Omega_N}\in\Pi_{\Omega_N},\,i \in [b(\pi_{\Omega_N})].
\ee
Let
\be{ag23}
U_{\Sigma} = \{U_{\Sigma}(\xi) \}_{\xi \in \Sigma}
\ee
be a collection of independent $\Sigma$-valued random variables uniformly distributed on $\Sigma$.
We construct the \emph{standard spatial
$n$-$\Lambda$-coalescent with non-local coalescence} $\mathfrak{C}^{(\Omega_N)}_n =
(\mathfrak{C}^{(\Omega_N)}_n(t) \in \Pi_{\Omega_N,n})_{t \geq 0}$ as the
$\Pi_{\Omega_N,n}$-valued Markov process with the following properties:

\begin{itemize}
\item
\emph{Initial state.}
Assume $\mathfrak{C}^{(\Omega_N)}_n(0) \in S_{\Omega_N,n}$ (recall \eqref{labelled-singletons}).
\item
\emph{Coalescence with reshuffling.}
For each point $(t,\eta,k,r,\omega)$ of the Poisson 
point process $\mathfrak{P}^{(\Omega_N)}$ (cf. \eqref{ag22}--\eqref{ag22b}), all families $(\pi_i,\eta_i) \in 
\mathfrak{C}^{(\Omega_N)}_n(t-)$ such that $\omega_i = 1$ and $\eta_i \in B_k(\eta)$
coalesce into a new family with label $\eta$. Subsequently, all families with labels 
$\zeta \in B_k(\eta)$ obtain a new label that is drawn independently and uniformly 
from $B_k(\eta)$. In a formula (recall \eqref{labelled-coalescence}, \eqref{reshuffling-mapping}--\eqref{ag23}):
\be{block-coalescence-transition}
\mathfrak{C}^{(\Omega_N)}_n(t)
= \textrm{resh}_{B_k(\eta),U_{B_k(\eta)}} \circ 
\textrm{coal}_{\{i \in [n]\colon\,\omega_i = 1, 
\eta_i(t-) \in B_k(\eta)\},\eta}(\mathfrak{C}^{(\Omega_N)}_n(t-)).
\ee
Note that, in contrast with the spatial coalescent with local coalescence from Section~\ref{ss.spatcoal},
the coalescence mechanism in \eqref{block-coalescence-transition} is no longer 
local: all families whose labels are in $B_k(\eta)$, $k \in \N$, are involved 
in the coalescence event at site $\eta \in \Omega_N$.
\item
\emph{Migration.}
Independently of the coalescence events, the labels of all partition elements of
$\mathfrak{C}^{(\Omega_N)}_n(t)$ perform independent random walks with transition
rates $a^{(N)}(\cdot,\cdot)$ (recall \eqref{32b} and \eqref{ag21}).
\end{itemize}

As in Section~\ref{ss.spatcoal}, the consistency-between-restrictions property
allows us to apply the Kolmogorov extension theorem to the family 
$\{\mathfrak{C}^{(\Omega_N)}_n\}_{n \in \N}$ to construct the Markov process
\be{ag23b}
\mathfrak{C}^{(\Omega_N)}
\ee
taking values in $\Pi_{\Omega_N}$. 

\begin{definition}
\label{def-spatial-coal}
The process $\mathfrak{C}^{(\Omega_N)}$ is called the spatial $\uL$-coalescent with 
non-local coalescence corresponding to the resampling measures $(\Lambda_k)_{k \in \N_0}$ 
(recall \eqref{ag41}) and the migration coefficients $(c_k)_{k\in\N_0}$ (recall 
\eqref{ckdef}).
\end{definition}

\begin{proposition}
{\bf [Feller property]}\\
The process $\mathfrak{C}^{(\Omega_N)}$ is a càdlàg strong Markov process with the 
Feller property.
\end{proposition}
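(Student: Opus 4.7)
The plan is to establish all three properties (càdlàg paths, the strong Markov property, and the Feller property) by first verifying them for the finite-$n$ restrictions $\mathfrak{C}^{(\Omega_N)}_n$ and then transferring to $\mathfrak{C}^{(\Omega_N)}$ via the Kolmogorov-type projective-limit structure already used in \eqref{ag23b}, together with the consistency relation $\mathfrak{C}^{(\Omega_N)}|_n = \mathfrak{C}^{(\Omega_N)}_n$. Since $\Pi_{\Omega_N}$ is a projective limit of the discrete Polish spaces $\Pi_{\Omega_N,n}$, convergence $\pi^{(j)} \to \pi$ is equivalent to $\pi^{(j)}|_n = \pi|_n$ for all $j$ sufficiently large, for each fixed $n$. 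This reduces most continuity questions to finite-$n$ questions.

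The key technical input is that for each $n$ the process $\mathfrak{C}^{(\Omega_N)}_n$ is a pure-jump Markov process on the countable state space $\Pi_{\Omega_N,n}$ with \emph{bounded} total exit rates. First I would bound the rate at which events of $\mathfrak{P}^{(\Omega_N)}$ can affect any one tracked lineage located at some $\zeta \in \Omega_N$. Migration contributes total rate $\sum_{k \geq 1} c_{k-1}(N-1)/N^k$, which is finite by \eqref{ak:recurrence-cond}. A reshuffling-coalescence event at level $k$ with centre $\eta$ affects the lineage iff $\eta \in B_k(\zeta)$, and there are $N^k$ such $\eta$, each contributing rate $N^{-2k}\lambda^*_k$; summing gives $\sum_{k \in \N_0} N^{-k}\lambda^*_k$, which is finite by \eqref{ak:lambda-growth-condition}. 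Consequently the total exit rate out of any state of $\mathfrak{C}^{(\Omega_N)}_n$ is at most $n$ times this bound, and standard theory of pure-jump Markov chains on countable state spaces yields càdlàg paths and the strong Markov property for $\mathfrak{C}^{(\Omega_N)}_n$; the Feller property is automatic there since $\Pi_{\Omega_N,n}$ carries the discrete topology.

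Next I would transfer these properties to $\mathfrak{C}^{(\Omega_N)}$. Càdlàg paths follow because, by the projective-limit topology, $t \mapsto \mathfrak{C}^{(\Omega_N)}(t)$ is càdlàg iff $t \mapsto \mathfrak{C}^{(\Omega_N)}(t)|_n = \mathfrak{C}^{(\Omega_N)}_n(t)$ is càdlàg for every $n$. The strong Markov property is inherited from the Poissonian construction: at any $(\CF_t)$-stopping time $\tau$, the Poisson point process $\mathfrak{P}^{(\Omega_N)}$ restricted to $(\tau,\infty)$ is independent of $\CF_\tau$ and has the same law as the original process time-shifted, so that the post-$\tau$ evolution constructed from these points has the same law as $\mathfrak{C}^{(\Omega_N)}$ started from $\mathfrak{C}^{(\Omega_N)}(\tau)$. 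For the Feller property I would work with the point-separating algebra $\CC_{\Omega_N}$ of bounded cylinder functions introduced in \eqref{ghkk1}: for $F(\pi) = F_n(\pi|_n) \in \CC_{\Omega_N}$, consistency gives $(P_tF)(\pi) = \E_{\pi|_n}[F_n(\mathfrak{C}^{(\Omega_N)}_n(t))]$, which depends only on $\pi|_n$ and is therefore continuous on $\Pi_{\Omega_N}$; strong continuity at $t=0$ is immediate from the finite total rate in $\Pi_{\Omega_N,n}$. A standard density/approximation argument using the fact that $\CC_{\Omega_N}$ separates points then extends these conclusions from $\CC_{\Omega_N}$ to all of $C_{\mathrm{b}}(\Pi_{\Omega_N})$.

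The main obstacle will be the careful isolation, inside the infinite intensity measure \eqref{ag22b}, of the \emph{effective} part of the Poisson field that can actually act on the $n$ tracked lineages in bounded time. Once the per-lineage bound above is in hand, everything else is a relatively routine projective-limit argument; the bounds \eqref{ak:recurrence-cond} and \eqref{ak:lambda-growth-condition} are precisely the hypotheses one needs to make the localisation work and to avoid any explosion issue in the finite-$n$ restrictions.
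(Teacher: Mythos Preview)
Your approach is essentially correct and is a detailed fleshing-out of what the paper does: the paper's entire proof is the single sentence ``This is an immediate consequence of the Poissonian construction.'' Your projective-limit argument via the finite-$n$ restrictions, together with bounded-rate considerations, is exactly the kind of reasoning that sentence is pointing at.

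There is one imprecision worth flagging. In your per-lineage rate bound you sum $\sum_{k\in\N_0} N^{-k}\lambda^*_k$, and for $k=0$ this is $\lambda^*_0=\Lambda^*_0([0,1])$, which the paper does \emph{not} assume to be finite (condition \eqref{ak1000} is imposed only for $k\in\N$, and $\Lambda_0$ is allowed to have an atom at $0$ as well; see \eqref{ak987}). So your stated bound can be $+\infty$. The fix is easy: for $k=0$ the ball $B_0(\eta)$ is a singleton, so the reshuffling step is the identity and an event affects the $n$-restriction only when at least two of the $n$ tracked lineages at that site are marked; the total level-$0$ coalescence rate acting on $n$ lineages is therefore at most $\binom{n}{2}\lambda_0$ with $\lambda_0=\Lambda_0([0,1])<\infty$ (use $1-(1-r)^b-br(1-r)^{b-1}\le \binom{b}{2}r^2$ in \eqref{coalescence-rates}). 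For $k\ge 1$ your bound $n\sum_{k\ge 1}N^{-k}\lambda^*_k<\infty$ via \eqref{ak:lambda-growth-condition} and \eqref{ak1000} is fine. With this correction the finite-$n$ processes are genuine bounded-rate jump chains on countable state spaces, and the rest of your argument goes through.
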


\begin{proof}
This is an immediate consequence of the Poissonian construction.
\end{proof}

\subsubsection{Martingale problem}
\label{sss.martingale-problem-block-coalescence}

In this section, we characterise the spatial $\uL$-coalescent with non-local
coalescence as the solution of the corresponding martingale problem.

Given $\pi_{\Omega_N,n} \in \Pi_{\Omega_N,n}$ and $\eta \in \Omega_N$, 
denote the number of families of
$\pi_{\Omega_N,n}$ with labels in $B_k(\eta)$ (recall~\eqref{block-definition}) by
\be{no4}
b(\eta) = b(\pi_{\Omega_N,n},B_k(\eta)) = |\{ (\pi_i,\eta_i) \in \pi_{\Omega_N,n}\colon\, 
\eta_i \in B_k(\eta) \}| \in \N.
\ee
Recall the definition of the algebra of test functions $\CC_G$ from Section~\ref{sss.mart}. 
Let $\pi_{\Omega_N} = \{(\pi_i, \eta_i)\}_{i \in \N } \in \Pi_{\Omega_N}$,  $F \in 
\mathcal{C}_{\Omega_N}$ and $F(\cdot) = F_n(\cdot\vert_n)$ (recall \eqref{ghkk1}). Consider the linear operator 
$L^{(\Omega_N)*}$ defined as
\be{spatial-block-coalescent-generator}
L^{(\Omega_N)*} = L^{(\Omega_N)*}_{\mathrm{mig}} 
+ L^{(\Omega_N)*}_{\mathrm{coal}},
\ee
where the linear operators $L^{(\Omega_N)*}_{\mathrm{mig}}$ and 
$L^{(\Omega_N)*}_{\mathrm{coal}}$ are defined as follows (recall \eqref{ghkk1}). 
The migration operator is\footnote{Note that $a^{(N)} =  a^{(N)\ast}$ for the hierarchical 
random walk (cf.~\eqref{no3}).}
\be{coalescent:multi-level-migration}
\left(L^{(\Omega_N)*}_{\mathrm{mig}} F\right) (\pi_{\Omega_N})
= \sum_{i=1}^{b(\pi_{\Omega_N}\vert_n)}
\sum_{\eta,\zeta \in \Omega_N}a^{(N)\ast}(\eta,\zeta)
\left[F_n(\textrm{mig}_{\eta \to \zeta,i}(\pi_{\Omega_N}\vert_n))
- F(\pi_{\Omega_N})\right],
\ee
and the block-coalescence-reshuffling operator is (recall \eqref{coalescence-rates}, \eqref{labelled-coalescence}, \eqref{reshuffling-mapping} and \eqref{no4})
\be{coalescent:multi-level-coal}
\begin{aligned}
&\left(L^{(\Omega_N)*}_{\mathrm{coal}} F\right) (\pi_{\Omega_N})
= \sum_{\eta \in \Omega_N} \sum_{k \in \N_0} N^{-2k} \sum_{{\xi_1} \in B_k(\eta)}
N^{-k} \sum_{{\xi_2} \in B_k(\eta)} N^{-k} \cdots
\sum_{{\xi_{|B_k(\eta)|}} \in B_k(\eta)}N^{-k}\\
& \times \sum_{\substack{J \subset [b(\eta)],\\|J| \geq 2}}
\lambda^{(\Lambda_k)}_{b(\eta),|J|}
\left[F_n(\textrm{resh}_{B_k(\eta),\xi}\circ
\textrm{coal}_{\{i \in J\colon\, \eta_i \in
B_k(\eta)\},\eta}(\pi_{\Omega_N}\vert_n)) - F(\pi_{\Omega_N})\right].
\end{aligned}
\ee

\begin{proposition}
\label{prop:spatial-coalescent-martingale-problem-2}
{\bf [Martingale problem: Spatial $\uL$-coalescent with non-local coalescence]}\\
The spatial $\uL$-coalescent with non-local coalescence $\mathfrak{C}^{(\Omega_N)}$
defined in Section~{\rm \ref{sss.evpois}} solves the well-posed martingale problem
$(L^{(\Omega_N)*},\CC_{\Omega_N},\delta_{S_{\Omega_N}})$ with $S_{\Omega_N}$ as in \eqref{eq:initial-state}.
\end{proposition}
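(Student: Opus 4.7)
The plan is to prove the proposition in two stages: first existence (via the Poissonian construction from Section~\ref{sss.evpois}), then uniqueness (by projecting onto finite subfamilies and invoking standard theory for Markov chains on countable state spaces).

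\textbf{Step 1: Well-definedness of the generator.} First, I check that $L^{(\Omega_N)*}F$ is bounded for every $F \in \CC_{\Omega_N}$, $F = F_n(\cdot\vert_n)$. The migration part \eqref{coalescent:multi-level-migration} is a sum of at most $n$ terms, each of which contributes a uniformly bounded rate: the total rate for a single family to migrate is $\sum_{k\in\N} c_{k-1} N^{-(k-1)}$, finite by \eqref{ak:recurrence-cond} for $N$ large enough. For the coalescence-with-reshuffling part \eqref{coalescent:multi-level-coal}, the key is that the change $F_n(\mathrm{resh}\circ\mathrm{coal}(\pi\vert_n)) - F(\pi)$ is nonzero only for events that touch the first $n$ families. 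For level $k=0$ we have the classical $\Lambda_0$-coalescent whose rate $\lambda^{(\Lambda_0)}_{b,i}$ is finite for any finite $b$. For levels $k\geq 1$, each pair $(i,j)$ of families labelled at hierarchical distance $m$ is affected at total rate bounded by $\binom{n}{2}\sum_{k\geq m} N^{-k}\lambda_k$, while each single family can be reshuffled at rate bounded by $\sum_k N^{-k}\lambda^*_k$; both sums converge by \eqref{ak1000} and \eqref{ak:lambda-growth-condition} for $N$ sufficiently large. These bounds are uniform in the state, so $L^{(\Omega_N)*}F \in C_\mathrm{b}(\Pi_{\Omega_N},\R)$.

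\textbf{Step 2: Existence.} Starting from the graphical construction, I apply the standard compensation formula for functionals of Poisson point processes (see e.g.\ Ethier--Kurtz, Ch.~4) to the process $(F(\mathfrak{C}^{(\Omega_N)}_n(t)))_{t\geq 0}$ for any bounded $F_n$ on $\Pi_{\Omega_N,n}$. For each point $(t,\eta,k,r,\omega)$ of $\mathfrak{P}^{(\Omega_N)}$, the jump in $F_n$ is deterministic once one also averages over the independent uniform reshuffling variables $U_{B_k(\eta)}$. Integrating against the intensity measure \eqref{ag22b} and using Fubini to interchange the sums and integrals (legitimate by Step~1) yields exactly $(L^{(\Omega_N)*}F)(\pi)$. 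Hence the usual compensator identity gives that $F(\mathfrak{C}^{(\Omega_N)}(t)) - F(\mathfrak{C}^{(\Omega_N)}(0)) - \int_0^t (L^{(\Omega_N)*}F)(\mathfrak{C}^{(\Omega_N)}(s))\,\dd s$ is a mean-zero martingale w.r.t.\ the filtration $(\CF_t)_{t\geq 0}$ for every $F \in \CC_{\Omega_N}$. The c\`adl\`ag property and adaptedness are inherent to the Poissonian construction, and the strong Markov and Feller properties follow from the fact that each projection $\mathfrak{C}^{(\Omega_N)}_n$ is a pure-jump Markov chain on the countable Polish space $\Pi_{\Omega_N,n}$ with uniformly bounded total jump rates.

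\textbf{Step 3: Uniqueness.} The algebra $\CC_{\Omega_N}$ separates points of $\Pi_{\Omega_N}$ via the consistent projections $\cdot\vert_n$, so any two solutions to the martingale problem with the same initial law have identical finite-dimensional marginals iff they have identical marginals on each $\Pi_{\Omega_N,n}$. Fix $n$. The collection of $F$'s of the form $F = F_n(\cdot\vert_n)$ with $F_n$ bounded on the countable set $\Pi_{\Omega_N,n}$ is closed under the action of $L^{(\Omega_N)*}$, because the coalescence and migration operations map $\Pi_{\Omega_N,n}$ into itself; moreover, by Step~1 the resulting generator $L^{(\Omega_N)*}_n$ on $\ell^\infty(\Pi_{\Omega_N,n})$ is a bounded linear operator (sup over partitions of $L^{(\Omega_N)*}_n\I_{\{\pi\}}$ is finite). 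By the standard theorem on well-posedness of martingale problems for bounded generators on countable state spaces (e.g.\ Ethier--Kurtz, Thm.~4.2.7 or Liggett, Thm.~I.3.9), the projected martingale problem for $(L^{(\Omega_N)*}_n, \mathrm{id}_{\Pi_{\Omega_N,n}})$ has a unique solution, which coincides with the law of $\mathfrak{C}^{(\Omega_N)}_n$ from the graphical construction. Consistency \eqref{no2} across $n$ and the Kolmogorov extension theorem then give uniqueness for $(L^{(\Omega_N)*},\CC_{\Omega_N},\delta_{S_{\Omega_N}})$ on $\Pi_{\Omega_N}$.

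\textbf{Main obstacle.} The delicate point is Step~1: showing $L^{(\Omega_N)*}F$ is bounded despite the double summation over $k\in\N_0$ and $\eta\in\Omega_N$, together with the potentially infinite rate measure $\Lambda^*_k$ at small $r$. The interplay of the Poisson intensity factor $N^{-2k}$ with the block cardinality $N^k$ and the growth rates of $\lambda^*_k$ is what makes the hypotheses \eqref{ak1000} and \eqref{ak:lambda-growth-condition} natural and, as the computation above shows, sufficient. Everything else is a routine extension of the argument used for the local spatial $\Lambda$-coalescent in Proposition~\ref{prop:spatial-coalescent-martingale-problem-1}.
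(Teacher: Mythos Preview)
Your argument is correct, and it takes a genuinely different route to uniqueness than the paper. The paper obtains existence in the same way you do (from the Poissonian construction), but for uniqueness it truncates the \emph{geographic space}: it first establishes well-posedness on the finite groups $G_{N,K}$ (where the argument is the same as for a single site), and then passes to $\Omega_N$ by an approximation argument, controlling the error via the Liggett--Spitzer stochastic domination of occupation numbers by those of pure random walks. Your approach instead truncates the \emph{number of lineages}: for each $n$ you observe that $L^{(\Omega_N)*}$ maps functions of $\cdot\vert_n$ to functions of $\cdot\vert_n$, that the resulting operator on $\ell^\infty(\Pi_{\Omega_N,n})$ is bounded (this is exactly your Step~1), and that bounded generators on countable state spaces yield well-posed martingale problems. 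Consistency in $n$ then determines the full law.

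Your route is more elementary and self-contained: it exploits the built-in consistency of the coalescent and avoids the stochastic domination machinery entirely. The paper's route, on the other hand, parallels the spatial approximation scheme used elsewhere in the paper (Proposition~\ref{P.Kapprox}), so it fits more naturally into the overall architecture. One small remark on your Step~1: the bound on the reshuffling contribution should be stated as a bound on the total rate of events touching the $n$ families, namely at most $n\sum_{k\geq 1} N^{-k}\lambda^*_k$ (each of the at most $n$ distinct $k$-blocks containing a family is hit at rate $N^{-k}\lambda^*_k$), rather than as a per-pair coalescence bound; your conclusion is unchanged.
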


\begin{proof}
A straightforward inspection of the graphical construction in Section~\ref{sss.evpois} 
yields the existence of a solution. Uniqueness on finite geographic spaces is clear: 
this follows in the same way as for the single-site case. Once we have well-posedness 
for finite geographic spaces, we can show uniqueness for $G=\Omega_N$ via approximation. 
The approximation via finite geographic spaces follows from the fact that the occupation 
numbers of the sites are stochastically smaller than in the case of pure random walks
(see Liggett and Spitzer~\cite{LS81}).
\end{proof}

\begin{remark}
Note that, instead of the singleton initial condition in
Proposition~{\rm \ref{prop:spatial-coalescent-martingale-problem-2}} (and in the 
graphical construction of Section~{\rm \ref{sss.evpois}}), we can use any other 
initial condition in $\Pi_{\Omega_N}$.
\end{remark}

\subsection{Duality relations}
\label{ss.dual}

We next formulate and prove the duality relation between the
$C^{\uc,\uL}_N$-process from Section~\ref{sss.hierarCan} and the spatial
$\uL$-coalescent with non-local coalescence $\mathfrak{C}^{(\Omega_N)}$
described so far. This follows a general pattern for all choices of the
geographic space $G$ in \eqref{Gchoices}. We only give the proof for the case $G
= \Omega_N$.

Recall \eqref{ag13}. The construction of the duality function $H(\cdot,\cdot)$ requires 
some new ingredients. For $n \in \N$ and $\varphi \in C_{\mathrm{b}}(E^{n},\R)$, 
consider the \emph{bivariate function} 
$H^{(n)}_\varphi\colon\,\CP(E)^G \times\Pi_{G,n} \to \R$ of the form
\be{duality-test-functions}
\begin{aligned}
H^{(n)}_\varphi(x , \pi_{G,n})
& = \int_{E^{b}}
\left(\bigotimes_{i=1}^{b} x_{\eta_{\pi^{-1}(i)}}\big(\dd u_i\big)\right)
\varphi\big(u_{\pi(1)},u_{\pi(2)},\ldots,u_{\pi(n)}\big),
\end{aligned}
\ee
where $x = (x_{\eta})_{\eta \in G}\in\CP(E)^{G}$, $\pi_{G,n} \in \Pi_{G,n}$, $b 
= b(\pi_{G,n}) = |\pi_{G,n}|$ (cf. \eqref{add30}), $(\eta_i)_{i \in [b]}=L(\pi_{G,n})$ (cf. \eqref{20b}) are the labels 
of the partition $\pi_{G,n}$, and (with a slight abuse of notation) $\pi\colon\,[n]
\to [b]$ is the map from Remark~\ref{rem:coalescent-block-mapping}. In words, the 
functions in \eqref{duality-test-functions} assign the same type to individuals 
that belong to the same family. Note that these functions form a family of functions 
on $\CP(E)^G$,
\be{duality-separating-family}
\left\{
H^{(n)}_\varphi(\cdot, \pi_{G,n})\colon\,
\CP(E)^G\to\R\mid\pi_{G,n} \in \Pi_{G,n},
n \in \N,\varphi \in C_{\mathrm{b}}(E^{n},\R)\right\},
\ee
that separates points. The $C^{\uL}$-process with block resampling and the spatial 
$\uL$-coalescent with non-local coalescence are mutually \emph{dual} w.r.t.\ the duality 
function $H(\cdot,\cdot)$ given by
\be{ag30_b}
H(x,(\varphi, \pi_{G,n})) = H^{(n)}_\varphi (x,  \pi_{G,n}),
\qquad x \in \CE = \CP(E)^G,\,(\varphi, \pi_{G,n}) \in \CE^\prime,
\ee
with $\CE^\prime =\cup_{n\in\N_0}(C_\mathrm{b}(E^n, \R)\times \Pi_{G,n})$.

We proceed with the following observation. Recall the definition of duality in the paragraph including \eqref{ag13}.

\begin{remark}
\label{R2.9}
\begin{itemize}
\item[\textup{(a)}]
Let $X$ and $Y$ be two processes that are dual w.r.t.\ a continuous and
bounded duality function $H(\cdot,\cdot)$. Assume that $X$ and $Y$ are 
solutions to martingale problems corresponding to operators $L_X$, respectively, 
$L_Y$. Then the generator relation
\be{ag24}
[L_X (H(\cdot, Y_0))] (X_0) = [L_Y (H(X_0, \cdot))](Y_0), 
\quad \mbox{for all } (X_0, Y_0) \in \CE \times \CE^\prime,
\ee
is equivalent to the duality relation \eqref{ag13} (see, e.g., Ethier and 
Kurtz~{\rm \cite[Section~4.4]{EK86}}).

\item[\textup{(b)}]
Item \textup{(a)} gives the duality function $H(\cdot,\cdot)$ for all $t \geq 0$ and $n \in \N$, 
as is proved in Proposition~{\rm \ref{prop:spatial-coalescent-duality}} below. In particular, the 
following holds
\be{duality}
\E\left[H^{(n)}_\varphi(X^{(G)}(t),\mathfrak{C}^{(G)}(0)\vert_n)\right]
= \E\left[H^{(n)}_\varphi(X^{(G)}(0),\mathfrak{C}^{(G)}(t)\vert_n)\right],
\ee
with $X^{(G)}$ as below Proposition~{\rm \ref{P.vecLambda}} and $\mathfrak{C}^{(G)}$ as in 
Definition~{\rm \ref{def-spatial-coal}}.
\end{itemize}
\end{remark}

In our context, we have to verify the following relation for the linear operators
in the martingale problem. 

\begin{proposition}
\label{prop:spatial-coalescent-duality} 
{\bf [Operator level duality]}\\
For any of the geographic spaces $G = \Omega_N$, $G= \{0,\ldots,N-1\}^K, K \in \N$ 
and $G = \{0,\ast\}$ the following holds. For all $n \in \N$, for all $H^{(n)}_\varphi$ as in 
\eqref{duality-test-functions}, all $x \in \CP(E)^G$, and 
all $\pi_G \in \Pi_{G}$, 
\be{generator-duality} 
\left(L^{(G)} H^{(n)}_\varphi(\cdot , \pi_G\vert_n) \right)(x) 
= \left( L^{(G)*} H^{(n)}_\varphi(x , \cdot\vert_n) \right)(\pi_G).
\ee
\end{proposition}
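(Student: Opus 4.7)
The plan is to verify the generator-level duality identity \eqref{generator-duality} term by term, using the additive decompositions $L^{(G)} = L^{(G)}_{\mathrm{mig}} + L^{(G)}_{\mathrm{res}}$ and $L^{(G)*} = L^{(G)*}_{\mathrm{mig}} + L^{(G)*}_{\mathrm{coal}}$, and showing that the migration parts match and the resampling/coalescence parts match. Since all choices of $G$ in \eqref{Gchoices} are handled by the same algebraic manipulations, I will carry out the argument for $G=\Omega_N$; the reduction for the truncated hierarchy $G_{N,K}$ and the two-point space $\{0,*\}$ follows by specialising the rates $a^{(N)}, c_k, \Lambda_k$.

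For the migration part, I start from \eqref{ak:multi-level-migration} and compute the Gâteaux derivative of $H^{(n)}_\varphi$ with respect to $x_\eta$: because $H^{(n)}_\varphi$ is multilinear in the components $x_{\eta_i}$ with $\eta_i \in L(\pi_{\Omega_N}\vert_n)$, the derivative $\partial H^{(n)}_\varphi/\partial x_\eta[\delta_a]$ is obtained by replacing each occurrence of $x_\eta$ in the tensor product by $\delta_a$ and summing. Inserting this into $L^{(\Omega_N)}_{\mathrm{mig}} H^{(n)}_\varphi$ and using the symmetry $a^{(N)} = a^{(N)*}$ of the hierarchical random walk, I can reindex the sum over pairs $(\eta,\zeta)$ so that the effect on the duality function coincides exactly with the change produced by $L^{(\Omega_N)*}_{\mathrm{mig}}$ in \eqref{coalescent:multi-level-migration}, where a family labelled $\eta_i$ migrates to $\zeta$ via the map $\mathrm{mig}_{\eta_i \to \zeta, i}$. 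This is the usual Fleming-Viot-type migration duality and reduces to a bookkeeping exercise.

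For the local resampling (i.e., the $\Lambda_0$-part and the $d_0$-Fleming-Viot part of $L^{(\Omega_N)}_{\mathrm{res}}$ acting at a single site $\eta$), I expand the integrand by binomial expansion:
\begin{equation*}
\big((1-r)x_\eta + r\delta_a\big)^{\otimes m}
= \sum_{S \subseteq [m]} r^{|S|}(1-r)^{m-|S|}
\bigotimes_{i \in [m]} \mu_i^{(S,a)},
\end{equation*}
with $\mu_i^{(S,a)}=\delta_a$ if $i\in S$ and $\mu_i^{(S,a)}=x_\eta$ otherwise. Integrating against $x_\eta(\dd a)$ turns the factors $\delta_a^{\otimes |S|}$ into the diagonal measure on the $S$-coordinates; integrating against $\Lambda^*_0(\dd r)$ then produces precisely the rate $\lambda^{(\Lambda_0)}_{m,|S|}$ from \eqref{coalescence-rates} for every $S$ with $|S|\geq 2$, while the $|S|=0,1$ contributions cancel with the $-F(x)$ term by the very definition \eqref{ag3} and a short computation of the Gâteaux derivative (the $d_0$-term is recovered from the $|S|=2$ singular contribution at $r=0$). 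On the coalescent side, each subset $J\subseteq\{i\colon \eta_i=\eta\}$ with $|J|\geq 2$ coalesces at rate $\lambda^{(\Lambda_0)}_{b(\pi_{\Omega_N},\eta),|J|}$, and the effect of $\mathrm{coal}_{J,\eta}$ on $H^{(n)}_\varphi$ is exactly to turn the independent factors $x_\eta(\dd u_i)$, $i\in J$, into a single diagonal factor. Matching $S\leftrightarrow J$ gives the identity. The Fleming-Viot diffusion piece with volatility $d_0$ is matched to the binary coalescence at rate $2d_0$ in the standard way via the Fleming-Viot kernel \eqref{flemming-viot-kernel}.

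The main obstacle is the block reshuffling-coalescence piece, i.e., the $k\geq 1$ terms in \eqref{ak:multi-level-resampling-global} versus \eqref{coalescent:multi-level-coal}. Here the transformation $\Phi_{r,a,B_k(\eta)}$ replaces every component $x_\zeta$ with $\zeta\in B_k(\eta)$ by $(1-r)y_{\eta,k}+r\delta_a$, so after the reshuffling-resampling \emph{all} tensor factors $x_{\eta_i}$ with $\eta_i\in B_k(\eta)$ become the same measure. I will again apply the binomial expansion in $r$ but now to the product $\bigotimes_i \big((1-r)y_{\eta,k}+r\delta_a\big)$ restricted to the indices $I_{\eta,k}=\{i\in[b]\colon \eta_i\in B_k(\eta)\}$, integrate against $y_{\eta,k}(\dd a)$ (which introduces the average over $B_k(\eta)$), and then integrate against $\Lambda^*_k(\dd r)$ to produce the rates $\lambda^{(\Lambda_k)}_{b(\eta),|J|}$ for subsets $J\subseteq I_{\eta,k}$ with $|J|\geq 2$. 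The factors of $y_{\eta,k}$ in the surviving (non-$J$) coordinates are exactly the tensor-product image of the reshuffling map $\mathrm{resh}_{B_k(\eta),\xi}$ integrated against the uniform product measure $N^{-k|B_k(\eta)|}\sum_{\xi_1,\dots,\xi_{|B_k(\eta)|}\in B_k(\eta)}$ in \eqref{coalescent:multi-level-coal}, because the integrand of $H^{(n)}_\varphi$ is linear in each $x_{\eta_i}$ and averaging the label of family $i$ uniformly over $B_k(\eta)$ replaces $x_{\eta_i}$ by $y_{\eta,k}$. The remaining prefactor $N^{-2k}$ in \eqref{ak:multi-level-resampling-global} matches the $N^{-2k}$ in \eqref{coalescent:multi-level-coal}, so collecting everything yields the identity. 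The $|J|=0,1$ contributions in the $r$-expansion are absorbed by the $-F(x)$-term precisely as in Remark~\ref{rem:r-expansion}, which also justifies interchanging the sum over $S$ with the $\Lambda^*_k$-integral via \eqref{ak1000}. This delicate matching between the deterministic reshuffling of \emph{components} on the $X$-side and the random uniform relabelling of \emph{families} on the coalescent side is the genuinely new ingredient compared with the classical Fleming-Viot duality, and is where the construction of $H^{(n)}_\varphi$ in \eqref{duality-test-functions}---in which families share a common type---plays a decisive role.
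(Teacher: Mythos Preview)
Your approach is essentially the paper's: reduce to product test functions $\varphi=\prod_i\varphi_i$, expand $\Phi_{r,a,B_k(\eta)}$ binomially over subsets $J$, and match with the coalescent operator subset by subset. The migration duality and the $k=0$ local $\Lambda_0$-piece are handled correctly, and your identification of the averaged reshuffling on the dual side with the replacement $x_{\eta_i}\mapsto y_{\eta,k}$ in $H^{(n)}_\varphi$ is exactly the right observation.

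There is, however, a real gap in the $k\geq 1$ block piece (one that the paper's own display \eqref{sk:resampling-action-on-duality-test-function} also glosses over). Your assertion that ``the $|J|=0,1$ contributions in the $r$-expansion are absorbed by the $-F(x)$-term'' is correct at level $0$ but fails for $k\geq 1$: the map $\Phi_{r,a,B_k(\eta)}$ sends $x_\zeta$ to $(1-r)y_{\eta,k}+r\delta_a$ for \emph{every} $\zeta\in B_k(\eta)$, so already at $r=0$ the state has been reshuffled to $y_{\eta,k}$. After integrating out $a$, the $|J|\leq 1$ part contributes $[(1-r)^{b}+br(1-r)^{b-1}]\prod_{i\in I_{\eta,k}}\langle y_{\eta,k},\varphi_i\rangle$, which does not cancel against $\prod_{i\in I_{\eta,k}}\langle x_{\eta_i},\varphi_i\rangle$; integrating against $\Lambda^*_k$ leaves the residual
\[
\Bigl(\lambda^*_k-\sum_{|J|\geq 2}\lambda^{(\Lambda_k)}_{b,|J|}\Bigr)
\Bigl[\prod_{i\in I_{\eta,k}}\langle y_{\eta,k},\varphi_i\rangle-\prod_{i\in I_{\eta,k}}\langle x_{\eta_i},\varphi_i\rangle\Bigr].
\]
On the dual side this residual is accounted for by the \emph{pure-reshuffling} events of the graphical construction \eqref{block-coalescence-transition}: Poisson points at which at most one family in $B_k(\eta)$ is marked, so $\mathrm{coal}_J$ is trivial but $\mathrm{resh}_{B_k(\eta),\xi}$ still acts and, upon averaging over $\xi$, replaces each $x_{\eta_i}$ by $y_{\eta,k}$ in $H^{(n)}_\varphi$. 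These transitions are present in the process $\mathfrak{C}^{(\Omega_N)}$ but are not written out in the displayed generator \eqref{coalescent:multi-level-coal}; to make the operator identity \eqref{generator-duality} balance you must include them—equivalently, extend the sum in \eqref{coalescent:multi-level-coal} to $|J|\geq 0$ with $\mathrm{coal}_J=\mathrm{id}$ for $|J|\leq 1$ and rates $\lambda^{(\Lambda_k)}_{b,|J|}=\int_{(0,1]}\Lambda^*_k(\dd r)\,r^{|J|}(1-r)^{b-|J|}$.
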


\begin{proof}
We check the statement for $G=\Omega_N$. In this case, $L^{(G)}$ is as in
\eqref{ak:multi-level-generator} and $L^{(G)*}$ is as in
\eqref{spatial-block-coalescent-generator}. The proof for the other choices of
$G$ is left to the reader.

The claim follows from a straightforward inspection of 
(\ref{ak:multi-level-migration}--\ref{ak:multi-level-resampling-global}) 
and (\ref{coalescent:multi-level-migration}--\ref{coalescent:multi-level-coal}),
respectively. Indeed, duality of the migration operators in
\eqref{ak:multi-level-migration} and \eqref{coalescent:multi-level-migration} 
is evident:
\be{no5} 
\left( L^{(G)}_{\mathrm{mig}} H^{(n)}_\varphi(\cdot,\pi_G\vert_n)\right)(x) 
= \left( L^{(G)*}_{\mathrm{mig}} H^{(n)}_\varphi(x,\cdot\vert_n)\right)(\pi_G). 
\ee

Let us check the duality of the resampling and coalescence operators in
\eqref{ak:multi-level-resampling-global} and
\eqref{coalescent:multi-level-coal}. It is enough to assume that $d_0 = 0$,
since it is well-known that Fleming-Viot operator $L^d$ (cf.\ \eqref{Ldefs}) is
dual with the generator of the Kingman coalescent which is the special case of
$L^{(G)*}_{\mathrm{coal}}$ (cf.\ \eqref{add2_a}) with $\Lambda_0 = d_0
\delta_0$.

By a standard approximation argument, it is enough to consider the duality test 
functions in \eqref{duality-test-functions} of the product form, i.e., with $\varphi(u) 
= \prod_{i=1}^n \varphi_i(u_i)$, where $u = (u_i)_{i=1}^n \in E^n$ and $\varphi_i
\in C_{\mathrm{b}}(E)$. Using \eqref{ak:multi-level-resampling-global}--\eqref{ak:resampling-mapping-global}, 
\eqref{coalescence-rates}, \eqref{no4} and simple algebra, for $x \in \CP(E)^G$ and 
$\pi_G \in \Pi_{G}$ we can rewrite the action of the resampling operator on 
the duality test function as follows (where for ease of notation we
assume that $\pi_G \in S_G$ (cf. \eqref{eq:initial-state}), i.e., $\pi_G$ has the singleton family structure)
\begin{align}
\label{sk:resampling-action-on-duality-test-function}
&\left(L^{(\Omega_N)}_{\mathrm{res}} H^{(n)}_\varphi(\cdot ,\pi_G\vert_n)\right)(x)
\nonumber
\\
&= \sum_{\eta \in G} \sum_{k \in \N_0} N^{-2k} \int_{[0,1]} 
\Lambda_k^*(\dd r) N^{-k} \sum_{\rho \in B_k(\eta)} \int_E x_\rho(\dd a) 
\nonumber
\\
&\quad\times\left( \prod_{i=1}^{b(\eta)} 
\left\langle \Big( \Phi_{r,a,B_k(\eta)}(x) \Big)_{\eta_{\pi^{-1}(i)}}, 
\prod_{j\colon\,\pi(j)=i} \varphi_j \right\rangle
- \prod_{i=1}^{b(\eta)}
\left\langle x_{\eta_{\pi^{-1}(i)}}, 
\prod_{j\colon\, \pi(j)=i} \varphi_j \right\rangle \right)
\nonumber
\\
&= \sum_{\eta \in G} \sum_{k \in \N_0} N^{-2k} 
\int_{[0,1]} \Lambda_k^*(\dd r) N^{-k} \sum_{\rho \in B_k(\eta)} \int_E x_\rho(\dd a) 
\nonumber
\\
&\quad\quad\times\left(
\sum_{\substack{J \subset [b(\eta)]\\|J| \geq 0}} 
\prod_{i \in [b(\eta)] \backslash J} 
\left\langle (1-r) y_{\eta_{\pi^{-1}(i)},k},
\prod_{j\colon\, \pi(j)=i} \varphi_j \right\rangle 
\prod_{i \in J} \left\langle r \delta_a, 
\prod_{j\colon\, \pi(j)=i} \varphi_j \right\rangle \right.
\nonumber
\\
& \left. \quad\quad\quad - \prod_{i=1}^{b(\eta)}
\left\langle x_{\eta_{\pi^{-1}(i)}},\prod_{j\colon\,\pi(j)=i} 
\varphi_j \right\rangle \right) 
\nonumber
\\
&= \sum_{\eta \in G} \sum_{k \in \N_0} N^{-2k} 
\sum_{\substack{J \subset [b(\eta)],\\|J| \geq 2}}
\lambda^{(\Lambda_k)}_{b(\eta),|J|} 
\nonumber
\\
&\quad\quad\times\left( N^{-k} \sum_{\rho \in B_k(\eta)}
\prod_{i \in [b(\eta)] \backslash J} 
\left\langle N^{-k} \sum_{\xi \in B_k(\eta)} x_\xi,
\prod_{j: \pi(j)=i} \varphi_j \right\rangle 
\prod_{i \in J} \left\langle x_\rho , \prod_{j\colon\,\pi(j)=i} 
\varphi_j \right\rangle \right. 
\nonumber
\\
& \left. \quad\quad\quad - \prod_{i=1}^{b(\eta)}
\left\langle x_{g_{\pi^{-1}(i)}},\prod_{j\colon\,\pi(j)=i} 
\varphi_j \right\rangle \right).
\end{align}
On the other hand, according to \eqref{coalescent:multi-level-coal} (also recall \eqref{labelled-coalescence}, \eqref{reshuffling-mapping}), we have
\begin{align}
\label{sk:coalescent-action-on-duality-test-function}
&\left(L^{(\Omega_N)*}_{\mathrm{coal}} H^{(n)}_\varphi(x,\cdot\vert_n)\right)(\pi_G)
= \sum_{\eta \in \Omega_N} \sum_{k \in \N_0} N^{-2k}
\sum_{\substack{J \subset [b(\eta)],\\|J| \geq 2}}
\lambda^{(\Lambda_k)}_{b(\eta),|J|}
\nonumber
\\
& \times
\left(\sum_{{\xi_1} \in B_k(\eta)} N^{-k}
\sum_{{\xi_2} \in B_k(\eta)} N^{-k} \ldots
\sum_{{\xi_{b(\eta)}} \in B_k(\eta)} N^{-k} \right.
\nonumber
\\
& \quad\quad \times \left( \prod_{i \in [b(\eta)] \backslash J}
\left\langle x_{\xi_i} , \prod_{j\colon\,\pi(j)=i} \varphi_j \right\rangle
\left\langle x_{\xi_{\min\{l\colon\,l \in J\}}} , \prod_{j\colon\, 
\pi(j) \in J} \varphi_j \right\rangle \right.
\nonumber
\\
& \left.\left. \qquad\qquad - \prod_{i=1}^{b(\eta)}
\left\langle x_{g_{\pi^{-1}(i)}},
\prod_{j\colon\, \pi(j)=i} \varphi_j \right\rangle \right) \right).
\end{align}
Comparing \eqref{sk:coalescent-action-on-duality-test-function} with
\eqref{sk:resampling-action-on-duality-test-function}, we get the claim.
\end{proof}

\subsection{The long-time behaviour of the spatial $\Lambda$-coalescent with 
non-local coalescence}
\label{ss.ltbvecL}

We next investigate the long-time behaviour of the spatial $\uL$-coalescent with non-local coalescence. 
Subsequently, the duality relation allows us to translate results on 
the long-time behaviour of the spatial $\uL$-coalescent with non-local coalescence into results on the 
long-time behaviour of the $C^{\uc,\uL}_N$-process.

	\subsubsection{The behaviour as $t\to \infty$}
\label{sss.behtinf}

In this section, we prove the existence and uniqueness of a limiting state 
for the spatial $\uL$-coalescent with non-local coalescence as $t \to \infty$.

\begin{proposition}
\label{P.limstate}
{\bf [Limiting state]}\\
Start the $\mathfrak{C}^{(\Omega_N)}$-process from \eqref{ag23b} in a labelled partition $\{(\pi_i,
\eta_i)\}_{i=1}^{n}$, where $\{\pi_i\}_{i=1}^n$ form a partition of $\N$ and 
$\{\eta_i\}_{i=1}^n$ are the corresponding labels. If $x$ is a translation-invariant 
shift-ergodic random state with mean $\theta \in \CP(E)$, then
\be{ag40}
\CL \left[H^{(n)}_\varphi(x, \mathfrak{C}^{(\Omega_N)}_n(t))\right]
\mathop{\Longrightarrow}_{t \to \infty}
\CL\left[H^{(n)}_\varphi(\underline{\theta},
\mathfrak{C}^{(\Omega_N)}_n(\infty))\right]
\qquad \forall\, n \in \N,
\ee
where $\mathfrak{C}^{(\Omega_N)}_n$ is as in Section~{\rm \ref{sss.evpois}} and 
$H^{(n)}_\varphi$ as in \eqref{duality-test-functions}.
\end{proposition}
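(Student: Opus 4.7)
The plan is to combine two ingredients: (i) almost-sure stabilisation of the \emph{unlabelled} partition carried by $\mathfrak{C}^{(\Omega_N)}_n(t)$, and (ii) an ergodic-averaging argument showing that, as $t\to\infty$, the dependence of $H^{(n)}_\varphi(x,\mathfrak{C}^{(\Omega_N)}_n(t))$ on the labels of the surviving blocks washes out to the mean $\theta$. Note that, since $\underline{\theta}$ is spatially constant, $H^{(n)}_\varphi(\underline{\theta},\pi_G)=\int_{E^{b(\pi)}}\theta^{\otimes b(\pi)}(\dd u)\,\varphi(u_{\pi(1)},\ldots,u_{\pi(n)})$ depends only on the unlabelled partition, so the right-hand side of \eqref{ag40} is a function of that limiting partition alone.

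First, I would establish partition convergence. Let $\pi(t)\in\Pi_n$ be the unlabelled partition obtained from $\mathfrak{C}^{(\Omega_N)}_n(t)$ by forgetting labels. Once two indices $i,j\in[n]$ lie in the same block, they stay together forever, so $\pi(\cdot)$ is a monotone coarsening process on the finite set $\Pi_n$. Hence $\pi(t)\to\pi_\infty$ almost surely for a random $\pi_\infty\in\Pi_n$, and there is an a.s.\ finite random time $\tau$ with $\pi(t)=\pi_\infty$ for $t\geq\tau$. Set $B=b(\pi_\infty)$; this number of blocks and the combinatorial structure identify the distribution of $H^{(n)}_\varphi(\underline{\theta},\mathfrak{C}^{(\Omega_N)}_n(\infty))$.

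Second, I would exploit the Poissonian construction of Section~\ref{sss.evpois} to show that the surviving labels $(\eta_1(t),\ldots,\eta_B(t))_{t\geq\tau}$ disperse. By \eqref{ak1000}--\eqref{ak:lambda-growth-condition}, for every scale $k$ the reshuffling events of level $\geq k$ around any fixed site occur infinitely often at positive rate, and each such event relocates the affected labels \emph{uniformly} inside a block of radius $k$. Therefore, for every $k$, with probability tending to $1$ as $t\to\infty$ each surviving label $\eta_i(t)$ has been subjected to a reshuffling at scale $\geq k$ within $[\tau,t]$, and consequently its conditional law (given the coalescent history up to that last reshuffling) is uniform in some block $B_{k}(\cdot)$. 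Moreover the condition $t\geq\tau$ forbids further coalescence, which forces distinct labels eventually to sit inside \emph{disjoint} large balls, since joint occupation of a block $B_k(\eta)$ would, at the next Poisson point, produce coalescence with positive probability governed by $\lambda_k$.

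Third, by Stone--Weierstrass it suffices to treat product test functions $\varphi(u)=\prod_{j=1}^n\psi_j(u_j)$, in which case $H^{(n)}_\varphi(x,\pi_G)=\prod_{i=1}^{b}\langle x_{\eta_i},\phi_i\rangle$ with $\phi_i=\prod_{j:\,\pi(j)=i}\psi_j$. Conditioning on the coalescent (which is independent of $x$), for each label last reshuffled in $B_k$ we have
\begin{equation*}
\E\bigl[\langle x_{\eta_i(t)},\phi_i\rangle\bigm|\mathfrak{C}^{(\Omega_N)}_n,\text{pre-reshuffling history}\bigr]
=|B_k|^{-1}\sum_{\zeta\in B_k}\langle x_\zeta,\phi_i\rangle.
\end{equation*}
By the mean ergodic theorem for the shift action of $\Omega_N$ on the translation-invariant shift-ergodic law of $x$, this block average converges in $L^2$ to $\langle\theta,\phi_i\rangle$ as $k\to\infty$. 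The joint convergence of the product follows by the ergodic theorem applied on disjoint Følner sets, using the dispersion statement from Step~2 to ensure that distinct labels end up in disjoint balls. Integrating over $\mathfrak{C}^{(\Omega_N)}_n(t)$ and using the stabilisation $\pi(t)\to\pi_\infty$ yields \eqref{ag40}.

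The main obstacle is the passage from shift-ergodicity of $x$ (which is strictly weaker than mixing) to the joint factorisation $\E[\prod_i\langle x_{\eta_i(t)},\phi_i\rangle]\to\prod_i\langle\theta,\phi_i\rangle$. One cannot simply invoke asymptotic independence of $x_\eta$ at arbitrary separated sites. The workaround is to use the reshuffling structure: after a reshuffling at scale $k$, the relevant conditional expectation becomes a genuine block average of $\langle x_\cdot,\phi_i\rangle$, turning the question into an application of the ergodic theorem along Følner sequences $(B_k)_{k\in\N}$, where it is valid; joint convergence then reduces to ensuring that different labels are eventually reshuffled in disjoint blocks, which is precisely what the absence of further coalescence after time $\tau$ guarantees.
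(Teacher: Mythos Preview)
Your first step (almost-sure stabilisation of the unlabelled partition via monotonicity of $b(\pi(t))$) is exactly what the paper does. The divergence is in the averaging step: the paper does \emph{not} use the reshuffling mechanism at all, but simply invokes the standard voter-model argument (Liggett~\cite{L85}), which relies on the \emph{migration} random walk to disperse the surviving labels and then applies ergodic averaging along the random-walk displacement.

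Your reshuffling-based argument has a genuine gap. You need reshuffling events at arbitrarily large scales $k$ to occur at positive rate, so that each surviving label is eventually redistributed uniformly over balls $B_k$ with $k\to\infty$. But nothing in the hypotheses forces $\Lambda_k\neq 0$ for infinitely many $k\geq 1$; conditions \eqref{ak1000}--\eqref{ak:lambda-growth-condition} are upper bounds, not lower bounds. In the important special case $\Lambda_k=0$ for all $k\geq 1$ (purely local resampling, which includes the hierarchical Fleming--Viot setting the paper explicitly compares against), there is \emph{no} non-local reshuffling whatsoever, and your conditional-uniform-on-$B_k$ identity never produces a growing F{\o}lner average. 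The proposition still holds in that case, and what carries the averaging is the migration kernel $a^{(N)}$, which is always present since $c_k>0$.

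A second, smaller issue: your claim that absence of coalescence after $\tau$ ``forces distinct labels eventually to sit inside disjoint large balls'' conflates a pathwise statement with a probabilistic one. On the event $\{\pi(t)=\pi_\infty\ \forall t\geq\tau\}$ the labels may well share a $k$-ball infinitely often without coalescing (each Poisson point merely fails to mark both). What you would actually need is that the \emph{proportion of time} (or the law at a late fixed time) spent with labels in a common $k$-ball is small --- and that is precisely the random-walk dispersion estimate that drives the Liggett argument, not a consequence of $t\geq\tau$ alone. The fix is to drop the reshuffling route and argue via migration: after $\tau$ the surviving labels evolve with the hierarchical random walk transitions (reshuffling, if present, only helps), and the ergodic-averaging step is then the classical one for coalescing random walks against a translation-invariant ergodic field.
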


\bpr 
We first observe that $|\mathfrak{C}^{(\Omega_N)}_n(t)|$ is monotone non-increasing,  
so that there exists a limit for the number of partition elements. This implies that 
the partition structure converges a.s.\ to a limit partition, which we call 
$\mathfrak{C}^{(\Omega_N,n)}(\infty) \in \Pi_{\Omega_N,n}$ (cf. \eqref{labelled-partitions}). We 
must prove that the locations result in an effective averaging of the configuration  $x$, so that we 
can replace the $|\mathfrak{C}_n^{(\Omega_N)}(t)|$-locations by any tuple for the (constant)
configuration $\underline{\theta}$. This is a standard argument (see, e.g., the proof 
of the ergodic theorem for the voter model in Liggett~\cite{L85}). 
\epr

Recall the definition of the spatial $\Lambda$-coalescent with immigration-emigration introduced in 
Section~\ref{sss.special-spatial-coalescents}.

\begin{corollary}
\label{C.convZ} {\bf [Limiting state of the $\Lambda$-coalescent with
immigration-emigration]} The analogous to \eqref{ag40} statement holds if we
substitute $\mathfrak{C}^{(\Omega_N)}_n$ with the the $\Lambda$-coalescent  with
immigration-emigration (see Section~{\rm \ref{sss.special-spatial-coalescents}}),
i.e., the spatial $\Lambda$-coalescent $\mathfrak{C}^{(G),\mathrm{loc}}_n$ with
geographic space $G = \{0, \ast\}$ and migration kernel $a(0,\ast) = c$,
$a(\ast,0) = 0$.
\begin{equation}
\label{eq:limiting-state-coal-with-im-emi}
\CL \left[H^{(n)}_\varphi(x, \mathfrak{C}^{(\{0, \ast\}),\mathrm{loc}}_n(t))\right]
\mathop{\Longrightarrow}_{t \to \infty}
\CL\left[H^{(n)}_\varphi((y, x_\ast),
\mathfrak{C}^{(\{0, \ast\}),\mathrm{loc}}_n(\infty))\right]
\quad \forall y \in \CP(E), n \in \N
,
\end{equation}
where $H^{(n)}_\varphi$ as in \eqref{duality-test-functions} and $x = (x_0, x_\ast) \in \CP(E)^2$.
Note that the right hand side of \eqref{eq:limiting-state-coal-with-im-emi} does not depend on $y$.
\end{corollary}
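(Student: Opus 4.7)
The plan is to mimic the proof of Proposition~\ref{P.limstate}, replacing the spatial averaging argument used there (which relied on translation invariance and shift ergodicity of the initial configuration) by an \emph{absorption} argument that exploits the special two-site structure: since the migration kernel satisfies $a(\ast,0)=0$, the cemetery state $\ast$ is absorbing for the labels, and so in the long run every family is labelled $\ast$.

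First, I would establish almost sure convergence of the labelled partition. As in Proposition~\ref{P.limstate}, the number of families $|\mathfrak{C}^{(\{0,\ast\}),\mathrm{loc}}_n(t)|$ is monotone non-increasing in $t$, hence converges almost surely to a limit in $[n]$, and the underlying unlabelled partition stabilises almost surely to some $\pi_\infty \in \Pi_n$. Next, I would argue that all surviving labels converge almost surely to $\ast$: each family currently at $0$ has an independent exponential clock of rate $c>0$ for migration to $\ast$, whereas once at $\ast$ it stays there forever; hence the total time any given family spends at $0$ is almost surely finite, and with probability one there is a finite random time after which every family carries the label $\ast$.

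Combining these two facts yields almost sure convergence of $\mathfrak{C}^{(\{0,\ast\}),\mathrm{loc}}_n(t)$ to a random labelled partition $\mathfrak{C}^{(\{0,\ast\}),\mathrm{loc}}_n(\infty)$ in which every label equals $\ast$. Substituting into \eqref{duality-test-functions}, each factor $x_{\eta_{\pi^{-1}(i)}}$ collapses to $x_\ast$, so the limiting value of $H^{(n)}_\varphi$ depends on the configuration only through its $\ast$-coordinate. Replacing the first coordinate $x_0$ by an arbitrary $y \in \CP(E)$ therefore leaves the limit unchanged, yielding precisely the right-hand side of \eqref{eq:limiting-state-coal-with-im-emi}. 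Since $H^{(n)}_\varphi$ is bounded and is continuous on the discrete state space $\Pi_{\{0,\ast\},n}$, bounded convergence upgrades almost sure convergence of the integrand to convergence of the laws.

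I expect the absorption step to be the only substantive point: one must rule out a pathological scenario in which coalescence events at $0$ somehow prevent families from ever migrating. This is handled by noting that coalescence can only decrease the number of families, so at any time at most $n$ families coexist at $0$, each carrying an independent exponential migration clock of rate $c$; a straightforward first Borel--Cantelli argument then shows that the label configuration is absorbed at $\ast$ in almost surely finite time, completing the proof.
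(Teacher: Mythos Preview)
Your proof is correct and follows essentially the same approach as the paper. The paper does not spell out a separate proof for this corollary, treating it as immediate from the structure of Proposition~\ref{P.limstate}; the absorption argument you give (all families eventually reach and remain at $\ast$) is precisely what the paper later invokes explicitly in the proof of Proposition~\ref{P.ergo}, so your write-up matches the intended reasoning.
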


\subsubsection{The dichotomy: single ancestor versus multiple ancestors}
\label{sss.dichosv}

A key question is whether the $\mathfrak{C}^{(\Omega_N)}$-process from \eqref{ag23b} converges 
to a single labelled partition element as $t \to \infty$ with probability one. To answer this question, we 
have to investigate whether two tagged partition elements coalesce with probability 
one or not. Recall that, by the projective property of the coalescent, we may focus 
on the subsystem of just two dual individuals, because this translates into the 
same dichotomy for any $\mathfrak{C}_n^{(\Omega_N)}$-coalescent and hence for the entrance law 
starting from countably many individuals. However, there is additional reshuffling 
at all higher levels, which is triggered by a corresponding block-coalescence event. 
Therefore, we consider two coalescing random walks $(Z^1_t, Z^2_t)_{t\geq 0}$ on 
$\Omega_N$ with migration coefficients $(\bar{c}_k(N))_{k \in \N_0}$ (cf.~\eqref{a2}) 
and coalescence at rates $(\lambda_k)_{k\in\N_0}$. Consider the {\it time-$t$ accumulated 
hazard for coalescence} of this pair:
\be{ak12}
H_N(t)  = \sum_{k\in\N_0} \lambda_k N^{-k} \int_0^t 
1\left\{d(Z^1_s, Z^2_s) \leq k\right\}\,\dd s.
\ee
Here, the rate $N^{-2k}$ to choose a $k$-block is multiplied by $N^k$ because all 
partition elements in that block can trigger a coalescence event. This explains the 
factor $N^{-k}$ in \eqref{ak12}. Let
\be{ak123456}
H_N  \equiv  \lim_{t\to\infty} H_N(t).
\ee
We have coalescence of the random walks (= common ancestor) with probability 
one, when $H_N = \infty$ a.s., but separation of the random walks (= different 
ancestors) with positive probability, when $H_N < \infty$ a.s.

\begin{lemma}
\label{ak14b}
{\bf [Zero-one law]}
$H_N = \infty$ a.s.\ if and only if $\bar H_N = \E[H_N] = \infty$. Moreover, under 
the weak regularity condition in \eqref{dichreg} the latter is equivalent to
\be{eq:clcond}
\sum_{k \in \N_0} \frac{1}{c_k} \sum_{l=0}^k \lambda_l = \infty.
\ee
\end{lemma}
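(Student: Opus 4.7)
Write $H_N = \int_0^\infty \phi(D_s)\,\dd s$ with $D_s := Z^1_s - Z^2_s$ and $\phi(\eta) := \sum_{k \in \N_0} \lambda_k N^{-k} \1_{B_k(0)}(\eta)$. Because $Z^1$ and $Z^2$ are independent symmetric continuous-time random walks on the abelian group $\Omega_N$ with migration rates $\bar c_k(N)$ (cf.\ \eqref{a2}), a Fourier/characteristic-function argument shows that $D_s$ has the same distribution as $Z_{2s}$, and Fubini yields $\E[H_N] = \bar H_N$ matching \eqref{stable-ergodicity-condition}. The implication $H_N = \infty$ a.s.\ $\Rightarrow \bar H_N = \infty$ is immediate.

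For the converse (the \emph{zero-one law}), the plan has two stages. \emph{Dichotomy} $\P(H_N=\infty) \in \{0,1\}$: the function $u(x) := \P_x(H_N=\infty)$ is bounded $D$-harmonic, so $u(D_t) \to \1_{\{H_N=\infty\}}$ a.s.\ by bounded martingale convergence; in the recurrent case all bounded harmonic functions are constant, while in the transient case the ultrametric identity $B_k(-x)=B_k(0)$ whenever $d(0,x)\leq k$ forces the tail of $H_N^x$ (the functional started at $x$) to dominate the tail of $H_N^0$, preventing $u$ from decaying at infinity and leaving only constant options. \emph{Positivity}: the recurrent case gives $L_k := \int_0^\infty \1_{B_k(0)}(D_s)\,\dd s = \infty$ a.s.\ immediately. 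In the transient case (the main obstacle), I apply the Paley--Zygmund inequality to the truncations $H_N(T) := \int_0^T \phi(D_s)\,\dd s$: using the strong-Markov decomposition of $D$'s trajectory into excursions into and out of the nested balls $B_k(0)$, I would bound $\E[H_N(T)^2]$ by a constant multiple of $\E[H_N(T)]^2$ uniformly in $T$, whence $\P(H_N(T) \geq \tfrac12 \E[H_N(T)]) \geq c > 0$, so $\P(H_N=\infty) \geq c > 0$, which by the dichotomy equals $1$.

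For the equivalence to the explicit sum under \eqref{dichreg}, expand $\E[L_k] = \sum_{\eta \in B_k(0)} G(0,\eta)$ using the Green's function $G(0,\eta) = \tfrac12 \int_0^\infty P_s(0,\eta)\,\dd s$. The explicit Green's function of hierarchical random walks with rates $\bar c_l(N)$ \cite{DGW05} gives $N^{-k}\E[L_k] \asymp \sum_{l \geq k} 1/\bar c_l(N)$, and swapping the order of summation produces
\begin{equation*}
\bar H_N \;\asymp\; \sum_{l \in \N_0} \frac{1}{\bar c_l(N)} \sum_{k=0}^{l} \lambda_k.
\end{equation*}
Finally, the regularity condition \eqref{dichreg} forces $\bar c_l(N) = c_l + N^{-1}\lambda_{l+1} \asymp c_l$ uniformly in $l$: the first branch bounds $\lambda_{l+1}/c_l$ directly, while the second branch combines the lower bounds on $\lambda_{l+1}/c_l$ and $\lambda_l/\lambda_{l+1}$ via a telescoping comparison. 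Hence $\bar H_N = \infty$ iff $\sum_l (1/c_l) \sum_{k \leq l} \lambda_k = \infty$.

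\textbf{Main obstacle.} The transient-case positivity step: showing that the a.s.-finite occupation times $L_k$, although each has finite mean, combine to an a.s.-infinite weighted sum whenever their expected weighted sum diverges. The proposed second-moment argument crucially exploits the near-independence of the hierarchical random walk's excursions at different scales, which must be made quantitative enough to give the uniform bound $\E[H_N(T)^2] \leq C\,\E[H_N(T)]^2$.
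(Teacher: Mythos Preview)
Your proposal is correct and follows the same overall strategy as the paper: a second-moment (Cauchy--Schwarz/Paley--Zygmund) bound to get positive probability of $\{H_N=\infty\}$, a zero--one law to upgrade this to probability one, and the explicit Green's function of the hierarchical walk from \cite{DGW05} for the equivalence with \eqref{eq:clcond}. Two implementation differences are worth flagging. First, for the dichotomy the paper simply observes that $\{H_N=\infty\}$ lies in the tail $\sigma$-algebra of the random-walk increments, which is trivial; this is shorter than your bounded-harmonic-function route, and your transient-case ultrametric argument (``preventing $u$ from decaying at infinity and leaving only constant options'') is not complete as written---non-decay alone does not force constancy of a bounded harmonic function, and the clean fix (Choquet--Deny on abelian groups) is precisely the tail triviality the paper invokes. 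Second, the paper truncates \emph{spatially} rather than temporally, setting $H_N^{(M)}=\sum_{k=0}^M w_k(N)L(k)$ with $w_k(N)=\sum_{j\geq k}\lambda_j N^{-j}$ and $L(k)$ the occupation time at hierarchical distance exactly $k$, and then computes both $\E[H_N^{(M)}]$ and $\E[(H_N^{(M)})^2]$ \emph{explicitly} via the Green's function $G_k(N)$, checking that the ratio is bounded below uniformly in $M$ and $N$ (it tends to $\tfrac12$ as $N\to\infty$). This direct calculation removes what you correctly identified as the main obstacle, replacing your proposed excursion decomposition by algebra that is needed anyway for the final series comparison.
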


\bpr
Write $H_N=\sum_{k\in\N_0} w_k L(k)$ with 
\be{wkLkdef}
w_k(N) = \sum_{j \geq k} \lambda_j N^{-j}, \qquad 
L(k) = \int_0^\infty 1\{d(Z^1_s,Z^2_s)=k\}\dd s.
\ee 
Note that $w_k(N)<\infty$ because of condition \eqref{ak:lambda-growth-condition}. We 
want to show that $\bar H_N = \infty$ implies $H_N = \infty$ (the reverse is immediate).
Recall from Section~\ref{ss.dichoXN} that $P_t(\cdot,\cdot)$ denotes the time-$t$ transition 
kernel of the hierarchical random walk on $\Omega_N$ with migration coefficients 
$(\bar{c}_k(N))_{k \in \N_0}$ given by~\eqref{a2}. In the computations below, we pretend 
that the coefficients are $(c_k)_{k\in\N_0}$.  Afterwards, we can replace $c_k$ by 
$\bar{c}_k(N)$.

Note that $(Z^1_s-Z^2_s)_{s \geq 0}$ has the same law as a single copy $(Z_s)_{s \geq 0}$ 
of the hierarchical random walk but moving at twice the speed. Thus, in law, we may replace 
$L(k)$ by $L(k)=\int_0^\infty 1\{|Z_{2s}|=k\} \dd s$. 

\myparagraph{Step 1.} 
As shown in Dawson, Gorostiza and Wakolbinger~\cite[Eq.~(3.1.5)]{DGW05}, for
the hierarchical random walk with jump rate 1,
\be{ak:dgw-asympt}
P_t(0,\eta) = \sum_{j \geq k} K_{jk}(N)\,\frac{\exp\left[-h_j(N) t\right]}{N^{j}},
\qquad t \geq 0,\, \eta \in \Omega_N\colon\,|\eta|=k \in \N_0,
\ee
where
\be{ak:K-j-k}
K_{jk}(N) =
\begin{cases}
0, & j = k = 0,\\
-1, & j = k > 0, \qquad j,k \in \N_0,\\
N-1, & \text{otherwise,}
\end{cases}
\ee
and
\be{ak:h-j}
h_j(N) = \frac{N}{N-1} r_j(N) + \sum_{i>j}^\infty r_i(N), \quad j \in \N,
\ee
where, for the hierarchical random walk defined in Section~\ref{sss.mighg},
\be{ak:r-j}
r_j(N) = \frac{1}{D(N)}\sum_{i \geq j} \frac{c_{i-1}}{N^{2i-j-1}}, \qquad j \in \N,
\ee
with $D(N)$ the normalising constant such that $\sum_{j\in\N} r_j(N) = 1$. 

The random walk in \cite{DGW05} has jump rate 1, while our hierarchical random walk has
jump rate 
\begin{equation}
\label{Dstardef}
D^*(N) = \sum_{\eta\in\Omega_N} a^{(N)}(0,\eta)
= \sum_{k\in\N} (N^k-N^{k-1}) \sum_{j \geq k} \frac{c_{j-1}}{N^{2j-1}}
= \sum_{m\in\N_0} \frac{c_m}{N^m}\,\left(1-\frac{1}{N^{m+1}}\right).
\end{equation}
Therefore, after computing $H_N$ with the help of the above formulas, we must divide 
$H_N$ by $D^*(N)$ to get the correct expression. 

Note that (\ref{ak:h-j}--\ref{ak:r-j}) simplify considerably when $N\to\infty$, namely,
\be{ak-large-N-rates}
h_j(N) \sim r_j(N) \sim \frac{c_{j-1}}{D(N) N^{j-1}}, \qquad D(N) \sim c_0,
\ee
while also \eqref{wkLkdef} and \eqref{Dstardef} simplify to
\be{wkasymp}
w_k(N) \sim \frac{\lambda_k}{N^k}, \qquad D^*(N) \sim c_0.
\ee
Moreover, because $\limsup_{k\to\infty} \frac{1}{k} \log c_k < \log N$ and $\limsup_{k\to\infty}
\frac{1}{k} \log \lambda_k < \log N$ (see the footnotes in Sections~\ref{sss.mighg}--\ref{sss.reshg}),
the following holds:
\be{fdh-Ncomp}
\begin{array}{ll}
&\text{For every $N \in \Ntwo$ the quantities $h_j(N)$, $r_j(N)$, $D(N)$, $w_k(N)$ and $D^*(N)$}\\ 
&\text{are bounded from above and below by positive finite constants times their}\\
&\text{$N\to\infty$ asymptotics uniformly in the indices $j,k$}. 
\end{array}
\ee

\myparagraph{Step 2.}
For $M \in \N_0$, define the truncated hazard
\be{ak-truncated-hazard}
H^{(M)}_N = \sum_{k=0}^M w_k(N) L(k). 
\ee
For a non-negative random variable $V$ with a finite second moment, Cauchy-Schwarz
gives
\be{ak-inverse-chebyshev}
\P\{V > 0\} \geq (\E[V])^2/\,\E[V^2].
\ee
Therefore,
\be{ak1}
\P \left\{ H^{(M)}_N \bigg/\, \E\left[H^{(M)}_N\right] > 0 \right\} 
\geq \left(\E\left[H^{(M)}_N\right]\right)^2 \bigg/\, \E\left[(H^{(M)}_N)^2\right].
\ee
To compute the quotient in the right-hand side of \eqref{ak1}, we write
\be{ak2alt}
\begin{aligned}
\E\left[H^{(M)}_N\right]
&= \sum_{k=0}^M w_k(N) \int_0^\infty \dd s \, P\{|Z_{2s}| = k\}\\
&= \tfrac12 \sum_{k=0}^M w_k(N) \sum_{\eta \in \partial B_k(0)} G(0,\eta)
\end{aligned}
\ee
and
\be{ak2}
\begin{aligned}
\E\left[\left(H^{(M)}_N\right)^2\right] 
&= \sum_{k,l=0}^M w_k(N) w_l(N) \int_0^\infty \dd s \int_0^\infty \dd t \, P\{|Z_{2s}| = k\}P\{|Z_{2t}| = l\}\\
&= \tfrac12 \sum_{k,l=0}^M w_k(N) w_l(N) \sum_{\substack{\eta \in \partial B_k(0)\\ \eta^\prime \in \partial B_l(0)}} 
G(0,\eta)G(0,\eta^\prime-\eta).
\end{aligned}
\ee
Here, $G$ is the Green function of the hierarchical random walk, which by \eqref{ak:dgw-asympt} 
equals
\be{ak-green}
G(0,\eta) = G_k(N), \qquad \eta \in \Omega_N\colon\,|\eta|=k\in\N_0,
\qquad G_k(N) = \sum_{j \geq k} K_{jk}(N)\,\frac{1}{h_j(N)N^{j}}.
\ee
Let
\be{eq:conting}
N[k] = \left\{\begin{array}{ll}
1, &k=0,\\
N^k-N^{k-1}, &k>0,
\end{array}
\right.
\qquad 
\bar{N}[k] = \left\{\begin{array}{ll}
1, &k=0,\\
N^k-2N^{k-1}, &k>0,
\end{array}
\right.
\ee
denote the number of sites at distance $k$ from the origin, respectively, at distance $k$
from both the origin and a given site itself at distance $k$ from the origin. A straightforward 
counting argument shows that
\be{1st2ndmoment}
\begin{aligned}
\mbox{r.h.s.}\eqref{ak2alt} &= \tfrac12 \sum_{k=0}^M w_k(N)\,N[k]\,G_k(N),\\
\mbox{r.h.s.}\eqref{ak2} &= \tfrac12 \sum_{k,l=0}^M w_k(N)\,w_l(N)\,N[k]\,N[l]\,G_{k \vee l}^2(N)\\
&\qquad + \tfrac12 \sum_{k=0}^M w_k^2(N)\,N[k]\,G_k(N)
\Big\{\big(\bar{N}[k]-N[k]\big)\,G_k(N) + \sum_{m=0}^{k-1} N[m]\,G_m(N) \Big\}. 
\end{aligned}
\ee
For $N\to\infty$, substituting \eqref{ak:K-j-k} and \eqref{ak-large-N-rates} into \eqref{ak-green}
and the resulting expression into \eqref{1st2ndmoment}, we get
\be{ak-h-m-n-1st-moment}
\E \left[ H^{(M)}_N \right] \sim \sum_{k = 0}^M \mu_k
\sum_{m \geq k} \frac{1}{c_m}
\ee
and
\be{ak-h-m-n-2nd-moment}
\E \left[ \left(H^{(M)}_N\right)^2 \right] \sim 2 \sum_{k,l = 0}^M \mu_k \mu_l 
\left( \sum_{m \geq k \vee l} \frac{1}{c_m} \right)^2,
\ee
where we use that the dominant term in the sum defining $G_k(N)$ in \eqref{ak-green} is the 
one with $j=k+1$, and we also use that $\mu_k=\tfrac12\lambda_k$ as in \eqref{ag46}. Thus, 
for every $M$, the right-hand side of \eqref{ak1} is bounded from below by a number that 
tends to $\tfrac12$ as $N\to\infty$. Together with the observation made below 
(\ref{ak-large-N-rates}--\ref{wkasymp}), it therefore follows that there exists a 
$\delta>0$ independent of $M$ and $N$ such that  
\be{ak1MN}
\P \left\{ H^{(M)}_N \bigg/\, \E\left[H^{(M)}_N\right] > 0 \right\} 
\geq \delta.
\ee

\myparagraph{Step 3.}

Since $H^{(M)}_N \leq H_N$ and $H_N = \lim_{M\to\infty} H^{(M)}_N$, it
follows from \eqref{ak1MN} that 
\be{ak1N}
\P \left\{ H_N \big/\, \E[H_N] > 0 \right\} \geq \delta.
\ee
Thus, $\E[H_N]=\infty$ implies $\P\{H_N=\infty\} \geq \delta$. But the event 
$\{H_N=\infty\}$ lies in the tail-sigma-algebra of the hierarchical random 
walk, which is trivial, and therefore this event has probability 0 or 1. Consequently, 
$P\{H_N=\infty\}=1$.

\myparagraph{Step 4.}
Finally, replacing $c_k$ by $\bar{c}_k(N) = c_k + N^{-1}\lambda_{k+1}$ (recall \eqref{a2}), 
noting that \eqref{fdh-Ncomp} continues to apply, and using \eqref{ak-h-m-n-1st-moment} with 
$M=\infty$, we get that $P\{H_N=\infty\}=1$ if and only if 
\begin{equation}
\sum_{k \in \N_0} \lambda_k
\sum_{m \geq k} \frac{1}{c_m + N^{-1} \lambda_{m+1}} = \infty,
\end{equation}
which is the same as
\begin{equation}
\sum_{k \in \N_0} 
\frac{1}{c_k + N^{-1} \lambda_{k+1}} \sum_{l=0}^k \lambda_l = \infty.
\end{equation}
Under the weak regularity condition in \eqref{dichreg} the latter is equivalent to
\eqref{eq:clcond}.
\epr

\section{Well-posedness of martingale problems}
\label{s.wpmp}

Our task in this section is to prove Propositions \ref{P.wp}--\ref{P.vecLambda},
i.e., we have to show that the martingale problem for the single-colony process, 
the McKean-Vlasov process, the multi-colony process and the hierarchically 
interacting Cannings process are all well-posed (= have a unique solution). The 
line of argument is the same for all. In Section~\ref{ss.mp}, we make some preparatory
observations. In Section~\ref{ss.mppropproofs}, we give the proofs.

\subsection{Preparation}
\label{ss.mp}

We first show that the duality relation and the characterisation of the dual
process via a martingale problem allow us to prove the {\em existence} of a
solution to the martingale problem that is strong Markov and has c\`adl\`ag
paths. To this end, observe that via the dual process we can specify a
distribution for every time $t$ and every initial state, since the dual is a
unique solution of its martingale problem (being a projective limit of a Markov
jump process defined for all times $t \geq 0$). Since the family $\{H(\cdot,Y_0) 
\colon\, Y_0 \in \CE^\prime\}$ (cf. \eqref{ag30_b}) separates  points, this uniquely defines a family of
transition kernels $(P_{t,s})_{t \geq s \geq 0}$  satisfying the Kolmogorov
equations, and hence defines uniquely a Markov process. By construction, this
Markov process solves the martingale problem, provided we can verify the
necessary path regularity.

We need to have c\`adl\`ag paths to obtain an admissible solution to the
martingale  problem. For finite geographic space this follows from the theory
of Feller  semigroups (see Ethier and Kurtz~\cite[Chapter 4]{EK86}). For
$\Omega_N$, we consider the exhausting sequence $(B_j(0))_{j\in\N_0}$ and use 
the standard tightness criteria for jump processes to obtain a weak limit point 
solving the martingale problem. The essential step is to control the effect
on a single component of the flow of individuals in and out of $B_j(0)$ in 
finite time as $j\to\infty$.

It is standard to get {\em uniqueness} of the solution from the existence of the
dual process (see, e.g., \cite[Section~1.6]{E00} or \cite[Proposition~4.4.7
and Theorem~4.4.11]{EK86}). Again, this works for all the choices of $G$ in
\eqref{Gchoices}, with a little extra effort when $G=\Omega_N$.

\subsection{Proofs of well-posedness}
\label{ss.mppropproofs}

In this section, we prove Propositions~\ref{P.wp}--\ref{P.vecLambda}. We follow 
the line of argument of Evans~\cite[Theorem~4.1]{E97} and derive existence and 
uniqueness of the spatial Cannings process from the existence of the corresponding 
spatial Cannings-coalescent established in Section~\ref{s.spatcoal}. The main tool 
is duality (cf.\ Proposition~\ref{prop:spatial-coalescent-duality} respectively \eqref{duality}). The proofs of 
Propositions~\ref{P.wp}--\ref{P.vecLambda} follow the same pattern for $G =
\{0,\ldots,N-1\}$, $G = \{0,*\}$ and $G=\Omega_N$.

\begin{proof}[Proof of Propositions~{\em\ref{P.wp}--\ref{P.vecLambda}}]
\mbox{ }\\
\medskip\noindent
$\bullet$ \emph{Well-posedness.} 
First we show that there exists a Markov transition kernel $Q_t$ on $\mathcal{P}(E)^G$ 
such that, for all $\varphi \in C_{\mathrm{b}}(E^{n},\R)$, $\pi \in \Pi_{G,n}$ (cf. \eqref{labelled-partitions}), $X 
\in \mathcal{P}(E)^G$ and $t \geq 0$,
\begin{equation}
\label{eq:forward-proccess-kernel-equation}
\int Q_t(X, \dd X^\prime) H_\varphi^{(n)}(X^\prime,\pi)
= \E\left[ H_\varphi^{(n)}(X,\mathfrak{C}^{(G)}_n(t)) 
\mid \mathfrak{C}^{(G)}_n(0) = {\pi} \right],
\end{equation}
where $H_\varphi^{(n)}$ as in \eqref{duality-test-functions} and
$\mathfrak{C}^{(G)}_n$ as in \eqref{add1} resp. \eqref{ag23b} depending on the
choice of $G$. Once \eqref{eq:forward-proccess-kernel-equation} is established,
the general theory of Markov processes implies the existence of a
\emph{Hunt-process} with the transition kernel $Q_t$ (see, e.g., Blumenthal and
Getoor~\cite[Theorem~I.9.4]{BG68}). This càdlàg process is unique and coincides
with the process $X^{(G)}$ from \eqref{ag7} resp. \eqref{Zdef} resp. from below
Proposition~\ref{P.vecLambda}, since \eqref{eq:forward-proccess-kernel-equation}
implies \eqref{duality}. There can be at most one process satisfying
\eqref{duality}, since the family of duality functions
$H_\varphi^{(n)}(\cdot,\pi)$ separates points on $\mathcal{P}(E)^G$.

Finally, the transition kernel $Q_t$ satisfying
\eqref{eq:forward-proccess-kernel-equation} exists as a solution of the
Hausdorff moment problem \eqref{eq:forward-proccess-kernel-equation} and is
Markov due to the Markov property of the spatial coalescent on the right-hand
side of \eqref{eq:forward-proccess-kernel-equation} (see~\cite[Theorem~4.1]{E97}
for details).

\medskip\noindent
$\bullet$ \emph{Feller property.} 
To show that $X^{(G)}$ is a Feller process we use duality. It is enough to show 
that, for any $F \in \CF$ an appropriate test-function and any $t \geq 0$, the map
\begin{equation}
\begin{aligned}
\label{ak103}
\CP(E)^G \ni x \mapsto \E \left[ F(X^{(G)}(t)) \mid X^{(G)}(0)=x \right] \in \R
\end{aligned}
\end{equation}
is continuous. In \eqref{ak103}, instead of the test functions $F(\cdot) \in
\CF$, it is enough to take the duality test functions $H^{(n)}_\varphi(\cdot ,
\pi_{G,n})$ from \eqref{duality-test-functions}. The duality in \eqref{duality} 
implies that
\begin{equation}
\label{ak104}
\begin{aligned}
\E\left[H^{(n)}_\varphi(X^{(G)}(t),\pi_{G,n}\vert_n)\mid X^{(G)}(0)=x\right]
= \E\left[H^{(n)}_\varphi(x,\mathfrak{C}^{(G)}(t)\vert_n)\right],
\quad
t \geq 0.
\end{aligned}
\end{equation}
Recall that we equip $\CP(E)^G$ with the topology of weak convergence.
Definition \eqref{duality-test-functions} readily implies that the right-hand
side of \eqref{ak104} is continuous in $x$.
\end{proof}

\section{Properties of the McKean-Vlasov process with immigration-emigration}
\label{s.mcv_imem}

The purpose of this section is to show that the $Z_\theta^{c,d,\Lambda}$-process
with immigration-emigration (cf.\ Section~\ref{sss.MV}) is ergodic (Section~\ref{sss.equil}), to identify its 
equilibrium distribution in terms of the dual (Section~\ref{ss.strucmkv}), and 
to calculate its first and second moment measure (Section~\ref{ss.1st2nd}). The
characterisation via the dual will allow us to also show that the equilibrium 
depends continuously on the migration parameter $\theta$ (Section~\ref{sss.conttheta}), 
a key property that will be needed later on and for which we need that the 
$\Lambda$-coalescent is {\em dust-free} (recall \eqref{ag2}).

\subsection{Equilibrium and ergodic theorem}
\label{sss.equil}

The equilibrium $\nu=\nu_\theta^{c,d,\Lambda} \in \CP(\CP(E))$ is the solution of
the equation
\be{cond-invariant}
\left\langle \nu,L_\theta^{c,d,\Lambda} F_\varphi \right\rangle = 0,
\quad
\varphi \in \CC_\mathrm{b}(E^n),\quad n \in \N,
\ee
where we recall (\ref{ak:multi-level-test-functions*}--\ref{Ldefs}) for
the form of $F_\varphi$ and $L_\theta^{c,d,\Lambda}$.

\begin{proposition}
\label{P.ergo}
{\bf [Ergodicity]}\\
For every initial state $Z^{c,d,\Lambda}_\theta (0) \in \CP(E)$,
\be{ag25}
\CL
\left[
Z^{c,d,\Lambda}_\theta (t)
\right]
\mathop{\Longrightarrow}_{t \to \infty}
\nu^{c,d,\Lambda}_\theta
\ee
and the right-hand side is the unique equilibrium of the process. The convergence 
holds uniformly in the initial state.
\end{proposition}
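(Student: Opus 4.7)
The strategy is to derive \eqref{ag25} from the duality between the McKean-Vlasov process $Z^{c,d,\Lambda}_\theta$ and the $\Lambda$-coalescent with immigration-emigration introduced in Section~\ref{sss.special-spatial-coalescents}. The McKean-Vlasov dynamics is the single-colony process at site $0 \in \{0,*\}$ coupled with a frozen source $\theta$ at the cemetery $*$; the drift $c(\theta-x)$ thus corresponds on the dual side to migration of partition elements from $0$ to $*$ at rate $c$, with $*$ absorbing. Proposition~\ref{prop:spatial-coalescent-duality}, applied to $G=\{0,*\}$ with the $\ast$-component of the Cannings process frozen at $\theta$, gives
\be{mk1}
\E\left[H^{(n)}_\varphi\big(Z^{c,d,\Lambda}_x(t),\pi\big)\right]
= \E\left[H^{(n)}_\varphi\big((x,\theta),\mathfrak{C}^{(\{0,*\}),\mathrm{loc}}_n(t)\big)\,\big|\,\mathfrak{C}^{(\{0,*\}),\mathrm{loc}}_n(0)=\pi\right]
\ee
for every $n\in\N$, $\varphi\in C_\mathrm{b}(E^n,\R)$, $\pi\in\Pi_{\{0,*\},n}$ and $x\in\CP(E)$.

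First I would apply Corollary~\ref{C.convZ} to the dual started from the singleton labelled partition $\{(\{i\},0)\}_{i\in[n]}$. Because $*$ is absorbing and the dust-free condition \eqref{ag2} forces each partition element to either coalesce with another or migrate to $*$ in finite time, the limiting labelled partition $\mathfrak{C}^{(\{0,*\}),\mathrm{loc}}_n(\infty)$ almost surely has all labels equal to $*$. Consequently, the right-hand side of \eqref{mk1} converges as $t\to\infty$ to a limit that \emph{does not depend on} $x$, since the evaluation in \eqref{duality-test-functions} at the limiting partition only involves the $*$-coordinate. Since the family $\{H^{(n)}_\varphi(\cdot,\pi)\}_{n,\varphi,\pi}$ separates points on $\CP(E)$ (cf.~\eqref{duality-separating-family}), this identifies a unique law $\nu^{c,d,\Lambda}_\theta\in\CP(\CP(E))$ such that $\CL[Z^{c,d,\Lambda}_x(t)]\Longrightarrow\nu^{c,d,\Lambda}_\theta$ for every $x$.

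Invariance of $\nu^{c,d,\Lambda}_\theta$ follows by combining the convergence with the Feller property from Proposition~\ref{prop:McKean-Vlasov-well-posedness}: $\int\nu^{c,d,\Lambda}_\theta(\dd x)\P_t(x,\cdot)$ is again the limit law and hence agrees with $\nu^{c,d,\Lambda}_\theta$. Uniqueness of the equilibrium among all invariant measures is automatic, because any such measure would have to coincide with its time-$t$ pushforward and hence with the common limit. For the uniform convergence in the initial state, I would exploit that in \eqref{mk1} the dual process $\mathfrak{C}^{(\{0,*\}),\mathrm{loc}}_n$ is the \emph{same} for every $x$; thus $|H^{(n)}_\varphi|\leq\|\varphi\|_\infty$ together with the event $A_t=\{L(\mathfrak{C}^{(\{0,*\}),\mathrm{loc}}_n(t))\subset\{*\}\}$, whose probability tends to $1$ independently of $x$, yields
\be{mk2}
\sup_{x\in\CP(E)}\left|\E\left[H^{(n)}_\varphi(Z^{c,d,\Lambda}_x(t),\pi)\right]
-\E\left[H^{(n)}_\varphi((\cdot,\theta),\mathfrak{C}^{(\{0,*\}),\mathrm{loc}}_n(\infty))\right]\right|
\leq 2\|\varphi\|_\infty\,\P(A_t^c)\to 0.
\ee

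The main obstacle is the first step: justifying that the dual coalescent with immigration-emigration is almost surely fully absorbed at $*$ in the limit $t\to\infty$. On the finite initial condition $[n]$, this amounts to a horse-race estimate between Cannings coalescence at $0$ (whose total rate on $b$ elements is bounded by $\tfrac12 b(b-1)\Lambda([0,1])$ plus the Kingman contribution from $d\delta_0$, hence stays finite) and migration from $0$ to $*$ at per-element rate $c>0$. A standard comparison argument, using that the absorbing site has no return, shows that the expected absorption time is finite; \eqref{ag2} is needed only implicitly, to guarantee that the coalescent itself is well-defined on $\Pi_G$, and the rest of the argument is then routine once Corollary~\ref{C.convZ} is in hand.
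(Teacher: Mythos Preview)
Your argument is correct and follows essentially the same route as the paper: both exploit the duality with the $\Lambda$-coalescent with immigration-emigration on $G=\{0,*\}$, invoke Corollary~\ref{C.convZ} to get convergence of the dual, and observe that since every partition element eventually migrates to the absorbing cemetery $*$, the limit depends only on $\theta$ and not on the initial state $x$. One small remark: the dust-free condition \eqref{ag2} plays no role here---absorption at $*$ follows simply because each of the finitely many partition elements independently jumps to $*$ at rate $c>0$, so the absorption time is stochastically dominated by the maximum of $n$ i.i.d.\ exponentials; your ``horse-race'' is unnecessary.
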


\begin{proof}
We use the  dual process, namely, the $\Lambda$-coalescent  with
immigration-emigration (see Section~\ref{sss.special-spatial-coalescents}), to
show that the expectation in the right-hand side of the duality relation
\eqref{duality} converges. Indeed, we showed in  (\ref{ag40}) in
Proposition~\ref{P.limstate} and its Corollary~\ref{C.convZ} that the state of
the duality function $H(X_0,\cdot)$, cf., \eqref{duality-test-functions},
applied to the dual process converges in law to a limiting random variable as $t
\to \infty$.  The duality function viewed  as a function of the first argument
generates a law-determining family $\{H(\cdot, C_0) \colon\, C_0 \in \CE^\prime\}$ ($\CE^\prime$ as below \eqref{ag30_b}) and
hence \eqref{ag40} proves convergence.

It remains to show that the limit is independent of the initial state. Indeed, this 
is implied by the fact that if we start with {\em finitely many} partition elements,
then all partition elements eventually jump to the cemetery location $\{\ast\}$ 
where all transition rates are zero  and the state is $\theta$. The latter implies 
that the limit is unique. Since $\CP(E)$ is compact and the process is Feller, there 
must exist an equilibrium, and this equilibrium must be equal to the $t \to \infty$ 
limit.
\end{proof}

\subsection{Continuity in the centre of the drift}
\label{sss.conttheta}

We want to prove that
\be{ag26}
\CP(E) \ni \theta \mapsto \nu^{c,d,\Lambda}_\theta \in \CP(\CP(E))
\ee
is uniformly continuous for suitably chosen metrics (in the weak topology 
on the respective metrisable spaces). We will choose the metrics in 
(\ref{ak-metric-p-e}--\ref{ak-metric-cp-cp-e}) below. Recall the definition of 
the duality functions $H$ from (\ref{duality-separating-family}--\ref{ag30_b}). 
Since the family $\{H(\,\cdot\,,C_0) \colon\, C_0 \in \CE^\prime\}$ is dense in
$C_\mathrm{b}(\CP(E),\R)$, we can approximate any function in
$C_\mathrm{b}(\CP(E),\R)$ by duality functions in the supremum norm. 
In fact, even the smaller family $\{H_\varphi(\,\cdot\,,\{ \{1\},\ldots,\{ n\}\}) 
\colon\, n \in \N, \varphi \in C_\mathrm{b}(E)\}$ is dense in $C_\mathrm{b}(\CP(E),\R)$. 
It is enough to prove uniform continuity for the duality function uniformly in the
family, even with the additional restriction $\Vert \varphi \Vert_\infty < 1$.
For this purpose, we analyse the limiting random variable for the corresponding
dual as a function of $\theta$ in the limit as $t \to\infty$.

If $(C^{c,\Lambda}_t)_{t \geq 0}$ denotes the spatial $\Lambda$-coalescent
with immigration-emigration starting from $\{(\{1\},0),\ldots,(\{ n\},0)\}$ and jumping 
to the cemetery state {$\{\ast\}$} at rate $c$, then $H(\theta,C^{c,\Lambda}_\infty)$ 
uniquely determines the McKean-Vlasov limit law $\nu^{c,d,\Lambda}_\theta$ for
$t \to \infty$. Recall that we associate the distribution of types $\theta$ with the 
cemetery state. It is clear that $C^{c,\Lambda}_\infty = \lim_{t \to \infty} C^{c,\Lambda}_t$ 
exists. The random variable $C^{c,\Lambda}_\infty$ has partition elements that are 
all located at the cemetery state.

Let
\be{ag63a}
P_{n,k} = \P \big\{|C^{c,\Lambda}_\infty| = k 
\mid C^{c,\Lambda}_0 =\{\{1\}, \ldots, \{n\}\} \big\}.
\ee
For all $\theta \in \CP(E)$ and all $\varphi \in C_\mathrm{b} (E)$ with $\Vert \varphi 
\Vert_\infty < 1$, taking $H_\varphi(\underline{\theta},(\{1\},\ldots,\{n\}\})=\langle\theta,
\varphi\rangle^n$ we have
\be{ag63b}
\E \left[H(\underline{\theta}, C^{c,\Lambda}_\infty) \mid C^{c,\Lambda}_0 =\{\{1\}, \ldots, \{n\}\} \right] 
= \suml^n_{k=1} P_{n,k} \langle \theta, \varphi \rangle ^k.
\ee
From the right-hand side of \eqref{ag63b}, we read off that the family of functions
\be{ak-uniformity-family-n}
\text{
$\left\{\E \left[H_\varphi(\underline{\theta}, C^{c,\Lambda}_\infty) \mid C^{c,\Lambda}_0 =\{\{1\}, \ldots, \{n\}\} \right] \colon\, n \in \N \right\}$
is uniformly continuous in $\theta$.}
\ee
On $\CP(E)$ we choose the metric
\be{ak-metric-p-e}
\rho_{\CP(E)}(\theta, \theta^\prime)
\equiv
\sum_{k\in\N} 2^{-k} |\langle \theta - \theta^\prime, \varphi_k \rangle|,
\quad \theta, \theta^\prime \in \CP(E),
\ee
where $\{ \varphi_k \in C_\mathbb{b}(E) \colon\, k \in \N \}$ with $ \sup_{k\in\N} 
\Vert \varphi_k \Vert_\infty < 1$ separates points and therefore generates the topology. 
On $\CP(\CP(E))$, we choose the metric
\be{ak-metric-cp-cp-e}
\rho_{\CP(\CP(E))}(X, X^\prime)
\equiv
\sum_{n\in\N} \sum_{k\in\N} 2^{-k-n} 
\E \left[ | H_{\varphi_k}( X-X^\prime, \{ \{ 1\}, \ldots, \{n\} \})| \right],
\quad X, X^\prime \in \CP(\CP(E)).
\ee
Combining (\ref{ak-uniformity-family-n}--\ref{ak-metric-cp-cp-e}), we get the 
uniform continuity of \eqref{ag26}.

\subsection{Structure of the McKean-Vlasov equilibrium}
\label{ss.strucmkv}

In the case of the McKean-Vlasov Fleming-Viot processes, the equilibrium
$\nu^{c,d,0}_\theta$ can be identified as an atomic measure of the form
\be{atm1}
\sum_{i\in\N} \Big[W_i \prod^{i-1}_{j=1} (1-W_j)\Big]\,\delta_{U_i}
\ee
with $(U_i)_{i \in \N}$ i.i.d.~$\theta$-distributed and $(W_i)_{i\in\N}$ i.i.d.\ 
$\mathrm{BETA}(1,\frac{c}{d})$-distributed, independently of each other (cf.\
\cite{DGV95}). What we can say about the equilibrium $\nu^{c,d,\Lambda}_\theta$?

\bp{P.mkv}
{\bf [Towards a representation for McKean-Vlasov equilibrium]}
Let $\nu^{c,d,\Lambda}_\theta$ be the equilibrium of the process $Z^{c,d,\Lambda}_\theta
= (Z^{c,d,\Lambda}_\theta(t))_{t \geq 0}$ with resampling constant $d$ and resampling 
measure $\Lambda \in \CM_f([0,1])$. Assume that $\Lambda$ has the dust-free property (recall
\eqref{ag2}).
\begin{itemize}
\item[\textup{(a)}]
The following decomposition holds:
\be{atm2}
\nu^{c,d,\Lambda}_\theta = 
\CL\left[\sum_{i\in\N} V_i \delta_{U_i}\right].
\ee
Here, $(V_i)_{i \in \N}$ and $(U_i)_{i \in \N}$ are independent sequences of random
variables taking values in $[0,1]$, respectively, $\CP(E)$. Moreover, $(U_i)_{i \in \N}$ 
is i.i.d.\ with distribution $\theta$, $\sum_{i\in\N} V_i=1$ a.s., and 
\be{atm3}
V_i = W_i \prod^{i-1}_{j=1} (1-W_j),
\ee
where
\be{atm4}
(W_j)_{j\in\N} 
\ee
is a sequence of $[0,1]$-valued random variables whose joint distribution is uniquely 
determined by the moment measures of $\nu^{c,d,\Lambda}_\theta$ (which can be 
expressed in terms of the dual coalescent process) and depends on $c,d$ and 
$\Lambda$. (See \textup{Remark~{\rm \ref{Wstructure}}} below.)

\item[\textup{(b)}] If $\theta \notin M=\{\delta_u \colon\, u \in E\}$ and $c,d>0$, then
\be{ag64}
0 \leq \nu^{c,d,\Lambda}_\theta(M) < 1.
\ee
\end{itemize}
\end{proposition}

\begin{proof}
$\mbox{}$

\medskip\noindent
(a) The distribution and the independence of $(U_i)_{i\in \N}$ follow from the 
representation of the state at time $t \in [0,\infty]$ in terms of the entrance 
law of the $\Lambda$-coalescent starting from the partition into singletons: 
$\{ \{1\},\{2\},\ldots \}$. This representation is a consequence of the duality relation 
in (\ref{duality}) and de Finetti's theorem, together with the dust-free condition on 
$\Lambda$ in (\ref{ag2}), which guarantees the existence of the frequencies of the 
partition elements at time $t$. Indeed, every state, including the equilibrium state, can 
be written as the limit of the empirical distribution of the coalescent entrance law 
starting from the partition $\{\{1\},\{2\},\ldots \}$ at site 1, where we assign 
to each dual individual the type of its partition element at time $\infty$, drawn 
independently from $\theta$, the cemetery state. Here, we use the fact that if 
we condition individuals not to coalesce with a given individual, respectively, 
its subsequent partition element, then the process is again a coalescent for the 
smaller (random) subpopulation without that individual, respectively, its subsequent 
partition element. 

The $(V_j)_{j \in \N}$ are the relative frequencies of the partition elements 
ordered according to their smallest element. By construction, $(V_i)_{i \in \N}$ 
and $(U_i)_{i \in \N}$ are independent. 

In principle, via the duality we can express the moments in equilibrium 
\be{momdualquan}
\E_{\nu_\theta^{c,d,\Lambda}} [\langle X,f\rangle^n]
\ee 
in terms of $\langle \theta,f\rangle^k$, $k=1,\dots,n$, and the coalescence probabilities 
before the migration jumps into the cemetery state. The latter in turn can be calculated 
in terms of 
\be{atm6}
c,\,d,\,r^k(1-r)^{n-k} \Lambda (\dd r).
\ee
These relations uniquely determine the distribution of the atom sizes, which in turn 
uniquely determines the marginal distribution of the $W_i$'s via (\ref{atm3}).

\medskip\noindent 
(b) First consider the case $\Lambda=\delta_0$. Let us verify that, for $c>0$ and 
$\theta \notin M$, there can be no mass in $M$. Indeed, if there would be an atom
somewhere in $M$, then there would also be an atom in $M$ after we merge types 
into a finite type set. However, in the latter situation the $W_i$'s are 
$\mathrm{BETA}$-distributed, hence do not have an atom at 0 or 1, and so also 
the law of the $V_i$'s has no atom at 0 or 1. This immediately gives the claim, 
because it means that $\nu^{c,d,\Lambda}_\theta (M)=0$.

Next, consider the case $\Lambda\neq\delta_0$. Then new types keep on coming in. We 
need to prove that the event that $\mathfrak{C}^{(\{0,\ast\})}_\infty$ (the limit of the dual 
coalescent) contains more than one partition element has a positive probability. But this 
is obviously true when $c,d>0$.
\end{proof}

\begin{remark}
\label{Wstructure} It is well known (cf.~\textup{\cite{DGV95}}) that if
$\Lambda=\delta_0$ (the McKean-Vlasov Fleming-Viot process), then the $W_i$'s
are i.i.d.\ with distribution $\mathrm{BETA}(1,\frac{c}{d})$. It remains an open
problem to identify the law of the $W_i$'s for the general Cannings resampling
as function of the ingredients in \eqref{atm6}. We note that if the $W_i$'s
happen to be independent, then $W_i$ has distribution $\mathrm{BETA}(1 - \alpha,
i \alpha + \beta)$ for some $\alpha \in [0,1]$ and $\beta \in [0,\infty)$ (see
\textup{\cite[Theorem~3.4]{P06}}). 
\er

\subsection{First and second moment measure}
\label{ss.1st2nd}

We can identify the first and second moments of the equilibrium explicitly, and 
we can use the outcome to calculate the variance of $M^{(j)}_k$ for $k=0,\ldots,j$,
the interaction chain defined in Section~\ref{sss.extmsp}. Recall the definition 
of $\E_{\nu_\theta}[\var_{\cdot}(\psi)]$ from \eqref{vardef1} and of $\var_x(\psi)$ 
from \eqref{vardef2}. Recall $\lambda = \Lambda([0;1])$.

\bp{prop:2nd}
{\bf [Variance]}
For every $\psi\in \CC_\mathrm{b}(E)$,
\be{varevol}
\E_{\nu_\theta^{c,d,\Lambda}}\,\left[\var_\cdot\left(\psi\right)\right]
= \int_{\CP(E)} \nu_\theta^{c,d,\Lambda}(\dd x) 
\left( \left\langle\psi^2,x\right\rangle
- \left\langle\psi,x\right\rangle^2 \right)
= \frac{2c}{2c+\lambda+2d} \var_\theta\left(\psi\right).
\ee
\ep

\begin{proof}
We calculate the expectation of $\langle\varphi,x\rangle$, $\varphi\in\CC_\mathrm{b}(E)$,
and $\langle\varphi,x^{\otimes 2}\rangle$, $\varphi\in\CC_\mathrm{b}(E^2)$, in equilibrium.

It follows from \eqref{cond-invariant} with $\nu=\nu_\theta^{c,d,\Lambda}$ that
\be{first-moment-invariant-measure}
n=1,\,\varphi\in\CC_\mathrm{b}(E)\colon
\qquad 0 = c \int_{\CP(E)} \nu(\dd x) \left\langle \varphi,(\theta-x)\right\rangle,
\ee
i.e., $\int_{\CP(E)} \nu(\dd x) \left\langle \varphi,x \right\rangle = \left\langle\varphi,
\theta\right\rangle$. It further follows that, for $n=2,\,\varphi\in\CC_\mathrm{b}(E^2)$,
\be{sk:preparation-second-moment-invariant-measure-1}
\begin{aligned}
0 & =  -2c \int_{\CP(E)} \nu(\dd x) \left\langle\varphi,x^{\otimes 2}\right\rangle\\
&\qquad + c \int_{\CP(E)} \nu(\dd x)\left[\left\langle\varphi,\theta\otimes x\right\rangle
+ \left\langle\varphi,x\otimes\theta\right\rangle\right]
\\
&\qquad + 2d \int_{\CP(E)} \nu(\dd x) \left( \int_E x(\dd a) \left\langle \varphi ,
\delta_a^{\otimes 2} \right\rangle - \left\langle \varphi,
x^{\otimes 2} \right\rangle \right) 
\\
&\qquad + \lambda \int_{\CP(E)} \nu(\dd x)
\int_E x(\dd a) \left\langle\varphi,\left(\delta_a-x\right)^{\otimes 2}\right\rangle.
\end{aligned}
\ee
We can rewrite \eqref{sk:preparation-second-moment-invariant-measure-1} as \be{sk:preparation-second-moment-invariant-measure-1b}
\begin{aligned}
&\int_{\CP(E)} \nu(\dd x)
\int_E x(\dd a) \left\langle\varphi,\left(\delta_a-x\right)^{\otimes 2}\right\rangle \\
&\qquad = \int_{\CP(E)} \nu(\dd x)
\left( \int_E x(\dd a) \left\langle\varphi,\delta_a^{\otimes 2}\right\rangle
- \left\langle \varphi,x^{\otimes 2}\right\rangle\right) \\
&\qquad = \frac{c}{\lambda+2d} \left( 2\int_{\CP(E)} \nu(\dd x)
\left\langle\varphi,x^{\otimes 2}\right\rangle - \int_{\CP(E)} \nu(\dd x)
\left[\left\langle\varphi,\theta\otimes x\right\rangle
+ \left\langle\varphi,x\otimes\theta\right\rangle\right] \right).
\end{aligned}
\ee
From this, we see that
\be{sk:preparation-second-moment-invariant-measure-1c}
\begin{aligned}
&\int_{\CP(E)} \nu(\dd x) \left\langle \varphi,x^{\otimes 2}\right\rangle \\
&\qquad = \frac{\lambda+2d}{2c+\lambda+2d} \left( \frac{c}{\lambda+2d}
\int_{\CP(E)} \nu(\dd x)\left[\left\langle\varphi,\theta\otimes x\right\rangle\right.\right.\\
&\qquad \qquad\qquad\qquad\qquad\qquad
+ \left.\left.\left\langle\varphi,x\otimes\theta\right\rangle\right]
+ \int_{\CP(E)} \nu(\dd x) \int_E x(\dd a)
\left\langle\varphi,\delta_a^{\otimes 2}\right\rangle \right) \\
&\qquad = \frac{\lambda+2d}{2c+\lambda+2d} \left( \frac{2c}{\lambda+2d}
\left\langle\varphi,\theta^{\otimes 2} \right\rangle + \int_E \theta(\dd a)
\left\langle\varphi,\delta_a^{\otimes 2}\right\rangle \right),
\end{aligned}
\ee
where we use \eqref{first-moment-invariant-measure} in the last line. Substituting this 
back into \eqref{sk:preparation-second-moment-invariant-measure-1b} and using
\eqref{first-moment-invariant-measure} once more, we get
\be{sk:preparation-second-moment-invariant-measure-1d}
\begin{aligned}
& \int_{\CP(E)} \nu(\dd x)
\int_E \int_E Q_x(\dd u,\dd v)\,\varphi(u,v) \\
&\qquad = \int_{\CP(E)} \nu(\dd x)
\left( \int_E x(\dd a) \left\langle\varphi,\delta_a^{\otimes 2}\right\rangle
- \left\langle \varphi,x^{\otimes 2}\right\rangle\right) \\
&\qquad = \frac{2c}{\lambda+2d} \left( \int_{\CP(E)} \nu(\dd x)
\left\langle\varphi,x^{\otimes 2}\right\rangle
- \left\langle\varphi,\theta^{\otimes 2} \right\rangle \right) \\
&\qquad = \frac{2c}{2c+\lambda+2d} \left( \int_E \theta(\dd a)
\left\langle\varphi,\delta_a^{\otimes 2}\right\rangle
- \left\langle\varphi,\theta^{\otimes 2} \right\rangle \right) \\
&\qquad = \frac{2c}{2c+\lambda+2d} \int_E \int_E Q_\theta(\dd u,\dd v)\,\varphi(u,v).
\end{aligned}
\ee
Pick $\varphi=\psi\times\psi$ in \eqref{sk:preparation-second-moment-invariant-measure-1d}
to get the claim.
\end{proof}

For $\lambda=\Lambda([0,1])=0$, \eqref{varevol} is the same as Dawson, Greven and
Vaillancourt~\cite[Eq.\ (2.5)]{DGV95}.

\begin{corollary}
\label{C.asyvar}
{\bf [Asymptotic variance of entrance law]}\\
For $\varphi \in C_\mathrm{b}(E,\R)$, the interaction chain (cf., Section~\ref{sss.extmsp}) 
satisfies
\be{ag54}
\begin{aligned}
\lim_{j\to \infty} \E_{\CL(M^{(j)}_0)}[\var_\cdot(\varphi)] = 0
\quad
(\text{respectively, }>0),
\end{aligned}
\ee
if $\sum_{k\in\N} m_k=\infty$ (respectively, $\sum_{k\in\N} m_k <\infty$) with $m_k$ 
defined in \eqref{ag46} and $d_k$ in \eqref{diffusion-constants}.
\end{corollary}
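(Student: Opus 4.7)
The plan is to iterate the one-step variance identity of Proposition~\ref{prop:2nd} along the transition kernels of the interaction chain.

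First, I would rewrite the contraction factor in \eqref{varevol} in the convenient form. Using $\mu_k = \tfrac12\lambda_k$ and $m_k = (\mu_k+d_k)/c_k$, one immediately has
\begin{equation*}
\frac{2c_k}{2c_k+\lambda_k+2d_k} = \frac{c_k}{c_k+\mu_k+d_k} = \frac{1}{1+m_k}.
\end{equation*}
Therefore, applying Proposition~\ref{prop:2nd} with the parameters $(c_k,d_k,\Lambda_k)$ gives, for every $x \in \CP(E)$ and every $\psi \in C_\mathrm{b}(E,\R)$,
\begin{equation*}
\int_{\CP(E)} K_k(x,\dd y)\,\var_y(\psi) = \E_{\nu_x^{c_k,d_k,\Lambda_k}}[\var_\cdot(\psi)] = \frac{1}{1+m_k}\,\var_x(\psi),
\end{equation*}
where $K_k$ is the transition kernel of the interaction chain introduced in \eqref{ag14}.

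Next, I would use the (time-inhomogeneous) Markov property of $M^{(j)}$. Starting from $M^{(j)}_{-(j+1)} = \theta$, conditioning on $M^{(j)}_{-(k+1)}$ and using the tower property gives
\begin{equation*}
\E\bigl[\var_{M^{(j)}_{-k}}(\psi) \,\big|\, M^{(j)}_{-(k+1)}\bigr] = \frac{1}{1+m_k}\,\var_{M^{(j)}_{-(k+1)}}(\psi).
\end{equation*}
Iterating this identity from $k=j$ down to $k=0$ yields the explicit formula
\begin{equation*}
\E_{\CL[M^{(j)}_0]}[\var_\cdot(\psi)] = \left(\prod_{k=0}^{j} \frac{1}{1+m_k}\right) \var_\theta(\psi),
\end{equation*}
which is exactly the identity \eqref{ag50} already announced in the paper.

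Finally, the dichotomy follows from the elementary fact that for any non-negative sequence $(m_k)_{k\in\N_0}$ one has $\prod_{k\in\N_0}(1+m_k) < \infty$ if and only if $\sum_{k\in\N_0} m_k < \infty$ (by comparing $\log(1+m_k)$ with $m_k$). Hence, if $\sum_{k\in\N_0} m_k = \infty$ the product in the displayed formula tends to zero for every $\psi$, while if $\sum_{k\in\N_0} m_k < \infty$ it converges to a strictly positive limit, so the limit $\lim_{j\to\infty}\E_{\CL[M^{(j)}_0]}[\var_\cdot(\psi)]$ is strictly positive as soon as $\var_\theta(\psi) > 0$ (such a $\psi$ exists unless $\theta$ is a point mass, which is the trivial case). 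This gives both claims of the corollary. There is no substantive obstacle here: the whole argument is a direct iteration of Proposition~\ref{prop:2nd}, so the only thing to be careful about is the bookkeeping of the indexing of the interaction chain.
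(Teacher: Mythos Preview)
Your proposal is correct and follows essentially the same approach as the paper: both iterate the one-step variance identity of Proposition~\ref{prop:2nd} along the interaction chain kernels $K_k$ to obtain the product formula $\prod_{k=0}^j (1+m_k)^{-1}\,\var_\theta(\psi)$, and then invoke the standard equivalence $\prod_{k}(1+m_k)<\infty \Leftrightarrow \sum_k m_k<\infty$. The only cosmetic difference is that you phrase the iteration via the tower property while the paper writes it directly as a product of the $n_k$'s.
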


\begin{proof}
From \eqref{varevol}, we have the formula
\be{ag55}
\E_{\nu_\theta^{c,d,\Lambda}}[\var_\cdot (\varphi)] 
= \frac{2c}{2c+\lambda+2d} \var_\theta(\varphi).
\ee
Hence, we have the relation (recall \eqref{ag14} for the definition of 
$K_k(\theta, \dd x)$)
\be{ag56}
\int_{\CP(E)} K_k(\theta, \dd x)
\var_x(\varphi) =  \frac{2c_k}{2c_k +\lambda_k +2d_k} \var_\theta(\varphi),
\ee
which says that in one step of the interaction chain the variance is modified by the
factor
\be{ag57}
n_k \equiv \frac{2c_k}{2c_k +\lambda_k + 2d_k} = \frac{1}{1+m_k}.
\ee
Iteration gives
\be{ag58}
\E_{\CL(M^{(j)}_0)}\left[\var_\cdot(\varphi)\right]
= \left(\prod^j_{k=0} n_k\right) \var_\theta(\varphi)
= \left(\prod^j_{k=0} \left(\frac{1}{1+m_k}\right)\right)\var_\theta (\varphi).
\ee
Therefore, taking logarithms, we see that \eqref{ag54} is equivalent to
\be{ag59}
\sum_{k\in\N_0} m_k = \infty \text{ (respectively, } < \infty).
\ee
\end{proof}

We next prove a result that is similar to, but more involved than, \cite{DGV95}, Eq.\ (6.12). 
This result is necessary for the proof of Theorem~\ref{T.dicho2} on diffusive clustering. 

\bp{prop:2nd_int}
{\bf [Variance of the integral against a test function]}
For every $\psi\in \CC_\mathrm{b}(E)$, $j \in \N$ and $0 \leq k \leq j+1$,
\be{varevol_int}
\begin{aligned}
\var_{\CL(M^{(j)}_{-k})}\,\left( \langle \cdot,\psi \rangle \right)
&= \E_{\CL(M^{(j)}_{-k})}[ \langle \cdot,\psi \rangle^2] 
- \left( \E_{\CL(M^{(j)}_{-k})}[ \langle \cdot,\psi \rangle ] \right)^2 \\
&= \left( \sum_{i=k}^j \left( \frac{d_{i+1}}{c_i} 
\prod_{l=i+1}^j \frac{1}{1+m_l} \right) \right) \var_\theta\left(\psi\right).
\end{aligned}
\ee
\ep

\begin{proof}
The proof uses the following two ingredients. Combining \eqref{varevol} and \eqref{ag57}, 
we have
\be{sk101}
\E_{\nu_\theta^{c_k,d_k,\Lambda_k}}[\var_\cdot (\psi)] 
= \frac{1}{1+m_k} \var_\theta(\psi).
\ee
The first and the third line of \eqref{sk:preparation-second-moment-invariant-measure-1d} 
yield
\be{sk102}
\var_{\nu_\theta^{c_k,d_k,\Lambda_k}}(\langle \cdot,\psi\rangle ) 
= \frac{\lambda+2d}{2c}\,\E_{\nu_\theta^{c_k,d_k,\Lambda_k}}[\var_\cdot(\psi)].
\ee
Together with \eqref{varevol} and \eqref{diffusion-constants}, we therefore obtain
\be{sk103}
\var_{\nu_\theta^{c_k,d_k,\Lambda_k}}(\langle \cdot,\psi\rangle) 
= \frac{\lambda_k+2d_k}{2c_k+\lambda_k+2d_k}\, \var_\theta(\psi) 
= \frac{d_{k+1}}{c_k}\, \var_\theta(\psi).
\ee
Fix $j \in \N$. The proof follows by downward induction over $0 \leq k \leq j+1$. The 
initial case $k=j+1$ is obvious because $M^{(j)}_{-(j+1)} = \theta$ by \eqref{ag61}. 
Let us therefore assume that the claim holds for $k+1$. By (\ref{ag61}--\ref{ag14}),
\be{sk104}
\begin{aligned}
& \var_{\CL(M^{(j)}_{-k})}\,\left( \langle \cdot,\psi \rangle \right)
= \E_{\CL(M^{(j)}_{-k})}[ \langle \cdot,\psi \rangle^2] 
- \left( \E_{\CL(M^{(j)}_{-k})}[ \langle \cdot,\psi \rangle ] \right)^2 \\
&= \int_{\CP(E)} \nu_\theta^{c_j,d_j,\Lambda_j}(\dd \theta_j) 
\int_{\CP(E)} \nu_{\theta_j}^{c_{j-1},d_{j-1},\Lambda_{j-1}} 
(\dd \theta_{j-1}) \ldots \int_{\CP(E)} \nu_{\theta_{k+1}}^{c_k,d_k,\Lambda_k} 
(\dd \theta_k) \langle \theta_k,\psi \rangle^2 - \langle \theta,\psi \rangle^2.
\end{aligned}
\ee
Next, use \eqref{sk103} to rewrite the inside integral as
\be{sk105}
\int_{\CP(E)} \nu_{\theta_{k+1}}^{c_k,d_k,\Lambda_k} 
(\dd \theta_k) \langle \theta_k,\psi \rangle^2 
= \E_{\nu_{\theta_{k+1}}^{c_k,d_k,\Lambda_k}}( \langle \cdot,\psi \rangle^2 )
= \langle \theta_{k+1},\psi \rangle^2 + \frac{d_{k+1}}{c_k} \var_{\theta_{k+1}}(\psi).
\ee
Substitute this back into \eqref{sk104}, to obtain
\be{sk106}
\begin{aligned}
&\var_{\CL(M^{(j)}_{-k})}\,\left( \langle \cdot,\psi \rangle \right)
= \var_{\CL(M^{(j)}_{-(k+1)})}\,\left( \langle \cdot,\psi \rangle \right) \\
&\quad + \frac{d_{k+1}}{c_k} \int_{\CP(E)} \nu_\theta^{c_j,d_j,\Lambda_j}
(\dd \theta_j) \int_{\CP(E)} \nu_{\theta_j}^{c_{j-1},d_{j-1},\Lambda_{j-1}} 
(\dd \theta_{j-1})\\
&\qquad\qquad\qquad \cdots 
\int_{\CP(E)} \nu_{\theta_{k+2}}^{c_{k+1},d_{k+1},\Lambda_{k+1}} 
(\dd \theta_{k+1}) \var_{\theta_{k+1}}(\psi).
\end{aligned}
\ee
The first term is given by the induction hypothesis. For the second term we 
use \eqref{sk101}, to see that the inside integral equals
\be{sk107}
\int_{\CP(E)} \nu_{\theta_{k+2}}^{c_{k+1},d_{k+1},\Lambda_{k+1}} 
(\dd \theta_{k+1}) \var_{\theta_{k+1}}(\psi)
= \E_{\nu_{\theta_{k+2}}^{c_{k+1},d_{k+1},\Lambda_{k+1}}}( \var_\cdot(\psi) )
= \frac{1}{1+m_{k+1}} \var_{\theta_{k+2}}(\psi).
\ee
Iteration of this reasoning for the second term in \eqref{sk106} leads to 
\be{sk108}
\begin{aligned}
\var_{\CL(M^{(j)}_{-k})}\,\left( \langle \cdot,\psi \rangle \right)
&= \var_{\CL(M^{(j)}_{-(k+1)})}\,\left( \langle \cdot,\psi \rangle \right) 
+ \frac{d_{k+1}}{c_k} \prod_{l=k+1}^j \frac{1}{1+m_l} \var_\theta(\psi) \\
&= \left( \sum_{i=k+1}^j \left( \frac{d_{i+1}}{c_i} 
\prod_{l=i+1}^j \frac{1}{1+m_l} \right) \right) \var_\theta\left(\psi\right) 
+ \frac{d_{k+1}}{c_k} \prod_{l=k+1}^j \frac{1}{1+m_l} \var_\theta(\psi),
\end{aligned}
\ee
which proves the claim.
\end{proof}

If $\lambda_k=\Lambda_k([0,1])=0$, $k \in \N_0$, then \eqref{varevol_int} reduces 
to \cite[Eq.\ (6.12)]{DGV95}. Indeed, in that case $d_{i+1} \prod_{l=i+1}^j \frac{1}{1+m_l}$ 
is equal to $d_{i+1}$. (Note the typo in \cite[Eq.\ (6.12)]{DGV95}: $d_k$ should be 
replaced by $d_{k+1}$.) 

\begin{remark}
The results in this section can alternatively be inferred from the long-time behaviour of 
the spatial $\Lambda$-coalescent with $G = \{0,*\}$.
\end{remark}

\section{Strategy of the proof of the main scaling theorem}
\label{s.stratth}

The proof of Theorem~\ref{mainth} will be carried out in
Sections~\ref{s.mflambda}--\ref{s.proof1stth}. In this section we explain 
the main line of the argument.

\subsection{General scheme and three main steps}
\label{ss.gen3}

In Dawson, Greven and Vaillancourt~\cite{DGV95}, a general scheme was developed
to derive the scaling behaviour of space-time block averages as in \eqref{a3} for hierarchically
interacting Fleming-Viot processes, with the interaction coming from migration, 
i.e., a system similar to ours but \emph{without} $\Lambda$-Cannings block 
resampling (so for $\Lambda =\delta_0$, which results in diffusion processes 
rather than jump processes). Nevertheless, this scheme is widely applicable 
and indicates what estimates have to be established in a concrete model (with 
methods that may be specific to that model).

For our model, the difficulty sits in the fact that {\em diffusions} are replaced by 
{\em jump processes}, even in the many-individuals-per-site limit. Below we 
explain how we can use the special properties of the dual process derived in 
Section~\ref{s.spatcoal} to deal with this difficulty. In 
Sections~\ref{s.mflambda}--\ref{s.proof1stth} the various steps will be carried 
out in detail to prove our scaling result in Theorem~\ref{mainth}. In these 
sections, we focus on the new features coming from the $\Lambda$-Cannings 
block resampling. The refined multi-scale result in Theorem~\ref{Tscb} will be 
proved in Section~\ref{s.msana}. The line of argument can be largely based 
on the work in \cite[Section 4]{DGV95}, where it was developed in detail for 
Fleming-Viot. No new ideas are needed for the Cannings process: only a new 
moment calculation is required.

The analysis in Sections~\ref{s.mflambda}-\ref{s.proof1stth} proceeds in three main 
steps:
\begin{itemize}
\item 
Show that for the mean-field system from Section~\ref{sss.mcolonycannings}, i.e., $G=G_{N,1}=\{0,1,\ldots,N-1\}$, in the
limit as $N\to\infty$ we obtain for single sites on time scale $t$ independent 
McKean-Vlasov processes (recall Section~\ref{sss.MV}), and for block averages on time scale $Nt$ Fleming-Viot 
processes with a resampling constant $d_1$ corresponding to $\Lambda_0$ and $c_0$.
With an additional $\Lambda_1$-block resampling at rate $N^{-2}$ there is no effect 
on time scale $t$, and so on time scale $Nt$ we obtain a $C^{\wt \Lambda}$-process 
with $\wt \Lambda =d_1 \delta_0 + \Lambda_1$. This is done in Section~\ref{s.mflambda}.
\item 
Consider the $C^{\uc,\uL}_N$-process from Section~\ref{sss.hierarCan} restricted to $G_{N,K}$ as in \eqref{ag17}. More precisely, 
study its components and its $k$-block averages \eqref{k-block-average} for $1 \leq k \leq j<K$ on 
time scales $ N^j + t N^k$. This is done in Section~\ref{s.hiermod}.
\item 
Treat the $(j,k)$ renormalised systems for $1\leq k\leq j<K$ via an approximation of 
the $C^{\uc,\uL}_N$-process on $\Omega_N$ by the process on $G_{N,K}$ from 
the previous step, in the limit as $N \to \infty$ and on time scales at most $N^K t$ 
for a fixed but otherwise arbitrary $K \in \N$. This is done in Section~\ref{s.proof1stth}.
\end{itemize}

The three steps above are carried out following the scheme of proof developed 
in \cite{DGV95}. What is new for {\em jump} processes? We are dealing with 
sequences of measure-valued processes $X=(X_t)_{t \geq 0}$, and the key 
difference is that now semi-martingales arising from functionals of the process 
of the form $\langle X_t,f\rangle^n$ with $f \in C_\mathrm{b}(E)$ are no longer 
controlled just by the {\em compensator} and the {\em increasing process} of the 
linear functional $\langle X_t,f \rangle$. This is different from the case of diffusions, 
where linear and quadratic functions $\langle X_t, f\rangle$ and $\langle X_t,f \rangle^2$ 
in a set $\CF$ of test-functions suffice to establish both {\em tightness in path space} and {\em convergence 
of finite-dimensional distributions (f.d.d.s)}. 

The new ingredients are the analysis of the linear operators of the martingale 
problem acting on all of $\CF$, and the extension of the tightness arguments 
necessary to handle the jumps. We explain the basic structure of the argument 
in the next section.

\subsection{Convergence criteria}
\label{ss.embedding}

In the proofs, we view the process with $G=\{0,1,\ldots,N-1\}$, 
$G=G_{N,K}=\{0,1,\ldots,N-1\}^K$ and $G=\Omega_N$ (cf. \eqref{ag30_a}) as embedded in the
process with $G=\N$, $G=\N^K$ and $G=\Omega_\infty$, where
\be{ak4}
\Omega_\infty =\bigcup\limits_{M\in \N} \Omega_M \subseteq \N^\N.
\ee
Note that $\Omega_\infty$ is {\em countable}, but that the $\Omega_M$'s are 
{\em not} subgroups of $\Omega_\infty$. The embedding requires us to embed the 
test functions and the generators on $\Omega_M$ into those on $\Omega_\infty$.
In the calculations in Sections~\ref{s.mflambda}--\ref{s.proof1stth}, we use 
this embedding without writing it out formally.

The claims we have to prove require us to show that certain sequences of probability 
measures $(P_n)_{n \in \N}$ on $D([0,\infty),E)$ converge to a specified limit $P$. 
Therefore we have to show
\begin{itemize}
\item
tightness on $D([0,\infty),E)$,
\item
convergence of the f.d.d.'s to the ones of the claimed limit.
\end{itemize}
What we will use to establish tightness (and later also f.d.d.-convergence) is that the 
$P_n$'s and $P$ are solutions to martingale problems for measure-valued processes.
We write $X^{(N)},X$ to denote realisations of these processes.

The states of our processes are {\em probability} measures on the type space (recall \eqref{k-block-average} and \eqref{macroscopic-behaviour}).
We use Jakubowski's criterion for measure-valued processes (see~\cite[Theorem 3.6.4]{D93}). 
This requires us to prove: (1) a compact containment condition for the path, i.e., for all
$\epsilon,T>0$ there exists a $K_{T,\epsilon}$ compact such that
\be{ag43-0}
\P(\{X^{(N)}(t) \in K_{T,\epsilon} \mbox{ for all } t \in [0,T]\}) \geq 1-\ve;
\ee
(2) tightness of evaluation processes $(F(X^{(N)}(t)))_{t \geq 0}$ in path space
for all $F \in \CD$, with $\CD$ a dense subspace of continuous functions on 
type space. We will use for $\CD$ the set
\be{ag80}	
\CD = \{\langle X,f \rangle^n | f \in C_b (E, \R), \quad n \in \N\} \subseteq C_b (\CP(E),\R).
\ee
	 
In our setting, the compact containment condition in (1) is immediate, because we have a 
compact type space and the probability measures on it form a compact set in the weak 
topology. Condition (2) can be verified by using a criterion for tightness by Kurtz (see 
Dawson~\cite[Corollary 3.6.3]{D93}). (Alternatively, we could use a tightness criterion 
by Joffe-M\'etivier \cite[Theorem 3.6.6 and Corollary 3.6.7]{D93}.) In particular, we get 
that (2) follows from
\be{ag51}
\sup\limits_{N \in \N} \|L^{(N)} F\|_\infty < \infty, \quad \forall \; F \in \CD.
\ee
Thus, to conclude tightness, we have to calculate $L^{(N)} F$, for $F \in \CD$, and 
bound it in the supremum norm.

In order to show f.d.d.-convergence of $X^{(N)}$ to the claimed limit $X$, we use that these 
measure-valued processes arise as the solution to the $(L^{(N)}, \CD,\delta_{X^N_0})$-martingale
problem, respectively, the $(L,\CD, \delta_{X_0})$-martingale problem, where the latter is well-posed.
It then suffices to show that, for a dense subset $\mathcal{A}$ of $C_\mathrm{b}((\CP(E))^\N,\R)$ and all all $F \in \mathcal{A}$,  
the compensator terms satisfy:
\be{ak4b}
\CL\left[\left(\int^t_0 L^{(G_N)} F((X^N_s))\dd s\right)_{t \geq 0}\right]
\mathop{\Longrightarrow}_{N \to \infty}
\CL\left[\left(\int^t_0 (L^{(G)}F)(X_s)\dd s\right)_{t \geq 0}\right]
\ee
and the initial laws satisfy
\be{ag50a}
\CL[X^{(N)}_0] 
\mathop{\Longrightarrow}_{N \to \infty}
\CL[X_0].
\ee
This allows us to conclude that $X^{(N)}$ converges in f.d.d.\ to $X$, so that we get (2).

Thus, to prove the convergence as claimed, we have to verify (\ref{ag51}) and
(\ref{ak4b}) for each of the three processes mentioned in Section \ref{ss.gen3}.
For the proof of (\ref{ak4b}), it is necessary to use the {\em duality
relation}, in order to establish certain properties of the  process $X^{(N)}$ in
the limit as $N \to \infty$ that allow us to draw more information from  the
generator calculation. This includes a proof that certain higher-order terms can
be  discounted, or an argument that establishes independence over sufficiently
large distances. 

The averaging arguments we will use in the following sections are close in
spirit to those in~\cite{K92}. In our case, however, the latter work does not
apply immediately, in particular, because we deal with $N$-dependent state
space.

In summary, the role of Sections~\ref{s.mflambda}--\ref{s.proof1stth} is to first carry out 
some generator calculations, leading to the bound in (\ref{ag51}), and then an asymptotic 
evaluation of the resulting generator expressions, leading to a limiting form that uniquely 
determines the limiting process in (\ref{ak4b}). The latter will be based on a direct calculation. 
In view of the large time scales involved, we can use an {\em averaging principle} for local 
variables, based on the {\em local equilibria} dictated by the macroscopic slowly changing 
variables. The properties of the limiting process are established in Section \ref{s.mcv_imem}.

\section{The mean-field limit of $C^\Lambda$-processes}
\label{s.mflambda}

This section deals with the case $G=\{0,1,\ldots,N-1\}$ for a model that includes 
mean-field migration and Cannings reproduction at rate 1 with resampling measure 
$\Lambda_0$ in single colonies (cf. Section~\ref{sss.mcolonycannings}). We analyse the single components and the block 
averages on time scales $t$, $Nt$ and $Nt+u$ with $u\in\R$. The key results are 
formulated in Propositions~\ref{P.mkvl} and \ref{P.mffsys} below. We will see that 
we can also incorporate block resampling at rate $N^{-2} \Lambda_1$ and still get 
the same results.

The analysis for mean-field interacting Fleming-Viot processes with drift is given 
in detail in \cite[Section~4]{DGV95}. The reader unfamiliar with the arguments 
involved is referred to this paper (see, in particular, the outline of the abstract 
scheme in \cite[Section 4(b)(i), pp.~2314--2315]{DGV95}). In what follows, we provide 
the main ideas again, and focus on the changes arising from the replacement of the 
Fleming-Viot process by the $\Lambda$-Cannings resampling process, i.e., the change 
from continuous to càdlàg semi-martingales.

We always start the process in a product state with law $\chi^{\otimes N}$ with $\chi\in\CP(\CP(E))$ 
satisfying
\be{add2_b}
\int_{\CP(E)} x \chi( \dd x) = \theta \in \CP(E).
\ee
The system will be analysed in the limit as $N\to\infty$ in two steps: (1) component-wise 
on time scale $t$ (Section~\ref{ss.singcol}); (2) block-wise on time scale $N t$ and 
component-wise on time scale $N t+u$ with $u\in\R$ (Section~\ref{ss.mffinite}).

\subsection{Propagation of chaos: Single colonies and the McKean-Vlasov process}
\label{ss.singcol}

In this section, we consider the $C^\Lambda$-mean-field model from
Section~\ref{sss.mcolonycannings} with $G=\{0,1,\ldots,N-1\}$. We prove
propagation of chaos for the collection 
\be{ak:201402081423}
(\{X_0^{(N)}(t),
\ldots,X_{N-1}^{(N)}(t)\})_{t\geq 0}
\ee 
in the limit as $N\to\infty$, i.e., we
prove asymptotic independence of the components via duality as well as
component-wise convergence to the McKean-Vlasov process with parameters
$d_0=0,c_0,\Lambda_0,\theta$ (cf.\ \eqref{Ldefs}).

\begin{proposition}
\label{P.mkvl}
{\bf [McKean-Vlasov limit, propagation of chaos]}\\
Under assumption \eqref{add2_b}, for any $L \in \N$ fixed,
\be{ag43}
\CL\left[(X^{(N)}_0 (t), \ldots, X^{(N)}_L (t))_{t \geq 0}\right]
\mathop{\Longrightarrow}_{N \to \infty} 
\bigotimes_{i=0}^{L} \CL\left[Z^{c_0,d_0,\Lambda_0}_{i,\theta} \right],
\ee
where $Z^{c_0,d_0,\Lambda_0}_{i,\theta}$ solves the martingale problem for
$(L_\theta^{c_0,d_0,\Lambda_0},\mathcal{F}, \chi)$.
\ep

\begin{corollary}
\label{Cmkvbs}
{\bf [McKean-Vlasov limit with block resampling]}\\
Consider the system above with an additional rate $N^{-2}\Lambda_1$ of block 
resampling per site. Then \eqref{ag43} continues to hold.
\end{corollary}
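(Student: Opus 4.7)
\textbf{Proof proposal for Corollary~\ref{Cmkvbs}.}

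The plan is to follow the martingale-problem/generator strategy laid out in Section~\ref{ss.embedding} and used for Proposition~\ref{P.mkvl}, and to show that the added block-resampling term contributes negligibly on time scale $t = O(1)$. Denote the generator of the modified process by
\begin{equation*}
\wt L^{(N)} = L^{(N)}_{\mathrm{mig}} + L^{(N)}_{\mathrm{res}} + L^{(N)}_{\mathrm{block}},
\end{equation*}
where $L^{(N)}_{\mathrm{mig}}$, $L^{(N)}_{\mathrm{res}}$ are as in \eqref{ak:migration-operator}--\eqref{ak:one-level-generator} and $L^{(N)}_{\mathrm{block}}$ is the hierarchical-level-$1$ operator extracted from \eqref{ak:multi-level-resampling-global} with $G=\{0,\dots,N-1\}$, so
\begin{equation*}
(L^{(N)}_{\mathrm{block}} F)(x) = N^{-2} \sum_{\eta \in G} \int_{(0,1]} \Lambda_1^*(\dd r) \int_E y_{\eta,1}(\dd a)\,\bigl[F(\Phi_{r,a,G}(x)) - F(x)\bigr],
\end{equation*}
where $y_{\eta,1}$ is the full block average (identical for all $\eta$ in the mean-field case).

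The central estimate is that $L^{(N)}_{\mathrm{block}}$ is uniformly small on the test-function algebra $\CD$ from \eqref{ag80}. For any $F \in \CD$, since $\|F\|_\infty < \infty$ and $\lambda_1^* = \int_{(0,1]} \Lambda_1^*(\dd r) < \infty$ by \eqref{ak1000}, the trivial bound $|F(\Phi_{r,a,G}(x)) - F(x)| \leq 2\|F\|_\infty$ combined with the sum of $N$ terms each weighted by $N^{-2}$ gives
\begin{equation*}
\|L^{(N)}_{\mathrm{block}} F\|_\infty \leq 2 \|F\|_\infty\,\lambda_1^*\,N^{-1} \xrightarrow[N\to\infty]{} 0.
\end{equation*}
In particular, the compact-containment/tightness criterion \eqref{ag51} is preserved (the bound on $L^{(N)}_{\mathrm{mig}} + L^{(N)}_{\mathrm{res}}$ used for Proposition~\ref{P.mkvl} continues to apply, and the block term adds only a vanishing contribution), so tightness of the $L$-coordinate collection on $D([0,\infty),\CP(E)^{L+1})$ is immediate.

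For convergence of finite-dimensional distributions (cf.~\eqref{ak4b}), I would compare the compensator $\int_0^t \wt L^{(N)} F(\tilde X^{(N)}_s)\,\dd s$ with its counterpart $\int_0^t L^{(N)} F(X^{(N)}_s)\,\dd s$ from Proposition~\ref{P.mkvl}. By the uniform estimate above, $\int_0^t L^{(N)}_{\mathrm{block}} F(\tilde X^{(N)}_s)\,\dd s = O(tN^{-1})$ uniformly in $\omega$, so the two compensators have the same limit in law. Since the martingale problem for $(L^{c_0,d_0,\Lambda_0}_\theta,\CF,\chi)$ is well-posed (Proposition~\ref{prop:McKean-Vlasov-well-posedness}), the argument used for \eqref{ag43} transfers verbatim and yields the same McKean-Vlasov limit for the modified system.

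The expected main obstacle is essentially nonexistent here—the $N^{-2}$ scaling makes the block term negligible on time scale $O(1)$. A conceptually cleaner alternative, if preferred, is to note that the dual spatial $\uL$-coalescent of Section~\ref{ss.blockcoal} accumulates level-$1$ non-local coalescence events at total rate $O(N^{-1})$ (the $N^{-2k}$ factor with $k=1$, summed over $N$ reference sites, gives $N^{-1}$), so on bounded time intervals no such events occur with probability $1 - O(N^{-1})$. Duality (cf.~\eqref{duality}) then transports propagation of chaos directly from Proposition~\ref{P.mkvl} to the modified process.
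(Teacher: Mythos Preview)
Your proposal is correct and matches the paper's approach. The paper disposes of this corollary in one line (Section~\ref{sss.asyind}: ``Obviously, no non-local coalescence takes place in the time interval $[0,T]$ in the limit as $N\to\infty$''), which is exactly your duality observation that the total level-$1$ event rate is $O(N^{-1})$; your generator bound $\|L^{(N)}_{\mathrm{block}}F\|_\infty = O(N^{-1})$ is the forward-process version of the same fact and is equally valid.
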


In order to prove (\ref{ag43}), we will argue that the laws
$\CL[(\{X^{(N)}_\xi(t), \xi=0,\ldots,L\})_{t \geq 0}]$, $N \in \N$, are tight.
We show this first for components (Section \ref{sss.tightN}). Then, we verify
asymptotic independence (Section~\ref{sss.asyind}), calculate explicitly the
action of the generator on the test functions in the martingale problem of
$X^{(N)}$ (Section~\ref{sss.genconv}), and show, for functions depending on one
component, uniform convergence to the generator of the McKean-Vlasov operator
with parameter  $\theta=\E[X^{(N)}_0(0)]$
(Section~\ref{sss.single-mckean-vlasov}).

\subsubsection{Tightness on path space in $N$}
\label{sss.tightN}

Since we have a state in $(\CP(E))^\N$ equipped with the product topology, it suffices to
establish tightness for $L$-tuples of components.
We focus first on one component $(X_\xi(t))_{t \geq 0}$ and conclude later the result for
tuples of $L$-components.

Here, we use test functions as in 
\eqref{ak:test-functions} that only depend on the first $L$ coordinates. We further 
make use of the boundedness of the characteristics of the generator as a function of 
$N$ when acting on a test function (recall \eqref{ag11}, \eqref{ak:migration-operator} and \eqref{ak:one-level-generator}). 
Namely, we will see in Section~\ref{sss.genconv} (in (\ref{sk_15c_1}), (\ref{sk_15c_3}) 
and (\ref{sk_15c_4}) below) that the generator $L^{(N)}F$ satisfies
\be{ag43b}
\sup_{N \in \N} \|L^{(N)} F\|_\infty < \infty, \quad \text{for all } F \in C^2_b(\CP(E), \R).
\ee
As we outlined in Section \ref{ss.embedding}, this guarantees tightness.

\subsubsection{Asymptotic independence}
\label{sss.asyind}

In this section, we use duality to prove the factorisation of spatial mixed moments
(including the case with non-local coalescence at rate $N^{-2}\Lambda_1$). Namely, 
we show that for any $L \in \N$, any $k_\xi \in \N$, $\xi \in [L]$,
\be{ag44}
\limsup\limits_{N \to \infty} \left|\E \left[\prodl^L_{\xi=0} 
\left(\langle X^{(N)}_\xi (t), f_\xi\rangle\right)^{k_\xi}\right]
- \prodl^L_{\xi=0} \E\left[\left(\langle X^{(N)}_\xi (t), 
f_\xi\rangle\right)^{k_\xi}\right]\right| = 0,
\quad \text{for all } t \geq 0.
\ee
Similar to \eqref{ag44} decorrelation holds also for mixed moments at different time points.

\begin{proof}[Proof of {\rm(\ref{ag44})}]
Obviously, no non-local coalescence takes place in the time interval $[0,T]$ in the limit 
as $N \to \infty$. We verify the remaining claim by showing that any two partition 
elements of the dual process starting at different sites never meet, so that for $n$ 
partition elements none of the possible pairs will ever meet. Indeed, the probability 
for two random walks to meet is the waiting time for the rate-$2c_0$ random walk to 
hit 2 starting from 1. This waiting time is the sum of a geometrically distributed number 
of jumps with parameter $N^{-1}$, each occurring after an $\exp(2c_0)$-distributed 
waiting time. By explicit calculation, the probability for this event to occur before time 
$t$ is $O(N^{-1})$, which gives the claim.
\end{proof}

\subsubsection{Generator convergence}
\label{sss.genconv}

In order to show the convergence of $L^{(N)} F$, we investigate the migration and the 
resampling part separately.

\medskip\noindent
$\bullet$ {\bf Migration part.}
Recall from \eqref{ak:migration-operator} that the migration operator for the geographic 
space $G=G_{N,1}=\{0,1,\ldots,N-1\}$ is
\be{sk_15c_1}
(L^{(N)}_{\mathrm{mig}}F)(x)
= \frac{c_0}{N} \sum_{\xi,\zeta \in G_{N,1}} \int_E (x_\zeta - x_\xi)(\dd a)\,
\frac{\partial F(x)}{\partial x_\xi}[\delta_a],
\ee
where $F \in \CF \subset C_\mathrm{b}(\CP(E)^N,\R)$, with $\CF$ the algebra of functions 
of the form \eqref{ak:test-functions}. We rewrite \eqref{sk_15c_1} as
\be{sk_15c_2}
\begin{aligned}
(L^{(N)}_{\mathrm{mig}} F)(x)
&= c_0 \sum_{\xi \in G_{N,1}} \int_E \frac{1}{N} \sum_{\zeta \in G_{N,1}}
\left(x_{\zeta}-x_{\xi} \right)(\dd a)\,\frac{\partial F(x)}{\partial x_\xi}[\delta_a]\\
&= c_0 \sum_{\xi \in G_{N,1}} \int_E
\left(y-x_{\xi}\right)(\dd a)\,\frac{\partial F(x)}{\partial x_\xi}[\delta_a],
\end{aligned}
\ee
where $y=N^{-1} \sum_{\zeta=0}^{N-1} x_\zeta =N^{-1} \sum_{\zeta \in G_{N,1}} x_\zeta$ 
denotes the block average. We will show that, in the limit $N\to\infty$,
$(L^{(N)}_{\mathrm{mig}} F)(x)$ only depends on the mean type measure $\theta$ of the 
initial state, i.e., it converges to
\be{ak3}
(L^{c_0}_\theta F)(x) \equiv c_0 \sum_{\xi \in \N_0}  \int (\theta - x_\xi)(\dd a)
\frac{\partial F(x)}{\partial x_\xi} [\delta_a],
\ee
where we use for this generator acting on $C_\mathrm{b} (\CP(E))^\N, \R)$ the same notation
we used for the McKean-Vlasov process with immigration-emigration on $\CP(E)$ (cf. \eqref{generic-interaction-operator}).
Furthermore, we show that
\be{ak3b}
\theta \mapsto L^{c_0}_\theta F \in C_\mathrm{b}(\CP(E),\R) \mbox{ is continuous for all } 
\theta \in \CP(E).
\ee

To show the convergence, define
\be{ak5}
\B_\theta = \Big\{x \in (\CP(E))^{\N_0} \colon\,
N^{-1} \suml_{\xi \in G_{N,1}} x_\xi  \mathop{\longrightarrow}_{N \to \infty}
\theta \Big\} \subseteq (\CP(E))^\N,
\ee
and
\be{ak5b}
\B =\bigcup\limits_{\theta \in \CP(E)} \B_\theta.
\ee
For $x \in \CP(E)^{\N_0}$ and $n \in \N$, denote $x\vert_n = (x_0, x_1, \ldots, x_{n-1})$.

If we have an i.i.d.\ initial law (respectively, an exchangeable law) with mean measure
$\theta$, then the process $X^{(N)}$ satisfies
\be{ak6}
\CL[X^{(N)}(t)] (\B \vert_{N}) = 1 \quad (\text{respectively, } \CL[X^{(N)}(t)(\B_\theta\vert_N)]=1).
\ee
Indeed, as we will see in Section \ref{ss.mffinite}, the $1$-block average $Y^{(N)}_{\xi,1}$ (recall 
\eqref{k-block-average}) evolves on time scale $Nt$. More precisely, $(Y^{(N)}_{\xi,1}
(tN))_{t \geq 0}$ is tight in path space and therefore converges over a finite time 
horizon to the mean type measure $\theta$ of the initial state. In a formula (the 
right-hand side means a constant path):
\be{ag60}
\CL[(Y^{(N)}_{\xi,1} (t))_{t \in [0,T]}] \Ntoo \CL [(\uth)_{t \in [0,T]}].
\ee
Therefore, we have
\be{ak7}
|(L^{(N)}_{\mathrm{mig}} F)(x\vert_N) - (L^{c_0}_\theta F)(x) 
| \mathop{\longrightarrow}_{N \to \infty} 0,
\quad \text{for all } x \in \B_\theta,
\ee
Hence, on the path space, by dominated convergence, we have
\be{ak7b}
\CL \left[\left(\left| \intl^t_0 (L^{(N)}_{\textrm{mig}}F)(X^{(N)}(s))\dd s
- \intl^t_0 (L^{c_0}_{Y^{(N)}_{\xi,1} (s)} F)(X^{(N)}(s))\dd s \right|\right)_{t \geq 0} \right]
 \mathop{\Longrightarrow}_{N \to \infty} \delta_{\unu}.
\ee

\medskip\noindent
$\bullet$ {\bf Resampling part.}
The action of the resampling term on each component (recall \eqref{ak:one-level-generator}) 
does not depend on $N$ and hence we obtain, by the law of large numbers for the marking 
operation (recall that $F$ as in \eqref{ak:test-functions} depends on finitely many coordinates only)
\be{sk_15c_3}
| (L^{(N)}_{\mathrm{res}} F)(x\vert_N) - (L^{\Lambda_0} F)(x) | 
\mathop{\longrightarrow}_{N \to \infty} 0, \quad \text{for all }x \in (\CP(E)^\N),
\ee
where
\be{sk_15c_4}
\begin{aligned}
&(L^{\Lambda_0} F)(x)\\
&\equiv \sum_{\xi \in \N_0} \int_{[0,1]} \Lambda_0^*(\dd r) \int_E x_\xi(\dd a)
\Big[F\big(x_0,\ldots,x_{\xi-1},(1-r)x_\xi+r\delta_a,x_{\xi+1},\ldots,x_{N-1} \big)
-F(x)\Big].
\end{aligned}
\ee
Again, we use for this generator acting on $C_\mathrm{b} (\CP(E))^\N, \R)$ the same notation
we used for the McKean-Vlasov process with immigration-emigration on $\CP(E)$ (cf. \eqref{generic-interaction-operator}).

\subsubsection{Convergence to the McKean-Vlasov process}
\label{sss.single-mckean-vlasov}

In this section, we finally show the convergence of the mean-field
C$^{\Lambda}$-process (see Section~\ref{sss.mcolonycannings}) to the
McKean-Vlasov process (see Section~\ref{sss.MV}) which was claimed 
in Proposition~\ref{P.mkvl}.

In what follows, we fix $\xi\in \N_0$ and let
\be{one-level-test-function}
G\!\left( x_{\xi} \right)
= \int_{E^n} x_{\xi}^{\otimes n}(\dd u)\,\varphi(u)
= \left\langle \varphi , x_{\xi}^{\otimes n} \right\rangle, 
\qquad n \in \N, \varphi \in C_{\mathrm{b}}(E^n,\R).
\ee
We know that $(X^{(N)}_{\xi}(t))_{\xi \in \N_0}$ is tight and that all weak limit points 
are systems of independent random processes (i.e, that propagation of chaos 
holds). It remains to identify the unique marginal law.

Let the initial condition $(X^{(\infty)}_{\xi}(0))_{\xi\in\N_0}$ be i.i.d.\ $\CP(E)$-valued 
random variables with mean $\theta$. Then each single component converges 
and the limiting coordinate process has generator (recall \eqref{generic-interaction-operator})
\be{sk:generator-one-dim-limit-1}
\begin{aligned}
(L_{\theta}^{c_0,0,\Lambda_0} G)\!\left(x_{\xi}\right)
=& c_0 \int_E \left(\theta-x_{\xi} \right)\!(\dd a)\,
\frac{\partial G(x_\xi)}{\partial x_\xi}[\delta_a]\\
&+ \int_{[0,1]} \Lambda^*_0 (\dd r) \int_E x_{\xi}(\dd a)\,
\big[G\big((1-r) x_{\xi} + r \delta_a \big)
- G\!\left(x_{\xi}\right)\big],
\end{aligned}
\ee
where $\theta \in \CP(E)$ is the initial mean measure. Indeed, we may now reason 
as in \cite[second part of Section~2.9]{D93}. Tightness of the processes 
$(X^{(N)}(t))_{t\geq 0}$ was shown in Section~\ref{sss.tightN}.  Fix $\xi \in \N_0$ 
and consider a convergent subsequence $(X^{(N_k)}_\xi(t))_{t\geq 0}$, $k\in\N$. 
We claim that the limiting process is the unique solution to the well-posed 
martingale problem with corresponding generator $L_{\theta}^{c_0,0,\Lambda_0}$ 
and initial distribution $\mathcal{L}\left[ X_\xi(0)\right]$. Recall from Section~\ref{sss.genconv} that, for all test 
functions $F \in \CF$,
\be{tf1}
\CL\left[ \left(\intl^t_0 (L^{(N)}_{\mathrm{mig}} +L^{(N)}_{\mathrm{res}})(F) 
(X^N(s))\dd s \right)_{t\geq 0}\right]
\Ntoo
\CL\left[  \left(\intl^t_0 L_\theta^{c_0, d_0,\Lambda_0} 
(X^\infty(s)) \dd s\right)_{t \geq 0} \right]
\ee
Hence, all weak limit points of $X^{(N)}$ solve the $L_\theta^{c_0,d_0,\Lambda_0}$-martingale 
problem of Section \ref{sss.MV}. The right-hand side of \eqref{tf1} is the compensator of a 
well-posed martingale problem (recall Proposition~\ref{prop:McKean-Vlasov-well-posedness}), and hence we have convergence \eqref{ag43}.

\subsection{The mean-field finite-system scheme}
\label{ss.mffinite}

In this section, we verify the mean-field ``finite system scheme'' for the 
$C^\Lambda$-process, i.e., we consider $L+1$ tagged sites $\{X^{(N)}_0(t),
\ldots,X^{(N)}_L(t)\}$ evolving as in Section~\ref{sss.mcolonycannings} and the
corresponding block average $Y^{(N)}(t) = N^{-1}\sum_{\xi \in G_{N,1}}
X_\xi^{(N)}(t)$. We prove:
\begin{itemize}
\item 
convergence of $(Y^{(N)}(Nt))_{t\geq 0}$ to the Fleming-Viot diffusion $Y(t) 
= Z^{0,d_1,0}_\theta (t)$ with parameter $d_1=\frac{c_0 \lambda_0}{2c_0+\lambda_0}$ 
and initial state $\theta$ (cf. Section~\ref{sss.MV} and recall \eqref{diffusion-constants} with $d_0=0$); 
\item 
convergence of the components $(\{X^{(N)}_\xi (Nt +u),\xi=0,\ldots,L\})_{u \geq 0}$ 
to the equilibrium McKean-Vlasov process with immigration-emigration 
$(Z^{c_0,d_0,\Lambda_0}_{\theta(t)}(u))_{u \geq 0}$ starting from distribution
$\nu^{c_0,d_0,\Lambda_0}_{\theta(t)}$ (recall \eqref{cond-invariant})  with
$\theta(t)=Y(t)$ (recall that $d_0=0$).
\end{itemize}

\begin{proposition}
\label{P.mffsys}  
{\bf [Mean-field finite system scheme]}\\
For initial laws with i.i.d.\ initial configuration and mean measure $\theta$,
\be{ak10}
\CL[(Y^{(N)}(Nt))_{t \geq 0}] 
\mathop{\Longrightarrow}_{N \to \infty}
\CL[(Z^{0,d_1,0}_\theta (t))_{t \geq 0}]
\ee
with $d_1 = \frac{c_0 \lambda_0}{2c_0+\lambda_0}$. Moreover, for every $u \in \R$
and $L\in\N$,
\be{ag42}
\CL[(X^{(N)}_\xi(Nt+u))_{\xi=0,\ldots,L}]
\mathop{\Longrightarrow}_{N \to \infty}
\intl_{\CP(E)} P_t (\dd \theta^\prime)  
\left(\nu^{c_0,d_0,\Lambda_0}_{\theta^\prime}\right)^{\otimes (L+1)} 
\mbox{ with } P_t =\CL[Z^{0,d_1,0}_\theta(t)].
\ee
\end{proposition}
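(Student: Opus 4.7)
The plan is to adapt the mean-field finite system scheme developed in \cite{DGV95} for Fleming-Viot diffusions to the present Cannings setting. The essential new feature is that single-site resampling is now a jump mechanism, so the generator computation on the block average requires a second-order Taylor expansion with remainder controlled via $\int r^k\,\Lambda_0(\dd r)<\infty$ for $k\geq 2$. Following the convergence scheme outlined in Section~\ref{ss.embedding}, compact containment is automatic because $\CP(E)$ is compact, so tightness of $(Y^{(N)}(Nt))_{t\geq 0}$ will follow once we establish the uniform bound $\sup_N\|NL^{(N)}F\|_\infty<\infty$ for test functions $F(x)=G(y)$ with $G(y)=\langle y^{\otimes n},\varphi\rangle$; this bound will drop out of the generator calculation.

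For the generator computation, first I would check that $L^{(N)}_{\mathrm{mig}}F\equiv 0$, since the block average $y$ is invariant under the migration of individual particles. For the resampling part, a Cannings event at site $\xi$ with parameter $r$ changes $y$ by $(r/N)(\delta_a-x_\xi)$. Taylor-expanding $G$ at $y$, the first-order contribution vanishes after integrating against $x_\xi(\dd a)$ (since $\int x_\xi(\dd a)\,G'(y)[\delta_a-x_\xi]=0$), and the second-order contribution, using $\int r^2\Lambda_0^*(\dd r)=\lambda_0$, yields
\begin{equation*}
N L^{(N)}F(x)=\frac{\lambda_0}{2}\cdot\frac{1}{N}\sum_{\xi\in G_{N,1}}\int Q_{x_\xi}(\dd u,\dd v)\,G''(y)[\delta_u,\delta_v]+o(1),
\end{equation*}
with the $o(1)$ remainder controlled by the higher-order jumps $\int r^k\,\Lambda_0(\dd r)$, $k\geq 3$, which are integrable because $\Lambda_0\in\CM_f([0,1])$.

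The crux of the argument is then an averaging principle: conditionally on $Y^{(N)}(Nt)\approx y$, the empirical distribution $N^{-1}\sum_\xi\delta_{X^{(N)}_\xi(Nt)}$ converges in probability to the McKean-Vlasov equilibrium $\nu^{c_0,0,\Lambda_0}_{y}$. The standard restart strategy works here: on an intermediate time window $[Nt-g(N),Nt]$ with $g(N)\to\infty$ and $g(N)/N\to 0$, the block average is effectively frozen at $Y^{(N)}(Nt)$; by Proposition~\ref{P.mkvl} (propagation of chaos) the tagged components evolve as asymptotically independent McKean-Vlasov processes with source $Y^{(N)}(Nt)$; and by Proposition~\ref{P.ergo} (ergodicity uniform in initial state) each such component reaches $\nu^{c_0,0,\Lambda_0}_{Y^{(N)}(Nt)}$ within time $g(N)$. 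A law-of-large-numbers argument over the asymptotically independent components then yields the desired averaging. Inserting this into the generator formula above and invoking Proposition~\ref{prop:2nd} to evaluate $\E_{\nu^{c_0,0,\Lambda_0}_y}[Q_\cdot]=[2c_0/(2c_0+\lambda_0)]\,Q_y$, one identifies the limiting generator on $\CP(E)$ as the Fleming-Viot generator with volatility $d_1=c_0\lambda_0/(2c_0+\lambda_0)$; well-posedness of the associated martingale problem (a special case of Proposition~\ref{prop:McKean-Vlasov-well-posedness}) uniquely identifies the limit and proves the first claim.

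The second claim follows from the same averaging principle: for fixed $u\in\R$ and $L\in\N$, conditionally on $Y^{(N)}(Nt)\Rightarrow\theta(t)$ with $\theta(t)\sim P_t$, the joint law of $(X^{(N)}_\xi(Nt+u))_{\xi=0,\ldots,L}$ converges to the $(L+1)$-fold product of $\nu^{c_0,d_0,\Lambda_0}_{\theta(t)}$, and mixing over $P_t$ yields the stated formula. The main obstacle throughout is the rigorous decoupling of time scales in the averaging step: one must simultaneously control exchangeability of the components (for the law of large numbers), their fast relaxation to equilibrium (using the uniformity in initial state of Proposition~\ref{P.ergo}), and the continuity of $\theta\mapsto\nu^{c_0,0,\Lambda_0}_\theta$ established in Section~\ref{sss.conttheta} (crucial because the frozen source value is only approximately $Y^{(N)}(Nt)$). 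The Cannings setting adds a layer of care compared to Fleming-Viot because the big-jump terms must be bounded uniformly, but the finiteness of $\Lambda_0$ on $[0,1]$ makes all these moments well-defined.
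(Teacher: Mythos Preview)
Your proposal is correct and follows essentially the same route as the paper: migration vanishes on the block average, the resampling generator on time scale $Nt$ reduces to the second-order term $\tfrac{\lambda_0}{2}\cdot N^{-1}\sum_\xi \int Q_{x_\xi}\,G''(y)$, and the averaging via local McKean-Vlasov equilibria (restart argument, Proposition~\ref{P.ergo}, continuity in $\theta$) together with Proposition~\ref{prop:2nd} identifies the limiting Fleming-Viot volatility $d_1$. The only organisational difference is that the paper derives the second-order formula (Lemma~\ref{L.genres}) by expanding $G(y)=N^{-n}\sum_{\xi_1,\ldots,\xi_n}\langle\varphi,\bigotimes x_{\xi_i}\rangle$ and doing a combinatorial case analysis on $|\{\xi_1,\ldots,\xi_n\}|$ rather than a direct Taylor expansion, but this is the same computation written out term by term and yields the identical $O(N^{-1})$ remainder.
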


\begin{corollary}
\label{C.mffin}
{\bf [Mean-field finite system scheme with $\Lambda_1$-block resampling]}\\
Consider the model above with additional block resampling at rate $N^{-2}\Lambda_1$.
Then, in the right-hand side of \eqref{ak10}, $Z_\theta^{0, d_1,0}$ must
be replaced by $Z_\theta^{0,d_1,\Lambda_1}$, and similarly in the definition of 
$P_t$ in \eqref{ag42}.
\end{corollary}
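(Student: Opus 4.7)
My plan is to follow the same scheme as in the proof of Proposition~\ref{P.mffsys}, adding the contribution of the $\Lambda_1$-block resampling to the relevant generator computations on the two time scales $Nt$ (for the block average $Y^{(N)}$) and $Nt+u$ (for the tagged components). The key observation is that block-resampling events at level $1$ occur at rate $N^{-2}$ per site, hence at total rate $N \cdot N^{-2} = N^{-1}$ per unit of microscopic time. This has two consequences that match the claim of the corollary: on time scale $t$ (microscopic) there are no block-resampling events in the limit, so the component-wise McKean-Vlasov convergence of Proposition~\ref{P.mkvl} extends (this is Corollary~\ref{Cmkvbs}), while on time scale $Nt$ (macroscopic) such events occur at rate $1$ and contribute a full $\Lambda_1$-Cannings term at the level of the block average.

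For the block-average generator computation, let $F(x) = G(y)$ with $y = N^{-1}\sum_{\xi \in G_{N,1}} x_\xi$ depend only on the $1$-block average. The reshuffling-resampling map $\Phi_{r,a,G_{N,1}}$ sends every coordinate to $(1-r)y + r\delta_a$, so that after the event the block average itself becomes $(1-r)y + r\delta_a$ with $a$ drawn from $y$. Summing the per-site rate $N^{-2}$ over the $N$ sites and speeding time up by $N$, the additional contribution to the generator of $(Y^{(N)}(Nt))_{t\geq 0}$ is
\begin{equation*}
N \cdot N \cdot N^{-2} \int_{(0,1]} \Lambda^*_1(\dd r) \int_E y(\dd a)\, \big[G((1-r)y + r\delta_a) - G(y)\big]
= (L^{\Lambda_1}G)(y),
\end{equation*}
which is precisely the $\Lambda_1$-Cannings operator from \eqref{Ldefs} acting on $G$. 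Combining this with the Fleming-Viot operator with volatility $d_1$ obtained in Proposition~\ref{P.mffsys}, the limiting generator on the block average is $L^{d_1} + L^{\Lambda_1}$, whose martingale problem is well-posed (by Proposition~\ref{prop:McKean-Vlasov-well-posedness} with $c=0$, $d=d_1$, $\Lambda=\Lambda_1$) and is solved by $Z_\theta^{0,d_1,\Lambda_1}$. This gives the analogue of \eqref{ak10}.

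For the components on time scale $Nt+u$, the argument of Proposition~\ref{P.mffsys} proceeds via a two-scale averaging: on the short scale $u$, the components relax to the McKean-Vlasov equilibrium $\nu_{\theta(t)}^{c_0,d_0,\Lambda_0}$ driven by the slowly varying block average $\theta(t) = Y(t)$, while on the long scale $t$ the block average $Y(t)$ evolves autonomously. The local (per-site and per-pair) dynamics governing this relaxation on scale $u$ is unaffected by the $\Lambda_1$-block resampling because, as noted above, such events have vanishing rate on scale $u$, and the uniform continuity of $\theta \mapsto \nu_\theta^{c_0,d_0,\Lambda_0}$ established in Section~\ref{sss.conttheta} (using the dust-free condition~\eqref{ag2} on $\Lambda_0$) ensures that the equilibrium depends continuously on the slowly varying mean measure even across the jumps of $Y$. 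Hence the averaging argument from Proposition~\ref{P.mffsys} carries through verbatim, with $P_t$ now given by $\CL[Z_\theta^{0,d_1,\Lambda_1}(t)]$.

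Tightness is as in Section~\ref{sss.tightN}: the additional block-resampling contribution to $\|L^{(N)}F\|_\infty$ is bounded uniformly in $N$ because $\lambda_1^* = \Lambda_1^*([0,1]) < \infty$ by~\eqref{ak1000} and the number of $1$-blocks is one. The main (minor) obstacle is justifying that the averaging argument tolerates the jumps of the slow variable $Y$, which requires the continuity statement of Section~\ref{sss.conttheta} together with the a.s.\ finite number of $\Lambda_1$-jumps on any finite time interval; both inputs are already available.
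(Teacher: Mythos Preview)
Your proposal is correct and follows the same approach that the paper uses. The paper does not give a separate explicit proof of this corollary, but the key computation you carry out---that the $\Lambda_1$-block resampling, summed over the $N$ sites and sped up by $N$, yields exactly the $\Lambda_1$-Cannings operator $(L^{\Lambda_1}G)(y)$ acting on the block average---is precisely the computation the paper performs later in the two-level setting (see the term $I_1$ in \eqref{sk_15c_41} and its evaluation in \eqref{sk_15c_44}--\eqref{sk_15c_46}), and your averaging and tightness arguments are the natural specialisations of those in the proof of Proposition~\ref{P.mffsys}.
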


The proof of the mean-field finite system scheme follows the abstract argument developed 
in~\cite{DGV95}. Namely, we first establish tightness of the sequence of processes 
$(Y^{(N)}(Nt))_{t \geq 0}$, $N\in\N$, which can be done as in Section~\ref{sss.tightN} 
for $(X^{(N)}_0 (t), \ldots, X^{(N)}_L (t))_{t \geq 0}$, $N\in\N$, once we have calculated 
the generators. A representation for the generator of the process is found in Sections
\ref{sss.1blockmig}--\ref{sss.1blockres-part-one} below. With the help
of the idea of local equilibria based on the ergodic theorems of Section \ref{s.mcv_imem}, 
we obtain first (\ref{ag42}) and then (\ref{ak10}) in Section~\ref{sss.1blockop}. 

In Sections~\ref{sss.1blockmig}-\ref{sss.1blockres-part-one}, we calculate the action of 
the generator of the martingale problem on the test functions induced by the functions 
necessary to arrive at the action of the generator of the limiting process. In 
Section~\ref{sss.1blockop}, we pass to the limit $N \to \infty$, where as in
Section~\ref{ss.singcol}, we have to use an averaging principle. However, instead of a 
simple law of large numbers, this now is a {\em dynamical averaging principle} with 
{\em local equilibria} for the single components necessary to obtain the expression for 
the limiting block-average process.

By the definition of the generator of a process, $M^{x,F} = (M^{x,F}_t)_{t \geq 0}$,
\be{sk_15c_9}
M_t^{x,F} = F(x_t) - F(x_0)
- \int_0^t \dd s \left(L_{\mathrm{mig}}^{(N)} F
+ L_{\mathrm{res}}^{(N)} F \right)\!(x_s)
\ee
is a martingale for all $F$, as in \eqref{one-level-test-function}. The same holds with 
$x$ replaced by the block averages $y$ (by the definition of $y$). Once again, we 
will investigate the migration and the resampling operator separately, this time 
for the block average.

\subsubsection{Migration}
\label{sss.1blockmig}

In this section, we consider functions $F \circ y$ with $F$ as in
\eqref{one-level-test-function} and 
\be{ak-block-avg}
y=N^{-1} \sum_{\xi \in G_{N,1}} x_\xi
\ee
a
block average (with $G_{N,1}=\{0,1,\ldots,N-1\}$). We will show below that
$L^{(N)}_{\mathrm{mig}}(F \circ y)=0$, so that migration has no effect.

Recall $(L_{\mathrm{mig}}^{(N)} F)(x)$ as rewritten in \eqref{sk_15c_2}. For the block 
averages $y$, the migration operator can be calculated as follows. Since $y = y(x)$ and
$F(y) = (F \circ y)(x)$ can be seen as functions of $x$ in the algebra $\CF$ of functions 
in $x$ of the form \eqref{one-level-test-function}, we have
\be{sk_15c_5}
(L_{\mathrm{mig}}^{(N)} F)(y)
= \left( L_{\mathrm{mig}}^{(N)}( F \circ y ) \right)\!(x)
= \sum_{\xi \in G_{N,1}} c_0
\int_E \left( y - x_{\xi} \right)\!(\dd a)
\frac{\partial (F \circ y)(x) }{\partial x_\xi}[\delta_a].
\ee
For $y=N^{-1} \sum_{\xi \in G_{N,1}} x_\xi$ this yields
\be{sk_15c_6}
\frac{\partial \!\left( F \circ y \right)\!(x) }{\partial x_\xi}[\delta_a]
=\frac{\partial F(y)}{\partial y}\!\left[ \frac{\delta_a}{N} \right]
\ee
and hence
\be{sk_15c_7}
(L_{\mathrm{mig}}^{(N)} F)(y)
= \sum_{\xi \in G_{N,1}} c_0
\int_E \left( y - x_{\xi} \right)\!(\dd a)
\frac{\partial F(y)}{\partial y}\!\left[ \frac{\delta_a}{N} \right]
=0.
\ee

\subsubsection{From $\Lambda$-Cannings to Fleming-Viot}
\label{sss.1blockres-part-one}

Next, we evaluate the moment measures of the average \eqref{ak-block-avg} in the limit as $N \to
\infty$ and show convergence of the terms to the Fleming-Viot second order term.

\begin{remark}[Notation for the rescaled generators]
Given a generator $L$ of a Markov process, we denote by $L^{[k]}$ (for $k \in \N$)
the generator of the Markov process on time scale $N^k t$. Evidently, this time speed-up 
simply amounts to multiplication of the original generator $L$ by $N^k$. 
\end{remark}

We are interested in the action of the
rescaled generator $L_{\mathrm{res}}^{(N)[1]}$ on the functions of the corresponding $1$-block averages \eqref{ak-block-avg}.

\begin{lemma}
\label{L.genres}
{\bf [Generator convergence: resampling]}\\
On time scale $Nt$, in the limit as  $N \to \infty$,
\be{sk_15c_8}
\begin{aligned}
&(L_{\mathrm{res}}^{(N)[1]} F)(y)\\
&= \frac{1}{N} \sum_{\xi \in G_{N,1}} \int_{[0,1]} \Lambda^*_0 (\dd r) \int_E
x_{\xi}(\dd a) \frac{1}{2} \frac{\partial^2 F(y)}{\partial y^2}[r(- x_{\xi}
+ \delta_a),r(- x_{\xi} + \delta_a)] + O\!\left( N^{-1} \right).
\end{aligned}
\ee
\end{lemma}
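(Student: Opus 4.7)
The plan is to apply a Taylor-type expansion of $F$ around the current $1$-block average $y$ and to exploit the fact that the first-order term vanishes after integration against $x_\xi$, leaving the claimed second-order expression plus a controlled remainder.

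First I would record the key observation about how the block average $y$ changes under a resampling event at site $\xi\in G_{N,1}$: if $x_\xi$ is replaced by $(1-r)x_\xi + r\delta_a$, then the new average differs from $y$ by the signed measure
\[
\delta y \;=\; \frac{1}{N}\bigl(r(-x_\xi + \delta_a)\bigr),
\]
of zero total mass and with $\|\delta y\|_{\mathrm{TV}}\le 2r/N$. Substituting this into the explicit form \eqref{ak:one-level-generator} of $L^{(N)}_{\mathrm{res}}$ applied to $F\circ y$ gives
\[
(L^{(N)}_{\mathrm{res}} F)(y) = \sum_{\xi\in G_{N,1}} \int_{(0,1]} \Lambda_0^*(\dd r) \int_E x_\xi(\dd a)\,\bigl[F(y+\delta y)-F(y)\bigr].
\]

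Next I would Taylor-expand $F(y+\delta y)-F(y)$ using that $F$ is a test function of the polynomial form \eqref{one-level-test-function}; the expansion terminates at order $n$ and is therefore exact. The crucial cancellation is at first order: since
\[
\int_E x_\xi(\dd a)\,(-x_\xi + \delta_a) \;=\; -x_\xi + x_\xi \;=\; 0,
\]
the term $N^{-1}\frac{\partial F(y)}{\partial y}[r(-x_\xi+\delta_a)]$ integrates to $0$ against $x_\xi(\dd a)$. The second-order term, after extracting the factor $N^{-2}$ and using bilinearity to move $r$ inside the derivative, is exactly
\[
\frac{1}{2N^2}\,\frac{\partial^2 F(y)}{\partial y^2}\bigl[r(-x_\xi+\delta_a),\,r(-x_\xi+\delta_a)\bigr],
\]
which after summing over $\xi$ and multiplying by $N$ for the time rescaling produces the leading term in \eqref{sk_15c_8}.

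Finally I would bound the remainder. Higher-order terms of order $k\ge 3$ in the Taylor expansion are bounded (in absolute value, using that $\varphi$ is bounded) by a constant times $(r/N)^k$; after summation over the $N$ sites they contribute $O(r^k/N^{k-1})$, and after the time rescaling factor $N$ they contribute $O(r^k/N^{k-2})$. The main obstacle is to check that these remainders are integrable against $\Lambda_0^*(\dd r)=\Lambda_0(\dd r)/r^2$ in spite of the dust-free condition \eqref{ag2}; but this is exactly where the cancellation of the first-order term pays off, since
\[
\int_{(0,1]} r^k\,\Lambda_0^*(\dd r) \;=\; \int_{(0,1]} r^{k-2}\,\Lambda_0(\dd r) \;<\;\infty \qquad \text{for all } k\ge 2,
\]
$\Lambda_0$ being finite. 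Hence the total remainder is $O(N^{-1})$, which gives the claim after one more multiplication by $N$ from passing to $L^{(N)[1]}_{\mathrm{res}}=N\,L^{(N)}_{\mathrm{res}}$.
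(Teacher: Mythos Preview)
Your approach is correct and is essentially the same as the paper's, just organized differently: the paper expands $y^{\otimes n}=(N^{-1}\sum_\xi x_\xi)^{\otimes n}$ combinatorially into a sum over $n$-tuples of sites and then classifies the contributions according to $|\{\xi_1,\dots,\xi_n\}|$ (all distinct $\leftrightarrow$ your vanishing first-order term; one coincident pair $\leftrightarrow$ your second-order term; at least two coincidences $\leftrightarrow$ your higher-order remainder), while you phrase the same computation as a Taylor expansion of the polynomial $F$ in the increment $\delta y$. Both routes identify the same cancellation $\int_E x_\xi(\dd a)(-x_\xi+\delta_a)=0$ as the reason the first-order term drops out.

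One small expositional slip: in your final sentence you say ``which gives the claim after one more multiplication by $N$ from passing to $L^{(N)[1]}_{\mathrm{res}}=N\,L^{(N)}_{\mathrm{res}}$'', but you had already applied that time-rescaling factor two sentences earlier (``after the time rescaling factor $N$ they contribute $O(r^k/N^{k-2})$''). The remainder is indeed $O(N^{-1})$ \emph{after} rescaling, so just delete the redundant phrase.
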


\begin{proof}[Proof of Lemma~\ref{L.genres}]
We first rewrite $F(y_t)$ in terms of $x_t$:
\be{sk_15c_10}
\begin{aligned}
F(y_t)
&= \left\langle \varphi, y_t^{\otimes n} \right\rangle
= \left\langle \varphi, \left( \frac{1}{N}
\sum_{\xi \in G_{N,1}} x_{\xi}(t) \right)^{\otimes n} \right\rangle\\
&= \frac{1}{N^n} \sum_{\xi_1 \in G_{N,1}} \ldots 
\sum_{\xi_n \in G_{N,1}} \left\langle \varphi , 
x_{\xi_1}(t) \otimes \ldots \otimes x_{\xi_n}(t)
\right\rangle \\
&= \frac{1}{N^n} \left( \bigotimes_{i=1}^n 
\sum_{\xi_i \in G_{N,1}} \right) \left\langle \varphi, 
x_{\xi_1}(t) \otimes \cdots \otimes x_{\xi_n}(t)
\right\rangle.
\end{aligned}
\ee
Abbreviate
\be{sk_15c_11}
F^{(\xi_1,\ldots, \xi_n)}(x)
= \int_{E^n} \left( \bigotimes_{i=1}^n x_{\xi_i}\!\left( \dd u^{(i)} \right) \right)
\varphi\!\left(u^{(1)}, \ldots, u^{(n)} \right)
= \left\langle \varphi , \bigotimes_{i=1}^n x_{\xi_i} \right\rangle.
\ee
Note that, in this notation, $\xi_i=\xi_j$ for $i \neq j$ is possible. Recall that 
$(x_t)_{t \geq 0}$ has generator $L^{(N)}$ and is the unique solution of the martingale 
problem \eqref{sk_15c_9}. If we use \eqref{sk_15c_10} in \eqref{sk_15c_9} with $x$ 
replaced by $y$, then we obtain that $(y_t)_{t\geq 0}$ solves the martingale problem 
with generator
\be{sk_15c_12}
(L_{\mathrm{res}}^{(N)} F)(y)
= \frac{1}{N^n} \left( \bigotimes_{i=1}^n 
\sum_{\xi_i \in G_{N,1}} \right) L_{\mathrm{res}}^{(N)}\!
\left( F^{\left( \xi_1, \ldots, \xi_n \right)} \right)(x)
\ee
for the resampling part. Together with \eqref{ak:one-level-generator} this yields the 
expression
\be{sk_15c_13}
\begin{aligned}
(L_{\mathrm{res}}^{(N)} F)(y)
&= \frac{1}{N^n} \left( \bigotimes_{i=1}^n \sum_{\xi_i \in G_{N,1}} \right) 
\sum_{\xi \in G_{N,1}} \int_{[0,1]} \Lambda^*_0 (\dd r) \int_E x_\xi(\dd a)\\
&\quad \times
\left[F^{\left(\xi_1,\ldots,\xi_n \right)}\!
\big(x_0,\ldots,x_{\xi-1},(1-r)x_\xi+r\delta_a,x_{\xi+1},\ldots,x_{N-1}\big)
-F^{\left(\xi_1,\ldots,\xi_n \right)}(x)\right].
\end{aligned}
\ee
We must analyse this expression in the limit as $N \to \infty$. To do so, we collect 
the leading order terms. The key quantity is the cardinality of the set $\{\xi_1,
\ldots,\xi_n\}$, for which we distinguish three cases.

\medskip\noindent
{\bf Case 1:} 
$\left| \left\{\xi_1,\ldots,\xi_n\right\} \right| = n$, i.e., all $\xi_i, 1 \leq i \leq n$ 
are distinct.

\medskip\noindent
The contribution to \eqref{sk_15c_13} is zero. For $\xi \not \in \left\{\xi_1,\ldots,\xi_n
\right\}$ this is obvious by the definition of $F^{\left( \xi_1, \ldots, \xi_n \right)}(x)$ 
in \eqref{sk_15c_11}. Otherwise, we have
\be{sk_15c_14}
\begin{aligned}
&\int_E x_\xi(\dd a) \left[F^{\left( \xi_1, \ldots, \xi_n \right)}\!
\big( x_0,\ldots,x_{\xi-1},(1-r)x_\xi+r\delta_a,x_{\xi+1},\ldots,x_{N-1} \big)
- F^{\left( \xi_1, \ldots, \xi_n \right)}(x) \right]\\
&= \int_E x_{\xi}(\dd a)\\
&\qquad \times \Big[ \left\langle \varphi , x_{\xi_1} \otimes \cdots \otimes
\underbrace{\left( (1-r) x_{\xi} + r \delta_a \right)}_{\substack{
\text{only change (unique)}\\
\text{position with } \xi_i=\xi}} \otimes \cdots \otimes x_{\xi_n} \right\rangle
- \left\langle \varphi , x_{\xi_1} \otimes \cdots \otimes x_{\xi_n} \right\rangle \Big] \\
& = 0,
\end{aligned}
\ee
where in the last line we use that $\left\langle x_{\xi},1\right\rangle=1$.

\medskip\noindent
{\bf Case 2:} 
$\left| \left\{\xi_1,\ldots,\xi_n\right\}\right| \leq n-2$.

\medskip\noindent
The contribution to \eqref{sk_15c_13} is of order $N^{-2}$. Indeed, the contribution 
is bounded from above by
\be{sk_15e_17}
\frac{1}{N^n} \left( \bigotimes_{i=1}^n \sum_{\xi_i \in G_{N,1}} \right) 
1_{\left\{ \left| \left\{ \xi_1, \ldots, \xi_n \right\} \right| 
\leq n-2 \right\}} \lambda_0 C_{F} = N^{-2} \lambda_0 C_{F},
\ee
where $C_F$ denotes a generic constant that depends on $F$ (as in 
\eqref{one-level-test-function}) only, and thereby on $\varphi$ and $n$. Here we use 
\eqref{ak:resampling-mapping-global} and the fact that the sum $\sum_{\xi \in G_{N,1}}$ 
yields at most $n$ non-zero summands by the definition of $F^{\left(\xi_1,\ldots,\xi_n
\right)}(x)$ in \eqref{sk_15c_11}.

\medskip\noindent
{\bf Case 3:} $\left| \left\{ \xi_1,\ldots,\xi_n \right\} \right| = n-1$.

\medskip\noindent
There exist $1 \leq m_1 < m_2 \leq n$ such that $\xi_{m_1}=\xi_{m_2}$ while all 
other $\xi_i, 1 \leq i \leq n$, are different. By the reasoning as in \eqref{sk_15c_14}, 
we see that the only non-zero contribution of the sum $\sum_{\xi \in G_{N,1}}$ to the 
generator in \eqref{sk_15c_13} comes from the case where $\xi=\xi_{m_1}=\xi_{m_2}$. 
We therefore obtain
\be{sk_15c_15}
\begin{aligned}
& (L_{\mathrm{res}}^{(N)} F)(y) \\
&= \frac{1}{N^n} \left( \bigotimes_{i=1}^n \sum_{\xi_i \in G_{N,1}} \right) 
1_{\left\{ \left| \left\{ \xi_1,\ldots,\xi_n \right\} \right| = n-1 \right\}}
\sum_{1 \leq m_1 < m_2 \leq n} 1_{\left\{ \xi_{m_1}=\xi_{m_2} = \xi \right\}} 
\int_{[0,1]} \Lambda^*_0 (\dd r) \int_E x_\xi(\dd a) \\
&\quad \times
\left[F^{\left( \xi_1, \ldots, \xi_n \right)}\!
\big( x_0,\ldots,x_{\xi-1},(1-r)x_\xi+r\delta_a,x_{\xi+1},\ldots,x_{N-1} \big)
-F^{\left( \xi_1, \ldots, \xi_n \right)}(x) \right] + O\!\left( N^{-2} \right).
\end{aligned}
\ee
Reasoning similarly to \eqref{sk_15e_17},  we see that extending
\be{sk_15c_16}
\left( \bigotimes_{i=1}^n \sum_{\xi_i \in G_{N,1}} \right) 
1_{\left\{ \left| \left\{ \xi_1,\ldots,\xi_n \right\} \right| = n-1 \right\}} 
\sum_{1 \leq m_1 < m_2 \leq n} 1_{\left\{ \xi_{m_1}=\xi_{m_2} \right\}}
\ee
in \eqref{sk_15c_15} to
\be{sk_15c_17}
\sum_{1 \leq m_1 < m_2 \leq n} \sum_{\xi_{m_1} \in G_{N,1}} 
1_{\left\{ \xi_{m_1}=\xi_{m_2} \right\}} 
\left( \bigotimes_{i \in \{1,\ldots,n\} \backslash \{m_1,m_2\}} 
\sum_{\xi_i \in G_{N,1}} \right)
\ee
only produces an additional error of order $N^{-2}$. Using this observation in 
\eqref{sk_15c_15}, we get
\be{sk_15c_18}
\begin{aligned}
& (L_{\mathrm{res}}^{(N)} F)(y) \\
=& \frac{1}{N^2} \sum_{1 \leq m_1 < m_2 \leq n}
\sum_{\xi \in G_{N,1}} \int_{[0,1]} \Lambda^*_0 (\dd r)
\int_E x_{\xi}(\dd a)\\
& \times \left[ \left\langle \varphi , y_{\xi_1} \otimes \cdots \otimes
\underbrace{\left( (1-r) x_{\xi} + r \delta_a \right)}_{\text{only change position }
\xi_{m_1}} \otimes \cdots \otimes
\underbrace{\left( (1-r) x_{\xi} + r \delta_a \right)}_{\text{and position }
\xi_{m_2}} \otimes \cdots \otimes y_{\xi_n} \right\rangle \right.\\
& \left. - \left\langle \varphi , y_{\xi_1} \otimes \cdots \otimes
\underbrace{x_{\xi}}_{\text{only change position } \xi_{m_1}} \otimes \cdots \otimes
\underbrace{x_{\xi}}_{\text{and position } \xi_{m_2}} \otimes \cdots \otimes
y_{\xi_n} \right\rangle \right]\\
& + O\!\left( N^{-2} \right).
\end{aligned}
\ee

Now use that
\be{sk_15e_10}
\int_E x_{\xi}(\dd a) \left\langle \varphi , y_{\xi_1} \otimes \cdots \otimes
\underbrace{\left( x_{\xi} \right)}_{\substack{\text{only change} \\ \text{position }
\xi_{m_1}}} \otimes \cdots \otimes
\underbrace{\left( -r x_{\xi}
+ r \delta_a \right)}_{\substack{\text{and position } \xi_{m_2} \\
\text{for } m_1,m_2 \text{ fixed}}} \otimes \cdots \otimes y_{\xi_n} \right\rangle = 0
\ee
to obtain from \eqref{sk_15c_18}, for $F(y)
= \langle \varphi, y^{\otimes n} \rangle$, that
\be{sk_15c_19}
\begin{aligned}
& (L_{\mathrm{res}}^{(N)} F)(y)\\
&= \frac{1}{N^2} \sum_{1 \leq m_1 < m_2 \leq n}
\sum_{\xi \in G_{N,1}} \int_{[0,1]} \Lambda^*_0 (\dd r)
\int_E x_{\xi}(\dd a)\\
&\quad \times \left\langle \varphi , y_{\xi_1} \otimes \cdots \otimes
\underbrace{\left( r(- x_{\xi} + \delta_a) \right)}_{\text{only change position }
\xi_{m_1}} \otimes \cdots \otimes \underbrace{\left( r(- x_{\xi} + \delta_a)
\right)}_{\text{and position } \xi_{m_2}} \otimes
\cdots \otimes y_{\xi_n} \right\rangle\\
&\quad + O\!\left( N^{-2} \right) \\
&= \frac{1}{N^2} \sum_{\xi \in G_{N,1}} \int_{[0,1]} \Lambda^*_0 (\dd r) 
\int_E x_{\xi}(\dd a) \frac{1}{2}
\frac{\partial^2 F(y)}{\partial y^2}[r(- x_{\xi} + \delta_a),
r(- x_{\xi} + \delta_a)] + O\!\left( N^{-2} \right).
\end{aligned}
\ee
Comparing Cases 1--3, we see that only the latter contributes to the leading 
term. Changing to time scale $Nt$ in \eqref{sk_15c_19}, i.e., multiplying 
$L^{(N)}_{\mathrm{res}}$ by $N$, we complete the proof.
\end{proof}

\subsubsection{A comment on coupling and duality}
\label{sss.coupling}

The techniques of coupling and duality are of major importance. One application 
can be found in \cite[Section 4]{DGV95}, namely, to prove Equation (4.17) therein. 
The key point is to obtain control on the difference between $\CL[Z_t]$ and 
$\CL[Z^\prime_t]$ for two Markov processes with {\em identical dynamics} but 
{\em different initial states}. Such estimates can be derived via coupling of 
the two dynamics, or alternatively, via dual processes that are based on finite 
particle systems with non-increasing particle numbers, allowing for an entrance 
law starting from a countably infinite number of particles. Both these properties 
hold in our model. This fact is used to argue that the configuration locally 
converges on time scale $Nt$ to an equilibrium by the following restart argument.

At times $Nt$ and $Nt-t_N$, with $\lim_{N\to\infty}t_N=\infty$ and
$\lim_{N\to\infty} t_N/N=0$, the empirical mean remains constant. Hence, we can
argue that, in the limit as $N\to\infty$, a system started at time $Nt-t_N$
converges over time $t_N$ to the equilibrium dictated by the current mean. Two
facts are needed to make this rigorous: (1) the map $\theta \mapsto
\nu^{c,d,\Lambda}_\theta$ must be continuous (recall
Section~\ref{sss.conttheta}); (2) the ergodic theorem must hold uniformly in the
initial state. Both coupling and duality do the job, which is why both work in 
\cite{DGV95}.

\subsubsection{McKean-Vlasov process of the $1$-block averages on time scale $Nt$}
\label{sss.1blockop}

Recall the definition of the Fleming-Viot diffusion operator $Q$ in
\eqref{flemming-viot-kernel} and the equilibrium $\nu$ of the McKean-Vlasov
process in the line preceding \eqref{cond-invariant}. Observe that the
compensators of $M^{x,F}$, see \eqref{sk_15c_9} are functionals of the empirical
measure of the configuration. The set of configurations on which $X^{(N)}$
concentrates in the limit  as $N \to \infty$ turns out to be 
\be{mcv1}
\B^\ast_\theta = \B_\theta \cap \left\{\ux \in (\CP(E))^\N \colon\, \frac{1}{N}
\suml^N_{\xi=1} \delta_{(x_\xi)} \Ntoo \nu^{c_0,0,\Lambda_0}_\theta\right\}, 
\ee
where $\theta$ is called the {\it intensity of the configuration} and 
\be{mcv2}
\B^\ast = \bigcup\limits_{\theta \in \CP(E)} \B^\ast_\theta.
\ee

\begin{lemma}
\label{L.locequ}
{\bf [Local equilibrium]}
\begin{itemize}
\item[\textup{(a)}] 
The block resampling term satisfies, with $y$ the intensity of the configuration 
$\ux$ for $\ux \in \B^\ast$,
\be{sk_15c_20}
\begin{aligned}
\lim_{N \rightarrow \infty} (L_{\mathrm{res}}^{(N)[1]} F)(y)
&= \frac{\lambda_0}{2} \int_{\CP(E)} \nu_y^{c_0,0,\Lambda_0}(\dd \wt x) 
\int_E \int_E Q_{\wt x}(\dd u,\dd v)\,
\frac{\partial^2 F(y)}{\partial y^2}[\delta_u, \delta_v] \\
&= \frac{c_0 \lambda_0}{2 c_0 + \lambda_0} \int_E \int_E Q_y(\dd u,\dd v)\,
\frac{\partial^2 F(y)}{\partial y^2}[\delta_u, \delta_v].
\end{aligned}
\ee
\item[\textup{(b)}] 
If the system starts i.i.d.\ with  some finite intensity measure, then every 
weak limit point of $\CL[(X^{(N)}(Nt+u))_{u \in \R}]$ as $N\to\infty$ has 
paths that satisfy
\be{mcv3}
\P(X^{(\infty)} (t,u) \in \B^\ast) = 1,
\qquad \text{for all } t \in [0,\infty),\,u \in \R.
\ee
\end{itemize}
\end{lemma}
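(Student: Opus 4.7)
\myparagraph{Plan for (a).} The strategy is to recognise the expression from Lemma~\ref{L.genres} as the integral of a bounded continuous function on $\CP(E)$ against the empirical measure $\nu^{(N)}(\ux) = N^{-1}\sum_{\xi \in G_{N,1}} \delta_{x_\xi}$, and then to use the hypothesis $\ux \in \B^\ast_y$ to pass to the limit. Exploiting bilinearity of $\partial^2 F(y)/\partial y^2$, I would first extract the factor $r^2$ from the bracket $[r(-x_\xi+\delta_a), r(-x_\xi+\delta_a)]$; since $\int_{(0,1]} \Lambda_0^\ast(\dd r)\,r^2 = \Lambda_0((0,1]) = \lambda_0$ by \eqref{ag3}, the $r$-integral absorbs the singularity at the origin and the $\xi$-summand reduces to
\begin{equation*}
g(x_\xi) = \tfrac{\lambda_0}{2} \int_E x_\xi(\dd a)\,\tfrac{\partial^2 F(y)}{\partial y^2}\big[\delta_a - x_\xi,\, \delta_a - x_\xi\big].
\end{equation*}
Using bilinearity once more together with $\int_E \wt x(\dd a)(\delta_a - \wt x)^{\otimes 2} = Q_{\wt x}$ from \eqref{flemming-viot-kernel}, one rewrites $g(\wt x) = \tfrac{\lambda_0}{2}\int_E\int_E Q_{\wt x}(\dd u,\dd v)\,\tfrac{\partial^2 F(y)}{\partial y^2}[\delta_u,\delta_v]$, which is bounded continuous in $\wt x$ because $F$ is a polynomial functional. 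Consequently $(L^{(N)[1]}_{\mathrm{res}} F)(y) = \int_{\CP(E)} g\,\dd\nu^{(N)}(\ux) + O(N^{-1})$, and the weak convergence $\nu^{(N)}(\ux) \Rightarrow \nu^{c_0,0,\Lambda_0}_y$ encoded in $\B^\ast_y$ yields the first equality of \eqref{sk_15c_20}. The second equality is then Proposition~\ref{prop:2nd} polarised to the symmetric bilinear form $\partial^2 F(y)/\partial y^2$: averaging $\int Q_{\wt x}(\dd u,\dd v)\,\phi(u,v)$ against $\nu^{c_0,0,\Lambda_0}_y$ contracts by a factor $2c_0/(2c_0+\lambda_0)$, which multiplied by $\lambda_0/2$ gives $c_0\lambda_0/(2c_0+\lambda_0)$.

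\myparagraph{Plan for (b).} I would establish the local-equilibrium assertion by a \emph{restart argument} based on separation of time scales. Fix $t > 0$, $u \in \R$ and an intermediate scale $t_N \to \infty$ with $t_N/N \to 0$. Tightness of the block averages $(Y^{(N)}(Ns))_{s \geq 0}$ combined with continuity of the Fleming--Viot limit $Y$ yields $Y^{(N)}(Nt+u-t_N) - Y^{(N)}(Nt+u) \to 0$ in probability, so the block average stays close to $Y(t) =: \theta(t)$ throughout the window of length $t_N$. Restarting the dynamics at time $Nt+u-t_N$ and applying propagation of chaos (Proposition~\ref{P.mkvl}) with drift centre $\theta(t)$, any finite tuple $(X^{(N)}_\xi(Nt+u))_{\xi=0,\ldots,L}$ converges in law to $L+1$ independent copies of the McKean--Vlasov process $Z^{c_0,0,\Lambda_0}_{\theta(t)}$ run for time $t_N$. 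Since $t_N \to \infty$, the ergodic theorem (Proposition~\ref{P.ergo}) forces the limit to be $(\nu^{c_0,0,\Lambda_0}_{\theta(t)})^{\otimes(L+1)}$. Letting $L \to \infty$ and invoking exchangeability with de~Finetti's theorem, the empirical measure $\nu^{(N)}(X^{(N)}(Nt+u))$ converges in probability to $\nu^{c_0,0,\Lambda_0}_{\theta(t)}$; combined with $Y^{(N)}(Nt+u) \to \theta(t)$, this is precisely $X^{(\infty)}(t,u) \in \B^\ast_{\theta(t)} \subset \B^\ast$ almost surely.

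\myparagraph{Main obstacle.} The delicate step is the interchange of the limits $N\to\infty$ and $t_N\to\infty$ in the restart argument. Two uniformity statements are needed: (i) the ergodic convergence in Proposition~\ref{P.ergo} must be \emph{uniform in the initial state}, which holds because every partition element of the dual coalescent is eventually absorbed into the cemetery; and (ii) the map $\theta \mapsto \nu^{c,d,\Lambda}_\theta$ must be \emph{uniformly continuous} on $\CP(E)$, which was established in Section~\ref{sss.conttheta} using the dust-free condition~\eqref{ag2}. Together these allow one to fix an arbitrarily small tolerance $\varepsilon>0$ and then choose $t_N$ growing slowly enough that the non-equilibration error and the drift-centre fluctuation can be estimated jointly by $\varepsilon$. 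Once these uniformities are in hand, upgrading from finite-dimensional convergence at a given $(t,u)$ to the path-level statement follows the abstract scheme of \cite[Section~4]{DGV95} with essentially no modification, since the Cannings-specific features have been absorbed into the single-component generator in part~(a).
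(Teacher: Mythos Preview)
Your proposal is correct and follows essentially the same approach as the paper. For (a), the paper likewise passes from the expression in Lemma~\ref{L.genres} to an average against $\nu_y^{c_0,0,\Lambda_0}$ via the definition of $\B^\ast_y$, extracts the factor $\lambda_0/2$ through the bilinearity of the second G\^ateaux-derivative, and then invokes the variance identity \eqref{sk:preparation-second-moment-invariant-measure-1d} (the general-$\varphi$ version of Proposition~\ref{prop:2nd}) for the second equality; for (b), the paper uses the same restart argument at time $Nt+u-t_N$ combined with the ergodic theorem for the McKean--Vlasov process and the law of large numbers for the empirical measure, and your identification of the two required uniformities (uniform-in-initial-state ergodicity and continuity of $\theta\mapsto\nu^{c,d,\Lambda}_\theta$) matches exactly what the paper flags in Section~\ref{sss.coupling}.
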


\begin{proof}
(a) The proof uses the line of argument in \cite[Section 4(d)]{DGV95} (recall the 
comment in Section~\ref{sss.coupling}), together with 
\eqref{sk:preparation-second-moment-invariant-measure-1d} and the definition 
of $Q$. In what follows, two observations are important:
\begin{itemize}
\item[(i)] 
We use the results on the existence and uniqueness of a stationary distribution 
to \eqref{sk:generator-one-dim-limit-1} on the time scale $t$ with $N \rightarrow 
\infty$, including the convergence to the stationary distribution uniformly in the 
initial state, combined with the Feller property of the limiting dynamics (see 
Section~\ref{s.mcv_imem}). Note, in particular, that with
\eqref{sk:preparation-second-moment-invariant-measure-1d} we get the second
assertion in \eqref{sk_15c_20} from the first assertion.
\item[(ii)] 
We use the property that the laws of the processes $(Y^{(N)}(Nt))_{t \geq 0}$,
$N\in\N$, are tight in path space.
\end{itemize} 
The combination of (i) and (ii) will allow us to derive the claim. 

To verify (ii), use (\ref{sk_15c_19}) together with \eqref{sk_15c_7} to
establish that $\|L^{(N)[1]}_{\mathrm{res}} (F)\|_\infty$ is bounded in $N$,
which gives the tightness (recall Section~\ref{ss.embedding}). To verify (i), we
want to show that the weak limit points satisfy the $(\delta_\theta, 
L_\theta^{0,d_1,0})$-martingale problem. For that, we have to show that
\be{mcv4}
\begin{aligned}
&\CL \left[ \left( F\big(Y^{(N)}(tN)\big) - F\big(Y^{(N)}(0)\big)
- \intl^t_0 (L^{(N),[1]} F)\big(Y^{(N)} (sN)\big)\dd s \right)_{t \geq 0}\right]\\
& \hspace{3cm} \Ntoo
\CL \left[ \left( F\big(Z^{0,\dd_1,0)}(t)\big)-F(\theta)
- \intl^t_0 (L^{0,\dd_1,0} F)\big(Z^{0,\dd_1,0} (s)\big) \dd s \right)_{t \geq 0}\right].
\end{aligned}
\ee
In order to do so, we first need some information on $L^{(N),[1]}$. Since we are on 
time scale $Nt$ with $N \to \infty$, we get 
\be{sk_15c_21}
\begin{aligned}
&\lim_{N \rightarrow \infty} (L_{\mathrm{res}}^{(N)[1]} F)(y)\\
&= \int_{[0,1]} \Lambda^*_0 (\dd r) \int_{\CP(E)} \nu_y^{c_0,0,\Lambda_0}(\dd x) 
\int_E x(\dd a) \frac{1}{2} \frac{\partial^2 F(y)}{\partial y^2}
[r(- x + \delta_a),r(- x + \delta_a)] \\
&= \frac{\lambda_0}{2} \int_{\CP(E)} \nu_y^{c_0,d_0,\Lambda_0}(\dd x)
 \int_E x(\dd a) \frac{\partial^2 F(y)}{\partial y^2}
[- x + \delta_a,- x + \delta_a]
\qquad \forall\,\ux \in \B^\ast_y,\,y \in \CP(E).
\end{aligned}
\ee
Use the definition of the Fleming-Viot diffusion operator $Q$ from 
\eqref{flemming-viot-kernel} to obtain the first line of the claim in
(\ref{sk_15c_20}). The second line follows with the help of
\eqref{sk:preparation-second-moment-invariant-measure-1d} (recall $d_0=0$ in
this section).

\medskip\noindent
(b) To show that the relevant configurations (under the limiting laws) are
in $\B^\ast$, we use a restart argument in combination with the ergodic theorem 
for the McKean-Vlasov process. Namely, to study the process at time $Nt+u$ we 
consider the time $Nt+u-t_N$ with $\lim_{N\to\infty} t_N = \infty$ and 
$\lim_{N\to\infty} t_N/N=0$. We know that the density process $Y^{(N)}$ 
at times $Nt+u-t_N$ and $Nt+u$ is the same in the limit $N \to \infty$, say 
equal to $\theta$, and so over the time stretch $t_N$ the process converges 
to the equilibrium $(\nu^{c_0,0,\Lambda_0}_\theta)^{\otimes \N}$. By the law 
of large numbers, this gives the claim. Therefore, all possible limiting dynamics 
allow for an averaging principle with the local equilibrium.
\end{proof}

\subsubsection*{Conclusion of the proof of Proposition~\ref{P.mffsys}}

Recall from \eqref{sk_15c_7} that migration has no effect. Lemma \ref{L.locequ}
shows the effect of the block resampling term on time scale $Nt$ for $N
\to \infty$. Adding both effects together, we have that all weak limit points 
of $\CL [(Y^{(N)}(Nt))_{t \geq 0}]$, $N \in \N$, satisfy
\be{sk_15c_30}
\mbox{the } (\delta_\theta, L_\theta^{0,\dd_1,0})\mbox{-martingale problem with } 
d_1 = \frac{c_0 \lambda_0}{2c_0+\lambda_0}.
\ee

\section{Hierarchical $C^\Lambda$-process}
\label{s.hiermod}

The next step in our construction is to consider finite spatial systems with a 
hierarchical structure of $K$ levels and to study the $k$-block averages with 
$k=0,1,\ldots,K$ on their natural time scales $N^k t$ and $N^k t+u$. This section 
therefore deals with the geographic space 
\be{mcv5}
G=G_{N,K}=\{0,1,\ldots,N-1\}^K, \qquad  N,K\in\N.
\ee
Define the Cannings process on $G_{N,K}$ by restricting $X^{(\Omega_N)}$ 
from Section~\ref{sss.hierarCan} to $B_K(0)$ and putting
\be{mcv6}
c_k, \lambda_k=0, \quad \text{for all }k \geq K. 
\ee
The corresponding process will be denoted by $X^{(N,K)}$ and its generator by 
$L^{(N,K)}$, etc. It is straightforward to include also a block resampling at 
rate $N^{-2K}$ with resampling measure $\Lambda_K$ (compare Corollary \ref{Cmkvbs}). 

In this section, our principal goal is to understand how we move up $0 \leq k 
\leq K$ levels when starting from level 0. However, in order to also understand 
a system with $k$ levels starting from level, say, $L$ and moving up to level 
$L+k$, we will add a Fleming-Viot term to the generator of $X^{(N)}$, i.e., we 
consider the case $d_0>0$. We do not need to add Fleming-Viot terms acting on 
higher blocks. As we saw in Lemma~\ref{L.locequ}, a resampling term can result, on a higher time 
scale and in the limit as $N \to \infty$, in a Fleming-Viot term. For instance, if 
we choose $d_0=0$ in the beginning, then we obtain $d_1= \frac{c_0 \lambda_0}{2c_0
+\lambda_0} > 0$ on time scale $Nt$ for the $1$-block average (recall \eqref{sk_15c_30}).

We look at the block averages on space scales $N^k$ and time scales $N^kt$ with 
$k=1,\ldots,K$. In Section \ref{ss.K=2}, we will focus on the case $K=2$, where 
most of the difficulties for general $K$ are already present. Many features from
Section~\ref{ss.mffinite} reappear here, but we have to be aware that level-one averages are
forming only asymptotically a mean-field system of the type we had in Section~\ref{s.mflambda}
and we have to prove that we can in fact ignore this perturbation. For $K>2$, lower 
order perturbations arise, which we will discuss only briefly in Section~\ref{ss.K>3}
because they can be treated similarly as in \cite{DGV95}. In Section~\ref{s.proof1stth},
we will take the limit $K\to\infty$ and show how this approximates the model 
with $G=\Omega_N$ on all the time scales we are interested in for our main theorem.

\subsection{Two-level systems}
\label{ss.K=2}

The geographic space is $G_{N,2}=\{0,1,\ldots,N-1\}^2$, we pick $d_0,c_0,c_1, 
\lambda_0,\lambda_1>0$ and put $c_k,\lambda_k$ to zero for $k \geq 2$. We 
will prove the following: (1) On time scales $t$ and $Nt$ we obtain the same 
limiting objects as described in Section \ref{s.mflambda}, but with an additional 
Fleming-Viot term ($d_0>0$) and with block resampling via $\Lambda_1$; (2) For 1-block averages (each belonging to an address 
$\eta \in \{0,1,\cdots,N-1\}$) we introduce the notation
\be{add21} 
Y^{(N)}_\eta(t) = N^{-1}\sum_{\xi \in G_{N,1}} X_{\xi,\eta}^{(N)}(t).
\ee
Next, we consider the {\it total average} 
\be{add20}
Z^{(N)}(t) = N^{-2}\sum_{\zeta \in G_{N,2}} X_\zeta^{(N)}(t).
\ee
We get a similar structure to the one in Section~\ref{s.mflambda}. 
Namely, we can replace the system $(Y^{(N)},Z^{(N)})$ for $N \to \infty$ by a 
system of the type in Section~\ref{s.mflambda}, where the role of components 
on time scale $t$ is taken over by $1$-block averages on time scale $Nt$ 
and the role of the total ($1$-block) average on time scale $Nt$ taken over 
by the $2$-block average on time scale $N^2t$. Once again, we only focus on the
new features arising in our model. The general scheme of the proof for the two-level 
system can be found in \cite[Section 5(a), pp.~2328--2337]{DGV95}. The calculations 
in Sections~\ref{sss.tsO}--\ref{sss.totalN2t} correspond to Steps 4--5 in 
\cite[Section 5(a)]{DGV95}, with the focus now shifted from the characteristics of 
diffusions to the {\em full} generator because we are dealing with jump processes.

\begin{proposition}
\label{P.2lresc}
{\bf [Two-level rescaling]}\\
Under the assumptions made above,
\be{ak15}
\CL[(X^{(N)}_\zeta(t))_{t \geq 0}] 
\mathop{\Longrightarrow}_{N \to \infty}
\CL[(Z^{c_0, d_0, \Lambda_0}_\theta (t))_{t \geq 0}]
\qquad \forall\, \zeta \in G_{N,2},
\ee
and
\be{ak16}
\CL[(Y^{(N)}_\xi (Nt))_{t \geq 0}] 
\mathop{\Longrightarrow}_{N \to \infty}
\CL[(Z^{c_1, d_1, \Lambda_1}_\theta (t))_{t \geq 0}] \quad \mbox{ with } \quad 
d_1 = \frac{c_0 (\lambda_0 + 2d_0)}{2 c_0 + \lambda_0 + 2 d_0}, \quad \xi \in G_{N,1}
\ee
and
\be{ak17}
\CL[(Z^{(N)} (N^2 t))_{t \geq 0}] 
\mathop{\Longrightarrow}_{N \to \infty}
\CL[(Z^{0,d_2,0}_\theta (t))_{t \geq 0}]
\quad \mbox{ with } d_2 = \frac{c_1 (\lambda_1 + 2d_1)}{2 c_1 + \lambda_1 + 2 d_1}.
\ee
\end{proposition}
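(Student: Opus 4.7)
\medskip\noindent
\textbf{Proof plan for Proposition~\ref{P.2lresc}.}
The plan is to follow the three-step scheme outlined in Section~\ref{ss.gen3} and the abstract framework of Section~\ref{ss.embedding}: establish tightness of the three rescaled families on path space, identify the limiting generators via an asymptotic analysis of $L^{(N,2)}$ acting on test functions of the form \eqref{one-level-test-function}, and conclude convergence by the well-posedness of the limiting McKean-Vlasov martingale problems (Proposition~\ref{prop:McKean-Vlasov-well-posedness}). Throughout, tightness follows from the uniform bound $\sup_N \|L^{(N,2)[k]} F\|_\infty < \infty$ for $F \in \CD$ (cf.~\eqref{ag51}), which is obtained by the same generator estimates as in Sections~\ref{sss.tightN} and \ref{sss.1blockres-part-one}, together with compact containment on $\CP(E)^{G_{N,2}}$; the new level-$1$ resampling contributes at rate $O(N^{-1})$ per $1$-block and is harmless.

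For the single-component statement \eqref{ak15}, the argument of Proposition~\ref{P.mkvl} carries over with two modifications. First, the Fleming-Viot term with volatility $d_0 > 0$ is simply an additional summand in $L^\Lambda$ that passes unchanged to the limit (it contributes $L^{d_0}_\eta$ as in \eqref{ak:multi-level-resampling-global}). Second, the level-$1$ migration acts on a single component at rate $c_1/N$ with drift toward $Y^{(N)}_\xi$, while level-$k$ resampling with $k \geq 1$ contributes $O(N^{-2})$ to the generator, hence is asymptotically negligible on time scale $t$. Since $Y^{(N)}_\xi(\cdot)$ evolves on time scale $Nt$ and is therefore essentially constant $=\theta$ on time scale $t$ (cf.~\eqref{ag60}), the effective drift at level $1$ collapses into an addition to the $c_0(\theta-x_\xi)$ drift. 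The asymptotic factorisation of mixed moments via the dual (Section~\ref{sss.asyind}) extends verbatim, since in time $O(1)$ at most $O(N^{-1})$ coalescence events between lineages starting at distance $\geq 1$ occur.

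For the $1$-block average statement \eqref{ak16}, I would repeat the mean-field finite-system analysis of Section~\ref{ss.mffinite}. The level-$0$ migration term cancels on block averages exactly as in \eqref{sk_15c_7}. The level-$0$ resampling, accelerated by $N$, is analysed by the moment expansion of Lemma~\ref{L.genres} adapted to allow $d_0 > 0$: the leading contribution comes from pairs $\xi_{m_1}=\xi_{m_2}$, and after averaging over the single-colony McKean-Vlasov local equilibrium $\nu^{c_0,d_0,\Lambda_0}_y$ (whose existence of limits via restart arguments is ensured by Proposition~\ref{P.ergo} and the continuity in Section~\ref{sss.conttheta}), Proposition~\ref{prop:2nd} delivers exactly the Fleming-Viot coefficient
\[
d_1 \;=\; \tfrac12\,\E_{\nu^{c_0,d_0,\Lambda_0}_y}\bigl[\,\lambda_0 + 2d_0\,\bigr]\cdot \tfrac{2c_0}{2c_0 + \lambda_0 + 2d_0} \;=\; \tfrac{c_0(\lambda_0+2d_0)}{2c_0 + \lambda_0 + 2d_0}.
\]
The level-$1$ resampling at rate $N^{-2}$ per colony yields total block rate $N^{-1}$; accelerated by $N$ it becomes a rate-$1$ Cannings move acting on $Y^{(N)}_\xi$ with measure $\Lambda_1$ (since reshuffling has already equalised the block to its mean before resampling, cf.~\eqref{ak:resampling-mapping-global}). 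The level-$1$ migration, accelerated by $N$, becomes a rate-$c_1$ drift of $Y^{(N)}_\xi$ toward $Z^{(N)}$, which on time scale $Nt$ is essentially $\theta$. Adding these three contributions gives the generator $L^{c_1,d_1,\Lambda_1}_\theta$.

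For \eqref{ak17}, I would iterate the averaging once more. Migration again cancels on the total average. The only surviving resampling term is the level-$1$ block resampling, accelerated by $N^2$ to rate $N$ per $1$-block, and the level-$0$ resampling, accelerated by $N^2$ to rate $N$ per colony. A two-stage local-equilibrium argument applies: on time scale $N^2 t$ the single colonies are in equilibrium $\nu^{c_0,d_0,\Lambda_0}_{y}$ governed by the $1$-block average $y$, and the $1$-block averages are in equilibrium $\nu^{c_1,d_1,\Lambda_1}_{z}$ governed by the total average $z$. Applying Lemma~\ref{L.genres} at level $1$ and then Proposition~\ref{prop:2nd} at this second level yields the Fleming-Viot coefficient $d_2 = c_1(\lambda_1+2d_1)/(2c_1+\lambda_1+2d_1)$. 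The \emph{main obstacle} is justifying this \emph{nested} averaging rigorously: one must ensure that the separation of time scales $1 \ll N \ll N^2$ allows successive equilibration and that the slow macroscopic variables can be treated as frozen when computing inner expectations. This is handled as in \cite[Section~5(a)]{DGV95} via a restart argument on an auxiliary time scale $t_N$ with $1 \ll t_N \ll N$, combined with the uniform-in-initial-state convergence in Proposition~\ref{P.ergo} and the continuity of $\theta\mapsto \nu^{c,d,\Lambda}_\theta$ from Section~\ref{sss.conttheta}; the latter now crucially requires the dust-free condition \eqref{ag2}, which is in force for $\Lambda_0$. With these ingredients, the identification \eqref{ak4b} of the compensator terms closes the argument via uniqueness of the limiting martingale problem.
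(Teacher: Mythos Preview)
Your proposal is essentially correct and matches the paper's approach for \eqref{ak15} and \eqref{ak16}: the generator decomposition, the local-equilibrium averaging via Proposition~\ref{prop:2nd}, and the identification of $d_1$ are exactly as in Sections~\ref{sss.tsO}--\ref{sss.1block2l}. One minor slip: the level-$1$ resampling contributes $O(N^{-1})$, not $O(N^{-2})$, to a single component's generator on time scale $t$ (cf.\ \eqref{sk_15c_27}), though this is still negligible.

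For \eqref{ak17} your route differs from the paper's. You propose the direct two-stage nested averaging (colonies equilibrate under the $1$-block average, $1$-blocks equilibrate under the total average). The paper writes out exactly this computation as \emph{heuristics} in \eqref{sk_15e_12}--\eqref{sk_15e_15}, but then explicitly says ``it is technically easier to follow a different route'' and instead uses a \emph{replacement argument}: it introduces an auxiliary mean-field system $(\wt Y^{(N)}_i)_{i=1,\dots,N}$ whose components already carry the limiting level-$1$ generator (Fleming-Viot with volatility $d_1$, $\Lambda_1$-resampling, $c_1$-drift), applies the one-level result of Section~\ref{s.mflambda} to obtain convergence of $\wt Z^{(N)}$, and then shows that the compensators of $Z$ and $\wt Z$ agree via \eqref{add32}. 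The advantage of the paper's trick is that it reduces the two-scale problem to a single application of the already-proved one-level scheme plus a compensator comparison, avoiding the need to control two simultaneous equilibrations; your direct route is also valid but requires justifying that the finite-$N$ $1$-block averages (which are not themselves a McKean-Vlasov system) nonetheless equilibrate to $\nu^{c_1,d_1,\Lambda_1}_z$. Both approaches ultimately defer the delicate step to \cite[Section~5(a)]{DGV95}.
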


\noindent
The proof of (\ref{ak15}--\ref{ak17}) is carried out in 
Sections~\ref{sss.tsO}--\ref{sss.totalN2t}.

\subsubsection{The single components on time scale $t$}
\label{sss.tsO}

In this section, our main goal is to argue that the components of $X^{(N)}$ change on 
time scale $t$ as before, and that the same holds on time scales $N t+u$ and $N^2 t+u$ 
with $u \in \R$, provided we use the appropriate value for the $1$-block average as 
the centre of drift.

We first look at the components on time scale $t$. Due to the Markov property
and the  continuity in $\theta$ of the law of the McKean-Vlasov process (cf., Section~\ref{sss.conttheta}), the
behaviour of the  components on time scales $Nt+u$ and $N^2 t+u$ with $u \in \R$
is immediate once we  have the tightness of $Y^{(N)}$ and $Z^{(N)}$ on these
scales.   Again, our convergence results are obtained by: (1) establishing {\em
tightness} in path space; (2) verifying convergence of the finite-dimensional 
distributions by means of establishing asymptotic independence and the {\em generator
calculation} for the martingale problem. Since the latter is key also for the
tightness arguments (recall \eqref{ag51}), we give the analysis of the generator terms first. In
fact, the rest of the argument is the same as in Section~\ref{ss.singcol}.

\myparagraph{Migration part.}
Consider the migration operator in \eqref{ak:multi-level-migration} with \eqref{32b} 
applied to functions $F \in \CF$, the algebra of functions in 
\eqref{ak:multi-level-test-functions}. The migration operator can be rewritten as 
(recall that the upper index $2$ in $L^{(N,2)}$ indicates that we consider $K=2$ levels)
\be{sk_15c_22}
\begin{aligned}
(L^{(N,2)}_{\mathrm{mig}} F)(x)
&= \sum_{\xi,\zeta \in G_{N,2}} a^{(N)}_{\xi,\zeta} \int_E
\left(x_{\zeta}-x_{\xi}\right)(\dd a)\, 
\frac{\partial F(x)}{\partial x_\xi}[\delta_a] \\
&= \sum_{\xi,\zeta \in G_{N,2}} \sum_{ d(\xi,\zeta) \leq k \leq 2}
c_{k-1} N^{1-2k} \int_E \left(x_{\zeta}-x_{\xi}\right)(\dd a)\,
\frac{\partial F(x)}{\partial x_\xi}[\delta_a] \\
&= \sum_{\xi \in G_{N,2}} \sum_{k \leq 2} c_{k-1} N^{1-2k}
\sum_{\zeta \in B_k(\xi)} \int_E
\left(x_{\zeta}-x_{\xi} \right)(\dd a)\,\frac{\partial F(x)}{\partial x_\xi}[\delta_a] \\
&= \sum_{\xi \in G_{N,2}} \sum_{k \leq 2} c_{k-1} N^{1-k} \int_E
\left(y_{\xi,k}-x_{\xi}\right)(\dd a)\,\frac{\partial F(x)}{\partial x_\xi}[\delta_a],
\end{aligned}
\ee
where we use \eqref{def-blocks} in the last line. Thus, for $F$ as in 
\eqref{ak:multi-level-test-functions}, we obtain
\be{sk_15c_23}
(L^{(N,2)}_{\mathrm{mig}} F)(x) = \sum_{\xi \in G_{N,2}} c_0
\int_E \left(y_{\xi,1}-x_{\xi}\right)(\dd a)\,
\frac{\partial F(x)}{\partial x_\xi}[\delta_a] + E^{(N)},
\ee
where
\be{sk_15c_24}
|E^{(N)}| \leq N^{-1} c_1 C_F = O\!\left( N^{-1} \right)
\ee
with $C_F$ a generic constant depending on the choice of $F$ only. Here we 
use that, by the definition of $F$ in \eqref{ak:multi-level-test-functions}, the sum 
over $\xi \in G_{N,2}$ is a sum over finitely many coordinates only, with the number 
depending on $F$ only.

\myparagraph{Resampling part.}
Recall \eqref{ak:multi-level-test-functions}. For $F \in \mathcal{F}$, consider the resampling 
operator $(L_{\mathrm{res}}^{(N,2)} F)(x)$ in \eqref{ak:multi-level-resampling-global}--\eqref{ak:resampling-mapping-global}. We have
\be{sk_15c_26}
(L^{(N,2)}_{\mathrm{res}} F)(x) =
\sum_{\xi \in G_{N,2}} \int_{[0,1]} \Lambda^*_0 (\dd r)
\int_E x_{\xi}(\dd a)
\left[ F\left( \Phi_{r,a,B_0(\xi)}(x) \right) - F(x) \right]
+ E^{(N)}
\ee
with
\be{sk_15c_27}
|E^{(N)}| \leq N^{-2} \int_{[0,1]} \Lambda^*_1 (\dd r) C_{F} r^2 N 
= C_{F} N^{-1} \lambda_1 = O\!\left( N^{-1} \right).
\ee
Here we use \eqref{ak:resampling-mapping-global} in the first inequality, together 
with the fact that $F(\Phi_{r,a,B_1(\xi)}(x))-F(x)$ is non-zero for at most $C_F N$ 
different values of $\xi \in G_{N,2}$.

\myparagraph{Additional Fleming-Viot part.}
Recall that in this section we consider the case $d_0>0$, i.e., we add the 
Fleming-Viot generator
\be{sk_15c_28}
(L_{\mathrm{FV}}^{(N,2)} F)(x)
= d_0 \sum_{\xi \in G_{N,2}} \int_E \int_E Q_{x_\xi}(\dd u,\dd v)\,
\frac{\partial^2 F(x)}{\partial x_\xi^2}[\delta_u,\delta_v]
\ee
with $Q_{x_\xi}$ as in \eqref{flemming-viot-kernel}.
Contrary to the migration and the resampling operator, the Fleming-Viot operator does 
not act on higher block levels.

\myparagraph{The resulting generator.}
Combining the migration parts \eqref{sk_15c_23} and \eqref{sk_15c_24}, the resampling 
parts \eqref{sk_15c_26} and \eqref{sk_15c_27}, and the Fleming-Viot part \eqref{sk_15c_28}, 
we obtain
\be{sk_15c_29}
\begin{aligned}
(L^{(N,2)} F)(x)
=& \sum_{\xi \in G_{N,2}} c_0
\int_E \left(y_{\xi,1}-x_{\xi}\right)(\dd a)\,
\frac{\partial F(x)}{\partial x_\xi}[\delta_a] \\
&+ \sum_{\xi \in G_{N,2}} \int_{[0,1]} \Lambda^*_0 (\dd r)
\int_E x_{\xi}(\dd a)
\left[ F\left( \Phi_{r,a,B_0(\xi)}(x) \right) - F(x) \right] \\
&+ d_0 \sum_{\xi \in G_{N,2}} \int_E \int_E Q_{x_\xi}(\dd u,\dd v)\,
\frac{\partial^2 F(x)}{\partial x_\xi^2}[\delta_u,\delta_v] 
+ O\!\left( N^{-1} \right),
\end{aligned}
\ee
where $O\!\left( N^{-1} \right)$ is uniform in $x$.

\myparagraph{Convergence to McKean-Vlasov process.}
We can use \eqref{sk_15c_29} to argue that
\be{add22}
\|L^{(N,2)} F - L^{c_0,d_0,\Lambda_0}_{y_{\xi,1}} F\|_\infty \leq C_F N^{-1},
\quad \|L^{c_0,d_0,\Lambda_0}_{y_{\xi,1}} F\| \leq C(F), 
\quad n \in \N, \, F\in \mathcal{F}
\ee
with $\mathcal{F}$ as in \eqref{ak:multi-level-test-functions}.
Next, following again the line of argument in Section~\ref{ss.embedding}, we see 
that $\CL[X^{(N)}]$ is tight in path space and, following the argument as in 
Section~\ref{ss.singcol}, we obtain that $X^{(N)}$ converges as a process to 
the McKean-Vlasov limit, which is an i.i.d.\ collection of single components 
indexed by $\N_0$ with generator
\be{sk_15c_31}
\begin{aligned}
(L_{\theta}^{c_0,d_0,\Lambda_0} G)\!\left(x_{\xi}\right)
=& c_0 \int_E \left(\theta-x_{\xi} \right)\!(\dd a)\,
\frac{\partial G(x_\xi)}{\partial x_\xi}[\delta_a] \\
&+ \int_{[0,1]} \Lambda^*_0 (\dd r) \int_E x_{\xi}(\dd a)\,
\big[G\big((1-r) x_{\xi} + r \delta_a \big)
- G\!\left(x_{\xi}\right)\big] \\
&+ d_0 \int_E \int_E Q_{x_\xi}(\dd u,\dd v)\,
\frac{\partial^2 G(x)}{\partial x_\xi^2}[\delta_u,\delta_v],
\end{aligned}
\ee
where $\theta \in \CP(E)$ is the initial mean measure. This completes the 
proof of \eqref{ak15}.

\subsubsection{The 1-block averages on time scale $Nt$}
\label{sss.1block2l}

Again, we need to prove: (1) uniform boundedness (in $N$) of the generator in the 
supremum norm for test-functions in $\mathcal{F}$ to get tightness in path 
space of $(Y^{(N)}_\xi (Nt))_{t \geq 0}$ (cf. \eqref{ag51}); (2) convergence of finite-dimensional 
distributions via asymptotic independence and generator convergence. As we saw 
in Section~\ref{s.mflambda}, the latter is also the key to tightness. Therefore, we 
proceed by first calculating the generator of $1$-block averages on time scale $Nt$ 
and then using this generator to show convergence of the process. At that point we 
need that the total average over the full space (cf. \eqref{add20}) remains $\theta$
on time scale $Nt$, in the sense of a constant 
path on time scale $Nt$. The latter property will be proved in Section~\ref{sss.totalN2t}.

\myparagraph{Basic generator formula.}
We proceed as in Section~\ref{ss.mffinite}. Since $G=G_{N,2}$, the $1$-block 
averages are now indexed too. We use the following notation for the indexing of 
$1$-block averages. Recall the notation $y_{\zeta,1}=N^{-1} \sum_{\xi \in B_1(\zeta)} 
x_\xi$ from \eqref{def-blocks}, which is the $1$-block around $\zeta$. This $1$-block 
coincides with the $1$-block around $\xi$ if and only if $d(\zeta,\xi) \leq 1$. To endow 
every $1$-block with a unique label, we proceed as follows. Let $\phi$ be the shift-operator 
\be{add22b}
\phi\colon\,G_{N,K} \to G_{N,K-1}, (\phi \xi)_i = \xi_{i+1}, 
\qquad 0 \leq i \leq K-1,\, K \in \N.
\ee
We consider the evolution in time of the $1$-block averages indexed block-wise, i.e.,
\be{sk_15c_32}
y_{\eta}^{[1]} \equiv N^{-1} \sum_{\xi \in G_{N,2}, \phi \xi = \eta} x_{\xi},
\ee
where we suppress the dependence of $y_\eta^{[1]}$ on $N$. Note in particular 
that
\be{sk_15c_33}
y_{\xi,1} = y_\eta^{[1]} \text{ for all } \xi \text{ such that } \phi \xi = \eta.
\ee
We will often drop the superscript $[1]$ to lighten the notation.

This time, we consider functions $F \in \CF$ (see
\eqref{ak:multi-level-test-functions}) applied to $y^{[1]} \equiv y^{[1]}(x)$,
where $y^{[1]} = ( y^{[1]}_\eta )_{\eta \in G_{N,1}}$. Recall the ${}^{[k]}$-notation for
the rescaled generators from Section~\ref{sss.1blockres-part-one}. By explicit
calculation of the different terms below, we will obtain the following
expression (recall $\Phi_{r,a,\eta}$ from \eqref{ak:resampling-mapping-global}
and $Q_{x_\xi}$ from \eqref{flemming-viot-kernel}):
\be{sk_15c_52}
\begin{aligned}
& (L^{(N,2)[1]} F)(y) = \left(L^{(N,2)[1]}_{\mathrm{mig}} 
+ L^{(N,2)[1]}_{\mathrm{res},0}
+ L^{(N,2)[1]}_{\mathrm{res},1} + L^{(N,2)[1]}_{\mathrm{FV}}\right) (F)(y)\\
&= \sum_{\eta \in G_{N,1}} c_1 \int_E 
\left( y_{\phi \eta}^{[1]} - y_{\eta} \right)\!(\dd a)
\frac{\partial F(y)}{\partial y_\eta}[\delta_a]\\
&\quad + \sum_{m=1}^q \frac{1}{N} 
\sum_{ \xi\colon\,\phi \xi = \eta^{(m)} } 
\int_{[0,1]} \Lambda^*_0 (\dd r) \int_E x_{\xi}(\dd a) 
\frac{1}{2} \frac{\partial^2 F(y)}{\partial y_{{\eta}^{(m)}}^2}[r(- x_{\xi} + \delta_a),r(- x_{\xi} + \delta_a)]\\
&\quad + \sum_{\eta \in G_{N,1}} \int_{[0,1]} \Lambda^*_1 (\dd r)
\int_E y_{\eta}(\dd a) \left[ F\left( \Phi_{r,a,\eta}(y) \right) - F(y) \right]\\
&\quad + d_0 \sum_{\eta \in G_{N,1}} \frac{1}{N} 
\sum_{\xi\colon\,\phi \xi = \eta} \int_E \int_E Q_{x_\xi}(\dd u,\dd v)\,
\frac{\partial^2 F(y)}{\partial y_\eta^2}[\delta_u, \delta_v] 
+ O\!\left( N^{-1} \right).
\end{aligned}
\ee
Here, we assumed that $F$ can be written as follows: $F(y_t) = F(y_t^{[1]}) = \langle \varphi,\bigotimes_{l=1}^q y_{\eta^{(l)}}^{\otimes n_l}
\rangle$ with $y=y^{[1]} = (y^{[1]}_\eta )_{\eta \in G_{N,1}}$, $\eta^{(l)} \in G_{N,1}$, 
$q \in \{1,\ldots,N\}$ and $n_l \in \N, 1 \leq l \leq q$. We give more detail in \eqref{sk_15c_38} below.

\myparagraph{Convergence to McKean-Vlasov process.}
We first argue how to conclude the argument, and then further below we carry out
the necessary generator calculations.

We have to argue first that the $N$ different $1$-blocks satisfy the propagation of 
chaos property (recall \eqref{ag44}, where we had this for components). The proof 
again uses duality, namely, dual particles from different $1$-blocks need a time of 
order $N^2$ to meet and hence do not meet on time scale $Nt$. We do not repeat the 
details here. 

Once we have the propagation of chaos property, it suffices to consider {\em single 
blocks}, which we do next. We have to verify tightness in path space and convergence 
of the finite-dimensional distributions. As we saw before, this reduces to showing that
the action of the generators is uniformly bounded in $N$ in the sup-norm on $\CF$, 
so that we have convergence of the generator on $\CF$ by the same tightness argument
as used in Section~\ref{sss.1blockop}, but now based on \eqref{sk_15c_52}. Consider
the resampling and Fleming-Viot parts of the generator in \eqref{sk_15c_52} separately.

Reason as in the proof of Lemma~\ref{L.locequ} to see that (recall the
definition of $\nu_{y_\eta}^{c_0,d_0,\Lambda_0}$ from \eqref{cond-invariant})
\be{sk_15c_53}
\begin{aligned}
& \lim_{N \to \infty} (L_{\mathrm{res},0}^{(N,2)[1]} F)(y)\\
& = \lim_{N \rightarrow \infty} \sum_{m=1}^q \frac{1}{N} 
\sum_{ \xi\colon\, \phi \xi = \eta^{(m)} } \int_{[0,1]} 
\Lambda^*_0 (\dd r) \int_E x_{\xi}(\dd a) \frac{1}{2} 
\frac{\partial^2 F(y)}{\partial y_{{\eta}^{(m)}}^2}[r(- x_{\xi} + \delta_a),r(- x_{\xi} + \delta_a)]\\
&= \frac{\lambda_0}{2} \sum_{\eta \in \N_0} \int_{\CP(E)} 
\nu_{y_\eta}^{c_0,d_0,\Lambda_0}(\dd x) \int_E \int_E Q_x(\dd u,\dd v)\,
\frac{\partial^2 F(y)}{\partial y_\eta^2}[\delta_u, \delta_v] \\
&= \frac{c_0 \lambda_0}{2 c_0 + \lambda_0 + 2d_0} \sum_{\eta \in \N_0} 
\int_E \int_E Q_{y_\eta}(\dd u,\dd v)\,
\frac{\partial^2 F(y)}{\partial y_\eta^2}[\delta_u, \delta_v],
\end{aligned}
\ee
where by \eqref{sk:preparation-second-moment-invariant-measure-1d} the 
second assertion follows from the first. Recall \eqref{sk_15c_28}. Similarly, 
we have
\be{sk_15c_54}
\lim_{N \rightarrow \infty} (L_{\mathrm{FV}}^{(N,2)[1]} F)(y)
= d_0 \sum_{\eta \in \N_0} \int_{\CP(E)} \nu_{y_{\eta}}^{c_0,d_0,\Lambda_0}(\dd x) 
\int_E \int_E Q_x(\dd u,\dd v)\,
\frac{\partial^2 F(y)}{\partial y_\eta^2}[\delta_u, \delta_v].
\ee
Using \eqref{sk:preparation-second-moment-invariant-measure-1d} once more, 
we get
\be{sk_15c_55}
\mbox{r.h.s.\ of \eqref{sk_15c_54}}
= \frac{2 c_0 d_0}{2 c_0 + \lambda_0 + 2 d_0} \sum_{\eta \in \N_0} 
\int_E \int_E Q_{y_{\eta}}(\dd u,\dd v)\,
\frac{\partial^2 F(y)}{\partial y_\eta^2}[\delta_u, \delta_v].
\ee

Combine \eqref{sk_15c_53} with \eqref{sk_15c_55} and argue as in 
Section~\ref{sss.single-mckean-vlasov}, to see that each single component of the 
$1$-block averages $y=y^{[1]}=( y^{[1]}_\eta )_{\eta \in G_{N,1}}$ converges and 
the limiting coordinate process has generator
\be{sk_15c_56}
\begin{aligned}
(L_{\theta}^{c_1,d_1,\Lambda_1} G)\!\left(y_{\eta}\right)
&= c_1 \int_E \left(\theta-y_{\eta} \right)\!(\dd a)\,
\frac{\partial G(y_\eta)}{\partial y_\eta}[\delta_a]\\
&\quad + d_1 \int_E \int_E Q_{y_\eta}(\dd u,\dd v)\,
\frac{\partial^2 G(y)}{\partial y_\eta^2}[\delta_u,\delta_v]\\
&\quad + \int_{[0,1]} \Lambda^*_1 (\dd r) \int_E y_{\eta}(\dd a)\,
\big[G\big((1-r) y_{\eta} + r \delta_a \big)
- G\!\left(y_{\eta}\right)\big],
\end{aligned}
\ee
for test-functions $G$ of the form \eqref{one-level-test-function}.
Note that $\theta \in \CP(E)$ is the initial mean measure of a component and $d_1 = 
\frac{c_0 (\lambda_0 + 2d_0)}{2 c_0 + \lambda_0 + 2 d_0}$. At this point we use that 
the average over the complete population remains the path that stands still at $\theta$
on time scale $Nt$.
  
\myparagraph{Generator calculation: proof of (\ref{sk_15c_52}).}
We next verify the expression given in \eqref{sk_15c_52}. We calculate separately the 
action of the various terms in the generator on the function $F$. In what follows a
change to time scale $N^kt$ is denoted by an additional superscript $[k]$.

\myparagraph{Migration part.} Recall $(L_{\mathrm{mig}}^{(N,2)} F)(x)$ from
\eqref{sk_15c_22} and that the upper index $2$ in $L^{(N,2)}$ indicates that we
consider $K=2$ levels. Let $F$ be as in \eqref{ak:multi-level-test-functions}.
Denote $\phi^k \equiv \underbrace{\phi \circ \phi \circ \ldots \circ \phi}_\text{$k$
times}$. Proceeding along the lines of (\ref{sk_15c_5}--\ref{sk_15c_7}), we get
\be{sk_15c_34}
\begin{aligned}
(L_{\mathrm{mig}}^{(N,2)} F)(y)
&= \sum_{\xi \in G_{N,2}} \sum_{k \leq 2} c_{k-1} N^{1-k}
\int_E \left( y_{\xi,k} - x_{\xi} \right)\!(\dd a)
\frac{\partial \!\left( F \circ y \right)\!(x) }{\partial x_\xi}[\delta_a] \\
&= \sum_{\xi \in G_{N,2}} \sum_{k \leq 2} c_{k-1} N^{1-k}
\int_E \left( y_{\phi^k \xi}^{[1]} - x_{\xi} \right)\!(\dd a)
\frac{\partial F(y)}{\partial y_{\phi \xi}}\!\left[ \frac{\delta_a}{N} \right] \\
&= N \sum_{\eta \in G_{N,1}} \sum_{k \leq 2} c_{k-1} N^{1-k}
\int_E \left( y_{\phi^{k-1} \eta}^{[1]} - y_{\eta} \right)\!(\dd a)
\frac{\partial F(y)}{\partial y_\eta}\!\left[ \frac{\delta_a}{N} \right] \\
&= \sum_{\eta \in G_{N,1}} \sum_{k \leq 1} c_k N^{1-k} \int_E
\left( y_{\phi^k \eta}^{[1]} - y_{\eta} \right)\!(\dd a)
\frac{\partial F(y)}{\partial y_\eta}\!\left[ \frac{\delta_a}{N} \right].
\end{aligned}
\ee
Next, for functions $F$ that are linear combinations of functions in 
\eqref{ak:multi-level-test-functions}, we have
\be{sk_15c_35}
N \frac{\partial F(y)}{\partial y_\eta}\!\left[ \frac{\delta_a}{N} \right] 
= \frac{\partial F(y)}{\partial y_\eta}[\delta_a].
\ee
On the time scale $Nt$, we have (recall that the upper index $[1]$ indicates time scale $N^1 t$)
\be{sk_15c_36}
(L_{\mathrm{mig}}^{(N,2)[1]} F)(y)
= \sum_{\eta \in G_{N,1}} c_1
\int_E \left( y_{\phi \eta}^{[1]} - y_{\eta} \right)\!(\dd a)
\frac{\partial F(y)}{\partial y_\eta}[\delta_a].
\ee

\myparagraph{Resampling part.}
The calculations proceed along the same lines as in Section~\ref{sss.1blockres-part-one}. 
Apart from an additional higher-order term, the main extension is that we consider 
$F(y_t) = F(y_t^{[1]}) = \langle \varphi,\bigotimes_{l=1}^q y_{\eta^{(l)}}^{\otimes n_l}
\rangle$ with $y=y^{[1]} = (y^{[1]}_\eta )_{\eta \in G_{N,1}}$, $\eta^{(l)} \in G_{N,1}$, 
$q \in \{1,\ldots,N\}$ and $n_l \in \N, 1 \leq l \leq q$, instead of 
restricting ourselves to test-functions of the form \eqref{sk_15c_10} 
(which corresponds to the case $q=1$). We will now use functions $F$ 
of the form
\be{sk_15c_38}
\begin{aligned}
&F(y) = \int_{E^{n_1 + \ldots + n_q}}
\left( \bigotimes_{l=1}^q y_{\eta^{(l)}}^{\otimes n_l}
\big(\dd u^{(l)}\big) \right) \varphi \big(u^{(1)},\ldots,u^{(q)}\big),
\quad y = (y_{\eta})_{\eta \in G_{N,1}} \in \CP(E)^N, \\
&q \in \{1,\ldots,N\},\,n_l \in \N, \eta^{(l)} \in
G_{N,1},\,l\in\{1,\ldots,q\},\, \\ &\eta^{(l)} \neq \eta^{(l')}
\text{, for all } l \neq l',\,u^{(l)} \in
E^{n_l},\, \varphi \in C_{\mathrm{b}}(E^{n_1+\ldots+n_q},\R).
\end{aligned}
\ee
The only difference with \eqref{ak:multi-level-test-functions} is the restriction of 
the ordering of the entries. This facilitates the notation in the computation below,
but is no loss of generality because the set of functions in \eqref{sk_15c_38} 
generates the same algebra $\CF$. We will now show that 
\be{sk_15c_47}
\begin{aligned}
&(L_{\mathrm{res}}^{(N,2)[1]} F)(y)\\
&= \sum_{m=1}^q \frac{1}{N} \sum_{ \xi\colon\, \phi \xi = \eta^{(m)} } 
\int_{[0,1]} \Lambda^*_0 (\dd r) \int_E x_{\xi}(\dd a) \frac{1}{2} 
\frac{\partial^2 F(y)}{\partial y_{{\eta}^{(m)}}^2}[r(- x_{\xi} + \delta_a),r(- x_{\xi} + \delta_a)] \\
&\quad + \sum_{\eta \in G_{N,1}} \int_{[0,1]} \Lambda^*_1 (\dd r)
\int_E y_{\eta}(\dd a) \left[ F\left( \Phi_{r,a,\eta}(y) \right)
- F(y) \right] + O\!\left( N^{-1} \right)
\end{aligned}
\ee
with $\Phi_{r,a,\eta}$ as in \eqref{ak:resampling-mapping-global}.

Recall the notation in \eqref{sk_15c_38} and set
\be{sk_15c_39}
L = \sum_{l=1}^q n_l.
\ee
Proceeding as in (\ref{sk_15c_10}-\ref{sk_15c_12}), we obtain
\be{sk_15c_40}
(L_{\mathrm{res}}^{(N,2)} F)(y)
= \frac{1}{N^L} \left( \bigotimes_{l=1}^q \bigotimes_{i=1}^{n_l} 
\sum_{\xi_i^l\colon\, \phi \xi_i^l = \eta^{(l)}} \right) L_{\mathrm{res}}\!
\left( F^{\left( \xi_1^1, \ldots, \xi_{n_q}^q \right)} \right)(x)
\ee
with $F^{\left( \xi_1^1, \ldots, \xi_{n_q}^q \right)}$ as in \eqref{sk_15c_11}.
As in Section~\ref{sss.1blockres-part-one}, we distinguish between the different cases for the 
structure of the set $\{\xi^1_1,\cdots,\xi^1_{n_q}\}$ and we obtain, using the definition 
of the resampling operator in \eqref{ak:multi-level-resampling-global}--\eqref{ak:resampling-mapping-global}, 
\be{sk_15c_41}
\begin{aligned}
&(L_{\mathrm{res}}^{(N,2)} F)(y)\\
&= \frac{1}{N^L} \left( \bigotimes_{l=1}^q \bigotimes_{i=1}^{n_l} 
\sum_{\xi_i^l\colon\,\phi \xi_i^l = \eta^{(l)}} \right) \sum_{\xi \in G_{N,2}}
\int_{[0,1]} \Lambda^*_0 (\dd r) \int_E x_\xi(\dd a)\\
&\quad \times
\left[F^{\left( \xi_1^1, \ldots, \xi_{n_q}^q \right)}\!
\left(\Phi_{r,a,B_0(\xi)}(x)\right)
-F^{\left( \xi_1^1, \ldots, \xi_{n_q}^q \right)}(x)\right] \\
&\quad + \frac{1}{N^L} \left( \bigotimes_{l=1}^q \bigotimes_{i=1}^{n_l} 
\sum_{\xi_i^l\colon\, \phi \xi_i^l
= \eta^{(l)}} \right) \sum_{\xi \in G_{N,2}} N^{-2}
\int_{[0,1]} \Lambda^*_1 (\dd r) \int_E y_{\xi,1}(\dd a)\\
&\quad \times
\left[F^{\left( \xi_1^1, \ldots, \xi_{n_q}^q \right)}\!
\left(\Phi_{r,a,B_1(\xi)}(x)\right)
-F^{\left( \xi_1^1, \ldots, \xi_{n_q}^q \right)}(x) \right] \\
&= I_0 + I_1.
\end{aligned}
\ee

For the first term $I_0$ in \eqref{sk_15c_41} we proceed along the lines of 
(\ref{sk_15c_14}--\ref{sk_15e_17}) to conclude that the only non-negligible 
contribution to the sum in $I_0$ comes from terms with $|\{ \xi_i^l,1 \leq l 
\leq q, 1 \leq i \leq n_l\}| = L-1$. It remains to investigate the terms
with $|\{\xi_i^l, 1 \leq l \leq q, 1 \leq i \leq n_l\}| = L-1$. Since $\phi \xi_i^l 
= \eta^{(l)}$, this implies that there exist $1 \leq m \leq q$ and $1 \leq m_1 
< m_2 \leq n_m$ such that $\xi_{m_1}^m=\xi_{m_2}^m$ and all other 
$\xi_i^l$ different. By the same reasoning as in \eqref{sk_15c_14}, we
see that the only non-zero contribution of the sum $\sum_{\xi \in G_{N,2}}$ 
comes from $\xi=\xi_{m_1}^m=\xi_{m_2}^m$. We therefore obtain
\be{sk_15c_42}
\begin{aligned}
I_0 =&  \frac{1}{N^L} \left( \bigotimes_{l=1}^q \bigotimes_{i=1}^{n_l} 
\sum_{\xi_i^l\colon\, \phi \xi_i^l
= \eta^{(l)}} \right) 1_{\left\{ \left| \left\{ \xi_i^l, 1 \leq l \leq q, 
1 \leq i \leq n_l \right\} \right| = L-1 \right\}}
\sum_{m=1}^q \sum_{1 \leq m_1 < m_2 \leq n_m} 
1_{\left\{ \xi_{m_1}^m=\xi_{m_2}^m=\xi \right\}} \\
&\quad \times \int_{[0,1]} \Lambda^*_0 (\dd r) \int_E x_\xi(\dd a)
\left[F^{\left( \xi_1^1, \ldots, \xi_{n_q}^q \right)}\!
\left(\Phi_{r,a,B_0(\xi)}(x)\right)
-F^{\left( \xi_1^1, \ldots, \xi_{n_q}^q \right)}(x)\right] + O\!\left( N^{-2} \right).
\end{aligned}
\ee
Now follow the reasoning from \eqref{sk_15c_15} to \eqref{sk_15c_19}, to get
\be{sk_15c_43}
\begin{aligned}
I_0 &= \frac{1}{N^2} \sum_{m=1}^q \sum_{ \xi\colon\, \phi \xi = \eta^{(m)} } 
\int_{[0,1]} \Lambda^*_0 (\dd r) \int_E x_{\xi}(\dd a) \frac{1}{2}
\frac{\partial^2 F(y)}{\partial y_{{\eta}^{(m)}}^2}[r(- x_{\xi} + \delta_a),r(- x_{\xi} + \delta_a)] \\
&\quad + O\!\left( N^{-2} \right).
\end{aligned}
\ee

For the second term $I_1$ in \eqref{sk_15c_41}, we obtain, by the definition of 
$\Phi_{r,a,B_1(\xi)}(x)$ in \eqref{ak:resampling-mapping-global} and using 
\eqref{sk_15c_33},
\be{sk_15c_44}
\begin{aligned}
I_1 =& \frac{1}{N^L} \left( \bigotimes_{l=1}^q \bigotimes_{i=1}^{n_l} 
\sum_{\xi_i^l\colon\, \phi \xi_i^l = \eta^{(l)}} \right) \sum_{\xi \in G_{N,2}} N^{-2}
\int_{[0,1]} \Lambda^*_1 (\dd r) \int_E y_{\xi,1}(\dd a)\\
&\quad \times
\left[F^{\left( \xi_1^1, \ldots, \xi_{n_q}^q \right)}\!
\left(\Phi_{r,a,B_1(\xi)}(x)\right)
-F^{\left( \xi_1^1, \ldots, \xi_{n_q}^q \right)}(x) \right] \\
=& \frac{1}{N^L} \left( \bigotimes_{l=1}^q \bigotimes_{i=1}^{n_l} 
\sum_{\xi_i^l\colon\, \phi \xi_i^l
= \eta^{(l)}} \right) \sum_{\eta \in G_{N,1}} N^{-1}
\int_{[0,1]} \Lambda^*_1 (\dd r) \int_E y_\eta(\dd a)\\
&\quad \times
\left[F^{\left( \xi_1^1, \ldots, \xi_{n_q}^q \right)}\!
\left(\Phi_{r,a,\eta}^{[1]}(x)\right)
-F^{\left( \xi_1^1, \ldots, \xi_{n_q}^q \right)}(x) \right]
\end{aligned}
\ee
with
\be{sk_15c_45}
\left[ \Phi_{r,a,\eta}^{[1]}(x) \right]_\xi
= \begin{cases}
(1-r) y_{\eta} + r \delta_a, & \phi \xi = \eta, \\
x_{\xi}, & \text{otherwise}.
\end{cases}
\ee
Now observe that the sum $\sum_{\eta \in G_{N,1}}$ in \eqref{sk_15c_44} yields 
non-zero contributions only for $\eta \in \left\{ \eta^{(1)}, \ldots, \eta^{(q)} \right\}$, 
and so we can rewrite $I_1$ as
\be{sk_15c_46}
\begin{aligned}
I_1 =& \frac{1}{N^L} \left( \bigotimes_{l=1}^q \bigotimes_{i=1}^{n_l} 
\sum_{\xi_i^l\colon\, \phi \xi_i^l
= \eta^{(l)}} \right) \sum_{l=1}^q N^{-1}
\int_{[0,1]} \Lambda^*_1 (\dd r) \int_E y_{\eta^{(l)}}(\dd a)\\
& \times \left[ \left\langle \varphi , x_{\xi_1^1} \otimes \cdots
\otimes x_{\xi_{n_{l-1}}^{l-1}} \otimes
\underbrace{\left( (1-r) y_{\eta^{(l)}} 
+ r \delta_a \right)}_{\text{change from position }
\xi_1^l}\right.\right.\\
&\left.\left.\qquad \otimes \cdots \otimes \underbrace{\left( (1-r) y_{\eta^{(l)}}
+ r \delta_a \right)}_{\text{to position } \xi_{n_l}^l} \otimes x_{\xi_1^{l+1}} 
\otimes \cdots \otimes x_{\xi_{n_q}^q} \right\rangle 
- \left\langle \varphi, x_{\xi_1^1} \otimes \cdots \otimes x_{\xi_{n_q}^q}
\right\rangle \right]\\
=& \sum_{l=1}^q N^{-1} \int_{[0,1]} \Lambda^*_1 (\dd r)
\int_E y_{\eta^{(l)}}(\dd a)\\
& \times \left[ \left\langle \varphi , y_{\eta^{(1)}}^{\otimes n_1} 
\otimes \cdots \otimes
y_{\eta^{(l-1)}}^{\otimes n_{l-1}} \otimes \left( (1-r) y_{\eta^{(l)}}
+ r \delta_a \right)^{\otimes n_l} 
\otimes y_{\eta^{(l+1)}}^{\otimes n_{l+1}}\right.\right.\\
&\left.\left.\qquad \otimes \cdots \otimes y_{\eta^{(q)}}^{\otimes n_q} \right\rangle
- \left\langle \varphi,\bigotimes_{l=1}^q y_{\eta^{(l)}}^{\otimes n_l} 
\right\rangle \right]\\
=& \sum_{\eta \in G_{N,1}} N^{-1} \int_{[0,1]} \Lambda^*_1 (\dd r)
\int_E y_{\eta}(\dd a) \left[ F\left( \Phi_{r,a,\eta}(y) \right)
- F(y) \right].
\end{aligned}
\ee
Combining \eqref{sk_15c_41}, \eqref{sk_15c_43} and \eqref{sk_15c_46}, we obtain 
\eqref{sk_15c_47} on time scale $Nt$.

\myparagraph{Additional Fleming-Viot part.}
We proceed as with the migration operator (recall that in the present Section~\ref{s.hiermod}
we added a Fleming-Viot term to the generator, i.e., we 
consider the case $d_0>0$) and write
\be{sk_15c_48}
\begin{aligned}
(L_{\mathrm{FV}}^{(N,2)} F)(y)
&= \left( L_{\mathrm{FV}}^{(N,2)} \!\left( F \circ y \right) \right)\!(x) \\
&= d_0 \sum_{\xi \in G_{N,2}} \int_E \int_E Q_{x_\xi}(\dd u,\dd v)\,
\frac{\partial^2\!\left( F \circ y \right)\!(x)}
{\partial x_\xi^2}[\delta_u,\delta_v],
\end{aligned}
\ee
with $Q_{x_\xi}$ as in \eqref{flemming-viot-kernel} and
where the definition of $y=y^{[1]}$ in \eqref{sk_15c_32} yields
\be{sk_15c_49}
\frac{\partial^2 \left( F \circ y \right)\!(x)}
{\partial x_\xi^2}[\delta_u,\delta_v]
= \frac{\partial^2 F(y)}{\partial y_{\phi \xi}^2}\!\left[ \frac{\delta_u}{N}, \frac{\delta_v}{N} \right].
\ee
Hence, on time scale $Nt$,

\be{sk_15c_50}
\begin{aligned}
(L_{\mathrm{FV}}^{(N,2)[1]} F)(y)
&= d_0 N \sum_{\eta \in G_{N,1}} \sum_{\xi\colon\,\phi \xi = \eta} 
\int_E \int_E Q_{x_\xi}(\dd u,\dd v)\,
\frac{\partial^2 F(y)}{\partial y_\eta^2}\!
\left[ \frac{\delta_u}{N}, \frac{\delta_v}{N} \right] \\
&= d_0 \sum_{\eta \in G_{N,1}} \frac{1}{N} \sum_{\xi\colon\,\phi \xi = \eta} 
\int_E \int_E Q_{x_\xi}(\dd u,\dd v)\,
\frac{\partial^2 F(y)}{\partial y_\eta^2}[\delta_u, \delta_v],
\end{aligned}
\ee
where in the last line we use that, for $F$ a linear combination of the functions 
in \eqref{ak:multi-level-test-functions},
\be{sk_15c_51}
N^2 \frac{\partial^2 F(y)}{\partial y_\eta^2}\!
\left[ \frac{\delta_u}{N}, \frac{\delta_v}{N} \right] 
= \frac{\partial^2 F(y)}{\partial y_\eta^2}[\delta_u, \delta_v].
\ee

\myparagraph{The resulting generator.}
Combining the migration \eqref{sk_15c_36}, resampling \eqref{sk_15c_47} and 
Fleming-Viot \eqref{sk_15c_50} parts for the $1$-block averages on time scale $Nt$, 
we obtain (\ref{sk_15c_52}). This completes the proof of \eqref{ak16}.

\subsubsection{The total average on time scale $N^2t$}
\label{sss.totalN2t}

Denote the total average by (recall $y_\eta^{[1]}$ from \eqref{sk_15c_32})
\be{add4}
z = N^{-1} \sum_{\eta \in G_{N,1}} y_\eta^{[1]} = N^{-2} 
\sum_{\xi \in G_{N,2}} x_\xi.
\ee
(This is a $2$-block average because we are considering the case $K=2$.) 
Recall notation \eqref{add20}.
We must prove: (1) the sequence of laws $\{\CL [(Z^{(N)} (t N^2))_{t \geq 0},\,N \in \N\}$ is {\em tight} 
in path space; (2) the weak limit points of this sequence are solutions of the
martingale problem for $Z^{0,d_2,0}_\theta$ (cf. \eqref{ak17}) by showing 
\eqref{ak4b} (recall Section~\ref{ss.embedding}). 
From the uniqueness of the solution to the martingale problem, we get the claim. 

We now verify these points by calculating the generator.
Recall the ${}^{[k]}$-notation from Section~\ref{sss.1blockres-part-one} for
the rescaled generators.

\myparagraph{Migration part.}
For the total average, the migration operator can be obtained from \eqref{sk_15c_36} 
by writing $z=z(y)$ and using the analogue to \eqref{sk_15c_6}, (cf., \eqref{add22b} 
for the definition of $\phi$)
\be{sk_15e_1}
(L_{\mathrm{mig}}^{(N,2)[1]} F)(z)
= (L_{\mathrm{mig}}^{(N,2)[1]} (F \circ z))(y)
= \sum_{\eta \in G_{N,1}} c_1
\int_E \left( y_{\phi \eta}^{[1]} - y_{\eta} \right)\!(\dd a)
\frac{\partial F(z)}{\partial z}\left[ \frac{\delta_a}{N} \right].
\ee
Using that $z=y_{\phi \eta}^{[1]} = N^{-1} \sum_{\eta \in G_{N,1}} y_\eta^{[1]}$, for all 
$\eta \in G_{N,1}$, we get
\be{sk_15e_2}
(L_{\mathrm{mig}}^{(N,2)[1]} F)(z) = (L_{\mathrm{mig}}^{(N,2)[2]} F)(z) = 0.
\ee

\myparagraph{Resampling part.}
Consider $F(z)= \langle \varphi,z^{\otimes n} \rangle$. Follow the derivation of 
\eqref{sk_15c_12} to obtain
\be{sk_15e_6}
(L_{\mathrm{res}}^{(N,2)} F)(z)
= \frac{1}{N^n} \left( \bigotimes_{i=1}^n \sum_{\eta_i \in G_{N,1}} \right) 
L_{\mathrm{res}}^{(N)}\!\left( F^{\left( \eta_1, \ldots, \eta_n \right)} \right)(y)
= I_0' + I_1'
\ee
with $F^{(\eta_1,\ldots, \eta_n)}(y) = \langle \varphi,\bigotimes_{i=1}^n y_{\eta_i}
\rangle$ as in \eqref{sk_15c_11}, where we recall from \eqref{sk_15c_41} that
\be{sk_15e_7}
\begin{aligned}
&(L_{\mathrm{res}}^{(N,2)} F^{(\eta_1,\ldots, \eta_n)})(y)\\
&= \frac{1}{N^n} \left( \bigotimes_{l=1}^n \sum_{\xi_l\colon\,\phi \xi_l
= \eta_l} \right) \sum_{\xi \in G_{N,2}}
\int_{[0,1]} \Lambda^*_0 (\dd r) \int_E x_\xi(\dd a)\\
&\quad \times
\left[F^{\left( \xi_1, \ldots, \xi_n \right)}\!
\left(\Phi_{r,a,B_0(\xi)}(x)\right)
-F^{\left( \xi_1, \ldots, \xi_n \right)}(x)\right]\\
&\quad + \frac{1}{N^n} \left( \bigotimes_{l=1}^n \sum_{\xi_l\colon\,\phi \xi_l
= \eta_l} \right) \sum_{\xi \in G_{N,2}} N^{-2}
\int_{[0,1]} \Lambda^*_1 (\dd r) \int_E y_{\xi,1}(\dd a)\\
&\quad \times
\left[F^{\left( \xi_1, \ldots, \xi_n \right)}\!
\left(\Phi_{r,a,B_1(\xi)}(x)\right)-F^{\left( \xi_1^1, \ldots, \xi_n \right)}(x) \right]\\
&= I_0'' + I_1''
\end{aligned}
\ee
with $\Phi_{r,a,B_k(\xi)}$ as in \eqref{ak:resampling-mapping-global}.

Let us begin with the second term $I_1''$ in \eqref{sk_15e_7}, which corresponds to $I_1$ 
in \eqref{sk_15c_41} and was rewritten in (\ref{sk_15c_44}--\ref{sk_15c_46}) as
\be{sk_15e_8}
I_1'' = \sum_{\eta \in G_{N,1}} N^{-1} \int_{[0,1]} \Lambda^*_1 (\dd r)
\int_E y_{\eta}(\dd a) \left[ F^{(\eta_1,\ldots, \eta_n)}\left( \Phi_{r,a,\eta}(y) \right)
- F^{(\eta_1,\ldots, \eta_n)}(y) \right].
\ee
Combine \eqref{sk_15e_6} and \eqref{sk_15e_8}, change to timescale $Nt$ and compare the 
result to \eqref{sk_15c_13}. We obtain that $I_1'$ on time scale $Nt$ behaves analogously 
to \eqref{sk_15c_13} on time scale $t$. By moving one time scale upwards, we obtain as 
in \eqref{sk_15c_20} (respectively, \eqref{sk_15c_53} with $d_1=\frac{c_0(\lambda_0+2d_0)}
{2c_0+\lambda_0+2d_0}>0$) that
\be{sk_15e_9}
\lim_{N \rightarrow \infty} (I_1')^{[2]} 
= \frac{c_1 \lambda_1}{2 c_1 + \lambda_1 + 2d_1} \int_E \int_E Q_z(\dd u,\dd v)\,
\frac{\partial^2 F(z)}{\partial z^2}[\delta_u, \delta_v].
\ee

The term $I_0'$ can be handled in the same spirit as $I_0$ in \eqref{sk_15c_41}. To obtain 
non-zero contributions in $I_0''$, we need to have $|\{ \xi_l, \phi \xi_l = \eta_l, 1 \leq 
l \leq n \}|<n$ (recall \eqref{sk_15c_14}). This is possible only if $|\eta_1,\ldots,\eta_n|
<n$. Reasoning similarly as in \eqref{sk_15e_17}, we obtain negligible terms if 
$|\{\xi_l, \phi \xi_l = \eta_l, 1 \leq l \leq n \}|<n-1$. Indeed, two sites residing in a common $1$-block
already result in a factor of $O(N^{-2})$ (on time scale $t$): first a common block 
has to be chosen ($|\eta_1,\ldots, \eta_n|=n-1$), which contributes a factor $N^{-2} 
\sum_{\eta \in G_{N,1}}$, and subsequently a common site has to be chosen, which 
contributes a factor $N^{-2} \sum_{\xi: \phi \xi=\eta}$. Any additional choice results in 
terms that vanish for $N \to \infty$ on time scale $N^2t$. Consequently, we  can reason 
as in (\ref{sk_15c_15}--\ref{sk_15c_19}) to obtain on time scale $t$
\be{sk_15e_11}
\begin{aligned}
(I_0')^{[0]} &= \frac{1}{N^2} \sum_{\eta \in G_{N,1}} \frac{1}{N^2} 
\sum_{\xi\colon\,\phi \xi=\eta} \int_{[0,1]} \Lambda^*_0 (\dd r) \\
&\quad \times \int_E x_{\xi}(\dd a) \frac{1}{2}
\frac{\partial^2 F(z)}{\partial z^2}
[r(- x_{\xi} + \delta_a),r(- x_{\xi} + \delta_a)] + O\!\left( N^{-3} \right).
\end{aligned}
\ee

\myparagraph{Additional Fleming-Viot part.}
We proceed as for the migration operator. Recall \eqref{sk_15c_50}, to get
\be{sk_15e_3}
\begin{aligned}
(L_{\mathrm{FV}}^{(N,2)[1]} F)(z)
&= d_0 \sum_{\eta \in G_{N,1}} \frac{1}{N} 
\sum_{\xi\colon\, \phi \xi = \eta} \int_E \int_E Q_{x_\xi}(\dd u,\dd v)\,
\frac{\partial^2 (F \circ z)(y)}{\partial y_\eta^2}[\delta_u, \delta_v].
\end{aligned}
\ee
Now use the analogue to \eqref{sk_15c_49}, to obtain
\be{sk_15e_4}
\begin{aligned}
(L_{\mathrm{FV}}^{(N,2)[1]} F)(z)
&= d_0 \sum_{\eta \in G_{N,1}} \frac{1}{N} 
\sum_{\xi\colon\,\phi \xi = \eta} \int_E \int_E Q_{x_\xi}(\dd u,\dd v)\,
\frac{\partial^2 F(z)}{\partial z^2}\!
\left[ \frac{\delta_u}{N}, \frac{\delta_v}{N} \right].
\end{aligned}
\ee
After changing to time scale $N^2t$, we have
\be{sk_15e_5}
\begin{aligned}
(L_{\mathrm{FV}}^{(N,2)[2]} F)(z)
&= d_0 \frac{1}{N} \sum_{\eta \in G_{N,1}} \frac{1}{N} 
\sum_{\xi\colon\,\phi \xi = \eta} \int_E \int_E Q_{x_\xi}(\dd u,\dd v)\,
\frac{\partial^2 F(z)}{\partial z^2}[\delta_u, \delta_v].
\end{aligned}
\ee

\myparagraph{Tightness.}
We have to bound the generator, i.e., show that $\sup_N \| L^{(N,2)[1]} (F)\| 
< \infty$, in order to apply the tightness criterion, as explained in 
Section~\ref{ss.embedding}. (Recall that the upper index $[1]$ indicates time scale 
$N^1 t$ and that the upper index $2$ indicates that we consider $K=2$ levels.) 
This we read off from \eqref{sk_15e_2}, \eqref{sk_15e_7}, \eqref{sk_15e_8}, 
\eqref{sk_15e_11} and \eqref{sk_15e_5}.

\myparagraph{Convergence to McKean-Vlasov process.}
We have to identify the limiting generator. One approach would be to try and make 
the following heuristics rigorous.

\medskip\noindent
{\em Begin heuristics.}
On time scale $N^2t$, we obtain, by reasoning as in \eqref{sk_15c_53}, 
using \eqref{sk_15e_11}, now on time scale $tN^2$, together with 
\eqref{sk:preparation-second-moment-invariant-measure-1d} in the second and fourth equation,
\be{sk_15e_12}
\begin{aligned}
\lim_{N \rightarrow \infty}(I_0')^{[2]}
&= \frac{\lambda_0}{2} \lim_{N \rightarrow \infty} \frac{1}{N} 
\sum_{\eta \in G_{N,1}} \int_{\CP(E)} 
\nu_{y_\eta}^{c_0,d_0,\Lambda_0}(\dd x) \int_E \int_E Q_x(\dd u,\dd v)\,
\frac{\partial^2 F(z)}{\partial z^2}[\delta_u, \delta_v] \\
&= \frac{c_0 \lambda_0}{2 c_0 + \lambda_0 + 2d_0} \lim_{N \rightarrow \infty} 
\frac{1}{N} \sum_{\eta \in G_{N,1}} \int_E \int_E Q_{y_\eta}(\dd u,\dd v)\,
\frac{\partial^2 F(z)}{\partial z^2}[\delta_u, \delta_v] \\
&= \frac{c_0 \lambda_0}{2 c_0 + \lambda_0 + 2 d_0} \int_{\CP(E)} 
\nu_z^{c_1,d_1,\Lambda_1}(\dd y) \int_E \int_E Q_y(\dd u,\dd v)\,
\frac{\partial^2 F(z)}{\partial z^2}[\delta_u, \delta_v] \\
&= \frac{2 c_1}{2 c_1 + \lambda_1 + 2 d_1} 
\frac{c_0 \lambda_0}{2 c_0 + \lambda_0 + 2 d_0} \int_E \int_E Q_z(\dd u,\dd v)\,
\frac{\partial^2 F(z)}{\partial z^2}[\delta_u, \delta_v].
\end{aligned}
\ee
Combine \eqref{sk_15e_9} with \eqref{sk_15e_12}, to get from \eqref{sk_15e_6}
\be{sk_15e_13}
\begin{aligned}
&\lim_{N \rightarrow \infty} (L_{\mathrm{res}}^{(N,2)[2]} F)(z)\\
&= \frac{2 c_1}{2 c_1 + \lambda_1 + 2 d_1} 
\left( \frac{\lambda_1}{2} + \frac{c_0 \lambda_0}{2 c_0 + \lambda_0 + 2 d_0} \right) 
\int_E \int_E Q_z(\dd u,\dd v)\,
\frac{\partial^2 F(z)}{\partial z^2}[\delta_u, \delta_v].
\end{aligned}
\ee
For the Fleming-Viot part in \eqref{sk_15e_5}, we obtain, by reasoning once more as in \eqref{sk_15c_53}, using \eqref{sk:preparation-second-moment-invariant-measure-1d},
\be{sk_15e_14}
\begin{aligned}
&\lim_{N \rightarrow \infty}(L_{\mathrm{FV}}^{(N,2)[2]} F)(z)\\
&= d_0 \lim_{N \rightarrow \infty} \frac{1}{N} \sum_{\eta \in G_{N,1}} 
\int_{\CP(E)} \nu_{y_\eta}^{c_0,d_0,\Lambda_0}(\dd x) \int_E \int_E Q_x(\dd u,\dd v)\,
\frac{\partial^2 F(z)}{\partial_z^2}[\delta_u, \delta_v] \\
&= \frac{2 c_0 d_0}{2 c_0 + \lambda_0 + 2d_0} \lim_{N \rightarrow \infty} 
\frac{1}{N} \sum_{\eta \in G_{N,1}} \int_E \int_E Q_{y_\eta}(\dd u,\dd v)\,
\frac{\partial^2 F(z)}{\partial_z^2}[\delta_u, \delta_v] \\
&= \frac{2 c_0 d_0}{2 c_0 + \lambda_0 + 2 d_0} \int_{\CP(E)} 
\nu_z^{c_1,d_1,\Lambda_1}(\dd y) \int_E \int_E Q_y(\dd u,\dd v)\,
\frac{\partial^2 F(z)}{\partial_z^2}[\delta_u, \delta_v] \\
&= \frac{2 c_1}{2 c_1 + \lambda_1 + 2 d_1} \frac{2 c_0 d_0}{2 c_0 + \lambda_0 + 2 d_0} 
\int_E \int_E Q_z(\dd u,\dd v)\,
\frac{\partial^2 F(z)}{\partial_z^2}[\delta_u, \delta_v].
\end{aligned}
\ee
Collecting the limiting terms as $N \to \infty$ on time scale $N^2t$ for migration 
\eqref{sk_15e_2}, resampling \eqref{sk_15e_13} and Fleming-Viot \eqref{sk_15e_14},
we obtain
\be{sk_15e_15}
\begin{aligned}
&\lim_{N \rightarrow \infty}(L^{(N,2)[2]} F)(z)\\
&= \frac{2 c_1}{2 c_1 + \lambda_1 + 2 d_1} 
\left( \frac{\lambda_1}{2} 
+ \frac{c_0 \lambda_0 + 2 c_0 d_0}{2 c_0 + \lambda_0 + 2 d_0} \right) 
\int_E \int_E Q_z(\dd u,\dd v)\,
\frac{\partial^2 F(z)}{\partial_z^2}[\delta_u, \delta_v].
\end{aligned}
\ee
In order to obtain the convergence in (\ref{sk_15e_12}--\ref{sk_15e_14}), we would need 
to restrict the set of configurations, argue that the law of the process lives on that
set of configurations, and show that therefore the compensators of the martingale problems 
converge to the compensator of the limit process. However, it is technically easier to 
follow a different route, as we do below. {\em End heuristics.}

\medskip
We want to view the expression for the generator of the total average on time scale 
$tN^2$ with $K=2$ levels, $(L^{(N,2),[2]} F)(z)$, as an average 
over $N$ different $1$-block averages. 
If we replace the $\big(L^{(N,2)[1]}_{\mathrm{res},0} + L^{(N,2)[1]}_{\mathrm{FV}} \big)$-part
of the $1$-block averages (cf.\ \eqref{sk_15c_52}) by a system of $N$ exchangeable Fleming-Viot 
diffusions with resampling constant $d_1$ (for which we have a formula in terms of 
$c_0, d_0$ and $\lambda_0$, cf.\ \eqref{ak16}), which on time scale $Nt$ lead to the generator
\be{add25}
L^{(N,2),[1]}_{\mathrm{mig}} (F)(y)
+ \frac{c_0 (\lambda_0 +2 d_0)}{2c_0+\lambda_0 +2d_0} \intl_E \intl_E Q_{y} (\dd u,\dd v)
\frac{\partial^2 F(y)}{\partial y^2} [\delta_u, \delta_v]
+ (L^{(N,2),[1]}_{\mathrm{res},1} F)(y),
\ee
then we can apply the analysis of Section \ref{s.mflambda} to this new collection of 
processes, denoted by
\be{add29}
\left\{\wt Y^{(N)}_i (tN)\colon i=1,\ldots,N\right\},
\ee
to conclude that  on time scale $tN^2$ the block average 
$\wt Z^{(N)} (tN) =N^{-1} \suml^N_{i=1} \wt Y^N_i(Nt)$ 
satisfies,
\be{add26}
\CL[(\wt Z^{(N)} (tN^2))_{t \geq 0}] \mathop{\Longrightarrow}_{N \to \infty} 
\CL [(\wt Z(t))_{t \geq 0}],
\ee
where $\wt Z$ is a Fleming-Viot diffusion with resampling constant
\be{add27}
\frac{c_1}{2c_1 +\lambda_1 +2 d_1} (\lambda_1 + 2d_1), \mbox{ where } d_1
= \frac{c_0(\lambda_0+2 d_0)}{2c_0+\lambda_0 + 2 d_0}. 
\ee
Hence, we obtain a limit process with a generator acting on $F$ as
\be{sk_15e_16}
\frac{c_1 (\lambda_1 + 2d_1)}{2 c_1 + \lambda_1 + 2 d_1}
 \int_E \int_E Q_z(\dd u,\dd v)\,
\frac{\partial^2 F(z)}{\partial_z^2}[\delta_u, \delta_v].
\ee
Hence, the weak limit points of the laws $\{\CL [(\wt Z^{(N)} (t N^2))_{t \geq 0}],\,N \in \N\}$
satisfy the martingale problem with generator $(L_{\theta}^{0,d_2,0} G)(z)$ with 
$d_2 = \frac{c_1 (\lambda_1 + 2d_1)}{2 c_1 + \lambda_1 + 2 d_1}$. 

Since we know that the martingale problem for the generator $L^{0,d_2,0}$ and for 
the test functions given in \eqref{ak:multi-level-test-functions} is well-posed 
(recall Proposition~\ref{prop:McKean-Vlasov-well-posedness}), we have the claimed 
convergence in (\ref{ak17}) on path space if $Z$ (a weak limit point for the original 
problem) and $\wt Z$ agree. Thus, we have to argue that it is legitimate to
\be{add27b}
\mbox{replace }
\{((Y^{(N)}_i (Nt))_{i=1,\ldots,N})_{t \geq 0}\} 
\mbox{ by }
\{(\wt Y^{(N)}_i (Nt)_{i=1,\ldots,N})_{t \geq 0}\}.
\ee

For that purpose, observe that we know from Section~\ref{s.mflambda} that, for a suitable 
subsequence along which $\CL[(Z^{(N)}(sN^2))_{s \geq 0}]$ converges to $Z(s)$,
\be{add28}
\CL [((Y^{(N)}_i (N^2 s+Nt))_{i=1,\ldots,N})_{t \geq 0}]
\mathop{\Longrightarrow}_{N \to \infty}
\CL[((Y^{(\infty)}_i(s,t))_{i \in \N})_{t \geq 0}],
\ee
where the right-hand side is the McKean-Vlasov process with Fleming-Viot part at rate 
$d_1$, Cannings part $\Lambda_1$, and immigration-emigration at rate $c_1$ from the 
random source $Z(s)$. We need to argue that the latter implies that $Z$ and $\wt Z$ 
agree.

For $F \in C^2_\mathrm{b} (\CP(E),\R)$, define $G_N \in C^2_\mathrm{b} ((\CP(E))^N, \R)$ 
and $H_N \in C^2_\mathrm{b} ((\CP(E))^{N^2}, \R)$ by
\be{add30b}
F(z) = G_N(y) = H_N(x), \quad x \in (\CP(E))^{N^2},
\quad y \in (\CP(E))^N, \quad z \in \CP(E),
\ee
with
\be{add31}
z=\frac{1}{N} \sum_{i \in \{1,\ldots,N\}} y_i, \quad y_i =\frac{1}{N} 
\suml_{j \in \{1,\ldots,N\}} x_{j,i}.
\ee
In order to verify that $Z$ and $\wt Z$ agree, it suffices to show that the compensator 
processes for $\wt Z$ and $Z$ agree for a measure-determining family of functions
$F \in C^2_\mathrm{b} (\CP(E),\R)$, namely,
\be{add32}
\begin{aligned}
&\CL \Big[\Big( \int^{tN^2}_0 \dd s\, 
\Big[\int_{E \times E}  d_1 \sum^N_{i=1} Q_{y_i (s)} (\dd u,\dd v)
\frac{\partial^2 G_N(y (s))}{\partial y_i^2} [\delta_u, \delta_v]
 + L^{(N,2)[1]}_{\mathrm{res},1} G_N (y (s))\Big]\Big)_{t \geq 0}\Big]\\ 
&\quad 
- \CL\Big[ \Big(\int^{t N^2}_0 \dd s\, \Big[L^{(N,2)[1]}_{\mathrm{res},1} G_N (y_j(s)) 
+ \frac{1}{N^2} \sum^N_{i=1} \sum^N_{j=1} 
\left( L^{(N,2)}_{\mathrm{res},0} + L^{(N,2)}_{\mathrm{FV}} \right) H_N (x_{j,i}(s)) \Big]\Big)_{t \geq 0}\Big]\\
&\quad\quad\quad \mathop{\Longrightarrow}_{N \to \infty} \mbox{ Zero measure}.
\end{aligned}
\ee
To that end, first note that the two terms with $L^{(N,2),[1]}_{\mathrm{res},1}$ cancel
each other out. Regarding the remaining terms, after we transform $s$ to $sN^2$, we must
show that for each $s\in [0,t]$ the term in the second line converges weakly to the term 
in the first line (the joint law of the density and the empirical measure converges).
When worked out in detail, this requires a somewhat subtle argument. However, nothing 
is specific to our model: a detailed argument along these lines can be found in \cite{DGV95}, 
pp.\ 2322-2339.

\subsection{Finite-level systems}
\label{ss.K>3}

The next step is to consider general $K \geq 3$ (recall the beginning of 
Section~\ref{s.hiermod}). We can copy the arguments used for $K=2$, and then
argue recursively. Namely, we can view the $(j-1),j,(j+1)$-block averages as a 
{\em two-level system} on time scales $tN^{j-1},N(tN^{j-1}),N^2(tN^{j-1})$. The 
limit as $N\to\infty$ is a two-level system with migration rates $c_{j-1},c_j,c_{j+1}$ 
instead of $c_0,c_1,c_2$, resampling measures $\Lambda_{j-1},\Lambda_j,
\Lambda_{j+1}$ instead of $\Lambda_0,\Lambda_1,\Lambda_2$, and volatility 
$\dd_{j-1}$ instead of  $\dd_0$. If we would have $c_0=c_1 = \cdots = c_{j-2} = 0$ 
and $\lambda_0 = \cdots  = \lambda_{j-2} =0$, then this would be literally the 
case. Hence, the key point is to show that the lower-order perturbation terms 
play no role in the renormalised dynamics after they have played their role in 
determining the coefficients $d_{j-1}, d_j,d_{j+1}$.

The argument has again a tightness part, which is the same as before and which 
we do not discuss, and a finite-dimensional distributions part. Since the solution of 
the martingale problem is uniquely determined by the marginal distributions (see 
\cite[Theorem~4.4.2]{EK86}), this part is best based on duality, which determines 
the transition kernel of the process as follows.

We have to verify that the dual of the $(j+1)$-level system on the time scales 
$N^{j-1}t,N^j t$ behaves like the dual process of a two-level system. This means
that the dual process can be replaced by the system where the locations up to level 
$j-2$ are uniformly distributed and all partition elements originally within that 
distance have coalesced. This can be obtained by showing that the dual system with 
the lower-order terms is instantaneously uniformly distributed in small balls,
and that within that distance coalescence is instantaneous, since we are working 
with times at least $tN^{j-1}$. Therefore, the dynamics as $N \to \infty$ results
{\em effectively} in a coalescent corresponding to a two-level system.

\section{Proof of the hierarchical mean-field scaling limit}
\label{s.proof1stth}

We are finally ready to prove Theorem~\ref{mainth}. Recall 
the $C_N^{\uc,\uL}$-process on $\Omega_N$, denoted $X^{(\Omega_N)}$ 
from Section~\ref{sss.hierarCan} and \eqref{k-block-average}. Also recall 
the discussion on convergence criteria from Section~\ref{ss.embedding}. 
We establish the tightness by checking the bound on the
generator action. Having Section \ref{s.hiermod}, all we need is to show that the
higher-order term action on monomials is bounded in $N$ in the considered time
scale. This is readily checked from the explicit form of the terms. In order to
show convergence of the finite dimensional distribution,  we approximate our
infinite spatial system by finite spatial systems of the type studied in 
Section~\ref{s.hiermod}. As before, we denote the finite system with geographic
space $G_{N,K}$ by $X^{(N,K)}$ and the one with $G=\Omega_N$ by
$X^{(\Omega_N)}$.

\begin{proposition}
\label{P.Kapprox}
{\bf [$K$-level approximation]}\\
For $t \in (0,\infty)$ and $s_N \in (0,\infty)$ with $\lim_{N\to\infty} s_N = \infty$ 
and $\lim_{N\to\infty} s_N/N = 0$, consider the $k$-block averages
$Y^{(\Omega_N)}_{\xi,k}$ and 
$Y^{(N,K)}_{\xi,k}$ on time scale $tN^j + s_N N^k$ for $0 \leq k \leq j<K$. 
Then
\be{ak18}
d_{\mathrm{Prokh}} 
\left(\CL\left[\left(Y^{(\Omega_N)}_{\xi,k} (tN^j + s_N N^k)\right)\right],
\CL\left[\left(Y^{(N,K)}_{\xi,k}(tN^j + s_N N^k)\right]\right)\right)
\mathop{\Longrightarrow}_{N \to \infty} 0,
\ee
where $d_{\mathrm{Prokh}}$ is the Prokhorov metric.
\end{proposition}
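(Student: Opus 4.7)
The plan is to leverage the duality of Section~\ref{ss.dual} and reduce the claim to a coupling statement for the two dual coalescents. Since the Prokhorov metric is controlled by mixed moments of linear functionals against bounded continuous test functions, it suffices to show, for every $n\in\N$, every $\varphi\in C_{\mathrm{b}}(E^n,\R)$, and every choice of sites $\xi_1,\dots,\xi_n\in B_k(\xi)\subseteq B_K(\xi)$, that $\E[H^{(n)}_\varphi(X^{(\Omega_N)}(T_N),\pi)] - \E[H^{(n)}_\varphi(X^{(N,K)}(T_N),\pi)]\to 0$, where $T_N=tN^j+s_NN^k$ and $\pi$ is the labelled partition of $[n]$ into singletons at $\xi_1,\dots,\xi_n$. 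By Proposition~\ref{prop:spatial-coalescent-duality} together with Remark~\ref{R2.9}(b), each of these expectations equals $\E[H^{(n)}_\varphi(X^{(\cdot)}(0),\mathfrak{C}^{(\cdot)}_n(T_N))]$. The initial states $X^{(\Omega_N)}(0)$ and $X^{(N,K)}(0)$ agree on $B_K(\xi)$ (both being i.i.d.\ from $\chi$), and $H^{(n)}_\varphi$ is bounded, so it is enough to couple $\mathfrak{C}^{(\Omega_N)}_n$ and $\mathfrak{C}^{(G_{N,K})}_n$ so that they coincide on $[0,T_N]$ with probability $1-o(1)$.

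First I would build the coupling via the Poissonian construction of Section~\ref{sss.evpois}. The $\Omega_N$-coalescent is driven by the full Poisson process $\mathfrak{P}^{(\Omega_N)}$ and the full hierarchical migration clocks; the $G_{N,K}$-coalescent uses the \emph{same} sources, but restricted to coalescence-reshuffling points with level index $l<K$ and to migration events of level $\leq K$, in accordance with the conventions $c_l=0$, $\Lambda_l=0$ for $l\geq K$. By the ultrametric structure, every permitted transition keeps a lineage inside its current $K$-block, and every permitted coalescence-reshuffling event centred at $\eta\in B_K(\xi)$ acts identically in the two coalescents. Consequently, starting from the same partition $\pi\subset B_K(\xi)$, the two processes stay equal until the first \emph{discrepancy event} affecting a tracked lineage, which must be either (i) a migration of level $\geq K+1$, or (ii) a coalescence-reshuffling of level $\geq K$ whose block contains that lineage.

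The next step is to bound the probability of a discrepancy event on $[0,T_N]$. Because the number of tracked lineages is non-increasing in time (coalescence only reduces it), one may estimate uniformly in the initial count $n$. For a single lineage, the total rate of type-(i) events equals $\sum_{l\geq K}c_l/N^l$, which by \eqref{ak:recurrence-cond} is $O(N^{-K})$; the total rate of type-(ii) events affecting its $l$-block is bounded by $\sum_{l\geq K}\lambda^*_l/N^l$, which by \eqref{ak:lambda-growth-condition} is also $O(N^{-K})$. Since $0\leq k\leq j\leq K-1$ and $s_N/N\to 0$ force $T_N\leq(t+s_N)N^{K-1}$, a union bound gives $\P\{\text{some discrepancy event in }[0,T_N]\}=O(n(t+s_N)/N)=o(1)$. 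Translating back through the duality, this yields \eqref{ak18}.

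The main obstacle, once the coupling is set up, is essentially bookkeeping: one must verify that permitted transitions really keep the tracked lineages inside $B_K(\xi)$ (so that both coalescents see the same Poisson marks), and that the level-$\geq K$ coalescence-reshuffling events in the $\Omega_N$-system, which could in principle entangle tracked lineages with lineages outside $B_K(\xi)$, are correctly counted as single discrepancy events in the union bound. Both issues are handled by the disjointness of $l$-blocks in the hierarchical group. The tail estimates then follow directly from the $\limsup$-growth conditions on $\uc$ and $\ul^*$.
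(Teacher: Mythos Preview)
Your argument is correct and takes a genuinely different route from the paper's own proof. The paper does \emph{not} argue via the dual coalescent; instead it works directly on the forward processes by comparing generators. Concretely, for $F\in\CF$ depending only on coordinates in $B_K(\xi)$, the paper shows that $(L^{(\Omega_N)}F)(x)=(L^{(N,K)}F)(x)+O(N^{-1})$, with the error uniform in $x$, by checking the migration, resampling and Fleming--Viot parts separately (the growth assumptions \eqref{ak:recurrence-cond} and \eqref{ak:lambda-growth-condition} enter at exactly the same place as in your discrepancy-rate bound). It then invokes the partial integration (Duhamel) formula for semigroups, $V_t=U_t+\int_0^t U_{t-s}(L_V-L_U)V_s\,\dd s$, to convert the generator error into a Prokhorov estimate at the relevant time horizons. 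Your duality-plus-coupling argument trades these generator computations for a simple union bound on level-$\geq K$ events in the Poissonian construction of the dual; because the number of tracked lineages is non-increasing and the ultrametric structure keeps permitted transitions inside $B_K(\xi)$, the bookkeeping is clean. Both approaches rely on the same growth assumptions in the same way; yours is perhaps more transparent once the duality of Section~\ref{ss.dual} is available, while the paper's approach is closer in spirit to the semigroup-based arguments of \cite{DGV95} and does not require the explicit graphical construction of the dual.
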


Once we have proved this proposition, we obtain Theorem~\ref{mainth} by observing that 
(\ref{ak18}) allows us to replace our system on $\Omega_N$ by the one on $G^{N,K}$ 
when we are interested only in block averages of order $\leq K$ on time scales of order 
$<N^K$. In that case, we can use the result of Section~\ref{s.hiermod} to obtain the 
claim of the theorem for $(j,k)$ with $k \leq j <K$. Thus, it remains only to prove 
Proposition~\ref{P.Kapprox}. We give the proof for $K=2$, and later indicate how to 
extend it to $K \in \N$.

The main idea is the following. We want to compare the laws of the solution of two 
martingale problems at a fixed time and show that their difference goes to zero in 
the weak topology. To this end, it suffices to show that the difference of the action 
of the two generators in the martingale problems on the functions in the algebra $\CF$ 
tends to zero. Indeed, we then easily get the claim with the help of the formula of 
partial integration for two semigroups $(V_t)_{t \geq 0}$ and $(U_t)_{t \geq 0}$ (see, 
e.g., Ethier and Kurtz~\cite[Section 1, (5.19)]{EK86}):
\be{add32b}
V_t = U_t +\int^t_0 U_{t-s} (L_V-L_U) V_s \dd s.
\ee

In Sections~\ref{sss.sts}--\ref{sss.avlts}, we calculate and asymptotically evaluate 
the difference of the generator acting on $\CF$ on the two spatial and temporal scales.

\subsection{The single components on time scale $t$}
\label{sss.sts}

For an $F \in \mathcal{F}$ (cf.\ \eqref{ak:multi-level-test-functions}) that depends only on 
$\{x_\xi$, $\xi \in B_1(0)\}$ (cf., \eqref{block-definition}), we have (as we will see below)
\be{ak19}
(L^{(\Omega_N)}F)(x) = (L^{(N,2)} F)(x) + (L^{\mathrm{err}} F) (x) ,
\ee
where $\Vert L^{\mathrm{err}} \Vert = O(N^{-1})$ ($\Vert \cdot \Vert$ is the operator norm 
generated by the $\sup$-norm). By the formula of partial integration for semigroups, it follows 
that 
\be{add23}
\left| \E \left[F(X^{(\Omega_N)}(t))\right] 
- \E \left[F(X^{(N,2)} (t))\right] \right| \leq t O(N^{-1}).
\ee
Since our test functions are measure-determining, the claim follows for any finite 
time horizon. To prove \eqref{ak19}, we discuss the different parts of the generators 
separately.

Consider the migration operator in \eqref{ak:multi-level-migration} applied to functions 
$F \in \CF$. 
The migration operator can be rewritten, similarly as in \eqref{sk_15c_22},
\be{sk_15f_1}
(L^{(\Omega_N)}_{\mathrm{mig}} F)(x)
= \sum_{\xi \in \Omega_N} \sum_{k \in \N} c_{k-1} N^{1-k} \int_E
\left(y_{\xi,k}-x_{\xi}\right)(\dd a)\,\frac{\partial F(x)}{\partial x_\xi}[\delta_a].
\ee
We obtain
\be{sk_15f_2}
(L^{(\Omega_N)}_{\mathrm{mig}} F)(x) = \sum_{\xi \in \Omega_N} c_0
\int_E \left(y_{\xi,1}-x_{\xi}\right)(\dd a)\,
\frac{\partial F(x)}{\partial x_\xi}[\delta_a] + E^{(N)},
\ee
where
\be{sk_15f_3}
|E^{(N)}| \leq N^{-1} C_F \sum_{k \in \Ntwo} c_{k-1} N^{2-k},
\ee
with $C_{F}$ a generic constant depending on the choice of $F$ only. Here we use 
that, by the definition of $F$ in \eqref{ak:multi-level-test-functions}, the sum over $\xi 
\in \Omega_N$ is a sum over finitely many coordinates only, with the number depending 
on $F$ only. By \eqref{ak:recurrence-cond} we get
\be{sk_15f_4}
|E^{(N)}| \leq O\!\left( N^{-1} \right).
\ee

For the resampling operator in \eqref{ak:multi-level-resampling-global}, 
applying first \eqref{ak:resampling-mapping-global} 
and then \eqref{ak:lambda-growth-condition}, we obtain, 
\be{sk_15f_5}
(L^{(\Omega_N)}_{\mathrm{res}} F)(x) =
\sum_{\xi\in\Omega_N} \int_{[0,1]} \Lambda^*_0 (\dd r)
\int_E x_{\xi}(\dd a)
\left[ F\left( \Phi_{r,a,B_0(\xi)}(x) \right) - F(x) \right]
+ E^{(N)}
\ee
with
\be{sk_15f_6}
|E^{(N)}| \leq \sum_{k \in \N} N^{-2k} \int_{[0,1]} 
\Lambda^*_k (\dd r) C_{F}  N^k r^2 = C_{F} \sum_{k \in \N} N^{-k} 
\lambda_k = O\!\left( N^{-1} \right).
\ee

Finally, the Fleming-Viot operator reads as in \eqref{sk_15c_28}:
\be{sk_15f_7}
(L_{\mathrm{FV}}^{(\Omega_N)} F)(x)
= d_0 \sum_{\xi \in \Omega_N} \int_E \int_E Q_{x_\xi}(\dd u,\dd v)\,
\frac{\partial^2 F(x)}{\partial x_\xi^2}[\delta_u,\delta_v].
\ee

Combining the migration parts in \eqref{sk_15f_2} and \eqref{sk_15f_4}, the 
resampling parts in \eqref{sk_15f_5} and \eqref{sk_15f_6}, and the Fleming-Viot 
part in \eqref{sk_15f_7}, we obtain
\be{sk_15f_8}
\begin{aligned}
(L^{(\Omega_N)} F)(x)
=& \sum_{\xi \in \Omega_N} c_0
\int_E \left(y_{\xi,1}-x_{\xi}\right)(\dd a)\,
\frac{\partial F(x)}{\partial x_\xi}[\delta_a] +O(N^{-1}) \\
&+ \sum_{\xi\in\Omega_N} \int_{[0,1]} \Lambda^*_0 (\dd r)
\int_E x_{\xi}(\dd a)
\left[ F\left( \Phi_{r,a,B_0(\xi)}(x) \right) - F(x) \right] 
+ O\!\left( N^{-1} \right) \\
&+ d_0 \sum_{\xi \in \Omega_N} \int_E \int_E Q_{x_\xi}(\dd u,\dd v)\,
\frac{\partial^2 F(x)}{\partial x_\xi^2}[\delta_u,\delta_v].
\end{aligned}
\ee
Combining (\ref{sk_15f_8}) with (\ref{sk_15f_1}--\ref{sk_15f_7}) and (\ref{sk_15c_29}) 
(also recall the discussion on embeddings from Section \ref{ss.embedding}), we get 
\eqref{ak19}.

\subsection{The $1$-block averages on time scale $Nt$}
\label{sss.avlts}

As before, we prove, for $F \in \mathcal{F}$ depending on $\{x_\xi$, $\xi \in B_1(0)\}$ 
only (recall that the upper index $[1]$ indicates time scale $N^1 t$ and that 
the upper index $2$ indicates that we consider $K=2$ levels),
\be{ak20}
(L^{(\Omega_N)[1]})(y) = (L^{(N,2)[1]}F)(y)+O(N^{-1})
\ee
after which the claim follows in the limit as $N \to \infty$ by the same argument as 
in Section~\ref{sss.sts}. We prove (\ref{ak20}) by considering separately the different 
parts of the generator.

For the $1$-block averages $y=y^{[1]}$, the migration operator can be calculated as 
in \eqref{sk_15c_34}. Using \eqref{sk_15c_35}, we get
\be{sk_15f_9}
(L_{\mathrm{mig}}^{(\Omega_N)} F)(y)
= \frac{1}{N} \sum_{\eta \in \Omega_N} \sum_{k \in \N} c_k N^{1-k} \int_E
\left( y_{\phi^k \eta}^{[1]} - y_{\eta} \right)\!(\dd a)
\frac{\partial F(y)}{\partial y_\eta}[\delta_a].
\ee
We obtain on the time scale $Nt$
\be{sk_15f_10}
(L_{\mathrm{mig}}^{(\Omega_N)[1]} F)(y)
= \sum_{\eta \in \Omega_N} c_1 
\int_E \left( y_{\phi \eta}^{[1]} - y_{\eta} \right)\!(\dd a)
\frac{\partial F(y)}{\partial y_\eta}[\delta_a] + E^{(N)},
\ee
where
\be{sk_15f_11}
\left| E^{(N)} \right| \leq C_{F} \sum_{k \in \Ntwo} c_k N^{1-k} 
= O\!\left( N^{-1} \right).
\ee
Note that, by \eqref{sk_15c_36},
\be{sk_15f_17}
(L_{\mathrm{mig}}^{(\Omega_N)[1]} F)(y)
= (L_{\mathrm{mig}}^{(N,2)[1]} F)(y) + O\!\left( N^{-1} \right).
\ee

For the resampling operator, the only change to \eqref{sk_15c_40} is that
\eqref{sk_15c_41} gets replaced by
\be{sk_15f_12}
(L_{\mathrm{res}}^{(\Omega_N)} F)(y)
= I_0 + I_1 + E^{(N)}
\ee
with $I_0, I_1$ as in \eqref{sk_15c_41} (with $G_{N,2}$ replaced by $\Omega_N$) 
and
\be{sk_15f_13}
\begin{aligned}
\left| E^{(N)} \right|
&\leq \frac{1}{N^L} \left( \bigotimes_{l=1}^q \bigotimes_{i=1}^{n_l} 
\sum_{\xi_i^l\colon\, \phi \xi_i^l= \eta^{(l)}} \right) 
\sum_{k \in \Ntwo} N^{-2k} \int_{[0,1]} \Lambda^*_k (\dd r) L N^k C_{F} r^2 \\
&= C_F \sum_{k \in \Ntwo} N^{-k}
\lambda_k = O\!\left( N^{-2} \right).
\end{aligned}
\ee
After a change to time scale $Nt$, we therefore have
\be{sk_15f_14}
(L_{\mathrm{res}}^{(\Omega_N)[1]} F)(y) 
= (L_{\mathrm{res}}^{(N,2)[1]} F)(y) + O\!\left( N^{-1} \right)
\ee
with $(L_{\mathrm{res}}^{(N,2)} F)(y)$ as in \eqref{sk_15c_40}.

The Fleming-Viot operator on time scale $t$ reads as in \eqref{sk_15c_48}, respectively, 
on time scale $Nt$ as in \eqref{sk_15c_50},
\be{sk_15f_15}
(L_{\mathrm{FV}}^{(\Omega_N)[1]} F)(y)
= (L_{\mathrm{FV}}^{(N,2)[1]} F)(y).
\ee

\subsection{Arbitrary truncation level}
\label{sss.gencase}

For every $K \in\N$, consider the block averages up to level $K-1$ on time scales 
up to $N^K t$, estimate the generator difference, bound this by an $O(N^{-1})$-term 
and get the same conclusion as above. There are more indices involved in the 
notation, but the argument is the same. The details are left to the interested reader.

\section{Multiscale analysis}
\label{s.msana}

\subsection{The interaction chain}
\label{ss.intchain}

In this section, we prove Theorem~\ref{Tscb}. In addition to Theorem~\ref{mainth},
what is needed is the convergence of the {\em joint law} of the collection of 
$k$-level block averages for $k=0, \ldots, j+1$ on the corresponding time scales 
$N^j t_N+N^k t$, with $\lim_{N\to\infty} t_N =\infty$ and $\lim_{N\to\infty} 
t_N/N=0$. We already know that the $\ell$-block averages for $\ell >k$ do not 
change on time scale $t N^k$ and that this holds in path space as well. Hence, 
in particular, the $(j+1)$-block average converges to a constant path at times 
$N^j t_N+N^k t$ for all $0\leq k \leq j$. We also have the convergence of the 
marginal distributions for each $k=0,\ldots,j+1$, namely, we know that the process 
on level $k$ solves a martingale problem on time scale $tN^k$, which we have 
identified and where only the block average on the next level appears as a 
parameter. Therefore, arguing downward from level $j+1$ to level $j$, we 
see that the {\em Markov property} holds for the limiting law. It therefore 
only remains to identify the transition probability.

We saw in Section~\ref{s.hiermod} that when going from level $k+1$ to level $k$, 
we get the corresponding equilibrium law of the level-$k$ limiting dynamics as 
a McKean-Vlasov process with parameters $(c_k,\theta,d_k,\Lambda_k)$ with $\theta$ 
equal to the limiting state on level $k+1$. Note here that, instead of $N^{k+1}s 
+ N^k t$, we can write $N^{k+1}s + N^kt_N$ with $\lim_{N\to\infty} t_N = \infty$ 
and $\lim_{N\to\infty} t_N/N = 0$, since an $o(1)$ perturbation of $s$ has no 
effect as $N \to \infty$. For more details, consult \cite[Section 5(f)]{DGV95}.

In the remainder of this section, we prove the implications of the scaling
results of  $(d_k)_{k \in \N}$ for the hierarchical multiscale analysis of the
process $X^{(\Omega_N)}$, involving clustering versus coexistence
(Section~\ref{ss.dichointeract}), related  phase transitions (Section~\ref{ss.ich}), 
as well as a more detailed description of the properties of the different regimes 
(Section~\ref{ss.cluscoex}), as discussed in Section~\ref{sss.extmsp}.

\subsection{Dichotomy for the interaction chain}
\label{ss.dichointeract}

In this section, we prove Theorem~\ref{T.dicho}.

\noindent\textit{Proof of Theorem~\ref{T.dicho}.} Fix $j\in\N_0$. The first observation 
is that the interaction chain $(M^{(j)}_k)_{k=-(j+1),\ldots,0}$ from Section~\ref{sss.extmsp} 
is a $\CP(E)$-valued Markov chain such that 
\be{add33}
\big(\langle M^{(j)}_k,\varphi \rangle\big)_{k=-(j+1),\ldots,0} 
\text{ is a {\em square-integrable martingale}, for any $\varphi \in C_\mathrm{b}(E)$}
\ee
(because it is bounded). For the analysis of the interaction chain for Fleming-Viot 
diffusions, carried out in \cite[Section~6]{DGV95}, this fact was central in combination 
with the formula for the variance of evaluations analogous to Proposition~\ref{prop:2nd}.  
We argue as follows.

Since the map $\theta \mapsto \nu^{c,d,\Lambda}_\theta$ is continuous (cf.\ 
Section~\ref{sss.conttheta}), the convergence as $j \to \infty$ in the \emph{local coexistence} 
regime is a standard argument (see \cite[Section 6a]{DGV95}). In the \emph{clustering} 
regime, the convergence to the mono-type state follows by showing, with the help of the 
variance formula \eqref{ag58}, that $\lim_{j\to\infty}\E_{\CL(M^{(j)}_0)}[\var_\cdot(\varphi)]=0$ 
for all $\varphi \in C_\mathrm{b}(E)$ (cf., Corollary~\ref{C.asyvar}), so that all limit points of 
$\CL[M^{(j)}]$ are concentrated on $\delta$-measures on $E$ (recall that $\CP(E)$ is 
compact). This argument is identical to the one in \cite[Section 6a]{DGV95}. The mixing 
measure for the value of the mono-type state can be identified via the martingale property.

It remains to show that in the case where $\E_{\CL(M^{(j)}_0)}[\var_\cdot(\varphi)]$
is bounded away from zero, the limit points allow for the coexistence of types. 
The argument in \cite[Section 6a]{DGV95} shows that for $\Lambda=0$,
\be{ag69}
\nu^{c,d,\Lambda}_\theta (M) = 0 \mbox{ if } d>0, 
\quad M = \{\delta_u \colon\, u \in E\}.
\ee
This is no longer true for $\Lambda\neq 0$. Instead, we have $\nu_\theta^{c,d,\Lambda}
(M) \in [0,1)$, as proven in Section~\ref{ss.strucmkv} (see (\ref{ag64})), and hence the 
variance is $>0$.

\subsection{Scaling for the interaction chain}
\label{ss.ich}

In this section, we prove Theorems~\ref{T.dicho2} and \ref{T.clustering-exp} 

The proof of the scaling result in the regime of diffusive clustering in 
\cite[Section~6(b), Steps~1--3]{DGV95} uses two ingredients:
\begin{itemize}

\item[(I)]
Assertion \eqref{add33}.

\item[(II)] 
For $c_k \to c \in (0,\infty)$ as $k \to \infty$, by \cite[Eq.\ (6.12)]{DGV95},
\be{ag66}
\var\big(\langle M^{(j)}_{k_2},f\rangle \mid M^{(j)}_{k_1}=\theta\big) 
= \frac{(-k_1)-(-k_2)+1}{c+(-k_1)}\, \var_\theta (f), 
\qquad \forall\,f \in C_\mathrm{b}(E,\R).
\ee
\end{itemize}
In \cite[Section 6(b)]{DGV95}, (I--II) led to the conclusion that if $\lim_{j \to \infty} 
(-k_j)/j = \bar{\beta}_i \in [0,1]$, $i=1,2$, with $\bar{\beta}_1>\bar{\beta}_2$, then
\be{add34}
\lim_{j \to \infty} \var\big(\langle M^{(j)}_{k_2},f\rangle 
\mid M^{(j)}_{k_1}=\theta\big) 
= \frac{\bar{\beta}_1-\bar{\beta}_2}{\bar{\beta}_1}\,\var_\theta (f).
\ee
Thus, as soon as we have these formulae, we get the claim by repeating the argument 
in \cite[Section 6(b)]{DGV95}, which includes the time transformation $\bar{\beta}
=e^{-s}$ in Step 3 to obtain a time-homogeneous expression from \eqref{add34}. 

We know the necessary first and second moment formulae from Section~\ref{ss.1st2nd}. 
Replace \cite[Eq.\ (6.12)]{DGV95} by \eqref{varevol_int}, to see that we must make 
sure that
\be{sk109}
\lim_{j\to\infty} 
\sum_{i=\lfloor \bar{\beta}_2 j \rfloor}^{ \lfloor \bar{\beta}_1 j \rfloor} 
\left( \frac{d_{i+1}}{c_i} 
\prod_{l=i+1}^{\lfloor \bar{\beta}_1 j \rfloor} \frac{1}{1+m_l} \right)
= 1 - \left(\frac{\bar{\beta}_2}{\bar{\beta}_1}\right)^R
\ee
(recall \eqref{diffusion-constants} and \eqref{ag46} for the definition of $d_k$ and $m_k$).
Note that \eqref{sk109} remains valid also for $\bar{\beta}_2 = 0$.

Moreover, by following the reasoning in \cite[Section 6(b), Step 4]{DGV95}, we obtain
by using \eqref{varevol_int} instead of \cite[(6.34)]{DGV95} that
\be{sk110}
\left\{
\begin{array}{c}
\text{ fast growing clusters} \\
\text{ slowly growing clusters} \\
\end{array}
\right\}
\quad 
\text{if}
\quad
 \sum_{i=n}^m \left( \frac{d_{i+1}}{c_i} \prod_{l=i+1}^m \frac{1}{1+m_l} \right)
\left\{
\begin{array}{c}
\rightarrow 0 \\
\rightarrow 1 \\
\end{array}
\right\}
\ee
when $m,n\to\infty$ such that $n/m \to \alpha$, for all $\alpha \in (0,1)$.

\begin{proof}[Proof of Theorem~\ref{T.dicho2}]
The proof follows by inserting the asymptotics of $c_k$,  $d_k$ and $m_k$ obtained 
in Theorem~\ref{dcases} and Corollary~\ref{C.scalemk} into \eqref{sk109} or 
\eqref{sk110}. 

\begin{itemize}
\item[(i)] 
In Cases (a) and (b), the asymptotics in (\ref{ag49}--\ref{ag48}) and (\ref{dkmk}) imply
\begin{equation}
\sum_{i= \lfloor \alpha m \rfloor}^m \left( \frac{d_{i+1}}{c_i} 
\prod_{l=i+1}^m \frac{1}{1+m_l} \right) 
= O\left(\eee^{-C  m}\right), \quad C > 0.
\end{equation}
In Case (c), using the fact that $d_{i+1}/ c_i \sim m_i \to 0$ and $\sum_{l \in \N_0} 
m_l = \infty$, we obtain
\begin{equation}
\sum_{i= \lfloor \alpha m \rfloor}^m \left( \frac{d_{i+1}}{c_i} 
\prod_{l=i+1}^m \frac{1}{1+m_l} \right) \to 0.
\end{equation}
\item[(ii)]
In Case (d), for any $\varepsilon > 0$ and $l$ large enough we have $|m_l- R/l| \leq 
\varepsilon R/l$. This implies
\be{ak4000}
\begin{aligned}
\prod_{l=i+1}^{\lfloor \bar{\beta}_2 j \rfloor} \frac{1}{1+m_l}
&= \exp\left[-\sum_{l=i+1}^ {\bar{\beta}_1 j} \left(\frac{R}{l} + O(m_l^2)\right) \right].
\end{aligned}
\ee
Since $d_{i+1}/c_i \sim R/i$ and $m_l=O(1/l)$, it follows that
\begin{equation}
\label{ak4001}
\begin{aligned}
\sum_{i= \lfloor \bar{\beta}_2 j \rfloor}^{ \lfloor\bar{\beta}_1 j \rfloor} 
\left( \frac{d_{i+1}}{c_i} \prod_{l=i+1}^{\lfloor \bar{\beta}_2 j \rfloor} 
\frac{1}{1+m_l} \right) 
\sim
\sum_{i= \lfloor \bar{\beta}_2 j \rfloor}^{ \lfloor \bar{\beta}_1 j \rfloor}
\frac{R}{i}\left(\frac{\bar{\beta}_1 j}{i}\right)^{-R}
\to
1-\left(\frac{\bar{\beta}_2}{\bar{\beta}_1}\right)^R.
\end{aligned}
\end{equation}
\end{itemize}
\end{proof}

\begin{proof}[Proof of Theorem~\ref{T.clustering-exp}]
In Case (A), $m_k \to \infty$, which by \eqref{sk110} implies fast clustering.
In Case (B), $m_k \to \bar{K} + \bar{M} > 0 $, which also implies fast clustering.
In Case (C1), $m_k \sim (c_k \sigma_k)^{-1} \to C > 0$, which implies fast clustering. 
In Case (C2), $d_k/c_k \sim m_k \sim (1-c)/c > 0$,  which implies fast clustering. 
In Case (C3),  $d_k/c_k \sim m_k \sim \mu_k/(c_k (\mu -1))$, which implies fast, 
diffusive and slow clustering depending on the asymptotic behaviour of $k\mu_k / c_k$.
\end{proof}

\section{Dichotomy between clustering and coexistence for finite $N$}
\label{ss.cluscoex}

In this section, we prove Theorems~\ref{dich_coclu}--\ref{P.ergodic}.

\begin{proof}[Proof of Theorem {\rm \ref{dich_coclu}}]
The key is the spatial version of the formulae for the first and second moments
in  terms of the coalescent process. The variance tends to zero for all
evaluations if  and only if the coalescent started from two individuals at a
single site coalesces  into one partition element. Therefore, all we have to
show is that the hazard function  for the time to coalesce is $H_N$, and then
show that $\lim_{N\to\infty} H_N = \infty$  a.s.\ if and only if
$\lim_{N\to\infty} \bar H_N=\infty$. The latter was already carried out in
Section~\ref{sss.dichosv}.
\end{proof}

\begin{proof}[Proof of Theorem {\rm \ref{P.ergodic}}]
We first note that the set of functions
\begin{equation}
\big\{H^{(n)}_\varphi (\cdot, \pi_{G,n})\colon\, n \in \N,\, 
\varphi \in C_\mathrm{b} (E^n, \R),\, \pi_{G,n} \in \Pi_{G,n}\big\},
\end{equation}
(recall the definition of $H^{(n)}_\varphi$ from \eqref{duality-test-functions} and 
of $\Pi_{G,n}$ from \eqref{labelled-partitions}) is a distribution-determining subset 
of the set of bounded continuous functions on $\CP(\CP(E))^G$. It therefore suffices 
to establish the following: 
\begin{itemize}
\item[(1)]
For all initial laws $\CL[X^{(\Omega_N)}(0)]$, where $X^{(\Omega_N)}$ is the 
$C_N^{\uc,\uL}$-process on $\Omega_N$ satisfying our assumptions for a given 
parameter $\theta \in \CP(E)$ (see below Proposition~\ref{P.vecLambda}), and 
all admissible $n,\varphi,\pi_{G,n}$, we have
\be{ag70}
\E\left[ H^{(n)}_\varphi (X^{(\Omega_N)}(t), \pi_{G,n})\right]
\ttO F((\varphi, n, \pi_{G,n}), \theta),
\ee
which implies that $\CL[X^{(\Omega_N)}(t)]$ converges to a limit law as $t\to\infty$ 
that depends on the initial law only through the parameter $\theta$.
\item[(2)] 
Depending on whether $\bar H_N < \infty$ or $ \bar H_N = \infty$, with $\bar H_N$ as 
in Section~\ref{sss.dichosv}, the quantity in the right-hand side of \eqref{ag70} 
corresponds to the form of the limit claimed in (\ref{stable-ergodicity}--\ref{nonstable-ergodicity}).
\end{itemize}

Item (2) follows from Theorem~\ref{dich_coclu} once we have proved the convergence 
result in (\ref{ag70}), since (\ref{ag52b}) implies that the marginal law of the limiting state 
is $\delta_\theta$, and we will see in (\ref{ag74}) below that recurrence of the migration mechanism $a$ (recall \eqref{32b}) implies that
\be{ag71}
\E_{\nu_\theta,\uc, \uL}^{(\Omega_N)}  
\left[\langle \varphi, \bigotimes^n_{i=1} x_{\eta_i} \rangle\right]
= \langle f^n(u), \theta \rangle, \quad \text{for } 
\varphi(u_1, \cdots, u_n) = \prod^n_{i=1} f(u_i),
\ee
which in turn implies
\be{ag72}
\nu_{\theta, \uc, \uL}^{(\Omega_N)} = \int_K (\delta_u)^{\otimes \Omega_N} \theta(\dd u)
.
\ee

In order to prove item (1), we use duality and express the expectation in the left-hand 
side of (\ref{ag70}) as an expectation over a coalescent $\mathfrak{C}^{(\Omega_N)}_t$ 
as in \eqref{ag23b} starting with $n$ partition elements. We therefore know that the number 
of partition elements, which is nonincreasing in $t$, converges to a limit as $t\to\infty$, which is 
1 for $\bar H_N = \infty$ and a random number in $\{1,\ldots,n\}$ for $\bar H_N < \infty$. 
This means that there exists a finite random time after which the partition elements 
never meet again, and keep on moving by migration only. For such a scenario, it was proven 
in \cite{DGV95}, Lemma 3.2, that the positions of the partition elements are given, 
asymptotically, by $k=1,\dots,n$ random walks, all starting at the origin. Using that 
the initial state is ergodic, we can then calculate, for $\varphi (u_1, \cdots, u_n) 
= \Pi^n_{k=1} f(u_k)$,
\be{ag74}
\lim_{t\to\infty} E\left[H^{(n)}_\varphi \left(X^{(\Omega_N)}(0),
\mathfrak{C}^{(\Omega_N)}_t\right)\right]
= \sum^n_{k=1} \langle f, \theta \rangle^k q^{(\pi_{G,n})}_k,
\ee
with $q^{(\pi_{G,n})}_k$ the probability that the coalescent starting in $\pi_{G,n}$ in 
the limit has $k$ remaining partition elements. Furthermore, if the initial positions of 
a sequence $(\pi^{(m)}_{G,n})_{m\in\N}$ of initial states satisfies $\lim_{m\to\infty}
d(\eta^{(m)}_i,\eta^{(m)}_j)=\infty$ for $i \neq j$, then for transient $a$ we know 
that 
\be{agextext}
\lim_{m\to\infty} q^{(\pi^{(m)}_{G,n})}_k=0, \quad \forall\,k=1,\dots,n-1 \mbox{ and }
\liml_{m \to \infty} q^{(\Pi^{(m)}_{G,n})}_n = 1. 
\ee
In view of (\ref{ag74}), this proves that the law on $(\CP(E))^G$ defined by the 
right-hand side of (\ref{ag70}) is a translation-invariant and ergodic probability 
measure, with mean measure $\theta$ (see \cite{DGV95}, p.\ 2310, for details).
\end{proof}

\section{Scaling of the volatility in the clustering regime}
\label{s.prcor}

In Section~\ref{ss.gencomp}, we prove Theorems~\ref{dichodual} and \ref{T.beh}, in 
Section~\ref{ss.dcases} we prove Theorem~\ref{dcases}.

\subsection{Comparison with the hierarchical Fleming-Viot process}
\label{ss.gencomp}

\begin{proof}[Proof of Theorem~\ref{T.beh}]
(a) Rewrite the recursion relation in \eqref{diffusion-constants} as
\be{recrew1}
d_0=0, \qquad \frac{1}{d_{k+1}} = \frac{1}{c_k} + \frac{1}{\mu_k+d_k}, \quad k\in\N_0.
\ee
From \eqref{recrew1}, it is immediate that $\uc\mapsto \ud$ and $\umuu \mapsto \ud$ 
are component-wise non-decreasing. 

\medskip\noindent
(b) To compare $\ud$ with $\ud^*$, the solution of the recursion relation in 
\eqref{dbarkrec} when $\mu_0>0$ and $\mu_k=0$ for all $k\in\N$, simply note 
that $d_1=d^*_1=c_0\mu_0/(c_0+\mu_0)$. This gives 
\be{dkd*k}
d_k \geq d^*_k, \quad k\in\N,
\ee
with $d^*_k$ given by \eqref{dbarksol}.

\medskip\noindent
(c) Inserting the definition $m_k= (\mu_k+d_k)/c_k$ into \eqref{recrew1}, we get
the recursion relation 
\be{recrew1alt}
c_0m_0=\mu_0, \qquad
c_{k+1}m_{k+1} = \mu_{k+1} + \frac{c_km_k}{1+m_k}, \quad k\in\N_0.
\ee
Iterating \eqref{recrew1alt}, we get
\be{recrew1alt*}
c_km_k = \sum_{l=0}^k \frac{\mu_l}{\prod_{j=l}^k (1+m_j)}.
\ee
Ignoring the terms in the denominator, we get
\be{mkub}
m_k \leq \frac{1}{c_k} \sum_{l=0}^k \mu_l,
\ee
which proves that $\sum_{k\in\N_0} (1/c_k) \sum_{l=0}^k \mu_l<\infty$ implies 
$\sum_{k\in\N_0} m_k<\infty$. To prove the reverse, suppose that $\sum_{k\in\N_0}
m_k<\infty$. Then $\prod_{j\in\N_0} (1+m_j) = C < \infty$. Hence
\eqref{recrew1alt*} gives
\be{recrew1alt**}
m_k \geq \frac{1}{C}\,\frac{1}{c_k} \sum_{l=0}^k \mu_l,
\ee
which after summation over $k\in\N_0$ proves the claim. 

\medskip\noindent
(d) We know from \eqref{dbarksol} that $d_k \geq d^*_k = \mu_0/(1+\mu_0\sigma_k)$ for 
$k\in\N$. Hence, if $\lim_{k\to\infty} \sigma_k = \infty$, then $\liminf_{k\to\infty} 
\sigma_k d_k \geq 1$. To get the reverse, note that iteration of \eqref{recrew1} gives
\be{dkformsol}
\begin{aligned}
\frac{1}{d_k} 
&= \sum_{l=0}^{k-1} \frac{1}{c_l \prod_{j=l+1}^{k-1} (1+\frac{\mu_j}{d_j})}
\geq \sum_{l=0}^{k-1} \frac{1}{c_l \prod_{j=l+1}^{k-1} (1+\frac{\mu_j}{d^*_j})}\\
&\geq \sum_{l=0}^{k-1} \frac{1}{c_l \prod_{j=l+1}^\infty 
(1+\frac{\mu_j}{\mu_0}[1+\mu_0\sigma_j])}. 
\end{aligned}
\ee
If $\sum_{j\in\N} \sigma_j\mu_j < \infty$, then the product in the last line tends 
to 1 as $l\to\infty$. Hence, if also $\lim_{k\to\infty} \sigma_k = \infty$, then 
it follows that $\liminf_{k\to\infty} (1/\sigma_kd_k) \geq 1$.   

\medskip
Note from the proof of (c) and (d) that in the \emph{local coexistence regime} $d_k 
\sim \sum_{l=0}^k \mu_l$ as $k\to\infty$ when this sum diverges and $d_k \to 
\sum_{l\in\N_0} \mu_l/\prod_{j=l}^\infty (1+m_j) \in (0,\infty)$ when it converges. 
  
We close with the following observation. Since $1/c_k\sigma_k= (\sigma_{k+1}-\sigma_k)
/\sigma_k$, $k\in\N$, and 
\be{sigmasandw}
\frac{\sigma_{k+1}-\sigma_k}{\sigma_1} \geq \frac{\sigma_{k+1}-\sigma_k}{\sigma_k} 
\geq \int_{\sigma_k}^{\sigma_{k+1}} \frac{\dd x}{x}, \quad k\in\N,
\ee
we have
\be{iffimpl}
\lim_{k\to\infty} \sigma_k = \infty \quad \Longleftrightarrow \quad 
\sum_{k\in\N} \frac{1}{c_k\sigma_k} = \infty. 
\ee
\end{proof}

\begin{proof}[Proof of Theorem~\ref{dichodual}.]
Combining Lemma~\ref{ak14b} with Theorem~\ref{T.beh}(c), we get the claim.
\end{proof}

\subsection{Preparation: M\"obius-transformations}
\label{ss.Mobtr}

To draw the scaling behaviour of $d_k$ as $k\to\infty$ from \eqref{recrew1}, we need 
to analyse the recursion relation
\be{recrew}
x_0=0, \qquad x_{k+1} = f_k(x_k), \quad k\in\N_0,
\ee
where
\be{coeff}
f_k(x) = \frac{c_kx+c_k\mu_k}{x + (c_k + \mu_k)}, \qquad x \neq -(c_k+\mu_k).
\ee
The map $x \mapsto f_k(x)$ is a M\"obius-transformation on $\R^*$, the one-point 
compactification of $\R$. It has determinant $c_k(c_k+\mu_k)-c_k\mu_k = c_k^2>0$
and therefore is hyperbolic (see Kooman~\cite{K98}; a M\"obius-transformation $f$ 
on $\R^*$ is called \emph{hyperbolic} when it has two distinct fixed points at 
which the derivatives are not equal to $-1$ or $+1$.) Since
\be{fkder}
f_k'(x) = \left(\frac{c_k}{x+(c_k+\mu_k)}\right)^2, \qquad x \neq -(c_k+\mu_k),
\ee
it is strictly increasing except at $x=-(c_k+\mu_k)$, is strictly convex for 
$x<-(c_k+\mu_k)$ and strictly concave for $x>-(c_k+\mu_k)$, has horizontal 
asymptotes at height $c_k$ at $x=\pm\infty$ and vertical asymptotes at 
$x=-(c_k+\mu_k)$, and has two fixed points
\be{fpM}
x_k^+ = \tfrac12\mu_k[-1 + \sqrt{1+4c_k/\mu_k}] \in (0,\infty), 
\qquad x_k^- = \tfrac12\mu_k[-1 - \sqrt{1+4c_k/\mu_k}] \in (-\infty,0), 
\ee
of which the first is attractive ($f_k'(x_k^+)<1$) and the second is repulsive 
($f_k'(x_k^-)>1$). For us, only $x_k^+$ is relevant because, as is clear from 
\eqref{recrew}, our iterations take place on $(0,\infty)$. See Fig.~\ref{fig-Mobius} 
for a picture of $f_k$.

\begin{figure}[htbp]
\begin{center}
\setlength{\unitlength}{0.5cm}
\begin{picture}(14,10)(-4,-2)
\put(-6,0){\line(15,0){13}}
\put(1,-5){\line(0,10){11}}
{\thicklines
\qbezier(-1.9,-5)(-1.9,2.7)(5,2.7)
\qbezier(-2.4,6)(-2.4,3.3)(-5,3.3)
}
\qbezier[80](-4,-5)(1,0)(6,5)
\qbezier[60](-5,3)(0,3)(6,3)
\qbezier[80](-2.2,-5)(-2.2,0)(-2.2,6)
\qbezier[20](3.6,0)(3.6,1.3)(3.6,2.6)
\qbezier[20](-1.75,0)(-1.75,-1.3)(-1.75,-2.75)
\put(-.3,6.5){$f_k(x)$}
\put(7.5,-.3){$x$}
\put(3.15,-.8){$x_k^+$}
\put(-1.95,.4){$x_k^-$}
\put(3.6,2.6){\circle*{.3}}
\put(-1.75,-2.75){\circle*{.3}}
\end{picture}
\vspace{2cm}
\caption{\small The M\"obius-transformation $x \mapsto f_k(x)$.}
\label{fig-Mobius}
\end{center}
\end{figure}
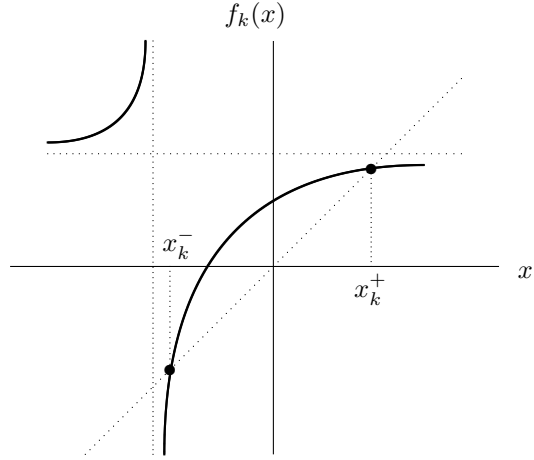

In what follows, we will use the following two theorems of Kooman~\cite{K98}. 
We state the version of these theorems for $\R$, although they apply for $\C$ 
as well.

\begin{theorem}
\label{Kooman1}
{\rm [Kooman~\cite{K98}, Corollary~6.5]}\\
Given a sequence of M\"obius-transforma\-tions $(f_k)_{k\in\N_0}$ on $\R^*$ that 
converges point-wise to a M\"obius-transformation $f$ that is hyperbolic. Then, for 
one choice of $x_0\in\R^*$ the solution of the recursion relation $x_{k+1}=f_k(x_k)$, 
$k\in\N_0$, converges to the repulsive fixed point $x^-$ of $f$, while for all other 
choices of $x_0$ it converges to the attractive fixed point $x^+$ of $f$. 
\end{theorem}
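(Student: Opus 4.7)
The plan is to reduce to a linear normal form by M\"obius conjugation and then combine a contraction argument for generic orbits with a backward-iteration construction for the exceptional orbit. Let $\sigma$ be a M\"obius transformation sending $x^+\mapsto 0$ and $x^-\mapsto\infty$, and set $F:=\sigma\circ f\circ\sigma^{-1}$, $F_k:=\sigma\circ f_k\circ\sigma^{-1}$. Then $F$ fixes $0$ and $\infty$, so $F(y)=\lambda y$ with $\lambda=f'(x^+)$ satisfying $|\lambda|<1$ by hyperbolicity of $f$ and the attractivity of $x^+$. Pointwise convergence is preserved under conjugation, hence $F_k\to F$ pointwise, and the sequence $y_k:=\sigma(x_k)$ satisfies $y_{k+1}=F_k(y_k)$. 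Thus it suffices to prove the theorem in these coordinates, with attracting fixed point $0$ and repulsive fixed point $\infty$.

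Normalising so that $\alpha_k\delta_k-\beta_k\gamma_k=1$, write $F_k(y)=(\alpha_k y+\beta_k)/(\gamma_k y+\delta_k)$. Pointwise convergence to $F(y)=\lambda y$ together with the three-point rigidity of M\"obius transformations forces, up to a global sign, $\alpha_k\to\lambda^{1/2}$, $\delta_k\to\lambda^{-1/2}$, and $\beta_k,\gamma_k\to 0$, so in particular the fixed points of $F_k$ converge to $0$ and $\infty$. Consequently, there exist $r>0$, $K\in\N_0$ and $q\in(|\lambda|,1)$ such that for all $k\ge K$ the disc $\{|y|\le r\}$ is $F_k$-invariant and $F_k$ is a $q$-contraction on it in the Euclidean metric. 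Any orbit that enters this disc therefore converges geometrically to $0$.

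To construct the exceptional orbit, consider the inverse dynamics: $F_k^{-1}\to F^{-1}(y)=y/\lambda$, which swaps the attracting and repelling fixed points. The same contraction argument, applied near $\infty$ in the chordal metric on the Riemann sphere, shows that $F_k^{-1}$ is eventually a strict chordal contraction on a neighbourhood $V$ of $\infty$. Define $y_0^{(N)}:=(F_0^{-1}\circ F_1^{-1}\circ\cdots\circ F_{N-1}^{-1})(\infty)$; telescoping chordal distances once the backward iterates enter $V$ shows that $(y_0^{(N)})_N$ is Cauchy in the chordal metric, with a limit $y_0^{\mathrm{exc}}$ whose forward $(F_k)$-orbit converges to $\infty$, i.e.\ to $x^-$ in the original coordinates. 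Injectivity of the finite compositions $F_{N-1}\circ\cdots\circ F_0$ forces uniqueness of $y_0^{\mathrm{exc}}$. For any other initial value $y_0\neq y_0^{\mathrm{exc}}$ one has to show that the orbit eventually enters $\{|y|\le r\}$; this follows by using the uniform convergence $F_k\to F$ on compact sets away from the poles, together with the fact that $F^n(y)=\lambda^n y\to 0$ uniformly on such sets, plus a perturbative triangle-inequality estimate to absorb the error from $F_k$ vs.\ $F$.

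The main obstacle is upgrading pointwise convergence $F_k\to F$ to the two quantitative pieces of uniform control just used---a strict contraction on a neighbourhood of $0$ and a strict chordal contraction on a neighbourhood of $\infty$ for the inverses. This is handled by the observation that M\"obius transformations form a three-dimensional Lie group in which pointwise convergence at three distinct points implies convergence of the coefficients, and hence uniform convergence on compact subsets of the Riemann sphere away from the limiting pole. The most delicate point thereafter is ruling out orbits that linger indefinitely in the ``transit region'' between the two fixed points; here one exploits that any compact subset of $\R^*\setminus\{\infty\}$ is squeezed into $\{|y|\le r\}$ by finitely many applications of $F$, and then transports this squeezing to $(F_k)$ by continuity in $k$.
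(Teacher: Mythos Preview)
The paper does not prove this theorem at all: it is quoted verbatim from Kooman~\cite{K98}, Corollary~6.5, and used as a black box in Section~\ref{ss.dcases}. So there is no ``paper's own proof'' to compare against; you are supplying an argument where the paper simply cites the literature.

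As for your sketch itself, the overall strategy---conjugate so that the limiting map becomes $y\mapsto\lambda y$ with $|\lambda|<1$, upgrade pointwise to coefficient (hence locally uniform) convergence via three-point rigidity, obtain an eventual contraction near $0$, and build the exceptional orbit by backward iteration near $\infty$---is sound and is essentially how such results are proved in the non-autonomous/exponential-dichotomy literature. Two points deserve tightening. First, ``injectivity of the finite compositions forces uniqueness of $y_0^{\mathrm{exc}}$'' is not a valid argument: injectivity only says distinct initial values give distinct orbits, not that at most one orbit can tend to $\infty$. Uniqueness genuinely comes from the expansion of the forward maps near $\infty$ (equivalently, the contraction of $F_k^{-1}$ there): two distinct orbits both converging to $\infty$ would, in the chart $w=1/y$, be two distinct orbits converging to $0$ under an eventually expanding map, which is impossible. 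Second, your final step---showing that every non-exceptional orbit eventually reaches the contracting disc---needs one more ingredient beyond ``$F^n\to 0$ uniformly on compacta'': you must also rule out orbits that drift into the neighbourhood of $\infty$. This is easy once you observe that coefficient convergence gives uniform closeness of $F_k$ to $F$ in the chordal metric on the \emph{entire} sphere, so for large $k$ the map $F_k$ strictly decreases chordal distance to $0$ on the complement of a small neighbourhood of $\infty$, and any orbit entering that neighbourhood (other than the exceptional one) is expelled by the expansion and cannot re-enter. With these two clarifications the argument goes through.
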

 
\begin{theorem}
\label{Kooman2}
{\rm [Kooman~\cite{K98}, Theorem 7.1]}\\
Given a sequence of M\"obius-transforma\-tions $(f_k)_{k\in\N_0}$ on $\R^*$ whose 
fixed points are of bounded variation and converge to (necessarily finite) distinct 
limits, i.e., 
\be{bdvardislim}
\begin{aligned}
&\sum_{k\in\N_0} |x_{k+1}^+-x_k^+|<\infty, \quad 
\sum_{k\in\N_0} |x_{k+1}^--x_k^-|<\infty,\\ 
&x^+=\lim_{k\to\infty} x_k^+ \in \R^*, \quad x^- = \lim_{k\to\infty} x_k^- \in \R^*,
\quad x^+ \neq x^-.
\end{aligned} 
\ee
If
\be{sloplim}
\prod_{k\in\N_0} |f_k'(x_k^+)| = 0, 
\ee
then, for one choice of $x_0\in\R^*$, the solution of the recursion relation 
$x_{k+1}=f_k(x_k)$, $k\in\N_0$, converges to $x^-$, while for all other choices 
of $x_0$ it converges to $x^+$. If, on the other hand,
\be{sloplimalt}
\prod_{k\in\N_0} |f_k'(x_k^+)| > 0, 
\ee
then all choices of $x_0\in\R^*$ lead to different limits.    
\end{theorem}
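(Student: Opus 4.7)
The plan is to conjugate each $f_k$ to an explicit linear normal form and then treat the full recursion as a perturbed scalar product. For each $k\in\N_0$, introduce the Möbius-transformation $T_k(x) = (x - x_k^+)/(x - x_k^-)$, which sends the attractive fixed point $x_k^+$ to $0$ and the repulsive fixed point $x_k^-$ to $\infty$. The cross-ratio identity for any Möbius-transformation with distinct fixed points $p,q$ and multiplier $\lambda$, namely $(f(x)-p)/(f(x)-q) = \lambda(x-p)/(x-q)$, yields
$$T_k\circ f_k\circ T_k^{-1}(y) = \lambda_k\,y,\qquad \lambda_k = f_k'(x_k^+) \in (-1,1)\setminus\{0\}.$$
In the coordinate $y_k := T_k(x_k)$ the recursion therefore reads
$$y_{k+1} = S_k(\lambda_k y_k),\qquad S_k := T_{k+1}\circ T_k^{-1}.$$
The maps $S_k$ encode the only obstruction to $(y_k)$ being a pure linear orbit, so the whole argument reduces to controlling them.

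Next I would quantify the $S_k$ using \eqref{bdvardislim}. Since $x_k^\pm\to x^\pm$ with $x^+\neq x^-$, the maps $T_k$ converge to $T(x) = (x-x^+)/(x-x^-)$ pointwise on $\R^*\setminus\{x^-\}$, and hence $S_k\to\mathrm{id}$. A direct computation of the matrix representing $S_k$ in $PGL(2,\R)$ in terms of $x_k^\pm, x_{k+1}^\pm$ shows that the bounded-variation assumption implies
$$\sum_{k\in\N_0} \|S_k - \mathrm{id}\|_{\mathcal{K}} < \infty$$
in a Lipschitz norm on every compact $\mathcal{K}\subset\R^*$ bounded away from the (themselves convergent) poles of the $S_k$. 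Writing $S_k(z) = (1+a_k)z + b_k + r_k(z)$ with $\sum_k(|a_k|+|b_k|)<\infty$ and $r_k(z)=O((a_k+b_k)z^2)$, a standard telescoping-plus-Gronwall bookkeeping of the accumulated perturbations gives, for every orbit staying in such a compact region,
$$y_k = \Lambda_k \left(A_k y_0 + B_k\right),\qquad \Lambda_k := \prod_{j<k}\lambda_j,$$
with $A_k\to A_\infty\in(0,\infty)$ and $B_k\to B_\infty\in\R$.

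The two cases of the theorem then drop out. If \eqref{sloplim} holds then $\Lambda_k\to 0$, so $y_k\to 0$ (equivalently $x_k\to x^+$) for every $y_0$ lying in the good compact region. The only way to escape to the limit $x^-$ is along the unique orbit tracking the unstable direction, which is obtained by running the recursion backwards from $x^-$ (equivalently, by constructing $x_k = f_k^{-1}(x_{k+1})$ inductively with $\lim_k x_k = x^-$); this fixes exactly one admissible $x_0$. If \eqref{sloplimalt} holds then $\Lambda_k$ converges to a non-zero limit, so $y_k$ tends to $\Lambda_\infty(A_\infty y_0 + B_\infty)$, an injective affine function of $y_0$, and pulling back through $T^{-1}$ makes the map $x_0\mapsto\lim_k x_k$ injective as claimed.

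The main obstacle is handling orbits that may approach the moving pole of $S_k$, where the linearised analysis degenerates. Under \eqref{bdvardislim} the poles of the $S_k$ themselves converge to a single point in $y$-coordinates, so the set of $y_0$ whose orbit ever enters a fixed small neighbourhood of that point is exceptional; generic orbits are controlled by the Gronwall estimate above, while the remaining orbits are treated either as the unique exceptional orbit (in case \eqref{sloplim}) or by a symmetric argument after conjugating by $y\mapsto 1/y$ to swap the roles of $0$ and $\infty$ (in case \eqref{sloplimalt}). The summability of $\|S_k-\mathrm{id}\|$ is precisely what prevents the perturbations from causing orbits to drift into the forbidden region infinitely often, which is why the bounded-variation hypothesis, rather than just convergence of the fixed points, is essential.
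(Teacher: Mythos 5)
This statement is not proved in the paper: it is quoted verbatim as an external result, Kooman~\cite{K98}, Theorem~7.1, so there is no internal proof to compare against. Your strategy --- conjugating each $f_k$ to its linear normal form via $T_k(x)=(x-x_k^+)/(x-x_k^-)$ and treating the transition maps $S_k=T_{k+1}\circ T_k^{-1}$ as summable perturbations of the identity --- is the natural and essentially correct route, and is in the spirit of Kooman's own analysis. However, as written the argument has genuine gaps.

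First, the closed form $y_k=\Lambda_k(A_ky_0+B_k)$ with $B_k\to B_\infty\in\R$ cannot hold in the case \eqref{sloplim}: unwinding $y_{k+1}=(1+a_k)\lambda_k y_k+b_k+r_k(\lambda_k y_k)$ gives an inhomogeneous term $\tilde B_k=\sum_{j<k}b_j\prod_{i=j+1}^{k-1}(1+a_i)\lambda_i$, so $B_k=\tilde B_k/\Lambda_k$ involves the factors $1/\Lambda_{j+1}$, which blow up when $\Lambda_k\to0$ and need not be compensated by $\sum_j|b_j|<\infty$. The conclusion you want ($y_k\to0$) is still true, because $|\tilde B_k|\leq C\sum_{j<k}|b_j|\,|\Lambda_k|/|\Lambda_{j+1}|\to0$ by dominated convergence, but the affine representation with convergent $A_k,B_k$ is only valid in the case \eqref{sloplimalt}. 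Second, and more seriously, the existence and uniqueness of the single exceptional $x_0$ whose orbit converges to $x^-$ is asserted, not proved. ``Running the recursion backwards from $x^-$'' requires a compactness or nested-interval argument on the inverse maps $f_k^{-1}$ to show that a backward orbit with $\lim_k x_k=x^-$ exists, and a separate argument (e.g., that two distinct backward orbits must separate at the geometric rate $1/\lambda_k$ and hence one of them falls into the basin of $x^+$) to show it is unique; neither is supplied. Third, the dichotomy ``generic orbits stay in the good compact region, the rest are exceptional'' is exactly the point at issue and is not established: you need to show that any orbit that does not converge to $x^-$ eventually enters and remains in a compact set on which the perturbation estimates apply, which is where the bounded-variation hypothesis is actually consumed. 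Until these three points are filled in, the proposal is a correct outline rather than a proof.
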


\noindent
Theorem~\ref{Kooman1} deals with the situation in which there is a limiting hyperbolic 
M\"obius-transfor\-mation, while Theorem~\ref{Kooman2} deals with the more general situation 
in which the limiting M\"obius-transformation may not exist or may not be hyperbolic, 
but the fixed points do converge to distinct finite limits and they do so in a summable 
manner. (In Theorem~\ref{Kooman1}, it is automatic that the fixed points of $f_k$ converge 
to the fixed points of $f$.) The conditions in (\ref{bdvardislim}--\ref{sloplim}) are 
necessary to ensure that the solutions of the recursion relation can reach the limits 
of the fixed points. Indeed, condition (\ref{sloplimalt}) prevents precisely that. As 
is evident from Fig.~\ref{fig-Mobius}, the single value of $x_0$ for which the solution 
converges to the limit of the repulsive fixed point must satisfy $x_0<0$, which is excluded 
in our case because $x_0=0$. We therefore also do not need the bounded variation
condition in the second part of the first line of \eqref{bdvardislim}.

\subsection{Scaling of the volatility for polynomial coefficients}
\label{ss.dcases}

\noindent\textit{Proof of Theorem~\ref{dcases}.}
Theorem~\ref{dcases} shows \emph{four regimes}. Our key assumptions are 
(\ref{regvarass}--\ref{regvarex}). For the scaling behaviour as $k\to\infty$ 
of the attractive fixed point $x_k^+$ given in \eqref{fpM}, there are three 
regimes depending on the value of $K$:
\be{threereg}
x_k^+ \sim \left\{\begin{array}{ll}
c_k, &\mbox{if } K=\infty,\\
M^+c_k,  &\mbox{if } K \in (0,\infty) \mbox{ with } M^+=\tfrac12K[-1+\sqrt{1+(4/K)}],\\
\sqrt{c_k\mu_k}, &\mbox{if } K=0.
\end{array}
\right.
\ee
Our target will be to show that (recall $x_k$ from \eqref{recrew})
\be{xkxk+target}
x_k \sim x_k^+  \quad \mbox{ as } \quad k\to\infty,
\ee 
which is the scaling we are after in Theorems~\ref{dcases}(a--c). We will see that 
\eqref{xkxk+target} holds for $K \in (0,\infty]$, and also for $K=0$ when $L=\infty$. 
A different situation arises for $K=0$ when $L<\infty$, namely, $x_k \sim 1/\sigma_k$, 
which is the scaling we are after in Theorem~\ref{dcases}(d). 

For the proofs given in Sections~\ref{ss.1}--\ref{ss.4}, below we make use of 
Theorems~\ref{Kooman1}--\ref{Kooman2} after doing the appropriate change of 
variables. Along the way, we need the following elementary facts: 
\begin{itemize}
\item[(I)] 
If $(a_k)$ and $(b_k)$ have bounded variation, then both $(a_k+b_k)$ and $(a_kb_k)$
have bounded variation.
\item[(II)]
If $(a_k)$ has bounded variation and $h\colon\,\R \to \R$ is globally Lipschitz on 
a compact interval containing the tail of $(a_k)$, then $(h(a_k))$ has bounded 
variation.
\item[(III)]
If $(a_k)$ is bounded and is asymptotically monotone, then it has bounded variation.
\end{itemize}
Moreover, the following notion will turn out to be useful. According to Bingham, Goldie
and Teugels~\cite[Section 1.8]{BGT87}, a strictly positive sequence $(a_k)$ is said to 
be \emph{smoothly varying} with index $\rho\in\R$ if 
\be{smoothvar}
\lim_{k\to\infty} k^na_k^{[n]}/a_k = \rho(\rho-1)\times\dots\times(\rho-n+1),
\qquad n\in\N,
\ee
where $a_k^{[n]}$ is the $n$-th order discrete derivative, i.e., $a_k^{[0]} = a_k$ and
$a_k^{[n+1]}=a_{k+1}^{[n]}-a_k^{[n]}$, $k,n\in\N_0$.
\begin{itemize}
\item[(IV)] 
If $(a_k)$ is smoothly varying with index $\rho\notin\N_0$, then $(a_k^{[n]})$ is 
asymptotically monotone for all $n\in\N$, while if $\rho\in\N$, then the same is 
true for all $n\in\N$ with $n\leq\rho$.
\end{itemize} 
This observation will be useful in combination with (I--III).

According to \cite[Theorem 1.8.2]{BGT87}, if $(a_k)$ is regularly varying with index 
$\rho\in\R$, then there exist smoothly varying $(a'_k)$ and $(a''_k)$ with index $\rho$ 
such that $a'_k \leq a_k \leq a''_k$ and $a'_k \sim a''_k$. In words, any regularly 
varying function can be sandwiched between two smoothly varying functions with the same 
asymptotic behaviour. In view of the monotonicity property in Theorem~\ref{T.beh}(a), 
it therefore suffices to prove Theorem~\ref{dcases} under the following assumption, 
which is stronger than \eqref{regvarass}:
\be{regvarassextra}
\begin{aligned}
&(c_k), \, (\mu_k), \, (\mu_k/c_k), \, (k^2\mu_k/c_k) \mbox{ are smoothly varying}\\ 
&\mbox{(with index $a$, $b$, $a-b$, respectively, $2+a-b$)}.
\end{aligned}
\ee

\subsubsection{Case (b)}
\label{ss.1}

Let $K\in (0,\infty)$. Put $y_k = x_k/c_k$. Then the recursion relation in \eqref{recrew} 
becomes
\be{grecKfin}
y_0=0, \qquad y_{k+1}=g_k(y_k), \quad k\in\N_0,
\ee
where
\be{gdefKfin}
g_k(y) = \frac{A_k y+B_k}{C_k y+D_k}, \qquad y \in \R^*,
\ee
with coefficients
\be{coeffrec} 
A_k=\frac{c_k^2}{c_{k+1}},
\quad 
B_k = \frac{c_k\mu_k}{c_{k+1}},
\quad 
C_k=c_k,
\quad D_k=c_k+\mu_k.
\ee
By \eqref{regvarass}, we have $c_k/c_{k+1} \sim 1$, and hence $A_k\sim C_k \sim c_k$, 
$B_k \sim Kc_k$, $D_k \sim (K+1)c_k$. Therefore, \eqref{gdefKfin} yields
\be{glimKfin}
\lim_{k\to\infty} g_k(y) = g(y) = \frac{y + K}{y + (K+1)}, \qquad y\in\R^*.
\ee
Since $g$ is hyperbolic with fixed points $y^\pm=M^\pm=\tfrac12 K[-1\pm\sqrt{1+(4/K)}]$, 
we can apply Theorem~\ref{Kooman1} and conclude that
\be{yklimKfin}
\lim_{k\to\infty} y_k = M^+. 
\ee

\subsubsection{Case (a)}
\label{ss.2}

Let $K=\infty$. Again put $y_k = x_k/c_k$. Then the same recursion relation as in 
(\ref{grecKfin}--\ref{gdefKfin}) holds with the same coefficients as in \eqref{coeffrec}, 
but this time $c_k/c_{k+1} \sim 1$ gives $A_k \sim C_k \sim c_k$, $B_k \sim D_k \sim 
\mu_k$, and 
\be{glimKzero_a}
\lim_{k\to\infty} g_k(y) = g(y) = 1, \qquad y\in\R^*.
\ee
Since $g$ is not hyperbolic, we cannot apply Theorem~\ref{Kooman1}. To compute $y^\pm
= \lim_{k\to\infty} y_k^\pm$, we note that $g_k$ has fixed points
\be{yk+}
y_k^{\pm} = \frac{1}{a_k} h^\pm(b_k/a_k^2) \, \mbox{ with } \,
h^\pm(x) = \frac{1}{2x}\big(\,1 \mp \sqrt{1+4x}\,\big),\,\,
a_k = \frac{A_k-D_k}{B_k},\,\, b_k = \frac{C_k}{B_k}
\ee
(use that $a_k<0$ for $k$ large enough). Since $c_k/\mu_k\to 0$, we have $a_k \to -1$ 
and $b_k \to 0$. It follows that $y_k^+ \to y^+ =1$ and $y_k^- \to y^-=-\infty$, so 
that we can apply Theorem~\ref{Kooman2}. To prove that $y_k \to y^+ = 1$, we need to 
check that (recall (\ref{bdvardislim}--\ref{sloplim}))
\begin{itemize}
\item[(1)]
$(y_k^+)_{k\in\N_0}$ has bounded variation.
\item[(2)] 
$\prod_{k\in\N_0} g_k'(y_k^+)=0$.
\end{itemize}
(What happens near $y_k^-$ is irrelevant because $x_k>0$ for all $k$.) 

To prove (1), note that $h^+$ is globally Lipschitz near zero. Since, by \eqref{coeffrec} 
and 
\eqref{yk+},
\be{coefwr1}
a_k = \frac{c_k}{\mu_k}\left(1-\frac{c_{k+1}}{c_k}\right) - \frac{c_{k+1}}{c_k},
\quad b_k = \frac{c_k}{\mu_k}\,\frac{c_{k+1}}{c_k},
\ee
it follows from \eqref{regvarassmon}, (I), (III--IV) and \eqref{regvarassextra} 
that $(a_k)$ and $(b_k)$ have bounded variation. Since $a_k \to -1$ and $b_k \to 0$, 
it in turn follows from (I--II) that $(1/a_k)$ and $(b_k/a_k^2)$ have bounded variation. 
Via (I--II) this settles (1).     

To prove (2), note that
\be{gkslope1Kzero}
g_k'(y_k^+) = \frac{\Delta_k}{(C_ky_k^+ + D_k)^2} \,\,\mbox{ with } \,\,
\Delta_k = A_kD_k-B_kC_k.
\ee
Since $y_k^+>0$ and $D_k>\mu_k$, we have
\be{gkslope2Kzero}
\prod_{k\in\N_0} g_k'(y_k^+) \leq \prod_{k\in\N_0} \frac{\Delta_k}{\mu_k^2}.
\ee
But $\Delta_k = c_k^3/c_{k+1}$ and so, because $c_k/c_{k+1} \sim 1$, we have 
$\Delta_k/\mu_k^2= c_k^3/c_{k+1}\mu_k^2 \sim (c_k/\mu_k)^2 \to 0$. Hence (2) 
indeed holds.

\subsubsection{Case (c)}
\label{ss.3}

Let $K=0$ and $L=\infty$. Put $y_k = x_k/\sqrt{c_k\mu_k}$. Then the same recursion 
relation as in (\ref{grecKfin}--\ref{gdefKfin}) holds with coefficients
\be{coefKzero_a}
A_k = c_k \sqrt{\frac{c_k\mu_k}{c_{k+1}\mu_{k+1}}},
\quad
B_k = c_k\mu_k \sqrt{\frac{1}{c_{k+1}\mu_{k+1}}},
\quad 
C_k = \sqrt{c_k\mu_k},
\quad 
D_k = c_k + \mu_k.
\ee
By \eqref{regvarass}, $c_{k+1}/c_k \sim 1$ and $\mu_{k+1}/\mu_k \sim 1$, and hence
$A_k \sim D_k \sim c_k$, $B_k \sim C_k \sim \sqrt{c_k\mu_k}$. Therefore \eqref{gdefKfin} 
yields
\be{glimKzero_b}
\lim_{k\to\infty} g_k(y) = g(y) = y, \qquad y\in\R^*.
\ee
Since $g$ is not hyperbolic, we cannot apply Theorem~\ref{Kooman1}. To compute 
$y^\pm=\lim_{k\to\infty} y_k^\pm$ from \eqref{yk+}, we abbreviate
\be{coefasympKinf} 
\alpha_k = \frac{c_{k+1}}{c_k}-1, \quad \beta_k = \frac{\mu_{k+1}}{\mu_k}-1,
\quad \gamma_k = \frac{\mu_k}{c_k},
\ee
and write
\be{coefasymp}
a_k = \frac{1}{\sqrt{\gamma_k}}
\left[1-(1+\gamma_k)\sqrt{(1+\alpha_k)(1+\beta_k)}\right],
\quad 
b_k = \sqrt{(1+\alpha_k)(1+\beta_k)}. 
\ee
We have $\alpha_k \to 0$, $\beta_k \to 0$, $\gamma_k \to 0$. Moreover, 
(\ref{regvarassmon}--\ref{regvarex}), (IV) and \eqref{regvarassextra} imply 
that $(k\alpha_k)$ and $(k\beta_k)$ are asymptotically monotone and bounded. Together 
with $\lim_{k\to\infty} k^2\gamma_k=\infty$ this in turn implies that $\alpha_k/\sqrt{\gamma_k}
\to 0$ and $\beta_k/\sqrt{\gamma_k} \to 0$. Hence $a_k \to 0$ and $b_k \to 1$, and therefore 
\eqref{yk+} yields $y^\pm = \pm 1$, so that we can apply Theorem~\ref{Kooman2}. 

To prove (1), note that (\ref{regvarassmon}--\ref{regvarex}), (IV) and 
\eqref{regvarassextra} also imply that $(\sqrt{\gamma_k})$ and $(1/\sqrt{k^2\gamma_k})$, 
are asymptotically monotone and bounded. By \eqref{coefasymp} and (I--III), this in 
turn implies that $(a_k)$ and $(b_k)$ have bounded variation. Indeed, the first equality 
in \eqref{coefasymp} can be rewritten as 
\be{coefasymprew}
a_k = \frac{1}{\sqrt{\gamma_k}}\,\frac{1-(1+\gamma_k)^2(1+\alpha_k)(1+\beta_k)}
{1+(1+\gamma_k)\sqrt{(1+\alpha_k)(1+\beta_k)}}.
\ee
The denominator tends to 2, is Lipschitz near 2, and has bounded variation because 
$(\alpha_k)$, $(\beta_k)$, $(\gamma_k)$ have bounded variation. The numerator equals 
$-\alpha_k-\beta_k-2\gamma_k$ plus terms that are products of $\alpha_k$, $\beta_k$ 
and $\gamma_k$. Writing $\alpha_k/\sqrt{\gamma_k} = k\alpha_k/\sqrt{k^2\gamma_k}$ 
and $\beta_k/\sqrt{\gamma_k} = k\beta_k/\sqrt{k^2\gamma_k}$ and using that 
$\sqrt{k^2\gamma_k}\to\infty$, we therefore easily get the claim.

To prove (2), note that 
\be{Dkrep1}
\Delta_k = c_k^2\sqrt{\frac{c_k\mu_k}{c_{k+1}\mu_{k+1}}} 
= c_k^2/\sqrt{(1+\alpha_k)(1+\beta_k)},
\quad 
C_ky_k^+ + D_k = c_k(1+y_k^+\sqrt{\gamma_k}+\gamma_k),
\ee 
and hence
\be{gprodest2}
\prod_{k\in\N_0} g_k'(y_k^+) \leq \prod_{k\in\N_0} 
\frac{1}{\sqrt{(1+\alpha_k)(1+\beta_k)}(1+y_k^+\sqrt{\gamma_k})^2}.
\ee
The term under the product equals
\be{prodbd_a}
1 - 2y^+\sqrt{\gamma_k}\,[1+o(1)],
\ee 
which yields (2) because $\sqrt{k^2\gamma_k}\to\infty$.

\subsubsection{Case (d)}
\label{ss.4}

Let $K=0$ and $L<\infty$. Put $y_k=\sigma_kx_k$. Then the same recursion relation as in 
(\ref{grecKfin}--\ref{gdefKfin}) holds with coefficients
\be{coefKzero_b}
A_k = c_k\frac{\sigma_{k+1}}{\sigma_k},
\quad
B_k = c_k\mu_k\sigma_{k+1},
\quad 
C_k = \frac{1}{\sigma_k},
\quad 
D_k = c_k + \mu_k.
\ee
Abbreviate
\be{deltakdef}
\delta_k = \frac{\sigma_{k+1}}{\sigma_k}-1 = \frac{1}{c_k\sigma_k}.
\ee
We have $k\mu_k/c_k \to 0$ and, by \eqref{regvarass}, $c_{k+1}/c_k \sim 1$, $\sigma_{k+1}
/\sigma_k \sim 1$ and $k\delta_k \to 1-a$ with $a \in (-\infty,1)$ the exponent in
\eqref{regvarass}. It therefore follows that 
\be{coefasympalt}
\frac{A_k}{D_k} \to 1, 
\quad 
\frac{B_k}{D_k} \sim \mu_k\sigma_k 
= \frac{k\mu_k}{c_k}\,\frac{1}{k\delta_k} \to 0, 
\quad
\frac{C_k}{D_k} \sim \frac{1}{c_k\sigma_k} = \delta_k \to 0.
\ee 
Hence, \eqref{gdefKfin} yields
\be{glimKzeroext}
\lim_{k\to\infty} g_k(y) = g(y) = y, \qquad y\in\R^*.
\ee
Since $g$ is not hyperbolic, we cannot apply Theorem~\ref{Kooman1}. To compute 
$y^\pm=\lim_{k\to\infty} y_k^\pm$, we rewrite \eqref{yk+} as
\be{yk+alt}
y_k^\pm = \tfrac12\left(\bar a_k \pm \sqrt{\bar a_k^2 + 4\bar b_k} \right) 
\quad \mbox{ with } \quad \bar a_k = \frac{A_k-D_k}{C_k}, \quad \bar b_k = \frac{B_k}{C_k},
\ee
and note that
\be{coeffrepalt}
\begin{aligned}
\bar a_k &= \frac{c_k}{c_{k+1}}-\mu_k\sigma_k
= \frac{c_k}{c_{k+1}} - \frac{k\mu_k}{c_k}\,\frac{1}{k\delta_k},\\
\bar b_k &= c_k\mu_k\sigma_k\sigma_{k+1} 
= \frac{k^2\mu_k}{c_k}\,\frac{\sigma_{k+1}}{\sigma_k}\,\frac{1}{(k\delta_k)^2}.
\end{aligned}
\ee
Since $k^2\mu_k/c_k \to L<\infty$ and $k\delta_k \to 1-a$ with $a \in (-\infty,1)$ the 
exponent in \eqref{regvarass}, it follows that $\bar a_k \to 1$ and $\bar b_k \to L/(1-a)^2$. 
Hence $y_k^\pm \to y^\pm = \tfrac12(1\pm\sqrt{1+4L/(1-a)^2})$, so that we can apply 
Theorem~\ref{Kooman2}. 

To prove (1), note that (\ref{regvarassmon}--\ref{regvarex}), (I--IV) and 
\eqref{regvarassextra} imply that $(\bar a_k)$ and $(\bar b_k)$ have bounded 
variation. This yields the claim via \eqref{yk+alt}.

To prove (2), note that 
\be{detform}
\begin{aligned}
\Delta_k &= c_k^2\,\frac{\sigma_{k+1}}{\sigma_k}
= c_k^2(1+\delta_k),\\
C_ky_k^+ + D_k &= \frac{y_k^+}{\sigma_k} + c_k + \mu_k 
= c_k\left(1+\delta_ky_k^+ + \frac{\mu_k}{c_k}\right),
\end{aligned}
\ee
and, hence,
\be{gprodest3}
\prod_{k\in\N_0} g_k'(y_k^+) \leq \prod_{k\in\N_0} 
\frac{1+\delta_k}{(1+\delta_ky_k^+)^2}.
\ee
The term under the product equals
\be{prodbd_b}
1 - (2y^+-1)\delta_k\,[1+o(1)],
\ee 
Since $y^+ \geq 1$, it follows that (2) holds if and only if $\sum_{k\in\N_0} 
\delta_k = \infty$, which by \eqref{iffimpl} and \eqref{deltakdef} holds if and 
only $\lim_{k\to\infty} \sigma_k = \infty$. Theorem~\ref{Kooman2} shows that 
failure of (2) implies that $y_k$ converges to a limit different from 1.

\subsection{Scaling of the volatility for exponential coefficients}
\label{ss.exp}

\textit{Proof of Theorem~\ref{dcases-exp}.} In this section, we briefly comment
on how to extend the proof of Theorem~\ref{dcases} to cover the case of
Theorem~\ref{dcases-exp}.

The claims made for Cases (A) and (B) follow from minor adaptations of the 
arguments for Cases (a) and (b) in Sections~\ref{ss.2} and \ref{ss.1}. The claim 
made for Case (C1) follows from Theorem~\ref{T.beh}(d). The claims made for 
Cases (C2) and (C3) follow from minor adaptations of the arguments for Cases 
(b) and (c) in Sections~\ref{ss.1} and \ref{ss.3}. The details are left to the 
reader.

\section{Notation index}
\label{s.notation}

\subsection{General notation}
\begin{itemize}
\item
$E$ $\leadsto$ compact Polish space of types.

\item
$\CP(E)$ $\leadsto$ set of probability measures on $E$.

\item
$M(E)$ $\leadsto$ set of measurable functions on $E$.

\item
$\CM([0,1])$ $\leadsto$ set of non-negative measures on $[0,1]$.

\item
$\CM_f([0,1])$ $\leadsto$ set of finite non-negative measures on $[0,1]$.

\item
$\mathcal{L}$ $\leadsto$ law.

\item 
$\Longrightarrow$ $\leadsto$ weak convergence on path space.

\item
$\Lambda^*\in\CM([0,1])$ $\leadsto$ 
(cf.\ \eqref{ag3}). 

\item
$\Lambda \in \CM_f([0,1])$ $\leadsto$ 
(cf.\ Section~\ref{ss.cannings}).  

\item
$\frac{\partial F(x)}{\partial x_i}[\delta_a]$ $\leadsto$ G\^ateaux-derivative 
of $F$ with respect to $x_i$ in the direction $\delta_a$ 
(cf.\ \eqref{Gatder}).

\item $\frac{\partial^2 F(x)}{\partial x^2 }[\delta_u,\delta_v]$ $\leadsto$
second G\^ateaux-derivative  of $F$ with respect to $x$ in the directions
$\delta_u$,  $\delta_v$ (cf.~\eqref{ak-second-variation}).

\item
$D(T,\CE)$ $\leadsto$ set of càdlàg paths in $\CE$ indexed by the elements of 
$T \subset \R$ and equipped with the Skorokhod $J_1$-topology.

\item
$C_\mathrm{b}\big(\CE,\CE^\prime)$ $\leadsto$ set of continuous bounded mappings 
from $\CE$ to $\CE^\prime$.
\end{itemize}

\subsection{Interacting $\Lambda$-Cannings processes}
\begin{itemize}
\item
$\Omega_N$ $\leadsto$ hierarchical group of order $N$ 
(cf.\ \eqref{ag30_a}).

\item 
$\uc=(c_k)_{k\in\N_0} \in (0,\infty)^{\N_0}$ $\leadsto$ migration coefficients 
(cf.\ \eqref{ckdef}).

\item
$\uL=\big(\Lambda_k)_{k\in\N_0} \in \CM_f([0,1])^{\N_0}$ $\leadsto$
offspring measures (cf.\ \eqref{ag41}).

\item
$\lambda_k=\Lambda_k([0,1])$ $\leadsto$ resampling rates 
(cf.\ \eqref{lambda-total-masses}).

\item 
$\ud=(d_k)_{k\in\N_0}$ $\leadsto$ volatility constants 
(cf.\ \eqref{diffusion-constants}).

\item 
$\umu=(m_k)_{k\in\N_0}$ $\leadsto$
(cf.\ \eqref{ag46}).

\item
$\mu_k=\tfrac12\lambda_k$ $\leadsto$ 
(cf.\ \eqref{ag46}).

\item
$\sigma_k$ $\leadsto$ 
(cf.\ \eqref{dbarksol}).

\item
$B_k(\eta)$ $\leadsto$ $k$-macro-colony around $\eta$ 
(cf.\ \eqref{block-definition}).

\item
$y_{\eta,k}$ $\leadsto$ type distribution in $B_k(\eta)$ 
(cf. \eqref{def-blocks}).

\item
$C^{\Lambda}$-process $\leadsto$ non-spatial continuum-mass $\Lambda$-Cannings process 
(cf.\ Section~\ref{sss.scolonycannings}).

\item
$a^{(N)}(\cdot,\cdot)$ $\leadsto$ hierarchical random walk kernel on $\Omega_N$ 
(cf.\ \eqref{32b}).

\item
$C_N^{\uc,\uL}$-process $\leadsto$ hierarchically interacting Cannings process on $\Omega_N$ 
(cf.\ Section~\ref{sss.hierarCan}).

\item
$\mathcal{F}$  $\leadsto$ algebra of test functions on $\CP(E)^{\Omega_N}$ (cf., \eqref{ak:multi-level-test-functions}).

\item $L^{(N)}$, $L^{(N)}_{\mathrm{mig}}$, $L^{(N)}_{\mathrm{res}}$ $\leadsto$
generators of the mean-field Cannings process 
(cf.\ \eqref{ag20a}).

\item
$L^{(\Omega_N)}$, $L^{(\Omega_N)}_{\mathrm{mig}}$, $L^{(\Omega_N)}_{\mathrm{res}}$ 
$\leadsto$ generators of the hierarchical Cannings process 
(cf.\ \eqref{ak:multi-level-generator-decomposition}).

\item
$\Phi_{r,a,B_k(\eta)}$ $\leadsto$ reshuffling-resampling map 
(cf.\ \eqref{ak:resampling-mapping-global}). 

\item 
$X^{(\Omega_N)}$ $\leadsto$ $C^{\uc,\uL}_N$-process 
(cf.\ Section~\ref{sss.hierarCan}).

\item 
$Y_{\eta,k}^{(\Omega_N)}(\cdot)$ $\leadsto$ macroscopic observables (= block averages) 
of $X^{(\Omega_N)}$ 
(cf.\ \eqref{k-block-average}).

\item
$y_{\eta}^{[1]}$ $\leadsto$ $1$-block averages indexed block-wise 
(cf.\ \eqref{sk_15c_32}).

\item $L_{\mathrm{res}}^{(N)[k]}$, $L_{\mathrm{mig}}^{(N)[k]}$ $\leadsto$
generators of the $k$-block averaged hierarchically interacting Cannings process
at the time scale $t^k N$ (cf.\ \ref{sss.1blockres-part-one}).

\item
$G_{N,K}$ $\leadsto$ $K$-level truncation of $\Omega_N$ 
(cf.\ \eqref{ag17}).

\item
$X^{(N)}$ $\leadsto$ mean-field interacting Cannings process 
(cf.\ Section~\ref{sss.mcolonycannings}).

\item
$Q_x(\dd u,\dd v)$ $\leadsto$ Fleming-Viot diffusion function 
(cf.\ \eqref{flemming-viot-kernel}). 

\item 
$L_\theta^{c,d,\Lambda}$, $L^c_\theta$, $L^d$, $L^\Lambda$ $\leadsto$ generators of
the McKean-Vlasov process
(cf.\ \eqref{generic-interaction-operator}).

\item
$Z_\theta^{c,d,\Lambda}$ $\leadsto$ McKean-Vlasov process with immigration-emigration 
(cf.\ Section~\ref{sss.MV}).

\item
$\nu^{c,d,\Lambda}_\theta$ $\leadsto$ unique equilibrium of $Z$ 
(cf.\ \eqref{cond-invariant}).

\item 
$(M^{(j)}_k)_{k=-(j+1),\ldots,0}$ $\leadsto$ interaction chain 
(cf.\ Section~\ref{sss.extmsp}).

\end{itemize}

\subsection{Spatial $\Lambda$-coalescents}
\begin{itemize}
\item
$[n] = \{1,\ldots,n\}$.

\item
$\Pi_n$ $\leadsto$ set of all partitions of $[n]$ into disjoint families 
(cf.\ \eqref{ag19}).

\item
$\Pi_{G,n}$ $\leadsto$ set of $G$-labelled partitions of $[n]$ 
(cf.\ \eqref{labelled-partitions}).

\item
$S_{G,n} \in \Pi_{G,n}$ $\leadsto$ $G$-labelled partition into singletons 
(cf.\ \eqref{labelled-singletons}).

\item
$\Pi$, $\Pi_G$ $\leadsto$ partitions of $\N$, $G$-labelled partitions of $\N$ 
(cf.\ \eqref{ag20}).

\item 
$L(\pi_G)$ $\leadsto$ set of labels of partition $\pi_G$ 
(cf.\ \eqref{20b}).

\item
$\lambda^{(\Lambda)}_{b,i}$ $\leadsto$ coalescence-rates 
(cf.\ \eqref{coalescence-rates}).

\item
$\cdot\vert_n$ $\leadsto$ operation of projection from $[m]$
(respectively, $\N$) onto $[n]$.

\item
$L^{(G)*}, L^{(G)*}_{\mathrm{mig}}, L^{(G)*}_{\mathrm{coal}}$ generators of 
the spatial coalescent on $G$ (cf., \eqref{spatial-coalescent-generator}).

\item $L^{(\Omega_N)*}$, $L^{(\Omega_N)*}_{\mathrm{mig}}$, $L^{(\Omega_N)*}_{\mathrm{coal}}$ 
$\leadsto$ generators of the spatial $\Lambda$-coalescent with non-local coalescence
(cf.\ \eqref{spatial-block-coalescent-generator}).

\item 
$\mathfrak{P}$ $\leadsto$ field of Poisson point processes driving the spatial 
$\Lambda$-coalescent 
(cf.\ \eqref{ppp-coalescence}).

\item
$\mathfrak{P}^{(\Omega_N)}$ $\leadsto$ driving Poisson point process for 
the spatial $n$-$\Lambda$-coalescent with non-local coalescence 
(cf.\ \eqref{ag22b}).

\item
$\mathfrak{C}^{(G)}_n$ $\leadsto$ spatial finite $n$-$\Lambda$-coalescent on $G$
(cf.\ \eqref{no1}).

\item 
$\mathfrak{C}^{(G)}$ $\leadsto$ spatial $\Lambda$-coalescent on $G$
(cf.\ \eqref{add1}).

\item 
$\mathfrak{C}^{(\Omega_N)}$ $\leadsto$ spatial $\uL$-coalescent with non-local coalescence
(cf.\ \eqref{ag23b}).

\end{itemize}

\bibliographystyle{alea3}

\end{document}